\title{Strong Algebras and Radical Sylvester-Gallai Configurations}
\author{
Rafael Oliveira \thanks{Cheriton School of Computer Science, University of Waterloo,
\email{rafael@uwaterloo.ca}}
\and
Akash Kumar Sengupta \thanks{Department of Pure Mathematics, University of Waterloo, \email{aksengup@uwaterloo.ca} }
}
\date{}
\renewcommand\footnotemark{}
\begin{document}

\maketitle

\begin{abstract}
    In this paper, we prove the following non-linear generalization of the classical Sylvester-Gallai theorem. 
    Let $\K$ be an algebraically closed field of characteristic $0$ and $\cF=\{F_1,\cdots,F_m\} \subset \K[x_1,\cdots,x_N]$ be a set of irreducible homogeneous polynomials of degree at most $d$ such that $F_i$ is not a scalar multiple of $F_j$ for $i\neq j$. 
    Suppose that for any two distinct $F_i,F_j\in \cF$, there is  $k\neq i,j$ such that $F_k\in \rad(F_i,F_j)$. 
    We prove that such radical SG configurations must be low dimensional. 
    More precisely, we show that there exists a function $\lambda : \N \to \N$, independent of $\K,N$ and $m$, such that any such configuration $\cF$ must satisfy
    $$ \dim (\Kspan{\cF}) \leq \lambda(d). $$
    
    Our result confirms a conjecture of Gupta \cite[Conjecture 2]{G14} and generalizes the quadratic and cubic Sylvester-Gallai theorems of \cite{S20,OS22}. 
    Our result takes us one step closer towards the first deterministic polynomial time algorithm for the Polynomial Identity Testing (PIT) problem for depth-4 circuits of bounded top and bottom fanins. Our result, when combined with the Stillman uniformity type results of \cite{AH20a,DLL19,ESS21}, yields uniform bounds for several algebraic invariants such as projective dimension, Betti numbers and Castelnuovo-Mumford regularity of ideals generated by radical SG configurations.
\end{abstract}

\newpage
\tableofcontents

\newpage

\section{Introduction}\label{sec:intro}

In 1893, Sylvester posed a fundamental question in combinatorial geometry (\cite{S93}): given a finite set of distinct points $\{ v_1, \ldots, v_m \} \subset \R^2$ such that the line defined by any pair of distinct points $v_i, v_j$ contains a third point $v_k$ in the set, must all points in the set be collinear?
This question was answered positively and independently by Melchior and Gallai \cite{M40, G44}, and is now known as the Sylvester-Gallai (SG) theorem.

Since then, mathematicians and computer scientists have given considerable attention to generalizations of Sylvester's question \cite{EK66, H66, S66, H83, K86, EPS06, PS09, BDWY11, GT13, DSW14}, and results such as the above are known as Sylvester-Gallai (SG) type theorems.
In its most general form, a Sylvester-Gallai (SG) configuration is a finite set of geometric objects which satisfy certain local conditions or dependencies.
For instance, in the original question, the geometric objects are  points and the local dependencies are collinear dependencies amongst triples from the set.
The main theme underlying Sylvester-Gallai type problems is the following local-to-global phenomenon: must these local constraints on the geometric objects imply a global constraint on such configurations?
More specifically, the main question of concern is:

\begin{center}
    Are Sylvester-Gallai type configurations always low-dimensional?
\end{center}

\noindent The answer to this question has turned out to be affirmative for various Sylvester-Gallai type problems with linear algebraic constraints. 
The above works prove beautiful generalizations of the classical SG theorem such as the colored, robust and higher dimensional linear SG  theorems. 
For a comprehensive survey on earlier works, we refer the reader to \cite{BM90}.

Linear Sylvester-Gallai type theorems have been successfully applied to obtain important consequences in diverse subfields of theoretical computer science.
In algebraic complexity theory, linear Sylvester-Gallai theorems were used in algorithms for Polynomial Identity Testing (PIT) and reconstruction of depth-3 circuits of bounded top fanin \cite{DS07, KS09, SS13, S16}. 
In coding theory, linear Sylvester-Gallai theorems were used to prove non-existence of 2-query Locally Correctable Codes over fields of characteristic zero \cite{BDWY11}.

In this paper we prove a non-linear generalization of the SG theorem, which is at the intersection of algebraic geometry, commutative algebra, algebraic complexity theory and combinatorics. 
Motivated by the PIT problem for depth-4 circuits, we consider Sylvester-Gallai configurations where the geometric objects are hypersurfaces in a projective space, or equivalently homogeneous polynomials in a polynomial ring $\K[x_1,\cdots,x_N]$. 
The local constraint is  given by radical dependence of triples of polynomials, which implies the following geometric constraint: 
for any two distinct hypersurfaces $Y_i,Y_j \subset \P^{N-1}$ in the configuration, there is a third hypersurface $Y_k$ in the configuration such that $Y_i\cap Y_j\subset Y_k$. 

We now formally describe such SG configurations. 
Henceforth, we adopt the standard notation and use the term \emph{form} to refer to a homogeneous polynomial. 
We also let $\K$ be an algebraically closed field of characteristic $0$ and denote our polynomial ring by $S := \K[x_1,\ldots, x_N]$.

\begin{definition}[Radical Sylvester-Gallai configuration]\label{definition: radical SG intro}
Let $d$ be a positive integer. 
We say that a finite set $\cF = \{F_1, \ldots, F_m\} \subset S$ of irreducible forms of degree $\leq d$ is a $\rsg{d}$ configuration if the following conditions hold:
\begin{enumerate}
    \item $F_i \not\in (F_j)$ for any $i \neq j \in [m]$, \hfill (pairwise non-associate forms)
    \item for every pair $F_i, F_j$, there is $k \neq i,j$ such that $F_k \in \rad(F_i, F_j)$. \hfill (radical SG condition)
\end{enumerate}
\end{definition}

Note that if $d = 1$, the definition above specializes to the dual of the linear Sylvester-Gallai condition for points in $\P^{N-1}$. 
In this case, condition (1) is equivalent to the linear forms being pairwise linearly independent. 
For examples of linear and higher degree radical Sylvester-Gallai configurations, we refer to \cref{subsection: examples}.

Our main result is to prove that such SG configurations must be ``low dimensional.'' 
More formally, we prove the following theorem.

\begin{restatable}[Radical Sylvester-Gallai Theorem]{theorem}{radSGmain}
\label{theorem: sylvester-gallai}
There exists an increasing function $\lambda : \N \to \N$ such that
if $\cF \subset \K[x_1, \ldots, x_N]$ is a $\rsg{d}$ configuration, then we have 
$$\dim (\Kspan{\cF}) \leq \lambda(d).$$
That is, the dimension of the $\K$-linear span of $\cF$ is bounded above by a function only of $d$, independent of $\K$, $N$ and the number of polynomials $m$. 
\end{restatable}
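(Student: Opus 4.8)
The plan is to follow the now-standard strategy for non-linear Sylvester-Gallai theorems: first reduce from arbitrary degree $\le d$ to the study of ideals generated by small numbers of forms, then establish a structural dichotomy for each radical membership $F_k \in \rad(F_i,F_j)$, and finally run an inductive argument on $d$ that uses a "projection/quotient" operation to lower the degree while controlling the increase in dimension. Concretely, I would begin by classifying what $F_k \in \rad(F_i,F_j)$ can mean when $F_i,F_j,F_k$ are irreducible. If $(F_i,F_j)$ is itself a prime (radical) ideal of the expected codimension $2$, then $F_k \in (F_i,F_j)$ and we get a genuine polynomial identity $F_k = A F_i + B F_j$; otherwise $(F_i,F_j)$ is degenerate, which should force $F_i,F_j$ (and by the local condition, many of the $F$'s) to live in a small common linear space or a small "essential variable" space. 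This is exactly the place where the notion of \emph{strong algebra} advertised in the title enters: one wants a quantitative statement that if a subalgebra generated by the configuration is not "strong" (ideal-theoretically degenerate in a controlled sense), then the whole configuration already has bounded dimension.

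The second main step is the linear-algebraic heart: consider the collinearity-type graph on $\cF$ and show that, after discarding a bounded-dimensional piece, one may assume \emph{every} triple gives an honest identity $F_k = \alpha F_i + \beta F_j$ with $F_i,F_j,F_k$ mutually non-associate. Here I would invoke a Stillman-type / Ananyan–Hochster uniformity result (as the abstract hints) to bound the projective dimension, Betti numbers, and regularity of the ideal $(\cF)$ in terms of $d$ alone, which in turn bounds the number of "independent" syzygetic relations one can have. The idea is that a large SG configuration produces a highly structured system of quadratic-in-the-generators relations, and Stillman uniformity says such a system cannot be genuinely high-dimensional without collapsing. I would use this to reduce to the case where $\cF$ is, up to bounded error, contained in a prime ideal of small codimension generated by a bounded number of forms of degree $< d$ — setting up the induction.

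The third step is the inductive descent on $d$. Given the structural reduction, I would pick a minimal generator $P$ (of degree $< d$) of the small ambient prime, pass to the quotient ring $S/(P)$, and show that the images $\bar F_i$ form (after removing associates and a bounded exceptional set) a radical SG configuration of strictly smaller degree, so that $\dim(\Kspan{\bar\cF}) \le \lambda(d-1)$ by induction; lifting back costs only $\deg P \le d$ extra dimensions per generator, and there are boundedly many generators, giving $\lambda(d) \le \lambda(d-1) + (\text{bounded in }d)$. The base case $d=1$ is the classical linear SG theorem (where $\lambda(1)$ is an absolute constant), and $d=2,3$ recover \cite{S20,OS22}.

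\textbf{Main obstacle.} The hard part will be the passage to the quotient: ensuring that $F_k \in \rad(F_i,F_j)$ descends to a \emph{radical} membership (not merely $\bar F_k \in (\bar F_i, \bar F_j)$ in a possibly non-reduced quotient), that the $\bar F_i$ remain irreducible or can be repaired to irreducible components without blowing up the count, and — most delicate — that the quotient ring is still a polynomial ring (or close enough) so the inductive hypothesis applies. Controlling how radicals, irreducibility, and the SG incidence structure interact with the non-smooth, possibly non-radical ideals $(F_i,F_j)$ is where the bulk of the technical work, and the "strong algebra" machinery, must be deployed; getting an \emph{effective} (even if tower-type) bound $\lambda(d)$ out of the Stillman uniformity input, rather than a mere finiteness statement, is the second significant difficulty.
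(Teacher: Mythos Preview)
Your outline has the right high-level shape (induction on $d$, pass to a quotient, lift back), and you correctly identify the main obstacle as controlling the quotient. But two of the steps you rely on to \emph{set up} the induction do not work as stated, and this is where the paper's real content lies.

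First, you propose to invoke Stillman uniformity on the ideal $(\cF)$ to bound its projective dimension, Betti numbers, and regularity ``in terms of $d$ alone.'' Stillman-type bounds depend on the degrees \emph{and the number of generators}; here the number of generators is $m = |\cF|$, which is precisely the unbounded parameter. The paper makes this point explicitly in the introduction: Stillman uniformity cannot be used as input to prove the theorem, and is instead derived as a \emph{corollary} (Corollary~\ref{corollary: stillman type bounds}) once the dimension bound is established. So the second step of your plan --- ``a large SG configuration produces a highly structured system of relations, and Stillman uniformity says such a system cannot be genuinely high-dimensional'' --- is circular.

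Second, the structural reduction you need (``$\cF$ is, up to bounded error, contained in a prime ideal of small codimension generated by a bounded number of forms of degree $< d$'') is not a consequence of anything you have outlined; it is essentially the whole theorem. The paper spends Sections~\ref{sec:strong-algebras}--\ref{sec:main} building the machinery to construct a \emph{small} vector space $W$ of \emph{strong} forms with $\cF_d \subset (W)$, and this construction is what forces the induction to take place over \emph{strong quotients} $R = S/(U)$ rather than polynomial rings. Your proposed quotient $S/(P)$ by a single form is exactly the kind of ring where the radical SG theorem can fail (see Example~\ref{example: counterexamples in quotients}), so the inductive hypothesis would not apply. The fix is not to repair irreducibility of the $\bar F_i$ case-by-case, but to strengthen the inductive statement to work over all sufficiently strong quotients (Theorem~\ref{theorem: radical SG theorem over UFD}), and to build $W$ iteratively using the Ananyan--Hochster strengthening procedure together with the discriminant bound (Corollary~\ref{corollary: best corollary discriminant}) and the combinatorics of Lemmas~\ref{lemma: algebra contains many then ideal contains Fd} and~\ref{lemma: fraction of Fd}. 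None of this is visible in your outline.
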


\cref{theorem: sylvester-gallai} settles a conjecture of Gupta \cite[Conjecture 2]{G14}, which is a fundamental first step towards completing Gupta's program to solve the PIT problem for depth-4 circuits with bounded top and bottom fanins. 
Our result generalizes the results of \cite{S20, OS22}, which proved Gupta's conjecture for $d=2,3$ respectively. 
For $d=1$, the statement is equivalent to the linear SG theorem over the field $\K$, which is one of the ingredients of our proof. 
\cref{theorem: sylvester-gallai} is a consequence of our main technical result (see \cref{theorem: radical SG theorem over UFD}), which works in a more general setting of radical SG-configurations of reducible forms in graded unique factorization domains which are quotients of polynomial rings by \emph{strong} sequences.

Our perspective highlights the \emph{geometric} aspects of Sylvester-Gallai problems. 
When viewed through the \emph{algebraic and computational} lens (via the algebra-geometry dictionary), one realizes that these problems are intrinsically related to the study of \emph{cancellations or syzygies} in algebraic problems. 
In algebraic and computational terms, the Sylvester-Gallai question can be rephrased as:
\begin{center}
    Must Sylvester-Gallai configurations depend only on ``few variables''?
\end{center}

\noindent Thus, from an algebraic point of view, the radical Sylvester-Gallai problem is a question about bounding the syzygies of the configuration of polynomials. 
A key observation is that Sylvester-Gallai problems can be viewed in light of the commutative algebraic principle of \emph{Stillman uniformity}. 
One way to quantify the complexity of a set of polynomials $\{F_1,\cdots,F_m\}\subset \K[x_1,\cdots,x_N]$, is to measure various commutative algebraic invariants of the ideal $I=(F_1,\cdots, F_m)$, such as the degree, projective dimension and Betti numbers, among others. 
In general, polynomials in $N$ variables will have greater complexity as $N$ increases. 
However, the following principle of so-called Stillman uniformity \cite{ESS19a} emerges from several recent developments in commutative algebra. 
If the degree and the number of polynomials is held fixed, then the complexity of polynomials remains bounded even if $N$ increases, because they depend on few ``variables'' \cite{AH20a, ESS19b, DLL19}. 
A model case of this phenomenon is the breakthrough work of \cite{AH20a}, which proves Stillman's conjecture by constructing \emph{strong subalgebras} of polynomial rings. 

Even though the Sylvester-Gallai question above has a striking similarity with Stillman uniformity, a major difference is that in the commutative algebraic works mentioned above, the bounds necessarily depend on the number of polynomials defining the ideal. 
On the other hand, in Sylvester-Gallai type problems the bounds must be independent of the number of polynomials. 
Also, note that \cref{theorem: sylvester-gallai} provides an uniform bound (independent of $\K$, $N$ and $m$) on the dimension of the $\K$-linear span of $F_1,\cdots,F_m$, which is much stronger than any Stillman uniformity type bound on other algebraic invariants. 
Indeed, we obtain the following consequence of \cref{theorem: sylvester-gallai}, which provides uniform bounds for several algebraic invariants of a radical Sylvester-Gallai configuration, via Stillman uniformity. 

\begin{restatable}[Stillman Bounds for Radical Sylvester-Gallai Configurations]{corollary}{stillmanSG}\label{corollary: stillman type bounds}
There exist functions $\lambda_{\deg},\lambda_{\mathrm{reg}},\lambda_{pdim}, \lambda_{\beta_{i,j}}:\N\rightarrow \N$,  such that the following holds.

If $\cF = \{F_1, \ldots, F_m\} \subset \K[x_1, \ldots, x_N]$ is a $\rsg{d}$ configuration, and $I=(F_1,\cdots,F_m)$, then the following invariants of $I$ are uniformly bounded by functions independent of $\K$, $N$ and $m$.

\begin{enumerate}
    \item degree of $I$, i.e.  $\deg(I)\leq \lambda_{\deg}(d)$.
    \item projective dimension of $I$, i.e. $\mathrm{pdim}(I)\leq \lambda_{pdim}(d)$,
    \item  Castelnuovo-Mumford regularity, i.e. $\mathrm{reg}(I)\leq \lambda_{\mathrm{reg}}(d)$,
    \item the $(i,j)$-th Betti numbers of $I$, i.e. $\beta_{ij}(I)\leq \lambda_{\beta_{i,j}}(d)$.
\end{enumerate}

\end{restatable}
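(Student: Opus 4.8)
The plan is to deduce \cref{corollary: stillman type bounds} from \cref{theorem: sylvester-gallai} together with the Stillman uniformity results of \cite{AH20a, DLL19, ESS21}. First I would apply \cref{theorem: sylvester-gallai} to obtain $\dim_\K(\Kspan{\cF}) \le \lambda(d)$. Since every $F_i$ is homogeneous, $V := \Kspan{\cF}$ is a graded $\K$-subspace of $S$, so it splits as $V = \bigoplus_{e \le d} (V \cap S_e)$ with $\sum_{e} \dim_\K(V \cap S_e) = \dim_\K V \le \lambda(d)$. Choosing a homogeneous $\K$-basis of $V$ produces forms $G_1, \ldots, G_r$ with $r \le \lambda(d)$, each of degree $\le d$; since each $F_i$ is a $\K$-linear combination of the $G_j$ of its own degree and conversely, one gets the equality of ideals $I = (F_1, \ldots, F_m) = (G_1, \ldots, G_r)$. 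Thus the task reduces to bounding $\deg(I)$, $\mathrm{pdim}(I)$, $\mathrm{reg}(I)$ and $\beta_{ij}(I)$ for an ideal $I$ generated by at most $r \le \lambda(d)$ forms of degree at most $d$, by a quantity independent of $\K$ and $N$.

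Next I would invoke the uniformity machinery. Ananyan--Hochster's proof of Stillman's conjecture \cite{AH20a}, together with its effective refinements in \cite{DLL19, ESS21}, gives more than a projective dimension bound: after a $\K$-linear change of coordinates there is a polynomial subring $R = \K[z_1, \ldots, z_n] \subset S$ with $n = n(r,d)$ depending only on $r$ and $d$, such that $I = (I \cap R)\, S$ and $I \cap R$ is generated by $r$ forms of degree $\le d$. Since $S$ is a polynomial ring over $R$, it is faithfully flat over $R$, so the minimal graded free resolution of $I$ is the base change of that of $I \cap R$; hence $\mathrm{pdim}(I) = \mathrm{pdim}(I \cap R)$, $\mathrm{reg}(I) = \mathrm{reg}(I \cap R)$, $\beta_{ij}(I) = \beta_{ij}(I \cap R)$, and likewise $\deg(I) = \deg(I \cap R)$. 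Now $I \cap R$ lives in a polynomial ring on $n(r,d)$ variables with generators of degree $\le d$, so $\mathrm{pdim}(I \cap R) \le n(r,d)$, while the regularity, the $\beta_{ij}$ and the degree are bounded by Macaulay-/Bezout-type bounds in terms of $n(r,d)$ and $d$ alone. Composing these bounds with $r \le \lambda(d)$ yields the required functions $\lambda_{\deg}$, $\lambda_{pdim}$, $\lambda_{\mathrm{reg}}$ and $\lambda_{\beta_{i,j}}$ of $d$ alone.

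I expect the only delicate point to be matching the precise hypotheses and conclusions of the cited uniformity theorems --- in particular, ensuring that the ``small subalgebra'' / extension-from-a-subring formulation is available over an arbitrary algebraically closed field of characteristic $0$ and in a form that simultaneously controls all four invariants rather than just $\mathrm{pdim}$. Once that is in place, the remainder is a purely formal composition of bounds, and no further input from the Sylvester--Gallai structure of $\cF$ is needed beyond the span bound of \cref{theorem: sylvester-gallai}.
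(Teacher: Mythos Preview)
Your proposal is correct and follows essentially the same approach as the paper: reduce to an ideal with at most $\lambda(d)$ homogeneous generators of degree $\leq d$ via \cref{theorem: sylvester-gallai}, then invoke Stillman uniformity. The paper's proof is terser, citing \cite[Theorem~1.1, Proposition~4.6]{ESS21} directly as a black box for the simultaneous bounds on all four invariants rather than unpacking the small-subalgebra mechanism, but the strategy is the same.
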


In fact, several other invariants of $I$ such as the number of minimal or associated primes, degrees and number of generators of minimal primes will also be uniformly bounded. We refer to \cite{AH20a,ESS21} for a list of such invariants.

On one hand, Sylvester-Gallai problems demand a much stronger uniform bounds than Stillman's uniformity. 
On the other hand, SG configurations possess many  combinatorial and geometric properties. 
Hence the challenges posed by the Sylvester-Gallai problem are novel in nature and require new techniques to overcome them. 
To that end, we build upon the work of \cite{AH20a} and generalize their methods to work over quotients of polynomial rings. 
We also develop several algebraic geometric tools to prove effective quantitative bounds for commutative algebraic questions appearing naturally in SG type problems. 
Our results enable us to carry out an inductive argument by leveraging the combinatorics of the configuration and eventually we reduce the radical SG problem to the robust linear SG theorem. 
We expect these techniques to be fundamental for the full resolution of Gupta's program to obtain a polynomial-time deterministic black-box PIT algorithm for dept-4 circuits with bounded top and bottom fanin. 
In the following subsections we discuss how SG problems are motivated from PIT, our results,  technical contributions, key ideas and an overview of the proof, some of which are interesting in their own right.

\subsection{Polynomial Identity Testing and Gupta's conjecture}

Given an algebraic circuit, the Polynomial Identity Testing (PIT) problem asks to determine whether the circuit computes the identically zero polynomial. 
Even though there are efficient randomized algorithms for PIT \cite{Z79, S80}, devising an efficient deterministic algorithm for this problem is one of the main challenges in theoretical computer science. 
The PIT problem is also intimately connected to other fundamental questions such as lower bounds for arithmetic circuits \cite{HS80, Agr05,KI04,DS07,FSV18, CKS18} and derandomization problems \cite{FS13, KSS15, Mul17, GT17, FGT19}. 
We refer the reader to the excellent surveys \cite{Sax09, SY10, Sax14} for more on the PIT problem and its applications. 
The results of \cite{AV08, GKKS16} show that in order to solve the PIT problem in general, it is enough to solve the problem for low depth circuits such as unrestricted depth-$3$ circuits or homogeneous depth-$4$ circuits. 
Therefore, these classes have received considerable attention in recent years. 
In particular, we have seen significant progress towards PIT for depth-$3$ circuits via rank bounds for linear SG-configurations \cite{DS07, KS09, SS13}.

Sylvester-Gallai configurations are intrinsic to the PIT problem as demonstrated by the following example. 
Suppose a polynomial $P\in\mathbb{C}[x_1,\cdots,x_N]$ has the expression $P=a_1\cdots a_d+b_1\cdots b_d+c_1\cdots c_d$ where $a_i,b_j,c_k$ are pairwise non-associate linear forms (in particular, $P$ is represented by a depth-$3$ circuit of the form $\Sigma^{[3]} \Pi^{[d]} \Sigma$). 
If $P\equiv 0$, then whenever $a_i,b_j$ vanish there exists a $c_k$ which also vanishes, in particular $c_k\in \mathrm{rad}(a_i,b_j)=(a_i,b_j)$. 
Hence, the sets $R=\{a_1,\cdots, a_d\}$, $B=\{b_1,\cdots, b_d\}$, $G=\{c_1,\cdots, c_d\}$ form a colored Sylvester-Gallai configuration and by the Edelstein-Kelly theorem \cite{EK66}, we have $\mathrm{dim}(\mathrm{span}_{\mathbb{K}}(R\cup B\cup G))\leq 3$. 
Therefore, if $P\equiv 0$, then we may assume that all the linear forms $a_i,b_j,c_k$ depend on three variables after a change of coordinates and reduce the problem to $\C[x,y,z]$. 
Thus the rank bounds in linear SG theorems imply that the constituent linear polynomials of the circuit depend on few variables, and more sophisticated applications of this idea yielded the PIT algorithms for special depth-$3$ circuits. 
However, we are still far from obtaining polynomial time deterministic PIT algorithms for unrestricted depth-$3$ circuits.

For depth-$4$ circuits, the Sylvester-Gallai configurations appearing in a polynomial identity are no longer linear, due to the presence of higher degree polynomials. 
Hence the case of depth-$4$ circuits is even more challenging. 
In \cite{BMS13,G14}, it was conjectured that such non-linear SG-configurations should also depend on few variables or have low algebraic rank. 
In particular, Gupta~\cite{G14} proposed far-reaching non-linear generalizations of the known Sylvester-Gallai type problems and  showed that if the posed conjectures are true, then we will have a deterministic polynomial-time black-box PIT algorithm for the model of depth 4 algebraic circuits with constant top and bottom fan-in. 
Such a circuit is  denoted by $\Sigma^k\Pi\Sigma\Pi^d$ (where $d, k$ should be thought of as constants) and it computes a polynomial $P$ of the form 
\begin{equation}\label{eq:depth 4 intro}
    P=\sum_{i=1}^k \prod_{j=1}^{n_i} Q_{ij}
\end{equation}   
where $\deg(Q_{ij}) \leq d$ for each $i, j$.

Gupta conjectured that the non-linear Sylvester-Gallai configuration of polynomials appearing in a depth-$4$ polynomial identity must also be depend on few variables. 
More precisely, if a polynomial $P$, evaluated by a depth-$4$ circuit\footnote{One needs to assume that the circuit is simple and minimal. For the precise definitions, we refer the reader to \cite{SS13, G14}.} as above, is identically $0$, then the transcendence degree of the set $\{Q_{i,j}\}$ is conjectured to be bounded by a function depending only on $k$ and $d$. 

The first challenge in Gupta's series of conjectures is the following non-linear version of the linear Sylvester-Gallai problem, \cite[Conjecture 2]{G14}:

\begin{conjecture}[Radical Sylvester-Gallai Conjecture]\label{conjecture: gupta radical SG}
There is a function $\lambda : \N \to \N$ such that the transcendence degree of any $\rsg{d}$ configuration $\cF$, is upper bounded by $\lambda(d)$.
\end{conjecture}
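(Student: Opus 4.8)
\medskip
\noindent\textbf{Proof strategy.}
The plan is to deduce \cref{conjecture: gupta radical SG} from the stronger \cref{theorem: sylvester-gallai}: since the transcendence degree of $\cF$ never exceeds $\dim(\Kspan{\cF})$, any bound $\lambda(d)$ on the latter bounds the former. So the real task is to prove \cref{theorem: sylvester-gallai}, which I would do in three interacting layers. A commutative-algebraic layer controls the ``strength'' of the forms in $\cF$ by adapting the small-subalgebra construction of \cite{AH20a} to quotient rings. A structural layer classifies, up to a bounded list of cases, the possible ways a witness $F_k$ can lie in $\rad(F_i,F_j)$. A combinatorial layer then runs an induction on $d$, cleans up the configuration using the classification, and eventually appeals to the robust linear Sylvester-Gallai theorem.

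\textbf{Reducing to strong configurations.}
First I would split off the ``bad'', low-strength forms. The aim is to produce a strong sequence $T_1,\ldots,T_r$ with $r \le c(d)$ such that, after passing to the graded UFD $R := S/(T_1,\ldots,T_r)$, the images of the $F_i$ and the relevant products of them have large strength relative to $R$, with all estimates independent of $N$. This is exactly why the main technical statement (\cref{theorem: radical SG theorem over UFD}) is phrased for radical SG configurations of possibly reducible forms over such quotients: the reduction has to be iterable inside the induction, since each cleanup round replaces the ambient ring by a further quotient of the same shape.

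\textbf{Classifying radical triples.}
The heart of the argument is a structural dichotomy: for irreducible forms $F,G,H$ of degree $\le d$ with $H \in \rad(F,G)$ and $H$ not associate to $F$ or $G$, one of a bounded list of outcomes should hold. Either $\bar H$ is a $\K$-linear combination of $\bar F, \bar G$ modulo a bounded-dimensional space of ``special'' forms (\emph{linear type}); or $F,G,H$, together with boundedly many auxiliary forms, can be absorbed into that special set (\emph{low-strength type}); or $H$ decomposes modulo the special set into products of strictly lower-degree forms that inherit SG-type constraints and feed the induction on $d$. Turning the raw input $H^M = AF + BG$ (with $M$ bounded, via an effective Nullstellensatz over $R$) into such structure requires new effective bounds — B\'ezout- and projective-dimension-type estimates for the relevant ideals, and control of the geometry of $V(F)\cap V(G) \subseteq V(H)$ — that are uniform in $\K$, $N$ and $m$.

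\textbf{Combinatorics, reduction to linear SG, and the main obstacle.}
Using the classification I would then argue that only boundedly many $F_i$ can be of low-strength type; deleting those and reducing modulo the bounded special subspace leaves a configuration in which essentially every witness is of linear type. When $d = 1$ this is, after accounting for the colored and robust features introduced by the reductions, a genuine linear SG configuration, and the robust linear SG theorem of \cite{BDWY11, DSW14} (building on \cite{EK66}) bounds its span; when $d > 1$ the lower-degree pieces are absorbed by the inductive hypothesis, and reassembling the special subspace with the strong sequence bounds $\dim(\Kspan{\cF})$. The main obstacle is the classification step: for $d = 2, 3$ the analogous results of \cite{S20, OS22} exploited the concrete algebra of quadratic and cubic forms, whereas for general $d$ one needs a uniform, characteristic-free, $N$-independent framework, and building the right notion of strength over quotient rings together with the effective commutative-algebra bounds that make the dichotomy go through is where essentially all the difficulty lies. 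A secondary difficulty is keeping the combinatorial induction uniform: each cleanup step must strictly decrease a carefully chosen complexity measure while every bound in sight remains a function of $d$ alone.
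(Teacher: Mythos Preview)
Your high-level scaffolding matches the paper's: reduce to bounding $\dim\Kspan{\cF}$, work in the more general setting of strong quotients $R = S/(U)$ (the paper's \cref{theorem: radical SG theorem over UFD}), induct on $d$, and bottom out in the robust linear SG theorem. The paper also develops strength over quotient rings (``lifted strength'') and a robust version of the Ananyan--Hochster small-subalgebra construction, essentially as you anticipate.

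Where you diverge is in the engine of the induction. You propose a structural classification of triples $H \in \rad(F,G)$ into linear / low-strength / lower-degree types, fed by an effective Nullstellensatz identity $H^M = AF + BG$ with $M$ bounded. The paper explicitly avoids any such classification: it remarks that structure theorems for codimension-$2$ complete intersections were what made the $d=2,3$ cases of \cite{S20,OS22} work, and that one cannot hope for analogues in arbitrary degree. Instead, the paper's quantitative core is a discriminant criterion (\cref{lemma: CM discriminant radical} and \cref{corollary: best corollary discriminant}): for a fixed square-free $P \notin (W)$ there are at most $d^2(2d-1)$ pairwise-coprime forms $Q_i \in \K[W]$ with $(P,Q_i)$ non-radical. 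This single bound, combined with the SG condition, powers \cref{lemma: algebra contains many then ideal contains Fd} and \cref{lemma: fraction of Fd}: once a small strong algebra $\K[W]$ contains $\Theta(d^3)$ top-degree forms, either $\cF_d \subset (W)$ outright, or a constant fraction of $\cF_d$ lies in $(W)$. Iterating $O(d)$ general quotients, tracked by the potential $\phi(\cF) = \sum_j j\,|\cF_j|$, forces $|\cF_d| \le 6d^3$, after which one more quotient drops the degree.

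Your Nullstellensatz route has a genuine obstacle beyond being merely different: standard effective bounds on the exponent $M$ depend on $N$, which breaks the uniformity you need; and even granting a degree-only bound on $M$, converting $H^M \in (F,G)$ into your trichotomy uniformly in $d$ is exactly the unavailable structure theorem. The paper's discriminant argument sidesteps this entirely --- it never analyzes a single non-radical pair, it only needs that there are few of them.
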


 Since the transcendence degree of $\cF$ is upper bounded by the dimension of the $\K$-linear span of $\cF$, the following stronger version of the conjecture has been considered.

\begin{conjecture}[Strong Radical Sylvester-Gallai Conjecture]\label{conjecture: strong radical SG}
There is a function $\lambda : \N \to \N$ such that for any $\rsg{d}$ configuration $\cF$, we have $\dim (\Kspan{\cF} )\leq \lambda(d)$.
\end{conjecture}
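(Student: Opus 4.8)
The plan is to prove \cref{theorem: sylvester-gallai} by an inductive argument on $d$, reducing the radical (nonlinear) Sylvester--Gallai problem to the robust linear Sylvester--Gallai theorem (the Barak--Dvir--Wigderson--Yehudayoff / Dvir--Saraf--Wigderson-type statements), which serves as the base case $d=1$. The overall strategy is a \emph{local-to-global} bootstrapping: the hypothesis gives, for each pair $F_i,F_j$, a third form $F_k\in\rad(F_i,F_j)$, and this radical membership is an algebraic shadow of a geometric incidence $Y_i\cap Y_j\subset Y_k$. First I would set up the structure theory: given $F_k\in\rad(F_i,F_j)$ with all three irreducible of degree $\le d$, analyze the possible ways this can happen. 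Roughly, either $F_k$ lies in the \emph{linear} span of products of lower-degree pieces coming from $F_i,F_j$ (a ``linear-like'' relation), or there is a genuine higher-degree algebraic relation. This is where I expect to invoke the \emph{strong algebra} machinery advertised in the title: one wants to pass to a quotient $S/(\text{strong sequence})$, built from a bounded number of forms, which is a graded UFD in which the configuration still makes sense, and then apply the general technical result \cref{theorem: radical SG theorem over UFD} about radical SG configurations of (possibly reducible) forms over such rings.

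The key steps, in order, would be: (1) Establish the trichotomy/structure lemma classifying radical dependencies $F_k\in\rad(F_i,F_j)$ up to bounded-degree data — this is the algebraic-geometry heart, requiring effective bounds (degrees of generators, projective dimension, number of associated primes) for the relevant ideals, obtained via Stillman-uniformity-type inputs from \cite{AH20a,DLL19,ESS21}. (2) Use the combinatorics of the configuration: since \emph{every} pair participates in some triple, a counting/graph argument (in the spirit of the proofs for $d=2,3$ in \cite{S20,OS22}) shows that a positive fraction of forms are ``heavily incident,'' forcing large substructures of a single type from the trichotomy. (3) In the ``linear-like'' regime, reduce directly to the robust/colored linear SG theorem by treating the relevant lower-degree factors as points in a projective space and checking the colored-SG hypothesis is inherited; conclude the span is bounded. (4) In the genuinely nonlinear regime, peel off a bounded-dimensional subspace of ``special'' variables using the strong-algebra construction, quotient by a strong sequence so that the ambient ring becomes a UFD of controlled complexity, show the image configuration is still a radical SG configuration (now of forms of degree $<d$, possibly reducible, over the quotient), and apply the inductive hypothesis together with \cref{theorem: radical SG theorem over UFD}. (5) Assemble: the final bound $\lambda(d)$ is obtained by combining the bounded number of ``special'' variables peeled off at each of the $\le d$ inductive stages with the base-case robust linear SG bound, and monotonizing to get an increasing $\lambda$.

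The main obstacle, I expect, is step (1) together with step (4): making the structure theory of radical dependencies effective and stable under the quotient operation. Specifically, one must show that after quotienting by a strong sequence the forms $F_i$ remain ``nondegenerate enough'' (they do not become zero, units, or acquire uncontrolled factorizations), that radical membership is preserved or replaced by a controlled substitute, and that the complexity invariants (so that the UFD property and the Stillman-type bounds persist) do not blow up — all uniformly in $N$ and $m$. This is precisely where the generalization of the \cite{AH20a} strong-subalgebra construction to \emph{quotients} of polynomial rings is needed, and where the bulk of the new technical work lies; the combinatorial reduction (step 2) and the linear endgame (step 3) are comparatively standard once the structural dichotomy is in hand. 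A secondary subtlety is ensuring the induction is genuinely on a quantity that decreases — one tracks the maximum degree $d$ of the irreducible forms, but must be careful that the reducible forms appearing over the quotient still have \emph{irreducible factors} of strictly smaller degree so that the inductive hypothesis (phrased over UFDs with strong sequences) applies.
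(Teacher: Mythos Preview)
Your broad architecture matches the paper: induct on $d$, work over strong quotients of polynomial rings (graded UFDs of the form $S/(U)$ with $U$ strong), show the image configuration is still radical SG over the quotient, and terminate at $d=1$ with the robust linear SG theorem. You also correctly identify the main obstacle as controlling what happens under the quotient.

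However, there is a genuine gap in your step (4), and it is exactly the heart of the paper. You write that after quotienting by a strong sequence the configuration consists of forms ``of degree $<d$.'' This does not happen automatically. A general quotient $\varphi_\alpha: R[y]\to R[y]/(W_\alpha)$ sends a degree-$d$ form $F$ to a degree-$d$ form; the degree drops (after taking reduced parts) only if $F\in(W)$, so that $\varphi_\alpha(F)$ is divisible by $y^2$. The entire content of the inductive step is therefore to construct a \emph{small} strong vector space $W$ with $\cF_d\subset(W)$, where ``small'' means $\dim W$ bounded by a function of $d$ alone, independent of $|\cF|$ and $N$. The Ananyan--Hochster construction gives a strong algebra containing any prescribed finite set of forms, but its size depends on the number of input forms, so applying it to all of $\cF_d$ is useless. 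The paper solves this by an iterative scheme: start with $O(d^3)$ forms from $\cF_d$, build a strong algebra $\K[V]$ around them, observe via a discriminant bound that at most $d^2(2d-1)$ forms $Q\in\K[V]$ can have $(P,Q)$ non-radical for a fixed $P\notin(V)$, leverage this combinatorially to show a large fraction of $\cF_d$ lands in $(V)$, take a general quotient, and repeat. A potential function $\phi(\cF)=\sum_j j|\cF_j|$ forces termination in $O(d)$ rounds. Your sketch does not contain this mechanism, and without it the induction does not move.

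Your step (1) ``trichotomy'' is also not what the paper does. There is no classification of radical dependencies into structural types. The operative dichotomy is simply whether $(F_i,F_j)$ is radical: if it is and both have degree $d$, then $F_k\in(F_i,F_j)$ forces $F_k\in\Kspan{F_i,F_j}$ by degree reasons, feeding directly into a $(c,\delta)$-linear SG argument on $\cF_d$; the non-radical pairs are controlled in number by the discriminant lemma, not by any further structural analysis. Finally, your ``secondary subtlety'' about tracking degrees of irreducible factors is handled differently: the paper works with reducible square-free forms throughout (this is built into \cref{definition: radical SG general}) and tracks the total degree, which drops because of the $y^2$-divisibility above, not because irreducible factors shrink.
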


In other words, for any $\rsg{d}$ configuration $\cF$, we must have that $\dim(\Kspan{\cF})$ is upper bounded by a function independent of the polynomial ring $\K[x_1,\cdots,x_N]$ and the cardinality $|\cF|$. In \cite{S20}, Shpilka broke ground on \cref{conjecture: strong radical SG}, solving it for $d = 2$.
After that, \cite{OS22} solved \cref{conjecture: strong radical SG} for $d = 3$. In this work, we positively settle \cref{conjecture: strong radical SG} for any value of $d$, which in turn positively resolves \cref{conjecture: gupta radical SG}.
In this work, our \cref{theorem: sylvester-gallai} completely settles this question in the affirmative.

In this paper, we restrict to the case of characteristic $0$, since the conjecture is false in characteristic $p>0$. 
In fact, even the linear Sylvester-Gallai theorem is not true over $\F_p$, as one may take the finite set of linear forms to be all the linear forms in $\F_p[x_1,\cdots,x_N]$. 
Our \cref{theorem: sylvester-gallai} is a consequence of our main technical theorem which holds in the more general setting of reducible forms in \emph{strong} quotients of polynomial rings. 
In the following subsection we will describe our main technical theorem and the key ideas behind the proof.

\subsection{Main technical results \& overview of proofs }

We develop an inductive framework for solving \cref{conjecture: strong radical SG}. 
We induct on the degree of the forms in the configuration, eventually reducing the \rsg{d}  problem to the robust linear Sylvester-Gallai theorem (see \cref{theorem: fractional linear SG}).
An important issue that arises in the process of reducing degree is that radical SG configurations in polynomial rings do not seem to behave well when we try to reduce the degree.
For this reason, as is often the case in inductive arguments, one needs to start with a stronger hypothesis, which may be harder to prove but amenable to induction.

We will now describe the ideas for reducing the degree of the configuration, along with our generalized definition of radical SG configurations and a high-level overview of our inductive approach. 
We will also see how our generalized definition arises naturally in the process.

\subsection*{I. Degree reduction and general radical SG configurations}
In the previous works on low-degree SG-configurations \cite{S20,OS22}, the idea to reduce the degree was employed in a special case as follows.

\emph{Special case in low degree.} Let $\cF=\{F_1,\cdots,F_m\}$ be a radical SG configuration of degree at most $3$ in a polynomial ring $S$. Suppose that there is a small set of linear forms $W=\{\ell_1,\cdots,\ell_r\} \subset S_1$, such that the ideal $(W)$ contains $\cF$. Then one proceeds as follows.
\begin{itemize}
    \item \emph{General projection.}
Apply a ``general projection'' of $W$, sending all forms in $W$ generically to a new variable $z$. In other words, consider the composition $S\hookrightarrow S[z]\xrightarrow[]{\varphi} S[z]$ defined by $\varphi(\ell_i)=\alpha_iz$, where $\alpha_i\in \K$ are general scalars . Since $\cF\subset (W)$, after the projection all forms in $\varphi(\cF)$ are divisible by $z$. Hence for all $F_i\in \cF$, one obtains a residual form $G_i\in S[z]$ where $\varphi(F_i)=zG_i$. 
\item \emph{Degree reduction.} Show that the set $\{G_i|i\in [m]\}\cup \{z\}$ forms a SG-type configuration and thereby reduce the degree of the configuration.
\item  \emph{Lifting.} The general choice of the scalar multiples of $z$ allows one to lift the SG-type bound back to the configuration $\cF$.

\end{itemize}

Note that the process above yields a set of forms divisible by $z$, which we will allow in our generalized definition. Furthermore, the residual forms $G_i$ as above, are not necessarily irreducible. For example, let $F = x u^2 - y v^2\in S:= \K[x,y,u,v]$, where $W = \Kspan{x, y}$.
Consider the projection $x \mapsto \alpha^2 z$, $y \mapsto \beta^2 z$, where $\alpha, \beta \in \K$. Then we have $F \mapsto z (\alpha u - \beta v)(\alpha u + \beta v)$ in the quotient ring $S/(x-\alpha^2z,y-\beta^2z)$. As this behaviour is unavoidable, one must consider \emph{reducible forms} in the definition of Sylvester-Gallai configurations if one is to use such projections or quotients inductively.

The reason why we would like to work over a larger class of rings (rather than just polynomial rings) is more subtle, and has not appeared in previous works. 
In general, it is not necessary that a there is a small set of linear forms $W$ such that $\cF\subset (W)$, even in the low-degree case. 
For example, if $\cF$ contains a high-rank quadric then $W$ must be large. 
The previous works navigate this issue by heavily using structure theorems for ideals generated by two quadrics or cubics. 
In general, one cannot hope for such a classification or structure theorem for codimension $2$ complete intersection ideals of arbitrary degree.

We shift perspective here and employ a new strategy to overcome this challenge. 
Given a $\rsg{d}$ configuration $\cF$ in $S$, we will \emph{directly} construct a small set of possibly \emph{non-linear} forms $W$ such that the ideal $(W)$ contains all the highest degree forms in $\cF$. 
Then we will take a general quotient using $W$ and reduce $\cF$ to a lower degree SG-configuration in a graded quotient of $S$. 
This quotient ring will be a UFD, but not necessarily a polynomial ring. 
Thus we are led to generalizing the definition of radical SG-configurations in UFDs. 

We generalize the definition of radical SG configurations in two key ways.  Firstly, we allow the ambient ring to be a finitely generated, $\N$-graded $\mathbb{K}$-algebra which is a unique factorization domain (UFD). Secondly, we allow the forms to be reducible. However, we require that there is a special linear form $z$ such that for any two forms in the configuration, the only common factor is $z$.

\begin{restatable}[Radical Sylvester-Gallai Configuration in a UFD]{definition}{radicalSGgeneral}\label{definition: radical SG general}

Let $d$ be a positive integer. Let $R=\bigoplus_{i\in \N} R_i$ be a finitely generated, $\N$-graded $\mathbb{K}$-algebra with $R_0=\K$,  $R$ is generated by $R_1$. Suppose that $R$ is a UFD. 
Let $z \in R_1$ be a linear form and $\cF := \{z, zF_1, \ldots, zF_m\} \subset R$ be a finite set of square-free forms of degree at most $d+1$.
We say that $\mathcal{F}$ is a $\rsgufd{d}{z}{R}$ configuration if for every $i \neq j \in [m]$: 
$$\gcd(F_i, F_j) =1 \text{ and } |\rad(zF_i, zF_j) \cap \mathcal{F}| \geq 3.$$
\end{restatable}

With this definition at hand, we are ready to outline the key steps of our inductive approach.

\subsubsection*{II. Our inductive approach.} 
Let $\cF$ be a $\rsgufd{d}{z}{R}$ configuration as above, and $\cF_d := \{ F_i \ \mid \ zF_i \in \cF \text{ and } \deg(F_i) = d \}$. 
We would like to construct a small set of forms $W$ containing $\cF_d$ and $z$. 
Once we construct $W$ we will apply general quotients to reduce the degree of the configuration, thereby obtaining a $\rsgufd{d-1}{y}{R'}$ configuration. 
We outline the steps below.

\begin{itemize}
    \item \emph{Small set of forms.} We construct a small set of forms $W := \{z, H_1,\cdots,H_r\}$ of degree at most $d$, such that $\cF_d \subset (W)$.  
    Note that the forms in $W$ are no longer required to be linear.
    \item \emph{General quotient.} Let $y$ be a new variable. We apply a general quotient given by $z \mapsto \alpha y$ and $H_i\mapsto \alpha_i y^{\deg(H_i)}$ where $\alpha,\alpha_i\in \K$ are sufficiently general. 
    More precisely, consider the image configuration $\varphi(\cF)$ under the quotient morphism $\varphi:R[y]\rightarrow R':=R[y]/J$, where $J=(z-\alpha y, H_i-\alpha_i y^{\deg(H_i)}|i\in [r])$. 
    Note that $R'$ is a $\N$-graded finitely generated $\K$-algebra, as $J$ is a homogeneous ideal in $R[y]$. 
    \item \emph{Degree reduction.} After taking the reduced parts of the forms in $\varphi(\cF)$, we obtain a radical SG-configuration of square-free forms $\cF'=\varphi(\cF)_\red$ in the quotient ring $R'$. 
    Now all the forms in $\cF'$ are divisible by $y$ and the maximum degree of the residual forms in $\cF'$ is $(d-1)$.
    \item  \emph{Lifting.} We show that sufficiently general choice of the scalars $\alpha_i$ allows us to lift the inductive bounds back to $\cF$ in $R$. 
    Furthermore, the cardinality of $W$ being small implies that the lifted bound depends only on $d$.

\end{itemize} 

All of the steps above present us with distinct technical challenges, which were not encountered in previous works on low-degree configurations. The first step is crucial and the most challenging one due to following reasons.

First, the cardinality of $W$ should be small. More precisely, $|W|$ should be bounded above by a function of $d$ only, independent of the field $\K$, the number of variables $N$ and $|\cF|$ or $|\cF_d|$.

Second, the radical SG theorem does not hold in arbitrary quotients of polynomial rings even if all the forms in the SG-configuration are of degree $1$ (see \cref{example: counterexamples in quotients}). 
Therefore, even if we apply successive general quotients and reduce to a  radical SG configuration of degree $1$ in a quotient ring $R$, we cannot ensure that such a \rsg{1} configuration is low-dimensional in general. 
The failure of the radical SG-theorem in an arbitrary quotient ring, is due to the fact that an ideal $(\ell_1,\ell_2)$ generated by two linear forms $\ell_1,\ell_2\in R$ may not be a prime ideal. 
Hence, the condition $\ell_3\in \rad(\ell_1,\ell_2)$ is not equivalent to the linear SG-condition $\ell_3\in \Kspan{\ell_1,\ell_2}$, unlike the case of linear forms in a polynomial ring. 
In fact, one cannot ensure primality of $(\ell_1,\ell_2)$, even if $R$ is a UFD, as shown in the following example.

\begin{example} \label{example: SG in ufd}
    Let $F=x_1x_2+x_3x_4+x_5^2\in S=\K[x_1,\cdots,x_7]$ and $R=S/(F)$. Then $R$ is a UFD. Consider the images $x_1,x_3,x_5\in R$. Then we have $x_5\in \rad(x_1,x_3)$ in $R$, but $x_5\not \in \Kspan{x_1,x_3}$. Note that this issue does not arise if the rank of the quadric is high enough. Indeed, let $G=x_1x_2+x_3x_4+x_5^2+x_6^2+x_7^2$. Then $S/(G)$ is a UFD, and  the ideal generated by any two linear forms $(\ell_1,\ell_2)$ is a prime ideal in $S/(G)$.
\end{example} 

Therefore, we need to  identify a subclass of UFDs, such that the radical SG theorem holds over rings in this subclass. 
Furthermore, in order to strike a balance between the two requirements above, we need a method for constructing a small set of forms $W$ with $\cF_d \subset (W)$, such that even after a general quotient we obtain a UFD in this subclass. 

One major challenge is that $W$ needs to be \emph{small}, hence the requirements above are in tension with each other. 
For example, if we started with a polynomial ring $R$, we can ensure that the quotient ring $R'$ is a polynomial ring, by choosing $W$ to be a set of linear forms. 
But, in that case $W$ will not always be small as observed earlier. 
Therefore we cannot restrict to work only with polynomial rings. 
As hinted in \cref{example: SG in ufd} above,  if the forms in $W$ are ``high rank'' or \emph{strong} enough, then we can expect the quotient to be a UFD where the \rsg{1} theorem holds. 
In this work, we identify the correct subclass to be quotients of polynomial rings by \emph{strong} sequences and our main technical result \cref{theorem: radical SG theorem over UFD} proves that the $d$-radical SG theorem holds over such strong quotients.

Now we will discuss the precise notion of strength, our main technical theorem and the ideas for constructing the desired small set of forms.

\subsection*{III. Strength of forms, strong vector spaces \& algebras} 
Let us note two simple yet extremely useful properties of linear forms in a polynomial ring $S=\K[x_1,\cdots,x_N]$. 
First, a set of linearly independent linear forms $\ell_1,\cdots,\ell_r\in S$, is also algebraically independent. 
Therefore the $\K$-subalgebra $\K[\ell_1,\cdots,\ell_r]$ is isomorphic to a polynomial ring and $\ell_1,\cdots,\ell_r$ can be thought of as variables in this polynomial ring. 
Second, the quotient $S/(\ell_1,\cdots,\ell_r)$ is again a polynomial ring, and hence a UFD.

In general, both the properties above fail for higher degree forms, even for quadrics. 
However, we know that if a quadric $Q$ is of rank at least $3$, then $S/(Q)$ is a UFD \cite[Chapter II.6 Exercise 6.5]{Har77}. 
The notion of strength\footnote{Classically known as Schmidt rank.} of a form generalizes the notion of rank for quadratic forms. 
In the course of proving Stillman's conjecture \cite{AH20a}, Ananyan and Hochster  defined strength of forms and proved thoroughgoing generalizations of the properties of linear and quadratic forms above to higher degree forms. We will now discuss the notion of strength and their results below.

\emph{Collapse and strength.} Let $S=\K[x_1,\cdots,x_N]$ be a polynomial ring and $S_d$ denote the space of degree $d$ forms.
We say that a form $F\in S_d$ has an $r$-collapse if $F$ can be written as $F = G_1 H_1 + \cdots + G_r H_r$ where $1 < \deg G_i < d$ for all $i \in [r]$.
The strength of a form $F$, denoted $s(F)$, is the maximum integer $s$ such that $F$ has no $s$-collapse (hence $F$ has a $s+1$-collapse). Linear forms are defined to have infinite strength.  We say that a form $F$ is $k$-strong if $s(F)\geq k$.

With the above definition, \cite[Theorem A]{AH20a} generalizes the above classical result as follows. 
There is an increasing function $A : \N \to \N$, independent of $\K$ and the number of variables $N$, such that for any form $F \in S_d$ with $s(F) \geq A(d)$, it holds that $S/(F)$ is a UFD.
 
In order to extend this result to homogeneous ideals generated by more than one form, one needs to quantify the collective strength of a set of forms or a vector space. 
Define the \emph{minimum strength} of a vector space $V \subset S_d$, denoted by $\smin{V}$, to be the minimum strength of  \emph{any nonzero} element of $V$. 
Since the generators of a homogeneous ideal can be of different degrees, we also need to define collective/minimum strength of graded vector spaces. 
Let $V=\bigoplus_{i=1}^dV_i \subset S$ be a graded vector space with $\dim(V_i)=\delta_i$. 
Given any function $C:\N^d \to \N^d$, we say that $V$ is $C$-strong if $\smin{V_i}\geq C_i(\delta)$, where $\delta=(\delta_1,\cdots,\delta_d)$ is the dimension sequence of $V$ and $C=(C_1,\cdots,C_d)$.  
We say that a $\K$-subalgebra $\cA\subset S$ is a $C$-strong algebra if $\cA=\K[V]$, where $V$ is a $C$-strong vector space. 
Now \cite[Theorem A]{AH20a} shows that there exists an ascending function $A : \N^d \to \N^d$, independent of $\K$ and $N$, such that if $V\subset S$ is a $A$-strong vector space, then any basis of $V$ is algebraically independent. 
Thus the $A$-strong algebra $\K[V]$ is isomorphic to a polynomial ring. 
Furthermore, we have that $S/(V)$ is a UFD. 

Now that we have defined strong vector spaces and discussed some of their important properties, we can discuss how we build on them to solve our SG problem.

\subsection*{IV. Main technical theorem}

In subsection II, we saw that one can only hope to prove SG type theorems over certain subclasses of UFDs.
In subsection III we saw how strong spaces give rise to special UFDs. 
In this section, we consider the subclass of such UFDs given by quotient rings $R=S/(U)$, where $S$ is any polynomial ring and $U\subset S$ is a sufficiently strong vector space. 
Our main technical theorem below shows that general radical Sylvester-Gallai configurations over such strong quotients are low dimensional. 

\begin{restatable}[General Radical Sylvester-Gallai Theorem]{theorem}{generalRadicalSG}
\label{theorem: radical SG theorem over UFD}
For any two positive integers $d,e$ such that $e\geq 2$ and $d\leq e$, there exist ascending functions $\Lambda_{d,e} : \N^e \to \N^e$ and  $\lambda_{d, e}:\N \rightarrow \N$, both independent of $\K$ and $N$, such that the following holds. 

Let $\K$ be an algebraically closed field of characteristic $0$ and $S := \K[x_1,\cdots,x_N]$. 
Let $U\subset S_{\leq e}$ be a $\Lambda_{d,e}$-strong graded vector space and $R=S/(U)$. 
If $\cF$ is a  $\rsgufd{d}{z}{
R}$ configuration for some $z\in R_1$, then we have
$$\dim (\Kspan{\cF} )\leq \lambda_{d,e}(\dim(U)).$$
i.e. the dimension of the $\K$-linear span of $\cF$ is upper bounded by a function of $d,e$ and $\dim(U)$, which is independent of the field $\K$, the number of variables $N$ and the cardinality of $\cF$.
\end{restatable}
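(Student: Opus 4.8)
The plan is to prove \cref{theorem: radical SG theorem over UFD} by induction on $d$, the maximum degree of the residual forms, with the base case $d=1$ being a ``robust linear Sylvester--Gallai over strong quotients'' statement and the inductive step implementing the degree-reduction scheme sketched in subsection~II. For the base case, one works with a $\rsgufd{1}{z}{R}$ configuration $\{z, z\ell_1,\dots,z\ell_m\}$ where $R=S/(U)$ and $U$ is $\Lambda_{1,e}$-strong; the key is that if $U$ is strong enough then for any two linear forms $\ell_i,\ell_j\in R$ the ideal $(\ell_i,\ell_j)$ is prime (as in \cref{example: SG in ufd}), so the radical condition $\ell_k\in\rad(z\ell_i,z\ell_j)$ forces either $\ell_k\in\Kspan{\ell_i,\ell_j}$ modulo $(U)$ or a divisibility relation pulling in $z$; lifting these relations to $S$ and invoking the robust/fractional linear SG theorem (\cref{theorem: fractional linear SG}) of the earlier discussion gives $\dim(\Kspan{\cF})\le \lambda_{1,e}(\dim U)$. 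I would be careful here that the linear relations in $R$ become \emph{approximate} linear relations in $S$ only up to elements of $(U)$, which is exactly why one needs the robust rather than the exact linear SG theorem.

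For the inductive step, assume the theorem for $d-1$ and all $e$, and let $\cF=\{z,zF_1,\dots,zF_m\}$ be a $\rsgufd{d}{z}{R}$ configuration with $R=S/(U)$ and $U$ a $\Lambda_{d,e}$-strong graded vector space. Write $\cF_d=\{F_i : \deg F_i = d\}$ for the top-degree residual forms. The heart of the argument is to construct a \emph{small} set $W=\{z,H_1,\dots,H_r\}$ of forms of degree $\le d$, with $|W|$ bounded by a function of $d$ (and, crucially, independent of $m$, $N$, $\K$), such that $\cF_d\subset(W)$ in $R$, \emph{and} such that $W$ can be taken sufficiently strong that after a general quotient the resulting ring is again of the form $S'/(U')$ with $U'$ a $\Lambda_{d-1,e'}$-strong space and $\dim U'$ controlled by $\dim U$ and $|W|$. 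Given such $W$, introduce a new variable $y$, form $R' = R[y]/(z-\alpha y,\ H_i - \alpha_i y^{\deg H_i})$ with generic scalars $\alpha,\alpha_i\in\K$, and push $\cF$ forward; since every $F_i\in\cF_d$ lies in $(W)$, its image is divisible by $y$, and after passing to square-free parts $\cF'=\varphi(\cF)_{\mathrm{red}}$ one gets a $\rsgufd{d-1}{y}{R'}$ configuration. One then has to verify two things: (a) $\cF'$ genuinely satisfies the radical SG incidences in $R'$ — this uses that $\rad(zF_i,zF_j)$ maps into $\rad$ of the images and that genericity of the $\alpha_i$ prevents degenerate collisions or loss of the third form; and (b) the dimension bound for $\cF'$ furnished by induction lifts back to a bound for $\cF$ in $R$ — this is the ``lifting'' step, where one exploits that the generic choice of the scalars multiplying $y$ lets one recover the original linear span of $\cF_d$ from the span of the residuals, up to the bounded-dimensional contribution of $W$ and of $\cF_{<d}$ (which is handled by an inner application of the theorem in degree $<d$).

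The construction of $W$ in step~(1) is where I expect the main obstacle, and where the paper's innovations over \cite{S20,OS22} must live. The naive approach — take $W$ to be linear forms whose span contains $\cF_d$ — fails because $\cF_d$ may contain high-rank (high-strength) forms, so no small linear $W$ works; conversely, letting $W$ contain the $F_i$ themselves is useless because there are $m$ of them. The resolution must be a dichotomy/regularization argument: using the radical SG incidences among triples in $\cF_d$, show that either (i) the $F_i$ of top degree already lie in the ideal generated by a bounded number of \emph{their own} pairwise common factors / GCDs plus a bounded strong ``core,'' or (ii) one can peel off a strong quotient that drops the effective degree. Making this precise requires effective commutative-algebra inputs: bounds on how the radical of $(zF_i,zF_j)$ interacts with strength, the fact that in a $\Lambda$-strong quotient a codimension-2 complete intersection $(zF_i,zF_j)$ is prime so the incidence $zF_k\in\rad(zF_i,zF_j)$ becomes a genuine containment of varieties, and a Stillman-type uniform bound ensuring the collection of forms produced along the way has bounded dimension. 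One must simultaneously track strength: each quotient should preserve ``$\Lambda$-strength'' with only a controlled degradation, so that the strength hypothesis needed at level $d-1$, namely $\Lambda_{d-1,e'}$-strength of $U'$, follows from a sufficiently large choice of $\Lambda_{d,e}$ at level $d$ — this forces the functions $\Lambda_{d,e}$ to be defined by a downward recursion on $d$, with $\Lambda_{d,e}$ built to absorb all the strength losses incurred by the quotient and by the effective primality/UFD theorems of \cite{AH20a}. Closing this recursion cleanly, while keeping $|W|$ a function of $d$ alone, is the crux of the whole proof.
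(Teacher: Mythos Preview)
Your high-level framework is correct: induction on $d$, base case via robust linear SG in a strong quotient, inductive step by constructing a small strong $W$ with $\cF_d\subset(W)$ and taking a general quotient to drop the degree. The base case is essentially what the paper does (\cref{proposition: linear case main theorem}): strength of $U$ makes $(\ell_i,\ell_j)$ prime in $R$, so the radical condition becomes a linear one and the $(c,\delta)$-linear SG bound applies.

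The gap is exactly where you flag it: you have no mechanism to produce $W$, and the dichotomy you sketch (``pairwise common factors / GCDs plus a bounded strong core'' versus ``peel off a strong quotient'') is not how the paper proceeds. The paper does \emph{not} construct a single $W$ with $\cF_d\subset(W)$ in one shot. Instead it runs an iterative process of $O(d)$ successive general quotients, each of which captures only a large \emph{fraction} of the current $\cF_d$ in the ideal of a small strong space, and tracks a potential $\phi(\cF)=\sum_j j|\cF_j|$ to force termination. The engine driving each step is a quantitative discriminant criterion (\cref{lemma: CM discriminant radical}, \cref{corollary: best corollary discriminant}): for a squarefree $P\notin(W)$, there are at most $d^2(2d-1)$ pairwise-coprime $Q\in\K[W]$ with $(P,Q)$ non-radical. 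This is the replacement for the ``structure theorems for pairs of quadrics/cubics'' you allude to, and it is what makes $r=O(d^3)$ seed forms suffice. Combined with the observation that when $(F,P)$ \emph{is} radical the SG witness must lie in $\Kspan{F,P}$ (since $\deg F=\deg P=d$), one gets a dichotomy at the level of $\cF_d$: either $\cF_d$ is close to a robust linear SG configuration (hence low-dimensional, and one can absorb it into a small strong algebra), or a bounded seed of $r$ forms generates a small strong $W$ whose ideal already contains all but an $\varepsilon$-fraction of $\cF_d$ (\cref{lemma: fraction of Fd}). A companion lemma (\cref{lemma: algebra contains many then ideal contains Fd}) shows that once $\K[W]$ contains enough of $\cF$ one in fact gets $\cF_d\subset(W)$ outright.

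Two further points your sketch underestimates. First, the iteration does not happen inside a fixed $R$: after each partial capture one takes a general quotient and continues in the new ring, so the ``single $W$'' picture is misleading; the lifting step (\cref{corollary: lifting from composition of quotients}) must handle a composition of $O(d)$ quotients, and the strength functions $\Lambda_d^{(\ell)}$ are stratified accordingly. Second, your claim that in a strong quotient $(zF_i,zF_j)$ is automatically prime is false in the relevant regime (the $F_i$ have degree $d$, not $1$); the paper never uses this, and instead works with the weaker but effective radical/non-radical dichotomy above together with a bound on the number of minimal primes of $(P,Q)$ (\cref{lemma: d^2 minimal primes}) to control the sets $\Grad(F)$ and $\Frad(F)$.
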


The functions $\Lambda_{d,e}$ will be larger than the Ananyan-Hochster function $A$ described above. 
Hence the quotient ring $R=S/(U)$ will be a UFD. \cref{theorem: radical SG theorem over UFD} shows that if $U$ is generated in degree at most $e$, and it is $\Lambda_{d,e}$-strong, then the dimension of the span of any $\rsgufd{d}{z}{R}$ configuration will be uniformly bounded by $\lambda_{d,e}(\dim(U))$ for all $d\leq e$. 
We note that the condition $d\leq e$ is to make the inductive definition of the functions $\Lambda_{d,e}, \lambda_{d,e}$ simpler and it is not restrictive in practice, as one can replace $e$ by $\max{\{d,e\}}$.
Now given any polynomial ring $S$, we may write $S=S[u]/(u)$, where $u$ is a new variable. 
Thus we recover \cref{theorem: sylvester-gallai} with the bound $\lambda(d)=\lambda_{d,d}(1)$.

The proof of \cref{theorem: radical SG theorem over UFD} is much more involved than previous proofs of the low-degree cases $d= 2,3$. Thus it requires significantly more techniques from commutative algebra and algebraic geometry. We generalize the methods of \cite{AH20a} to work over strong quotients of polynomial rings. In particular, we generalize the notion of strength to graded algebras and using a notion of \emph{lifted strength} we show that an analogue of \cite[Theorem A]{AH20a} works for strong quotients of polynomial rings (see \cref{sec:strong-algebras}). However, the construction of the small set $W$ in the inductive step is the most involved part of the argument. It is also arguably the most interesting part, because of the interplay of algebraic geometry and combinatorics. We describe some of the ideas for this construction below and refer to \cref{sec:main} for details.

\subsection*{V. Construction of a small set of strong forms.} 
Let $S=\K[x_1,\cdots,x_N]$. Given a \rsgufd{d}{z}{R} configuration in a strong quotient ring $R=S/(U)$, recall that our goal is to construct a \emph{small} set of \emph{strong} forms $W$ such that $\cF_d \subset (W)$, where $|W|$ depends only on $d$, independent of $\K$, $N$ and $|\cF|$. 
In fact, the $\K$-linear span of $W$ needs to be sufficiently strong to ensure that successive general quotients are still in the subclass of strong quotient UFDs. 

In \cite[Theorem B]{AH20a}, the authors proved the following statement and confirmed Stillman's conjecture as a consequence. 
Given forms $f_1,\cdots,f_r\in S_{\leq d}$ there exist $s$ number of sufficiently strong forms $g_1,\cdots,g_s\in S_{\leq d}$, such that $\{f_1,\cdots,f_r\}\subset \K[g_1,\cdots,g_s]$ , where $s$ depends only on $d$ and $r$, independent of $N$ and $\K$. 
However, this does not solve our problem even if $R$ is a polynomial ring, as the number of forms $s$ necessarily depends on the initial number of forms $r$. 
Therefore in addition to generalizing the construction above, we also need to  exploit the geometric and combinatorial properties of SG configurations by developing algebraic-geometric tools. 
We give an overview of some of these results below.

\emph{Construction of strong algebras.} The construction of \cite{AH20a} works with specific strength lower bounds given by the function $A: \N \rightarrow \N$ described earlier. We need to have the flexibility to make the forms as strong as we want, due to our successive quotients. We formalize their construction as an iterative process and generalize it to a robust version (see \cref{proposition: constructing AH algebras}). In particular, we show that given a vector space $U\subset S$ and any function $B:\N^d\rightarrow \N^d$, we can construct a $B$-strong vector space $V$ with $U\subset \K[V]$, where $\dim(V)$ is bounded by a function depending on $B$, $\dim(U)$ and $d$. Furthermore, we have the robustness property that any sufficiently strong form in $U$ is preserved during the construction and is contained in $V$.

\emph{A small set of strong forms.}  We will construct the small set of strong forms $W$ by iteratively applying a sequence of general quotients as follows. 
We start by choosing a small number of forms $F_1,\cdots,F_r\in \cF_d$. 
Here $r$ is some function of $d$ only, and in fact it turns out to be enough to  take $r=O(d^3)$. 
Now we apply the construction above to obtain a small, sufficiently strong algebra $\K[V]$ containing $F_1,\cdots,F_r$. 
Let us say that a form $F\in \cF$ of degree $d$ is good if $F\in (V)$, otherwise we call $F$ a bad form. 
If $\cF_d \subset (V)$ then we are done. 
Otherwise, we will apply a general quotient using $V$ to obtain a SG-configuration $\cF'$ in a strong quotient ring $R'$. 
Then we apply the process iteratively on $\cF'$ in $R'$ and obtain a strong algebra $\K[V']\subset R'$. 
We repeat the process above as long as there exist bad forms in our strong quotient ring. 
By our robustness property, these successive general quotients can be realized as a strong general quotient of the polynomial ring $S$. 

Note that at each step of the process, the construction reduces the number of bad forms by at least $r$, which was chosen to be a small number. However, using our algebraic geometric results and quantitative bounds described below, we are able to leverage the combinatorics of the SG configuration to ensure that the number of bad forms actually reduces significantly (see \cref{subsection: combinatorial preparations}). More precisely, we show that if a sufficiently strong algebra contains a large number of forms, i.e. $\K[V]\cap \cF$ is large, then we must have that $\cF_d\subset (V)$, as desired (see \cref{lemma: algebra contains many then ideal contains Fd}). Otherwise, in \cref{lemma: fraction of Fd}, we show that we can indeed increase the algebra $\K[V]$ to another small strong algebra $\K[V']$, such that the ideal $(V')$ contains a large fraction of $\cF_d$. 
Therefore, the number of bad forms reduces significantly. 
Finally, in \cref{subsection: final proof}, we show that after $O(d)$ number of iterations of the step above, we will end up with all highest degree forms in a ideal generated by a small dimensional strong vector space $V'$ in a general quotient $S'$. Then we can obtain $W$ by lifting the generators to $S$. 

\subsection*{VI. Algebraic-geometric tools and effective bounds.} 
Lastly, our strategy naturally leads us to study several interesting algebraic-geometric questions. 
We will now describe some of the algebraic-geometric results that we prove, along with our motivation in the context of SG configurations. Our results work over strong quotients of polynomial rings (see \cref{sec:algebraic-geometry}), however here we state the version for polynomial rings for simplicity.

Given a sufficiently strong vector space $V$ in a polynomial ring $S$ and a \rsg{d} configuration $\cF\subset S$, we would like to bound the number of bad forms, i.e. the forms $P\in (\cF\cap S_d)\setminus (V)$. In general, there might be many such bad forms. However, a key observation is that if the $\K$-span of degree $d$ forms in $\cF$ is high dimensional, then many of the ideals $(P,Q)$ tend to be non-radical, where $P,Q\in\cF$ are both degree $d$ forms. Indeed, since $P,Q\in S_d$, if $(P,Q)$ is radical, then by the SG condition there exists a third form $F\in \cF\cap \rad(P,Q)=(P,Q)$. As $\deg(F)\leq d$, we have that $F$ must be of degree $d$ and $F\in \Kspan{P,Q}$. Therefore, if most of the ideals $(P,Q)$ are radical, then we can expect that a large fraction of $\cF\cap S_d$ is a robust linear SG-configuration. Hence it must have low dimensional $\K$-span. We refer the reader to \cref{sec:main} for the precise statements and arguments. 

Thus we are naturally led to the following question. Given a form $P\not\in (V)$, how many degree $d$ forms $Q\in\K[V]\cap \cF$  can be there such that $(P,Q)$ not radical? We first prove the following criterion for radical ideals using discriminants.

\begin{lemma}\label{lemma: CM discriminant radical intro} (see \cref{lemma: CM discriminant radical})
Let $S:=\K[z_1,\cdots,z_r,x_1,\cdots,x_s]$ be a polynomial ring. Let $A=\K[x_1,\cdots,x_s]$. Let  $P,Q_1,\cdots,Q_k\in S$ be forms of positive degree such that $(Q_1,\cdots,Q_k)$ is a reduced complete intersection, $P$ is reduced and $P\not \in (x_1,\cdots,x_s)$. Suppose that, for all $i\in[r]$ such that $P$ depends on the variable $z_i$, we have $\mathrm{Disc}_{z_i}(P)\not\in \fq\cdot S$ for any minimal prime $\fq$ of $(Q_1,\cdots,Q_k)$ in $A$. Then the ideal $(P,Q_1,\cdots,Q_k)$ is radical in $S$.
\end{lemma}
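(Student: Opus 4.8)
The plan is to show that $R':=S/(P,Q_1,\dots,Q_k)$ is reduced by checking Serre's criterion $(R_0)+(S_1)$, after first establishing that $(P,Q_1,\dots,Q_k)$ is a complete intersection. \emph{Complete intersection:} since each $Q_i$ is a form of positive degree, $(Q_1,\dots,Q_k)$ is homogeneous, so every minimal prime $\mathfrak q$ of it in $A$ lies in the irrelevant ideal $(x_1,\dots,x_s)$, hence $\mathfrak q S\subseteq (x_1,\dots,x_s)S$. As $P\notin(x_1,\dots,x_s)$, the form $P$ avoids every minimal prime of $(Q_1,\dots,Q_k)S$, and since $(Q_1,\dots,Q_k)$ is reduced, $P$ is a nonzerodivisor on $S/(Q_1,\dots,Q_k)$. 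Thus $Q_1,\dots,Q_k,P$ is a regular sequence in the Cohen--Macaulay ring $S$, so $R'$ is Cohen--Macaulay of codimension $k+1$; in particular it is unmixed (every associated prime has codimension $k+1$) and satisfies $(S_1)$. By Serre's criterion it then suffices to prove $(R_0)$: that $R'$ is reduced at each of its minimal primes.

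\textbf{Reduction to a generic fibre.} Let $\mathfrak p$ be a minimal prime of $(P,Q_1,\dots,Q_k)$, so $\mathrm{ht}(\mathfrak p)=k+1$. A height computation --- using that $S/\mathfrak p$ is a domain quotient of $(A/(\mathfrak p\cap A))[z_1,\dots,z_r]$, and that the case $\mathfrak p=(\mathfrak p\cap A)S$ would force $P\in(x_1,\dots,x_s)$ --- shows $\mathfrak q:=\mathfrak p\cap A$ is a minimal prime of $(Q_1,\dots,Q_k)$ in $A$. Because $(Q_1,\dots,Q_k)$ is a reduced complete intersection, $A_{\mathfrak q}/(Q_1,\dots,Q_k)A_{\mathfrak q}$ is a $0$-dimensional reduced local ring, hence the field $K:=\mathrm{Frac}(A/\mathfrak q)$; equivalently $(Q_1,\dots,Q_k)A_{\mathfrak q}=\mathfrak q A_{\mathfrak q}$. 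Localizing $S=A[z_1,\dots,z_r]$ at $A\setminus\mathfrak q$ therefore gives
$$R'[(A\setminus\mathfrak q)^{-1}]\;\cong\; K[z_1,\dots,z_r]/(\bar P),$$
where $\bar P\neq 0$ is the image of $P$ (nonzero since $P\notin\mathfrak q S$). Hence $R'$ is reduced at every minimal prime lying over $\mathfrak q S$ if and only if $\bar P$ is square-free in the polynomial ring $K[z_1,\dots,z_r]$. So the whole lemma reduces to: for every minimal prime $\mathfrak q$ of $(Q_1,\dots,Q_k)$, the image $\bar P$ of $P$ in $K[z_1,\dots,z_r]$ is square-free.

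\textbf{Square-freeness via discriminants.} Suppose $\bar P$ has a repeated irreducible factor $G\in K[z_1,\dots,z_r]$, necessarily nonconstant. Choose a variable $z_i$ on which $G$ depends; then $\deg_{z_i}G\geq 1$, so $\bar P$ (and hence $P$) depends on $z_i$ and $m:=\deg_{z_i}\bar P\geq 2$. Viewing $\bar P$ over $K(z_1,\dots,\widehat{z_i},\dots,z_r)$, it is divisible by $G^2$ with $G$ still irreducible of positive $z_i$-degree by Gauss's lemma, so $\mathrm{Disc}_{z_i}(\bar P)=0$. I would then invoke the behaviour of the discriminant under the coefficientwise reduction $A[z_1,\dots,\widehat{z_i},\dots,z_r]\to K[z_1,\dots,\widehat{z_i},\dots,z_r]$: with $n=\deg_{z_i}P$ and $m=\deg_{z_i}\bar P\geq 2$, the image of $\mathrm{Disc}_{z_i}(P)$ equals, up to a nonzero scalar, $(\mathrm{lc}_{z_i}\bar P)^{2(n-m)}\cdot\mathrm{Disc}_{z_i}(\bar P)=0$. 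Thus $\mathrm{Disc}_{z_i}(P)\in\mathfrak q S$, contradicting the hypothesis, since $P$ depends on $z_i$. Hence $\bar P$ is square-free for every $\mathfrak q$, which by the previous two steps shows $(P,Q_1,\dots,Q_k)$ is radical.

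\textbf{Roles of the hypotheses and the main obstacle.} The reducedness of the complete intersection $(Q_1,\dots,Q_k)$ is exactly what makes $(Q_1,\dots,Q_k)A_{\mathfrak q}=\mathfrak q A_{\mathfrak q}$, so that the generic fibre over each component of $V(Q_1,\dots,Q_k)$ becomes an honest affine space over the function field $K$; the condition $P\notin(x_1,\dots,x_s)$ keeps $P$ a nonzerodivisor and off every $\mathfrak q S$ (so $\bar P\neq 0$); and reducedness of $P$ guarantees the $\mathrm{Disc}_{z_i}(P)$ are genuinely nonzero, consistent with the discriminant hypothesis. I expect the main technical point to be the last step, and within it the specialization behaviour of $\mathrm{Disc}_{z_i}$: the naive ``discriminant commutes with reduction'' holds only when $\mathrm{lc}_{z_i}(P)\notin\mathfrak q S$, and the degenerate case where the leading coefficient falls into $\mathfrak q$ (so $m<n$) must be controlled by the degree-drop formula quoted above --- the redeeming feature being that a repeated factor forces $m\geq 2$, which is precisely the range in which that formula is clean.
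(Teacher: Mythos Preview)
Your argument is correct and takes a genuinely different route from the paper's. The paper proves this lemma geometrically: it sets up the projection $\pi\colon Y=\Spec\bigl(S/(P,Q_1,\dots,Q_k)\bigr)\to X=\Spec\bigl(A/(Q_1,\dots,Q_k)\bigr)$, checks that every component of $Y$ dominates a component of $X$ (this is exactly your height computation, packaged as \cref{proposition: dominant}), and then invokes a general fibre-to-total-space transfer principle (\cref{lemma: property of fibers}) saying that a Cohen--Macaulay $Y$ over a reduced $X$ is reduced provided the fibre over a \emph{general closed point} of each component of $X$ is reduced. The discriminant hypothesis is then used to show that the closed locus $\{x:\pi^{-1}(x)\text{ not reduced}\}$ is a proper subvariety of each component. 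You instead go straight to the \emph{generic point} of each component via localization at $A\setminus\fq$, which collapses the problem to square-freeness of $\bar P$ over the function field $K=\mathrm{Frac}(A/\fq)$ --- a more algebraic and arguably more direct path. The paper's detour through closed points buys a reusable lemma (\cref{lemma: property of fibers}) and works verbatim when $(Q_1,\dots,Q_k)$ is merely Cohen--Macaulay radical rather than a complete intersection (which is how \cref{lemma: CM discriminant radical} is actually stated); your localization step uses the complete-intersection hypothesis to identify $(Q_1,\dots,Q_k)A_\fq=\fq A_\fq$ exactly.

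One small correction in your last step: the degree-drop formula you quote, $\overline{\mathrm{Disc}_{z_i}(P)}=c\cdot(\mathrm{lc}_{z_i}\bar P)^{2(n-m)}\mathrm{Disc}_{z_i}(\bar P)$, is only valid for $n-m\le 1$. When the $z_i$-degree drops by $2$ or more one has $\overline{\mathrm{Disc}_{z_i}(P)}=0$ outright (think of the homogenization $Y^{n-m}G$: for $n-m\ge 2$ the factor $Y^{n-m}$ already has a repeated root, so the binary-form discriminant vanishes). This does not damage your proof --- you only need the implication $\mathrm{Disc}_{z_i}(\bar P)=0\Rightarrow\overline{\mathrm{Disc}_{z_i}(P)}=0$, and both cases give it --- but the ``redeeming feature $m\ge 2$'' you cite is not the relevant dichotomy; the relevant split is $n-m\le 1$ versus $n-m\ge 2$. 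Incidentally, the paper's closed-point argument glosses over the same subtlety (it asserts an ``iff'' between $\mathrm{Disc}_{z_i}(P_x)\equiv 0$ and $\mathrm{Disc}_{z_i}(P)(z,c)=0$ that fails when the degree drops, though only the direction actually used survives).
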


The result above is a generalization of the fact that a polynomial in one variable is non-reduced iff its discriminant is zero. It also generalizes \cite[Lemma 3.21]{OS22}, where a similar result was proved for ideals $(P,Q)$ with $P,Q$ irreducible. As a corollary we obtain the following effective quantitative bound which answers our question above and generalizes \cite[Corollary 3.24]{OS22}.

\begin{corollary}[see \cref{corollary: best corollary discriminant}]
 Let $S:=\K[z_1,\cdots,z_r,x_1,\cdots,x_s]$ be a polynomial ring. Let $A=\K[x_1,\cdots,x_s]$. Let $P\in S$ be reduced form of degree $d\geq 1$ such that $P\not\in (x_1,\cdots,x_s)$. Then there exist at most $d^2(2d-1)$ number of reduced forms $Q_i\in A$ such that $Q_i,Q_j$ do not have common factors for $i\neq j$ and $(P,Q_i)$ is not radical for all $i$.

\end{corollary}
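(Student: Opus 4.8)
The plan is to deduce the corollary from Lemma~\ref{lemma: CM discriminant radical intro} by a counting argument on discriminants. Fix the reduced form $P \in S = \K[z_1,\ldots,z_r,x_1,\ldots,x_s]$ of degree $d \geq 1$ with $P \notin (x_1,\ldots,x_s)$, and let $Z = \{ z_i : P \text{ depends on } z_i \}$ be the (nonempty, since $P \notin (x_1,\dots,x_s)$ after possibly relabeling — actually we only need that $P$ is not in $A$, which forces $Z \neq \emptyset$) set of relevant $z$-variables. For each $z_i \in Z$ form the discriminant $D_i := \mathrm{Disc}_{z_i}(P) \in S$; since $P$ is reduced and has positive degree in $z_i$, $D_i$ is a nonzero form, and $\deg_{z_i} P \leq d$ gives $\deg D_i \leq d(2d-1)$ by the standard degree bound on discriminants (the discriminant of a degree-$n$ polynomial is a polynomial of degree $2n-2 \leq 2d-2$ in the coefficients, each of which has degree $\leq d$ in the remaining variables, but one keeps a cruder bound $\leq d(2d-1)$ which is what the statement uses). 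Collecting these, $D := \prod_{z_i \in Z} D_i$ is a nonzero form, but for the cardinality bound we will not multiply them — instead we work variable by variable.

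First I would apply Lemma~\ref{lemma: CM discriminant radical intro} in the case $k=1$: for a reduced $Q \in A$ with $(P,Q)$ \emph{not} radical, the hypothesis of the lemma must fail, so there exists some $z_i \in Z$ and some minimal prime $\mathfrak{q}$ of $(Q)$ in $A$ with $D_i \in \mathfrak{q} \cdot S$. Since $Q \in A$ is reduced, its minimal primes in $A$ are exactly the ideals $(Q^{(1)}), \ldots, (Q^{(t)})$ generated by the distinct irreducible factors $Q^{(\ell)}$ of $Q$ (each of which is prime in the UFD $A$, and generates a prime in $S = A[z_1,\ldots,z_r]$ as well). Thus $(P,Q)$ not radical forces: some irreducible factor $Q^{(\ell)}$ of $Q$ divides $D_i$ in $S$ for some $z_i \in Z$. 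Because $D_i \in S$ but $Q^{(\ell)} \in A$, divisibility in $S$ descends appropriately, so $Q^{(\ell)}$ divides $D_i$ in $A$ (or rather, $Q^{(\ell)}$ appears in the factorization of $D_i$ as an element of $S$, and being in $A$ it is among the "$A$-part"). The key combinatorial point is now: distinct $Q^{(\ell)}$ arising this way are pairwise non-associate irreducible forms in $A$ (this uses the hypothesis that the $Q_i$ have no common factors and each is reduced, so across all the $Q_i$ the relevant irreducible factors are distinct), hence each $D_i$ can be divisible by at most $\deg(D_i) \leq d(2d-1)$ of them, counted with multiplicity at least one.

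Next, to turn a bound on irreducible factors into a bound on the number of $Q_i$: each $Q_i$ for which $(P,Q_i)$ is not radical contributes at least one irreducible factor $Q_i^{(\ell)}$ dividing some $D_j$ with $z_j \in Z$; by the no-common-factor hypothesis these chosen irreducible factors are pairwise non-associate. On the other hand, the total supply of such factors is bounded by $\sum_{z_j \in Z} (\text{number of non-associate irreducible divisors of } D_j \text{ that lie in } A) \leq \sum_{z_j \in Z} \deg(D_j) \leq |Z| \cdot d(2d-1) \leq d \cdot d(2d-1) = d^2(2d-1)$, using $|Z| \leq r$ but more usefully $|Z| \leq \deg P = d$ (each relevant variable contributes at least degree $1$ to $P$, wait — that gives $|Z| \le d$ only for multilinear-in-$Z$ situations; the honest bound is $|Z| \le d$ since $P$ has degree $d$ and depends nontrivially on each $z_i \in Z$, so $d = \deg P \geq |Z|$). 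Therefore the number of bad $Q_i$ is at most $d^2(2d-1)$, as claimed.

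The main obstacle, and the step requiring the most care, is the passage from "$D_i \in \mathfrak{q}\cdot S$ for a minimal prime $\mathfrak{q}$ of $(Q_i)$ in $A$" to "a distinct irreducible factor of $Q_i$ divides $D_i$ in $A$," and then arguing these factors are genuinely distinct across different $i$. One must be careful that $\mathfrak{q} \cdot S$ is prime in $S$ (true since $S = A[\underline{z}]$ and extension of primes along a polynomial extension stays prime), that $D_i \in \mathfrak{q}\cdot S \cap A$ forces the generator of $\mathfrak q$ to divide $D_i$ already in $A$ (by flatness / the fact that $\mathfrak q \cdot S \cap A = \mathfrak q$), and that the hypothesis "$Q_i, Q_j$ have no common factors for $i \neq j$" guarantees the irreducible factors selected for distinct indices are pairwise non-associate, so no double-counting occurs. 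Everything else is a routine discriminant degree estimate together with the cited Lemma~\ref{lemma: CM discriminant radical intro}; I would also double-check the edge case where $P$ is linear in every $z_i$ it involves (then $\mathrm{Disc}_{z_i}(P)$ is a nonzero constant and contributes nothing, consistent with $(P,Q)$ being radical whenever $P$ is genuinely linear in a variable not in $A$).
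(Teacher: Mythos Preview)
Your overall architecture matches the paper's: invoke the discriminant criterion (Lemma~\ref{lemma: CM discriminant radical intro}) to see that a non-radical $(P,Q_i)$ forces some irreducible factor of $Q_i$ to divide one of the discriminants $\mathrm{Disc}_{z_j}(P)$, then count. The counting of irreducible factors and the distinctness argument using the pairwise-coprimality of the $Q_i$ are both fine.

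The gap is the inequality $|Z|\le d$. This is simply false: a form of degree $d$ can depend on arbitrarily many variables (take $P=z_1^d+\cdots+z_r^d$). Your own parenthetical hesitation was warranted; the sentence ``$d=\deg P\ge |Z|$'' does not follow from anything. Consequently your bound $\sum_{z_j\in Z}\deg(D_j)\le |Z|\cdot d(2d-1)$ gives no uniform control, and the argument as written does not yield $d^2(2d-1)$.

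The paper's fix is a small but essential trick you are missing. Since $P\notin(x_1,\ldots,x_s)$, some monomial of $P$ is a pure $z$-monomial; being of degree $d$, it involves at most $d$ of the $z$-variables, say $z_1,\ldots,z_k$ with $k\le d$. Now \emph{enlarge} the base: set $A_0:=\K[z_{k+1},\ldots,z_r,x_1,\ldots,x_s]$, so that $Q_i\in A\subset A_0$ and $P\notin (z_{k+1},\ldots,z_r,x_1,\ldots,x_s)$. Apply the discriminant lemma with this new splitting: only the $k\le d$ discriminants $\mathrm{Disc}_{z_1}(P),\ldots,\mathrm{Disc}_{z_k}(P)$ are relevant, and the product $\prod_{\ell=1}^k\mathrm{Disc}_{z_\ell}(P)$ has degree at most $k\cdot d(2d-1)\le d^2(2d-1)$. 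Each bad $Q_i$ contributes a distinct irreducible factor of this product, and the bound follows. In short, you should not take $Z$ to be all $z$-variables $P$ depends on, but only those appearing in a single witnessing monomial.
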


We remark that the bound $d^2(2d-1)$ helps us ensure that a choice of $r=O(d^3)$ works in the construction of small set of forms described earlier. We refer to \cref{sec:main} for precise statements and proofs. Our methods also yield the following primality criterion.

\begin{lemma}[see \cref{lemma: irreducible prime}]
 Let $P,Q$ be irreducible forms in $S=\K[x_1,\cdots, x_s,y_1,\cdots,y_n]$. Suppose that $Q\in \K[x_1,\cdots,x_s]$  and $P$ is irreducible in $S/(x_1,\cdots,x_s)$. Then the ideal $(P,Q)$ is prime.
 \end{lemma}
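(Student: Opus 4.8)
The plan is to show that $S/(P,Q)$ is a domain by exhibiting it as a subring-like quotient that inherits integrality from the two hypotheses: irreducibility of $Q$ in the subring $\K[x_1,\dots,x_s]$ and irreducibility of $P$ modulo $(x_1,\dots,x_s)$. Write $A = \K[x_1,\dots,x_s]$ and $B = \K[y_1,\dots,y_n]$, so $S = A \otimes_\K B$. Since $Q \in A$ is irreducible and $A$ is a UFD, $(Q)$ is prime in $A$, hence $A/(Q)$ is a domain and $S/(Q) = (A/(Q)) \otimes_\K B$ is a domain as well (the polynomial ring $(A/(Q))[y_1,\dots,y_n]$ over a domain is a domain). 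So it suffices to prove that the image $\bar P$ of $P$ in the domain $\bar S := S/(Q)$ generates a prime ideal, i.e.\ that $\bar P$ is a prime element of $\bar S$.

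First I would reduce primality of $(\bar P)$ in $\bar S$ to irreducibility plus a normality/UFD-type argument. The cleanest route: show $\bar P$ is irreducible in $\bar S$ and that $\bar S$ is a UFD, or at least that $\bar S / (\bar P)$ is a domain directly. For the latter, consider the further quotient by $(x_1,\dots,x_s)$: we have $S/(x_1,\dots,x_s) \cong B = \K[y_1,\dots,y_n]$, and by hypothesis $P$ maps to an irreducible element $P_0 \in B$, so $(P, x_1,\dots,x_s)$ is prime in $S$ and $S/(P,x_1,\dots,x_s) \cong B/(P_0)$ is a domain. The idea is then a standard "lifting primality along a filtration by the $x_i$" or flatness/specialization argument: grade or filter $S/(P)$ by powers of the ideal $(x_1,\dots,x_s)$ and use that the associated graded (or the generic fiber) is a domain. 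Concretely, I would argue that $Q$, viewed in $S/(P)$, is a nonzerodivisor and that $(S/(P))/(Q)$ is a domain because $Q$ remains irreducible after passing to the reduced/normalized situation; alternatively invoke that $P$ and $Q$ form a regular sequence (they have no common factor since $P \notin A$ while $Q \in A$ is irreducible, and a degree count shows the complete intersection is reduced), so $(P,Q)$ is unmixed of codimension $2$, and then check primality at the generic point by the two irreducibility hypotheses, using that an unmixed ideal whose localization at each minimal prime is prime and which has a single minimal prime is itself prime.

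The main obstacle I anticipate is ruling out that $(P,Q)$, while reduced and equidimensional, could still split into two or more distinct prime components of the same dimension — the two irreducibility hypotheses constrain $P$ and $Q$ separately but a priori the intersection could still be reducible. To handle this I would use a connectedness or specialization argument: the hypothesis that $P$ stays irreducible modulo $(x_1,\dots,x_s)$ says that $V(P) \cap V(x_1,\dots,x_s)$ is irreducible, and since $V(Q) \subset \P^{N-1}$ meets $V(x_1,\dots,x_s)$ along $V(Q) \cap V(x_1,\dots,x_s) = V(x_1,\dots,x_s)$ intersected appropriately (because $Q \in A$ forces $V(Q) \supseteq$ nothing trivial here — rather $Q$ may vanish identically or not on that linear space), one shows every irreducible component of $V(P,Q)$ must contain the irreducible locus $V(P, x_1,\dots,x_s)$, forcing a single component. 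The cleanest phrasing: let $\mathfrak{p}$ be a minimal prime of $(P,Q)$; since $(P,Q)$ is a complete intersection it is unmixed, so $\dim S/\mathfrak{p} = N-2$; intersecting with the hyperplane arrangement $x_1 = \dots = x_s = 0$ drops dimension by exactly $s$ generically, and the hypothesis pins down this intersection as $V(P_0) \subset \P^{n-1}$, which is irreducible — so all minimal primes restrict to the same thing, and an application of, e.g., the irreducibility criterion via the morphism $S \to S/(x_1,\dots,x_s)$ together with $\bar P$ irreducible there closes the argument. I would also double-check the edge case where $Q$ becomes a unit or zero modulo $(x_1,\dots,x_s)$ (it becomes $0$ since $Q \in (x_1,\dots,x_s)$ unless $Q$ is a nonzero scalar, impossible as $Q$ is irreducible of positive degree), which is exactly why the restriction to $V(x_1,\dots,x_s)$ only sees $P$ and why $\bar P$'s irreducibility there is the right hypothesis.
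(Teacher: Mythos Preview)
Your setup is fine through the point where you reduce to showing that the image of $P$ in $S/(Q)$ is prime, and you correctly identify that the real difficulty is ruling out several components of $V(P,Q)$ of the same dimension. But the argument you sketch for this does not close the gap.

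You propose to intersect $V(P,Q)$ with $V(x_1,\dots,x_s)$ and use that this intersection is the irreducible $V(P_0)\subset\mathbb{A}^n_y$. There are two problems. First, since $Q\in(x_1,\dots,x_s)$, this intersection is not transversal: $V(P,Q)$ has dimension $s+n-2$, the intersection has dimension $n-1$, so the drop is only $s-1$, not $s$. Second, and more seriously, even if two components of $V(P,Q)$ happened to meet $V(x_1,\dots,x_s)$ in the same irreducible set, that would not force them to coincide; distinct irreducible varieties can have the same trace on a linear subspace. So the claim that ``all minimal primes restrict to the same thing, forcing a single component'' is not a valid inference. Your specialization heuristic is also running in the wrong direction: irreducibility of a special fiber does not propagate to the total space.

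The paper's proof uses the projection $\pi:V(P,Q)\to V(Q)\subset\mathbb{A}^s_x$ and shows that \emph{every} closed fiber $\pi^{-1}(c_1,\dots,c_s)=\operatorname{Spec}\bigl(S/(x_1-c_1,\dots,x_s-c_s,P)\bigr)$ is integral, not just the fiber over the origin. The key step you are missing is that ``$P$ irreducible modulo $(x_1,\dots,x_s)$'' upgrades to ``$(x_1-c_1,\dots,x_s-c_s,P)$ is prime for all $(c_1,\dots,c_s)$'': since $x_1,\dots,x_s,P$ is a homogeneous regular sequence with $(x_1,\dots,x_s,P)$ prime, a result of Ananyan--Hochster (passing to the associated graded) gives primality of every affine translate. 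With all fibers irreducible, the paper then checks that every component of $V(P,Q)$ dominates the irreducible base $V(Q)$, and concludes irreducibility (and reducedness, using that $V(P,Q)$ is Cohen--Macaulay) via the standard ``irreducible base with irreducible general fiber'' criterion. Your outline never reaches the all-fibers statement, and without it the argument does not go through.
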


In fact, \cref{lemma: irreducible prime} works with the subalgebra $\K[x_1,\cdots,x_s]$ replaced by any sufficiently strong subalgebra $\K[V]$. In practice, we can apply the criterion as follows. Given irreducible forms $P,Q\in S$,  apply \cref{proposition: constructing AH algebras} to construct a sufficiently strong algebra $\K[V]\subset S$ such that $Q\in \K[V]$. If $P$ is irreducible in $S/(V)$, then the ideal $(P,Q)$ is prime.

The results above are stated for a polynomial ring. However, using our construction and properties of strong algebras, we are able to prove the results for strong quotient rings (see \cref{sec:algebraic-geometry} and \cref{sec:strong-algebras}). We achieve this by using a transfer principle which allows us to transfer the results from polynomial rings to strong quotients.

\emph{A transfer principle.} Let $R=S/(U)$ be a quotient of a polynomial ring $S$, where $U$ is sufficiently strong. Let $F_1,\cdots,F_r\in R$ be a small number of forms. Since $U$ is sufficiently strong, then we can construct a strong algebra $A:=\K[V]\subset R$, such that $F_1,\cdots,F_r\in A$ and the algebra $A$ is isomorphic to a polynomial ring. Furthermore, the extension $A\subset R$ is intersection flat and has several useful properties. For example, for any prime ideal $\fp\subset A$, the extension $\fp \cdot R$ is a prime ideal in $R$ (see \cref{proposition: extension of primes general}). Now the transfer principle can be stated as follows. In order to prove commutative algebraic statements for the ideal $I=(F_1,\cdots,F_r)$ in $R$ such as primality, reducedness, number of minimal primes or to bound the number of non-radical pairs in $\{F_1,\cdots,F_r\}$, it is enough to prove the corresponding statement in the polynomial ring $A$ (see \cref{proposition: radicals and prime sequences}).

We employ this transfer principle throughout our work. The requirement of $U$ being sufficiently strong can be quantified depending on the number $r$. The larger the number of forms $r$, the stronger $U$ needs to be. The precise quantitative requirements for strength of $U$ depends on the context of the application. We refer the reader to \cref{sec:algebraic-geometry} and \cref{sec:strong-algebras} for precise statements and details of applications of this transfer principle.

\subsection{Related Work}

\paragraph{Radical Sylvester-Gallai problems:} 
The works \cite{S20, OS22} proved radical SG theorems in low degree $d=2,3$. These works heavily relied on structure theorems for complete intersection ideals generated by pairs of quadrics or cubics, and the arguments crucially used that the degree of the forms is $2$ or $3$. In general one can not hope for such characterizations for ideals of arbitrary degree. We vastly generalize the methods and ideas of \cite{S20, OS22} to work over strong quotient rings and our proof is completely new as we do not rely on such structure theorems. Some of the results in \cref{sec:algebraic-geometry} of this paper are similar to the results in \cite{OS22}, where the authors proved weaker versions for irreducible forms in polynomial rings. We generalize these results to reducible forms in strong quotient UFDs. Furthermore, our definition of strong algebras \cref{sec:strong-algebras} is a robust generalization of wide vector spaces in \cite{OS22}. The notion of wide vector spaces worked in degree at most $2$ and were specifically tailored to work with SG configurations. Our general definition of strong algebras is more conceptual and the results generalizing \cite{AH20a} in this paper are of independent interest.

\paragraph{Progress on PIT:} In recent years, we have seen remarkable progress on the PIT problem for constant depth circuits.
In \cite{DDS21}, the authors give a deterministic \emph{quasi-polynomial} time algorithm for blackbox PIT for depth 4 circuits with bounded top and bottom fanins.
Their approach is analytic in nature, allowing them to bypass the need of Sylvester-Gallai configurations.
In \cite{LST21}, the authors give a deterministic \emph{weakly-exponential} algorithm for PIT for these circuits via the hardness vs randomness paradigm for constant depth circuits \cite{CKS19}.
The hardness result for the determinant, proved by \cite{LST21}, coupled with techniques from \cite{AF22}, also yields another weakly-exponential time black-box PIT for constant-depth circuits, as is shown in \cite[Section 6]{AF22}.

Despite all the formidable progress above mentioned, the Sylvester-Gallai type approach of \cite{G14} is the only one so far which could yield deterministic \emph{polynomial-time} black-box PIT algorithms for depth-4 circuits with constant top and bottom fanins.
In \cite{PS22}, this Sylvester-Gallai type approach was carried out for depth-4 circuits with top fanin 3 and bottom fanin 2.

\subsection{Organization}

In \cref{sec:prelim}, we establish the notation and conventions that will be used throughout the paper.
In \cref{sec:sg-configurations}, we formally define all variants of Sylvester-Gallai configurations and previous results that we will need in this paper. We also provide examples of Sylvester-Gallai configurations.
In \cref{sec:algebraic-geometry}, we establish the results from commutative algebra and algebraic geometry that we will need.
In \cref{sec:strong-algebras}, we formally define strength of forms and one of our main tools to solve the Sylvester-Gallai problem: strong Ananyan-Hochster vector spaces and algebras. We prove useful properties of these algebras generalizing the results of \cite{AH20a}.
In \cref{sec:general-quotients}, we formally define general quotients, and prove their technical properties. We also study how Sylvester-Gallai configurations behave under general quotients and how we can lift dimension bounds from general quotients to the original configurations.
In \cref{sec:main} we combine all the results from the previous sections to prove our main technical result and its consequences and thereby resolve \cref{conjecture: gupta radical SG}.

\section*{Acknowledgments}

The authors would like to thank Abhibhav Garg, Aise Johan de Jong,  J\'anos Koll\'ar, Shir Peleg and Amir Shpilka for helpful conversations throughout the course of this work. This work started while Rafael was a member of the Simons Institute during the Geometry of Polynomials program, and Akash was a member of the program on Birational Geometry and Moduli Spaces at MSRI, Berkeley. Part of this work was done while the first author was a postdoctoral fellow at the University of Toronto, and the second author was a graduate student at Princeton University. Majority of this work was completed in the years when the second author was at Columbia University. The authors would like to thank the aforementioned institutions for their hospitality and generous support.

\section{Notations and Conventions}\label{sec:prelim}


In this section, we establish the notation which will be used throughout the
paper and some important background which we shall need to prove our claims
in the next sections.

We will work over an algebraically closed field $\K$ with $\char(\K) = 0$. We will denote by $\N=\{0,1,2,\cdots\}$, the set of natural numbers including $0$. For any positive integer $n$, we let $[n]=\{1,2,\cdots, n\}$. Throughout the paper, we will have $S := \K[x_0, \cdots, x_N]$ a polynomial ring for some positive integer $N$. Note that, $S$ is a $\N$-graded $\K$-algebra with the standard grading by degree $\deg(x_i)=1$ for all $i\in [N]$. 
That is, $\dst S = \oplus_{d \in \N} S_d$, where $S_d$ is the $\K$-vector space of homogeneous polynomials of degree $d$.
As it is standard, we will use \emph{form} when we refer to a homogeneous polynomial. Let $R=\oplus_{d\in \N}R_d$ be a graded ring and $I\subset R$ an ideal. We say that $I$ is a homogeneous ideal iff $I$ is a graded submodule of $R$. If $I\subset R$ is a homogeneous ideal, then the quotient ring $R/I$ is also a graded ring where $(R/I)_d = R_d/I_d$.
A graded vector space of homogeneous elements is a subspace $V \subset R$ generated by homogeneous elements from $R$, thereby inheriting the grading from $S$, i.e. $V = \oplus_{d\geq 0} V_d$, where $V_d=V\cap S_d$.

Recall that $S$ is a unique factorization domain (UFD).
Given a unique factorization domain $R$ and $F,G\in R$, we denote by $\gcd(F,G)$ a generator of the unique minimal principal ideal containing $F,G$. Note that $\gcd(F,G)$ is unique up to a scalar multiple.

For any two functions $f,g:\N\rightarrow \N$, we write that $f=O (g)$ if there exists some constant $C$ and $n_0\in \N$, such that $f(n)\leq C g(n)$ for $n\geq n_0$. Similarly, we write  $f=\Omega(g)$ if if there exists some constant $C$ and $n_0\in \N$, such that $f(n)\geq C g(n)$ for $n\geq n_0$. Furthermore, we say that $f=\Theta (g)$ if there exists constants $C_1,C_2$ and $n_0\in \N$ such that we have $C_1 g(n)\leq f(n) \leq C_2 g(n)$ for all $n\geq n_0$.

\section{Sylvester-Gallai Configurations}\label{sec:sg-configurations}

In this section we formally describe all the generalizations of Sylvester-Gallai configurations that we will need for the rest of the paper, and state the main results that we need from previous works.

To simplify the definitions of our configurations, we say that two forms $F, G \in S$ are \emph{non-associate forms} iff $F$ is not a scalar multiple of $G$ (and vice-versa).
In this section (and in general in all the sets of forms we study), our SG configurations will be made of sets of pairwise non-associate forms.

\subsection{Linear Sylvester-Gallai Configurations}

\begin{definition}[Robust linear SG configuration]
Let $\cF := \{ \ell_1,\cdots, \ell_m \} \subset S_1$ be a finite set of pairwise non-associate linear forms in $S$ and let $\delta \in (0, 1]$. 
We say that $\cF$ is a $\vdlsg{\delta}$ configuration if the following condition holds:
\begin{enumerate}
    \item for every $i \in[m]$, there exist at least $\delta (m-1)$ values of $j \neq i$  such that $|\cF \cap \Kspan{\ell_i, \ell_j}|\geq 3$.
\end{enumerate}  
If $\delta=1$, then we simply call it a linear SG configuration.
\end{definition}

It was proved in \cite{BDWY11,DSW14, DGOS18} that the dimension of the span of a $\vdlsg{\delta}$ configuration is bounded by a function depending only on the robustness parameter $\delta$.
We now state the sharpest known result, from \cite[Theorem 1.6]{DGOS18}.

\begin{theorem}\label{theorem: fractional linear SG}
If $\cF$ is a $\vdlsg{\delta}$ configuration, then $\displaystyle \mathrm{dim}(\Kspan{\cF})\leq \left\lceil \frac{4}{\delta} \right\rceil - 1.$
\end{theorem}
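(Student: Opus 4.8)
The plan is to regard the forms of $\cF$ as points in a projective space, record the robust collinearity hypothesis as a sparse matrix with good combinatorial structure — a \emph{design matrix} — and then read off the dimension bound from a rank lower bound for such matrices. First I would fix coefficient vectors $v_1,\dots,v_m\in S_1$ for $\ell_1,\dots,\ell_m$, set $r=\dim(\Kspan{\cF})$, and let $V$ be the $m\times(N+1)$ matrix whose rows are the $v_i$, so that $\mathrm{rank}(V)=r$. For each $i\in[m]$ and each of the (at least $\delta(m-1)$) indices $j$ witnessing the robustness condition, the inclusion $|\cF\cap\Kspan{\ell_i,\ell_j}|\ge 3$ produces a third index $k=k(i,j)\notin\{i,j\}$ and scalars $a,b,c$ with $a v_i+b v_j+c v_k=0$; since the forms are pairwise non-associate, $v_i,v_j$ are independent and $v_k\notin\Kspan{v_i}\cup\Kspan{v_j}$, so $a,b,c$ are all nonzero. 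Listing all such relations as rows yields a matrix $A$ with $AV=0$, hence $\mathrm{rank}(A)\le m-r$, i.e. $r\le m-\mathrm{rank}(A)$. It thus suffices to prove $\mathrm{rank}(A)\ge m-\bigl(\lceil 4/\delta\rceil-1\bigr)$.

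Next I would check, and where necessary repair, the design structure of $A$: each row has exactly three nonzero entries, and each column $i$ is nonzero in at least $\delta(m-1)$ rows. The one subtle point is the requirement that the supports of any two rows overlap in at most one column; this can fail only for two relations $(i,j_1)$ and $(i,j_2)$ whose witnesses lie on a common line through $v_i$. For a fixed $i$ the good indices $j$ partition according to the line of $\Kspan{\cF}$ through $v_i$ on which $v_j$ lies, and relations from two distinct such lines already overlap only in column $i$; within one line, and separately for the (few) lines carrying a large fraction of the points, one keeps a balanced sub-family of relations so as to preserve column weights $\Omega(\delta m)$ while reducing the pairwise overlap to $1$. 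After this cleanup $A$ is a $(3,\Omega(\delta m),1)$-design matrix.

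Finally I would invoke the sharp rank bound for design matrices: one Sinkhorn-balances the positive semidefinite matrix $A^{*}A$ into a matrix of the form $I-P$ with $P$ doubly stochastic and supported on the ``collinearity graph'' of the configuration, and then bounds the number of eigenvalues of $P$ close to $1$ — equivalently, the number of zero (more precisely, near-zero) eigenvalues of $A^{*}A$, which is $m-\mathrm{rank}(A)$ — in terms of the expansion of that graph, which the column-weight lower bound $\delta(m-1)$ controls. Pushing the constants carefully through this spectral estimate, exactly as in \cite{DSW14,DGOS18}, gives $\mathrm{rank}(A)\ge m-\bigl(\lceil 4/\delta\rceil-1\bigr)$ and therefore $\dim(\Kspan{\cF})\le\lceil 4/\delta\rceil-1$.

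The main obstacle is the interaction between the last two steps: the two properties one wants from $A$ — small pairwise row-overlap and large column weight — are in tension, and, more importantly, extracting the \emph{sharp} constant $\lceil 4/\delta\rceil-1$ rather than a crude $O(1/\delta^{2})$ bound forces one to run the refined matrix-scaling/eigenvalue argument rather than black-boxing any off-the-shelf design-matrix rank bound. I would expect essentially all of the effort to go into making the balanced cleanup compatible with the tight spectral estimate, in particular into the separate treatment of configurations possessing a line through a constant fraction of the points.
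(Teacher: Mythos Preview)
The paper does not prove this theorem at all: it is stated as \cite[Theorem 1.6]{DGOS18} and used as a black box. Your sketch is essentially the design-matrix/matrix-scaling approach developed in \cite{BDWY11,DSW14} and sharpened in \cite{DGOS18}, so it aligns with the source the paper cites rather than with anything the paper itself does.
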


We will need the following strengthening of the result above, which comes from considering a slightly more general type of linear SG configurations, where we also allow certain SG pairs $(\ell_i, \ell_j)$ to intersect non-trivially a small dimensional vector space, instead of spanning a third element of the configuration.

\begin{restatable}[Robust linear Sylvester-Gallai configurations over a vector space]{definition}{robustLinearSG}\label{def:robust-linear-SG}
Let $c \in \N$, $0 < \delta \leq 1$ and $\cF := \{\ell_1, \ldots, \ell_m\} \subset S_1$ be a set of pairwise non-associate linear forms. 
We say that $\cF$ is a $\dlsg{c}{\delta}$ configuration if there exists a vector space $W \subset S_1$ of dimension at most $c$ such that the following condition holds: 
\begin{itemize}
    \item for any $\ell_i \in \cF \setminus W$, there exist at least $\delta (m-1)$ indices $j \in [m] \setminus \{i\}$ such that $\ell_j \not\in W$ and
    $$| \Kspan{\ell_i, \ell_j} \cap \cF | \geq 3 \ \ \ \text{ or } \ \ \  \Kspan{\ell_i, \ell_j} \cap W \neq 0.$$
\end{itemize} 
\end{restatable}

It was noted in \cite[Corollary 16]{S20}, that a $(1,\delta)$-linear SG-configuration has dimension bounded by $50/\delta$. 
The proof of \cite[Corollary 16]{S20} uses \cite[Theorem 1.9]{DSW14} which states that a $\delta$-linear SG-configuration has dimension bounded by $12/\delta$. 
However, using the sharper bound of \cref{theorem: fractional linear SG}, the same argument yields the following version of \cite[Corollary 16]{S20}.

\begin{proposition}\label{proposition: one delta amir}
A $\dlsg{1}{\delta}$ configuration has dimension at most $1 + 8/\delta$.    
\end{proposition}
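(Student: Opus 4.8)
\textbf{Proof proposal for \cref{proposition: one delta amir}.}

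The plan is to reduce a $\dlsg{1}{\delta}$ configuration to an ordinary robust linear SG configuration (a $\vdlsg{\delta'}$ configuration) to which \cref{theorem: fractional linear SG} applies, at the cost of a constant factor in the robustness parameter and an additive constant in the final dimension bound. First I would fix the distinguished one-dimensional subspace $W = \Kspan{w} \subset S_1$ from \cref{def:robust-linear-SG}, and let $\cF' := \cF \setminus W$. If $|\cF'|$ is bounded by an absolute constant we are immediately done since $\dim(\Kspan{\cF}) \le |\cF'| + 1$; so assume $m' := |\cF'|$ is large. For each $\ell_i \in \cF'$, the hypothesis gives at least $\delta(m-1) \ge \delta(m'-1)$ indices $j$ with $\ell_j \notin W$ and either $|\Kspan{\ell_i,\ell_j}\cap \cF|\ge 3$ or $\Kspan{\ell_i,\ell_j}\cap W \ne 0$. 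Split these ``good'' partners of $\ell_i$ into two groups: the genuine SG partners (first alternative) and the $W$-partners (second alternative, meaning $w \in \Kspan{\ell_i,\ell_j}$, equivalently $\ell_j \in \Kspan{\ell_i,w}$).

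The key observation is that the $W$-partners of a fixed $\ell_i$ all lie on the single projective line $\Kspan{\ell_i,w}$, and any three pairwise non-associate forms on a line are SG-related. So if $\ell_i$ has at least (say) $\delta(m'-1)/2$ many $W$-partners, then pick any two of them, $\ell_j, \ell_{j'}$; together with $\ell_i$ they give many triples $\{\ell_i,\ell_j,\ell_k\}$ with $\ell_k \in \Kspan{\ell_i,\ell_j}\cap \cF'$, as all the $W$-partners lie in $\Kspan{\ell_i,\ell_j}$. Thus in this case $\ell_i$ already has $\ge \delta(m'-1)/2 - O(1)$ genuine SG partners \emph{within} $\cF'$. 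In the complementary case, $\ell_i$ has at least $\delta(m'-1)/2$ genuine SG partners to begin with; for each such partner $\ell_j$ the third point $\ell_k \in \Kspan{\ell_i,\ell_j}\cap\cF$ might lie in $W$, but $W$ is one-dimensional so $\ell_k \in W$ forces $\ell_k$ to be the (unique up to scalar) form $w$, and then $w \in \Kspan{\ell_i,\ell_j}$ — which is exactly the $W$-partner alternative, handled already, or else we may simply discard the at most one such pair per $\ell_j$ and still find the third point inside $\cF'$. Consequently, after passing to $\cF'$ and halving $\delta$ (absorbing $O(1)$ additive losses into the largeness assumption on $m'$), every $\ell_i \in \cF'$ has at least $\tfrac{\delta}{2}(m'-1) - O(1) \ge \tfrac{\delta}{4}(m'-1)$ indices $j$ with $|\Kspan{\ell_i,\ell_j}\cap\cF'|\ge 3$; that is, $\cF'$ is a $\vdlsg{\delta/4}$ configuration.

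Applying \cref{theorem: fractional linear SG} to $\cF'$ yields $\dim(\Kspan{\cF'}) \le \lceil 16/\delta \rceil - 1$, and since $\Kspan{\cF} \subseteq \Kspan{\cF'} + W$ with $\dim W \le 1$, we get $\dim(\Kspan{\cF}) \le \lceil 16/\delta\rceil$. To recover the sharper constant $1 + 8/\delta$ claimed in the statement one must be more careful with the constant-factor loss: rather than the crude halving above, one should argue that for each $\ell_i$ the \emph{total} count $\delta(m-1)$ of good partners, minus the at most a constant number of indices lost to the ``third point in $W$'' and ``fewer than two $W$-partners'' exceptional cases, already gives $\delta(m-1) - O(1)$ honest SG partners inside $\cF'$, so that $\cF'$ is a $\vdlsg{\delta(1-o(1))}$ configuration; then \cref{theorem: fractional linear SG} gives $\dim(\Kspan{\cF'}) \le \lceil 4/\delta\rceil - 1 + O(1)$ and adding back $\dim W \le 1$ gives the bound $1 + 8/\delta$ with room to spare. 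The main obstacle is precisely this bookkeeping: making sure the reduction from the ``$\cup W$'' alternative to a genuine third collinear point inside $\cF'$ loses only an additive $O(1)$ rather than a multiplicative factor in the robustness parameter, which is what distinguishes the clean bound $1 + 8/\delta$ from a weaker $O(1/\delta)$; this is exactly the computation carried out in \cite[Corollary 16]{S20} with $12/\delta$ replaced by the sharper $4/\delta$ of \cref{theorem: fractional linear SG}.
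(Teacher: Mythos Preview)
Your approach is correct and is essentially the same as the paper's, which does not give a self-contained proof but points to the argument of \cite[Corollary~16]{S20} with the sharper bound $\lceil 4/\delta\rceil-1$ from \cref{theorem: fractional linear SG} substituted in.

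One cleanup that eliminates the extra factor of two in your crude version and removes the need for the vague $o(1)$ asymptotics in your last paragraph: rather than splitting on whether the number of $W$-partners exceeds $\tfrac{\delta}{2}(m'-1)$, split on $|A_i|\ge 2$ versus $|A_i|\le 1$ (where $A_i$ is your set of $W$-partners of $\ell_i$). In the former case \emph{every} good partner of $\ell_i$ becomes a genuine SG partner in $\cF'$: the $W$-partners all lie on the single line $\Kspan{\ell_i,w}$ and hence witness each other, and any Case-B partner whose only third point lands in $W$ automatically lies in $A_i$ and is therefore already covered. In the latter case $B_i\setminus A_i$ consists entirely of genuine SG partners in $\cF'$, so at most one index is lost. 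Either way $|C_i|\ge \delta(m-1)-1$; thus for $m'-1\ge 2/\delta$ the set $\cF'$ is already $\vdlsg{\delta/2}$, and \cref{theorem: fractional linear SG} gives $\dim(\Kspan{\cF'})\le\lceil 8/\delta\rceil-1\le 8/\delta$, while for smaller $m'$ the trivial bound $\dim(\Kspan{\cF'})\le m'$ suffices since $\delta\le 1$. Adding back $\dim W\le 1$ yields exactly $1+8/\delta$ with no second halving needed.
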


The following proposition generalizes \cite[Corollary 16]{S20} and \cref{proposition: one delta amir} to show that the dimension of a $(c,\delta)$-linear SG configuration is also bounded.

\begin{restatable}[Robust Linear SG Configurations]{proposition}{lincdeltasg}\label{prop:lincdeltasg}
Let $\cF$ be a $\dlsg{c}{\delta}$ configuration.
Then $\dim \Kspan{\cF} \leq c + 1 + 8/\delta$.
\end{restatable}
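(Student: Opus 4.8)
\textbf{Proof proposal for \cref{prop:lincdeltasg}.}
The plan is to reduce the general $\dlsg{c}{\delta}$ case to the already-established $\dlsg{1}{\delta'}$ case of \cref{proposition: one delta amir} by a clever choice of vector space and a rescaling of the robustness parameter. Let $W \subset S_1$ be the witnessing space, $\dim W \le c$. The key idea is to split the configuration $\cF$ according to how points interact with $W$: a point $\ell_i \notin W$ is ``$W$-heavy'' if many of its SG-partners are accounted for by hitting $W$ (i.e. $\Kspan{\ell_i,\ell_j} \cap W \ne 0$), and ``$W$-light'' otherwise. For $W$-light points, a constant fraction of their partners genuinely span a third configuration element, so after discarding a bounded number of exceptional elements they form a robust linear SG configuration in the usual sense, whose span is bounded by \cref{theorem: fractional linear SG}.

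First I would make the dichotomy quantitative. Fix $\ell_i \in \cF \setminus W$; by hypothesis there are at least $\delta(m-1)$ indices $j$ with $\ell_j \notin W$ and either $|\Kspan{\ell_i,\ell_j}\cap\cF|\ge 3$ or $\Kspan{\ell_i,\ell_j}\cap W \ne 0$. The crucial observation is that the second alternative is restrictive: if $\Kspan{\ell_i,\ell_j}\cap W\ne 0$, then $\ell_j$ lies in the $(\dim W + 1)$-dimensional space $\Kspan{W,\ell_i}$, and within such a space the lines through $\ell_i$ meeting $W$ sweep out a structure controlled by $\dim W \le c$. More usefully, I would instead argue as in the proof template of \cite[Corollary 16]{S20} and \cref{proposition: one delta amir}: pick a generic point $p$ and project from $W$. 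Concretely, choose a complement so that $S_1 = W \oplus W'$ and let $\pi : S_1 \dashrightarrow \P(W')$ be the linear projection away from $\P(W)$. Points of $\cF$ in $W$ are killed; the remaining points map to $\pi(\cF) \subset \P(W')$. An SG pair $(\ell_i,\ell_j)$ with $|\Kspan{\ell_i,\ell_j}\cap\cF|\ge 3$ either stays a genuine collinear triple after projection, or degenerates — but it can only degenerate if the third point maps into $W$, and one shows (by a dimension/counting argument on how many configuration points can map to the same image) that this costs only $O(c)$ elements globally. Likewise a pair with $\Kspan{\ell_i,\ell_j}\cap W \ne 0$ becomes, after projection, a pair $(\pi(\ell_i),\pi(\ell_j))$ whose span contains the image of a point of $W$ — but images of $W$-points are exactly the ``deleted'' locus, so these pairs are simply absorbed into a bounded exceptional set as well.

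The upshot is that $\pi(\cF)$, after removing a set of size $O(c)$, is a $\vdlsg{\delta'}$ configuration with $\delta' = \delta/2$ (the factor $2$ absorbing the loss from the two alternatives and the bounded deletions via a standard averaging argument — if each light point loses at most a $1/2$-fraction of its partners to the bad cases, at least $\delta(m-1)/2$ remain genuine). By \cref{theorem: fractional linear SG}, $\dim \Kspan{\pi(\cF)} \le \lceil 4/\delta'\rceil - 1 = \lceil 8/\delta \rceil - 1 \le 8/\delta$. Since $\cF \subset \Kspan{W, \pi^{-1}(\text{span})}$ — that is, $\Kspan{\cF} \subseteq W + \widetilde{V}$ where $\widetilde V$ is any lift of $\Kspan{\pi(\cF)}$ and the $O(c)$ discarded points each contribute at most one extra dimension but can in fact be folded into an enlargement of $W$ before projecting — we conclude $\dim\Kspan{\cF} \le \dim W + 1 + 8/\delta \le c + 1 + 8/\delta$. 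The ``$+1$'' is the standard slack coming from the projective-to-linear passage, exactly as in \cref{proposition: one delta amir}.

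The main obstacle I expect is the bookkeeping in the middle step: precisely controlling how many configuration elements must be thrown away when an SG triple degenerates under projection from $W$, and verifying that after these deletions the surviving robustness parameter is still a fixed constant multiple of $\delta$ (rather than, say, $\delta - O(c)/m$, which would be useless when $m$ is comparable to $c$). Handling the small-$m$ regime cleanly — where the bound $c+1+8/\delta$ might be vacuous or where one should just bound $\dim\Kspan{\cF}\le m$ directly — is the kind of edge case that needs care but is not conceptually hard. The cleanest route is probably to enlarge $W$ up front to swallow all potential ``bad'' points (which only changes $c$ by a bounded amount, provided one first checks that the number of such points is $O(c)$ independent of $m$), reducing everything to the clean statement of \cref{proposition: one delta amir} with $W$ replaced by this enlarged space of dimension $O(c)$, at which point the factor in front of $c$ would need a small additional argument to bring back down to $1$, matching the claimed $c + 1 + 8/\delta$.
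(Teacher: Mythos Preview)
Your approach is in the right spirit but misses the one clean idea that makes the paper's proof a three-line argument: instead of projecting \emph{away} from $W$ (killing it entirely), the paper collapses $W$ to a single line. Concretely, it applies a general quotient $\varphi_{W,\alpha}:S\to S[z]$ sending each basis element of $W$ to a generic scalar multiple of a new variable $z$, so that $\varphi(W)=\Kspan{z}$. Under this map the two alternatives in the $\dlsg{c}{\delta}$ definition translate \emph{directly} into the two alternatives of the $\dlsg{1}{\delta}$ definition with witness space $\Kspan{z}$: if $\Kspan{\ell_i,\ell_j}\cap W\neq 0$ then $\Kspan{\varphi(\ell_i),\varphi(\ell_j)}\cap\Kspan{z}\neq 0$, and if $|\Kspan{\ell_i,\ell_j}\cap\cF|\ge 3$ then the third element either stays a distinct third element of $\varphi(\cF)$ or lands in $\Kspan{z}$. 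General position (\cref{proposition: general quotient general}, \cref{proposition: gcd after projection}) ensures distinct elements of $\cF\setminus W$ stay pairwise non-associate. Hence $\varphi(\cF)$ is a $\dlsg{1}{\delta}$ configuration with the \emph{same} $\delta$, \cref{proposition: one delta amir} gives $\dim\Kspan{\varphi(\cF)}\le 1+8/\delta$, and since $\cF\subset\Kspan{\varphi(\cF),z,W}$ one reads off $\dim\Kspan{\cF}\le c+1+8/\delta$.

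All of your acknowledged obstacles --- the halving of $\delta$, the $O(c)$ ``bad points'' to discard, the worry that the constant in front of $c$ might inflate --- are artifacts of throwing $W$ away rather than shrinking it to dimension $1$. By keeping a one-dimensional shadow of $W$, the second alternative in the hypothesis never needs to be converted into a genuine collinear triple; it simply becomes the second alternative in the $\dlsg{1}{\delta}$ condition. Your sketch as written does not close these gaps, and the route you outline (enlarging $W$ to swallow bad points, then arguing the enlargement is $O(c)$) would require real work and would likely not recover the sharp constant $c+1+8/\delta$.
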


\begin{proof}
Let $\cF = \{\ell_1, \dots, \ell_m\}$, and $W \subset S_1$ be the vector space given by \cref{def:robust-linear-SG}. 
Let $\varphi := \varphi_{W, \alpha} : S \to S[z]$ be a general quotient, and let $y_i = \varphi(\ell_i)$.
Hence, we have $\varphi(W) = \Kspan{z}$, and by item 3 of \cref{proposition: general quotient general}, $\ell_i \not\in W \then y_i \not\in (z)$.
Moreover, by \cref{proposition: gcd after projection}, for any pair $\ell_i, \ell_j \in \cF \setminus W$, we have $y_i \not\in (y_j)$.
In particular, this implies that $\varphi(\cF)$ is a $\dlsg{1}{\delta}$ configuration, which by \cref{proposition: one delta amir} implies $\dim \Kspan{\varphi(\cF)} \leq 1 + 8/\delta$.
Since $\cF \subset \Kspan{\varphi(\cF), z, W}$, we have $\dim \Kspan{\cF} \leq c + 1 + 8/\delta$.
\end{proof}

The next proposition proves that if one has a set of pairwise non-associate forms which is almost a $\vdlsg{\delta}$ configuration, then it must be a $\dlsg{c}{\delta}$ configuration for some small $c$.

\begin{proposition}\label{linear sg simple}
Let $r\in \N$, $\delta \in (0,1)$ and $\cF \subset S_1$ be a finite set of pairwise non-associate linear forms with $|\cF|:=m$. 
Suppose that there exist $r$ forms $F_1,\cdots,F_r\in \cF$ such that for any $F\in \cF\setminus \{F_1,\cdots,F_r\}$, there exists at least $2\delta m$ forms $G_i\in \cF$ such that $| \Kspan{F,G_i} \cap \cF | \geq 3 $. 
Then $\cF$ is an $\dlsg{r}{\delta}$ configuration.
\end{proposition}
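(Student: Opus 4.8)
The plan is to verify \cref{def:robust-linear-SG} directly, taking as the witnessing subspace $W := \Kspan{F_1,\dots,F_r}$, so that $\dim W \le r =: c$. Thus I must show that for every $\ell_i \in \cF \setminus W$ there are at least $\delta(m-1)$ indices $j \ne i$ with $\ell_j \notin W$ and $|\Kspan{\ell_i,\ell_j}\cap\cF|\ge 3$. Fix such an $\ell_i$. Since $F_1,\dots,F_r \in W$, the form $\ell_i$ is not among them, so the hypothesis supplies a set $\cG_i \subseteq \cF$ with $|\cG_i| \ge 2\delta m$ such that $|\Kspan{\ell_i,G}\cap\cF|\ge 3$ for every $G\in\cG_i$; note $G\ne\ell_i$ automatically, since a one-dimensional span contains no three pairwise non-associate forms.

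The naive idea --- simply discard those $G\in\cG_i$ lying in $W$ --- fails, because $W\cap\cF$ can be large: a low-dimensional space can contain arbitrarily many pairwise non-proportional forms. I expect the crux to be the following observation, which repairs this. \emph{Since $\ell_i\notin W$, every two-dimensional subspace $\ell$ containing $\ell_i$ satisfies $|\ell\cap\cF\cap W|\le 1$.} Indeed, two distinct forms of $\cF$ are pairwise non-associate, hence linearly independent, hence span $\ell$; if both lay in $W$ this would force $\ell\subseteq W$, and in particular $\ell_i\in W$, a contradiction.

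With this in hand, partition $\cG_i$ according to the two-dimensional space $\Kspan{\ell_i,G}$ (genuinely two-dimensional, as $\ell_i$ and $G$ are distinct non-associate forms). For each subspace $\ell$ that occurs, write $k_\ell := |\cF\cap\ell|\ge 3$; the elements of $\cG_i$ lying on $\ell$ are precisely $(\cF\cap\ell)\setminus\{\ell_i\}$, of which there are $k_\ell-1$, and by the observation at most one of these lies in $W$, so at least $k_\ell-2\ge \tfrac12(k_\ell-1)$ of them lie outside $W$ (the inequality using $k_\ell\ge 3$). Summing over all such $\ell$, and using that each $G\in\cG_i$ lies on exactly one of them, at least $\tfrac12|\cG_i|\ge \delta m\ge \delta(m-1)$ forms $G\in\cG_i$ satisfy $G\notin W$. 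Each such $G$ gives an index $j\ne i$ with $\ell_j\notin W$ and $|\Kspan{\ell_i,\ell_j}\cap\cF|\ge 3$, which is exactly the condition of \cref{def:robust-linear-SG}. Hence $\cF$ is an $\dlsg{r}{\delta}$ configuration, and the only non-routine ingredient is the one-line-meets-$W$-once observation above.
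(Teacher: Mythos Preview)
Your proof is correct and follows essentially the same approach as the paper: both take $W = \Kspan{F_1,\ldots,F_r}$ and use the key observation that any two-dimensional subspace through $\ell_i \notin W$ contains at most one element of $\cF \cap W$ to conclude that at least half of $\Fspan(\ell_i)$ lies outside $W$ (the paper packages this as an injection $H_j \mapsto G_{i_j}$ from $\Fspan(F)\cap W$ into $\Fspan(F)\setminus W$, while you do an equivalent line-by-line count). One small point: your claim that the elements of $\cG_i$ lying on $\ell$ are \emph{precisely} $(\cF\cap\ell)\setminus\{\ell_i\}$ requires $\cG_i$ to be the full set $\{G\in\cF:|\Kspan{\ell_i,G}\cap\cF|\ge 3\}$, not merely some witnessing subset of size $\ge 2\delta m$ as you phrased it --- but since you may enlarge $\cG_i$ without loss, this is harmless.
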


\begin{proof}
Let $W=\Kspan{F_1,\cdots,F_r}$. 
As the elements of $\cF$ are pairwise non-associate, no two elements of $\cF$ are scalar multiples of each other. 
For each element $F \in \cF$, let 
$$\Fspan(F) := \{ G \in \cF \ \mid \ |\Kspan{F, G} \cap \cF | \geq 3 \}.$$

If $F\in \cF\setminus W$ then $F\in \cF\setminus \{F_1,\cdots,F_r\}$. 
Hence $|\Fspan(F)| \geq 2\delta m$. 
Let $\Fspan(F) = \{G_1, \ldots, G_s\} \sqcup \{H_1, \ldots, H_t\}$, where $G_i \in \cF \setminus W$ and $H_j \in W$ for all $i \in [s], j \in [t]$.

To prove that $\cF$ is an $\dlsg{r}{\delta}$ configuration, it is enough to show that $s \geq t$, as this implies $2s \geq s + t \geq 2 \delta m \then s \geq \delta m$.
Note that for any $H_j$, there must exist $G_{i_j} \in \Fspan(F) \setminus W$ such that $G_{i_j} \in \Kspan{F, H_j}$.
Additionally, we have $\Kspan{F, H_j} \cap \Kspan{F, H_k} = \Kspan{F}$, otherwise we would have $F \in \Kspan{H_j, H_k} \subset W$, which is a contradiction.
Hence, the above proves that $G_{i_j} \neq G_{i_k}$ for every $j \neq k \in [t]$, which implies that $s \geq t$, as we wanted.
\end{proof}

\subsection{Radical Sylvester-Gallai configurations over quotient rings}

We now define generalizations of the previous definitions of SG configurations to the setting of finitely generated $\K$-algebras which are UFD.
To motivate our general definition, we restate Gupta's original generalization, from \cite[Conjecture 2]{G14}.\footnote{\cref{definition: radical SG} is a restatement of \cref{definition: radical SG intro} with the new notation.}

\begin{restatable}[(Basic) Radical Sylvester-Gallai Configuration]{definition}{radicalSGBasic}\label{definition: radical SG}
    Let $\cF := \{F_1, \ldots, F_m\} \subset S$ be a subset of irreducible, pairwise non-associate forms of degree at most $d$.
    We say that $\cF$ is a $\rsg{d}$ configuration if for every $i, j \in [m]$, we have that $|\rad(F_i, F_j) \cap \cF| \geq 3.$
\end{restatable}

Note that a $\rsg{1}$ configuration is the classical linear SG configuration.

\radicalSGgeneral*

Note that in the definition above we require that the linear form $z$ is in the set $\cF$. 
This is simply a minor convention which will allow these configurations to be preserved under special quotients that will be defined in \cref{sec:general-quotients}. 
We remark that a $\rsgufd{1}{z}{S[z]}$ configuration recovers the $\dlsg{1}{1}$ configuration, which was first defined in \cite[Corollary 16]{S20}.

\begin{remark}\label{remark: grading of SG configs}
Since each form in a $\rsgufd{d}{z}{R}$ configuration $\cF$ is divisible by $z$, we denote by $\cF_e$ the set of forms $F \in R_e$ such that  $zF\in \cF$. 
With this convention and the previous remark, we define $\cF_0 = \{1\}$.
\end{remark}

The next proposition shows that \cref{definition: radical SG general} recovers 
\cref{definition: radical SG}.

\begin{proposition}\label{proposition: vanilla radical SG reduces to general radical SG}
Note that if we consider the ring $S[z]$, where $z$ is free over $S$, $U = (0)$, and $G_1, \ldots, G_m \in S$ are irreducible forms of degree $\leq d$, then the set $\cF := \{z, zG_1, \ldots, zG_m \}$ is a $\rsgufd{d}{z}{S[z]}$ configuration if, and only if, $\cG := \{G_1, \ldots, G_m\}$ is a $\rsg{d}$ configuration.
\end{proposition}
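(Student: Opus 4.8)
The plan is to prove the equivalence by unwinding both \cref{definition: radical SG general} and the basic \cref{definition: radical SG} and matching their conditions term by term; the only substantive ingredient is a computation of $\rad(zG_i,zG_j)$ in $S[z]$. First I would record that the ambient hypotheses of \cref{definition: radical SG general} hold for free: since $z$ is a new variable, $R:=S[z]/(U)=S[z]$ is a polynomial ring, hence a finitely generated $\N$-graded $\K$-algebra with $R_0=\K$, generated by $R_1$, and it is a UFD with $z\in R_1$. The ``shape of forms'' hypotheses are also automatic from the assumption that each $G_i\in S$ is irreducible of degree $\leq d$: a form of $S$ that is irreducible in $S$ stays irreducible (a fortiori square-free) in $S[z]$, because in the UFD $S[z]$ any factorization of $G_i$ has both factors of $z$-degree $0$, hence in $S$; moreover $z\nmid G_i$. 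So $z$ is square-free of degree $1\leq d+1$ and each $zG_i$ is square-free of degree $\leq d+1$, and $\cF=\{z,zG_1,\dots,zG_m\}$ has the required form. What remains is to check that, for all $i\neq j$, the pair of conditions ``$\gcd(G_i,G_j)=1$ and $|\rad(zG_i,zG_j)\cap\cF|\geq 3$'' is equivalent to ``$G_i,G_j$ non-associate and $|\rad(G_i,G_j)\cap\cG|\geq 3$''. The first halves agree trivially: as $G_i,G_j$ are irreducible, $\gcd(G_i,G_j)=1$ exactly when they are non-associate, equivalently when $\cG$ is a set of $m$ pairwise non-associate irreducible forms.

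The heart of the argument is the identity $\rad(zG_i,zG_j)=\{\, zg \ \mid\ g\in\rad(G_i,G_j)\,\}$ in $S[z]$, where $\rad(G_i,G_j)$ may be taken in $S$ or extended to $S[z]$ since $(G_i,G_j)S[z]\cap S=(G_i,G_j)S$ (and correspondingly $S[z]/(G_i,G_j)S[z]\cong (S/(G_i,G_j)S)[z]$). I would prove both inclusions using two facts about the fresh variable $z$: it is a prime element of $S[z]$, and it is a nonzerodivisor modulo $(G_i,G_j)S[z]$, because it is a nonzerodivisor in any polynomial ring over any ring. For ``$\subseteq$'': if $f^n\in (zG_i,zG_j)=z(G_i,G_j)S[z]$, then $z\mid f^n$, so $z\mid f$; writing $f=zg$ we get $z^n g^n\in z(G_i,G_j)S[z]$, hence $z^{n-1}g^n\in (G_i,G_j)S[z]$, and cancelling the nonzerodivisor $z^{n-1}$ gives $g^n\in (G_i,G_j)S[z]$, i.e. $g\in\rad(G_i,G_j)$. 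For ``$\supseteq$'': if $g^n\in (G_i,G_j)S[z]$ then $(zg)^n=z^n g^n\in z(G_i,G_j)S[z]=(zG_i,zG_j)$.

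With this identity I would intersect with $\cF=\{z\}\cup\{zG_k:k\in[m]\}$. We have $z=z\cdot 1\in\rad(zG_i,zG_j)$ iff $1\in\rad(G_i,G_j)$ iff $(G_i,G_j)S=S$, impossible since $(G_i,G_j)$ is a proper homogeneous ideal; so $z\notin\rad(zG_i,zG_j)$. And $zG_k=z\cdot G_k\in\rad(zG_i,zG_j)$ iff $G_k\in\rad(G_i,G_j)$. Thus the bijection $G_k\leftrightarrow zG_k$ between $\cG$ and $\cF\setminus\{z\}$ restricts to a bijection between $\rad(G_i,G_j)\cap\cG$ and $\rad(zG_i,zG_j)\cap\cF$, so these sets have equal cardinality and $|\rad(zG_i,zG_j)\cap\cF|\geq 3 \iff |\rad(G_i,G_j)\cap\cG|\geq 3$. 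Together with the trivial equivalence of the $\gcd$/non-associate conditions, both configuration conditions match, proving the proposition. As for obstacles, there is none of real substance: the statement is essentially bookkeeping, and its only non-formal step is the radical identity, which is routine once one notes $z$ is a fresh variable; the one point to keep straight is whether radicals and memberships are computed in $S$ or in $S[z]$, which I would handle uniformly via the isomorphism $S[z]/(G_i,G_j)S[z]\cong (S/(G_i,G_j)S)[z]$.
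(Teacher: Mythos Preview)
Your proof is correct and follows essentially the same approach as the paper: both hinge on $z$ being a nonzerodivisor modulo $(G_i,G_j)S[z]$ (the paper phrases this as $z,G_i,G_j$ being a regular sequence, and for the converse direction argues via the inclusion $\rad(zG_i,zG_j)\subset\rad(G_i,G_j)$ together with cancellation of $z$). Your version is slightly more thorough, packaging the argument into the clean identity $\rad(zG_i,zG_j)=z\cdot\rad(G_i,G_j)$ and explicitly ruling out $z\in\rad(zG_i,zG_j)$---a point the paper's converse direction glosses over when asserting that the third element of $\cF$ in the radical must be of the form $zG_k$.
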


\begin{proof}
    If $\cG$ is a $\rsg{d}$ configuration, then condition 1 of \cref{definition: radical SG general} is satisfied since $\gcd(G_i, G_j) = 1$ for any $i \neq j$.
    The SG condition on $\cG$ implies that there is $G_k \in \cG$ such that $G_k \in \rad(G_i, G_j)$, which implies $zG_k \in \rad(zG_i, zG_j)$.
    Thus, $\cF$ is a $\rsgufd{d}{z}{S}$ configuration.

    Conversely, if $\cF$ is a $\rsgufd{d}{z}{S}$ configuration, then the gcd condition of \cref{definition: radical SG general} implies $\gcd(G_i, G_j) = 1$ for $i \neq j$, which in particular implies $G_i, G_j$ are non-associate. 
    Since $z$ is free over $S$ and $G_i \in S$, we have that $z, G_i, G_j$ is a regular sequence for any $i \neq j$, as $G_i, G_j$ are non-associate.
    Then, the radical condition of \cref{definition: radical SG general} implies there exists $zG_k \in \rad(zG_i, zG_j) \subset \rad(G_i, G_j)$, and since $z, G_i, G_j$ is a regular sequence, we must have $G_k \in \rad(G_i, G_j)$, which implies $\cG$ is a $\rsg{d}$ configuration.
\end{proof}

As we will see in \cref{sec:main}, for special choices of vector spaces $U$, i.e. the strong vector spaces that we define in \cref{sec:strong-algebras}, we are able to prove that these more general SG configurations must span a small dimensional vector space.

\subsection{Examples of Sylvester-Gallai configurations}\label{subsection: examples}

In this section we discuss some examples of Sylvester-Gallai configurations. Note that due to the classical Sylvester-Gallai theorem over $\C$ or by \cref{theorem: fractional linear SG}, we know that any linear SG-configuration or a $1$-linear SG-configuration has $\dim(\Kspan{\cF})\leq 3$. Therefore, if we projectivize $S_1$ and consider linear forms up to scalar multiples as points in the projective space $\P^{N-1}$, then all linear SG-configurations must be planar. We recall a few examples of such configurations below.

\begin{example}[Hesse configuration] Let $C\subset \P^2_{\C}$ be a non-singular cubic curve. 
Let $\cH$ be the set of inflection points of $C$. 
Then $|\cH|=9$ and the line joining any two inflection points passes through another inflection point. Therefore $\cH$ is a linear SG-configuration.
    
\end{example}

More generally, we have the Fermat configuration of cardinality $3n$ for $n\geq 3$.

\begin{example}[Fermat configuration]
Let $n\geq 3$ and $C=V(x^n+y^n+z^n)\subset \P^2_{\C}$. 
Let $\cF$ be the set of inflection points of $C$. Let $\omega$ be a primitive $n$-th root of $-1$. 
Then we have $\cF=\{[0:1:\omega^j],[\omega^j:0:1],[1:\omega^j:0]|j\in[n]\}$. Hence the line joining any two points in $\cF$ contains a third point from $\cF$ (see \cite[Example 1]{BDSW19}).
    
\end{example}

The following example of cardinality $12$ was constructed in \cite{KN73}.

\begin{example}[Kelly-Nwankpa configuration] Let $i=\sqrt{-1}$.
Let $a=\frac{1+i}{2}$ and $b=\frac{1-i}{2}$. Furthermore, let $p_1=[0:0:1]$, $p_2=[0:1:1]$, $p_3=[1:0:1]$, $p_4=[1:1:1]$, $p_5=[a:a:1]$, $p_6=[a:b:1]$, $p_7=[b:a:1]$ and $p_8=[b:b:1]$. 
Let $p_9,p_{10},p_{11}, p_{12}$ be intersection points of the line at infinity and the lines joining $p_1$ with $p_2,p_3,[1:i:1],[1:-i:1]$ respectively. 
Then the line joining any $p_i,p_j$ contains a third point $p_k$ and $\cF=\{p_1,\cdots,p_{12}\}$ forms a SG configuration (see \cite[Theorem 3.10]{KN73})
    
\end{example}

Now we consider examples of radical SG configurations of polynomials. Recall that for linear forms a $1$-radical SG configuration is a classical linear SG-configuration. Therefore, the previous examples are also examples of $1$-radical SG configurations in a polynomial ring $S$. Note that, if $\cF\subset S$ is a $1$-radical SG-configuration, then $\dim(\Kspan{\cF})\leq 3$.

The following is an example of a non-linear $d$-radical-SG configuration for higher degree forms in $\K[x,y,z]$.

\begin{example}
    Fix an integer $m\geq 6$. Let $F_1=x$, $F_2=y$, $F_3=xz+y^2$ and $F_4=x(xz+y^2)+y^3$. For $5\leq k\leq  m$, we define $F_k$ recursively as $F_k=F_3\cdots F_{k-1}+y^{\sum_{i=3}^{k-1} \deg(F_i)}$. 
    Note that $F_i$ is homogeneous for $i\in [m]$ and $\deg(F_m)=\Theta( 2^m)$.
    
    We have that $\cF=\{F_1,\cdots,F_m\}$ is a $d$-radical SG-configuration in $\K[x,y,z]$ where $d\Theta( 2^m)$. 
    Indeed, note that $F_i\in (x,y)\cap (y,z)$ for all $i\in [3,m]$. 
    In particular, $F_3\in\rad(F_1,F_2)=(x,y)$. Also, by induction we see that for all $i\in [3,m]$, we have $F_i $ is congruent to a non-zero scalar multiple of $ y^{\deg(F_i)}$ modulo $(x)$. Therefore $\rad(F_1,F_i)=(x,y)$ and $F_2=y\in \rad(F_1,F_i)$ for all $i\in[3,m]$. Similarly, for all $i\in [3,m]$, we have $F_i$ congruent to a non-zero scalar multiple of $x^{a_i}z^{b_i}$ modulo $(y)$ for some positive integers $a_i,b_i$. 
    Hence $\rad(y,F_i)=(x,y)\cap (y,z)$ for all $i\in [3,m]$. 
    Hence, for all $j\in [3,m]$ there exists $k\neq j$ such that $F_k\in \rad(F_2,F_j)$. Now, we consider pairs $F_i,F_j$ where $i,j\in [3,m]$. 
    Suppose $i<j$, then $F_j$ is congruent to $y^{\deg(F_j)}$ modulo $(F_i)$. Hence we have that $y\in \rad(F_i,F_j)$.

    Furthermore, note that $\deg(F_i)\neq \deg(F_j)$ for any $i\neq j\in [2,m]$. Hence, we have that $F_1,\cdots,F_m$ are linearly independent and $\dim(\Kspan{\cF})=\Omega(\mathrm{log}_2(d))$. 
\end{example}

We note that the radical SG-theorem does not hold in arbitrary quotients of polynomial rings.

\begin{example}\label{example: counterexamples in quotients}
   Let $H_i=x_ix_{i+1}-x_{i+2}^2\in S=\K[x_1,\cdots,x_{2r}]$ for $i\in [2r]$, where we denote $x_{2r+1}:=x_1$ and $x_{2r+2}:=x_2$. Consider the ideal $J=(H_1,\cdots,H_{2r})$ and the quotient $R=S/J$. Let $\cF=\{x_1,x_3,\cdots,x_{2r-1}\}\subset R$. Then $x_i,x_j$ are pairwise linearly independent for $i\neq j$. Furthermore, $x_{2i+1}\in \rad(x_{2j+1})$ for all $i,j$. Hence, for all $x_i,x_j\in \cF$ there exists $k\neq i,j$ such that $x_k\in \rad(x_i,x_j)\cap \cF$. However, $\dim(\Kspan{\cF})=r-1$.
\end{example}

\section{Algebraic  geometry}\label{sec:algebraic-geometry}

In this section we establish the necessary definitions and develop our algebraic-geometric toolkit. We refer the reader to the standard sources of \cite{AM69, Eis95, Har77, stacks-project} for in-depth discussions of the notions in this section.

\subsection{Regular, prime and \texorpdfstring{$\mathcal{R}$\textsubscript{$\eta$}}{R eta}-sequences}

\begin{definition}[Regular sequence]\label{definition: regular sequence}
Let $R$ be ring and $M$ an $R$-module. A sequence of elements $F_1, F_2, \cdots F_n \in R$ is called an $M$-regular sequence if
\begin{itemize}
\item[(1)] $(F_1,F_2,\cdots,F_n)M\neq M$, and
\item[(2)] for $i=1,\cdots, n$, $F_i$ is a non-zerodivisor on $M/(F_1,\cdots,F_{i-1})M$.
\end{itemize}   
If $M=R$, then we simply call it a regular sequence.
\end{definition}

If $P$ and $Q$ are two irreducible polynomials in the polynomial ring $S$ such that $P$ does not divide $Q$, then $P,Q$ is an $S$-regular sequence. 
Indeed, since $P$ is irreducible, we know that $S/(P)$ is a integral domain. 
Therefore $Q$ is a non-zero element in $S/(P)$, and hence a non-zero divisor.

If $F_1,\cdots,F_n$ is a regular sequence of forms in $S$, then $F_1,\cdots,F_n$ are algebraically independent. 
Thus the subalgebra generated by $F_1,\cdots,F_n$ is isomorphic to a polynomial ring. 
In particular, the homomorphism $k[y_1,\cdots,y_n] \rightarrow S$ defined by $y_i\mapsto F_i$ is an isomorphism onto its image.

Even though the $\K$-algebra $\K[F_1, \ldots, F_n] \subset S$ is isomorphic to a polynomial ring, its elements may not ``behave well'' when seen as elements of $S$.
We next present a sufficient condition which will ensure that the subalgebra is well behaved with respect to $S$, which we describe in \cref{subsection: intersection flatness}.

\begin{definition}[Serre's $\mathcal{R}_\eta$-property] 
Let $\eta \in \N$. 
We say that a Noetherian ring $R$ satisfies the $\mathcal{R}_\eta$ property if the local ring $R_{\mathfrak{p}}$ is a regular local ring for all prime ideals $\mathfrak{p}\subset R$ such that $\mathrm{height}(\mathfrak{p})\leq \eta$.
\end{definition}

\begin{definition}[Prime and $\cR_\eta$-sequences] 
Let $\eta \in \N$ and $R$ a Noetherian ring.
A sequence of elements $F_1, \dots,F_n \in R$ is called a prime sequence (respectively an $\mathcal{R}_\eta$-sequence) if
\begin{enumerate}
    \item $F_1,\cdots,F_n$ is a regular sequence, and 
    \item  $R/(F_1,\cdots,F_i)$ is an integral domain (respectively, satisfies the $\mathcal{R}_\eta$ property) for all $i \in [n]$. 
\end{enumerate}
\end{definition}

\begin{remark}\label{remark: relation reta prime}
Since prime sequences and $\mathcal{R}_\eta$-sequences are regular sequences, we know that if $F_1,\cdots,F_n$ is a prime sequence or $\mathcal{R}_\eta$-sequence in the polynomial ring $S$, then $F_1,\cdots,F_n$ are algebraically independent. 
\end{remark}

We note the following simple statement about radicals and regular sequences.

\begin{lemma}\label{lemma: radical regular sequence} Let 
$R$ be an integral domain. Then we have the following.

\begin{enumerate}
    \item If $F,P_1,P_2$ is a regular sequence in $R$ and $G\in R$, then $FG\in\rad{(FP_1,FP_2)}$ iff $G\in \rad{(P_1,P_2)}$.
    \item Suppose $R$ is a UFD. Let $F,G\in R$ be non-units such that $(F,G)\neq R$. Then, $F,G$ is a regular sequence in $R$ if and only if $F,G$ do not have any common factors in $R$.
\end{enumerate}

\end{lemma}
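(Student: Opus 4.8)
\textbf{Proof plan for Lemma~\ref{lemma: radical regular sequence}.} The two statements are independent, so I would treat them separately. For part~(1), the key observation is that in an integral domain $R$ with $F$ a non-zerodivisor, the ideal $(FP_1, FP_2) = F\cdot(P_1,P_2)$, and more importantly $\rad(FP_1,FP_2) = \rad\big(F\cdot(P_1,P_2)\big)$. The plan is to show directly that $\rad(F(P_1,P_2)) = (F)\cap \rad(P_1,P_2)$ — or at least that an element of the form $FG$ lies in the left side iff $G$ lies in $\rad(P_1,P_2)$. The forward direction: if $(FG)^n \in (FP_1,FP_2) = F(P_1,P_2)$, then $F^nG^n = F\cdot h$ for some $h \in (P_1,P_2)$; cancelling one copy of $F$ (valid since $R$ is a domain) gives $F^{n-1}G^n \in (P_1,P_2)$. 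Now I would use that $F, P_1, P_2$ is a regular sequence: in $R/(P_1,P_2)$ the image of $F$ is a non-zerodivisor, so $F^{n-1}G^n \in (P_1,P_2)$ forces $G^n \in (P_1,P_2)$, i.e. $G \in \rad(P_1,P_2)$. The reverse direction is immediate: if $G^n \in (P_1,P_2)$ then $(FG)^n = F^n G^n \in F^n(P_1,P_2) \subset (FP_1,FP_2)$ since $F^{n-1} \in R$.

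For part~(2), I want to characterize when $F,G$ is a regular sequence in a UFD. Condition (1) of Definition~\ref{definition: regular sequence}, namely $(F,G)\neq R$, is given by hypothesis, so the content is: $G$ is a non-zerodivisor on $R/(F)$ iff $\gcd(F,G)=1$. Since $R$ is a domain, $F$ itself is automatically a non-zerodivisor, so the whole question reduces to the second element. The plan: suppose $F,G$ have a common factor $p$, a prime element (exists since $R$ is a UFD), so $F = p F'$, $G = p G'$. Then $G' F = G'\cdot p F' = F' \cdot p G' = F' G \in (F)$, but $G' \notin (F)$ — otherwise $G' = F h = p F' h$, forcing $p^2 \mid F$ and iterating gives $p^k \mid F$ for all $k$, impossible by unique factorization (here I use $F \neq 0$). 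So $G$ is a zerodivisor mod $(F)$. Conversely, if $\gcd(F,G)=1$ and $GH \in (F)$, write $GH = F K$; since $F \mid GH$ and $\gcd(F,G)=1$, unique factorization gives $F \mid H$, so $H \in (F)$ — hence $G$ is a non-zerodivisor mod $(F)$.

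The only mild subtlety — and the step I'd be most careful about — is the cancellation argument in part (1) showing $F^{n-1}G^n \in (P_1,P_2) \Rightarrow G^n \in (P_1,P_2)$ using the regular sequence hypothesis: one should note that $F,P_1,P_2$ being a regular sequence does \emph{not} directly say $F$ is a non-zerodivisor mod $(P_1,P_2)$, but it does say (after permutation, which is legitimate for regular sequences of this form in a Noetherian domain, or simply by unwinding the definition) that $P_1,P_2,F$ is also regular, equivalently that $F$ is a non-zerodivisor on $R/(P_1,P_2)$. In the graded/Noetherian setting of interest permutability of regular sequences is standard, so I would either invoke it or, to be safe, argue directly: since $F$ is a non-zerodivisor in $R$ and $P_1, P_2$ is a regular sequence (the relevant part of the hypothesis), a short induction on $n$ using that $F^{n-1}$ is a non-zerodivisor mod $(P_1,P_2)$ — which itself follows from $F$ being a non-zerodivisor mod $(P_1,P_2)$ — closes the gap. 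Part (2) is entirely routine UFD bookkeeping with no real obstacle.
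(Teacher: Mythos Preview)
Your approach matches the paper's in both parts. For part~(1) you correctly flag the subtlety that ``$F$ is a non-zerodivisor on $R/(P_1,P_2)$'' is not literally the definition of $F,P_1,P_2$ being regular; the paper simply asserts it without comment. (It does hold in any commutative ring: $F,P_1,P_2$ regular forces $H_1$ of the Koszul complex to vanish, and since Koszul homology is independent of the ordering of the sequence, the long exact sequence associated to $K_\bullet(P_1,P_2,F)\cong\operatorname{cone}\bigl(F\colon K_\bullet(P_1,P_2)\to K_\bullet(P_1,P_2)\bigr)$ yields injectivity of multiplication by $F$ on $H_0(P_1,P_2;R)=R/(P_1,P_2)$.) Your proposed ``direct'' workaround is circular as written---it assumes the very fact in question---but your alternative of invoking permutability in the graded Noetherian setting is sufficient for every use of the lemma in the paper.

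In part~(2) there is a slip in the execution. Having exhibited $F'\cdot G = G'\cdot F \in (F)$, you need $F'\notin(F)$, not $G'\notin(F)$, to conclude that $G$ is a zerodivisor modulo $(F)$; the containment $G'F\in(F)$ is trivial. Your stated justification ``$G'=Fh$ forces $p^2\mid F$'' does not follow. With $F'$ in place of $G'$ the argument is clean: $F'\in(F)$ would give $F'=Fc=pF'c$, hence $F'(1-pc)=0$, so $pc=1$, contradicting that $p$ is a non-unit. This is exactly the paper's argument (with the roles of the symbols slightly permuted).
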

\begin{proof}
1. If $G\in \rad(P_1,P_2)$, then $FG\in \rad(FP_1,FP_2)$. 
Conversely, if $FG\in \rad(FP_1, FP_2)$, then we have $FG \in \rad(P_1, P_2)$, since $\rad(FP_1, FP_2) \subset \rad(P_1, P_2)$. 
Hence there is $D \in \N^*$ such that $F^DG^D \in (P_1, P_2)$.
Since $F$ is a non-zero divisor in $R/(P_1,P_2)$, we conclude that $G^D \in (P_1,P_2)$.

2. Since $R$ is a domain, we have that $F$ is a non-zero divisor in $R$. If $FP=GQ$ for some $P,Q\in R$. Then by unique factorization we must have $F\mid Q$, if $F,G$ do not have any common factors. Therefore, $F,G$ is a regular sequence in $R$. Conversely, if $F=PF_1$ and $G=PG_1$ for some irreducible factor $P$, then we have $GF_1=G_1F\in (F)$. However, $F\not\mid F_1$, as $P$ is a non-unit.
\end{proof}

\begin{lemma}\label{lemma: d^2 minimal primes basic} Let $S=\K[x_1,\cdots,x_N]$ be a polynomial ring and $z$ be a new variable. For any polynomial $F\in S$, let $F^*\in S[z]$ denote the homogenization of $F$ with respect to $z$. Then we have the following.

\begin{enumerate}
    \item If $F\in S$ is irreducible in $S$, then $F^*$ is irreducible in $S[z]$.
    \item Let $P,Q\in S$ be non-constant polynomials. If $P,Q$ do not have any common factors in $S$, then $P^*,Q^*$ do not have any common factors in $S[z]$.
    \item Let $P,Q\in S$ such that $P,Q$ is a regular sequence in $S$. Then the number of minimal primes over $(P,Q)$ in $S$ is at most $\deg(P)\deg(Q)$. 
\end{enumerate}

\end{lemma}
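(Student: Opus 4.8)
\textbf{Proof plan for \cref{lemma: d^2 minimal primes basic}.}

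The plan is to prove the three parts essentially independently, since they are basic facts about homogenization combined with standard intersection theory. For part (1), I would recall that homogenization with respect to a new variable $z$ is compatible with factorization: the dehomogenization map $S[z] \to S$ sending $z \mapsto 1$ is a ring homomorphism, and for a form $G \in S[z]$ one has $(G|_{z=1})^* = G / z^k$ where $z^k$ is the largest power of $z$ dividing $G$. If $F \in S$ is irreducible and $F^* = G_1 G_2$ were a nontrivial factorization in $S[z]$, then dehomogenizing gives $F = (G_1|_{z=1})(G_2|_{z=1})$; since $F^*$ is not divisible by $z$ (its dehomogenization is $F$, of the same degree), neither $G_i$ is divisible by $z$, so neither dehomogenization is constant, contradicting irreducibility of $F$. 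This also immediately gives part (2): if $D^* \mid P^*$ and $D^* \mid Q^*$ for a non-constant common factor, dehomogenizing shows $D|_{z=1}$ divides both $P$ and $Q$ in $S$, and one checks $D|_{z=1}$ is non-constant because $D^*$ is $z$-free of positive degree; conversely lifting a common factor of $P,Q$ via homogenization produces a common factor of $P^*, Q^*$.

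For part (3), the statement is B\'ezout-type: I would pass to the projective setting. Since $P, Q$ is a regular sequence in $S = \K[x_1,\dots,x_N]$, by \cref{lemma: radical regular sequence}(2) they have no common factor, so $\gcd(P,Q) = 1$; the ideal $(P,Q)$ has height $2$ and all its minimal primes have height exactly $2$, i.e. the subscheme $V(P,Q) \subset \A^N$ is equidimensional of codimension $2$. The minimal primes of $(P,Q)$ correspond to the irreducible components of $V(P,Q)$. I would bound their number by their degrees: each component $C_i$ has $\deg(C_i) \geq 1$, and $\sum_i \deg(C_i) \leq \deg(C_i \text{ counted with multiplicity}) = \deg(P)\deg(Q)$ by the refined B\'ezout inequality (Fulton, \emph{Intersection Theory}), since the $C_i$ are among the components of the proper intersection of the hypersurfaces $V(P)$ and $V(Q)$ (after possibly restricting to the open locus where $P,Q$ are nonconstant, or working with the homogenizations $P^*, Q^*$ in $\P^N$ using parts (1)–(2) to control behavior at infinity). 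Hence the number of minimal primes is at most $\deg(P)\deg(Q)$.

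The main obstacle, such as it is, lies in part (3): making the B\'ezout bound rigorous requires either invoking the refined B\'ezout theorem for possibly-improper components or arguing directly. A clean alternative I would consider is an inductive argument on $\deg(P) + \deg(Q)$ using primary decomposition: factor $P = \prod P_i$ and $Q = \prod Q_j$ into irreducibles (no $P_i$ associate to any $Q_j$), reduce to the case $P, Q$ irreducible where $S/(P)$ is a domain and $Q$ cuts out a hypersurface in it, and then bound the number of height-one primes of $S/(P)$ containing $Q$ by $\deg(Q) \cdot (\text{something})$ using that a nonzero form of degree $\deg(Q)$ in the graded domain $S/(P)$ lies in only finitely many height-one primes, with the degree count controlled. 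Either way the key input is that in a polynomial ring the number of codimension-two components of a complete intersection of two hypersurfaces is bounded by the product of the degrees, which is classical; I would cite Fulton's refined B\'ezout inequality for the cleanest writeup.
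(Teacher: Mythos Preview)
Your proposal is correct and follows essentially the same route as the paper: parts (1) and (2) via the dehomogenization map $z \mapsto 1$, and part (3) by passing to the homogenizations $P^*, Q^*$ in $\P^N$ (using parts (1)--(2) to ensure they still form a regular sequence) and invoking B\'ezout to bound the number of irreducible components of $V(P^*,Q^*)$ by $\deg(P)\deg(Q)$. The paper commits to exactly this homogenize-then-B\'ezout argument (citing Fulton, \emph{Intersection Theory}, Proposition~8.4), so your alternative inductive sketch is unnecessary.
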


\begin{proof}
1. For any homogeneous polynomial $G\in S[z]$, let $G_*\in S$ be the dehomogenization of $G$, obtained by setting $z=1$. Note that $z$ does not divide $F^*$. If $F^*=GH$ in $S[z]$, then $G,H$ are both homogeneous. By \cite[Section 2.6]{Ful89} we obtain $F=(F^*)_*=G_*H_*$. Since $F$ is irreducible, we may assume that $G_*=F$ and $H_*\in \K$. This is a contradiction, since $H=(H_*)^*$, as $z$ does not divide $H$.

2. Let $P=\Pi_if_i$ and $Q=\Pi_j g_j$ be irreducible factorizations of $P,Q$ in $S$. Then by \cite[Section 2.6]{Ful89}, we have that $P^*=\Pi_i f_i^*$ and $Q^*=\Pi_j g_j^*$. By part (1), we note that $f_i^*,g_j^*$ are irreducible in $S[z]$. If $P^*,Q^*$ have a common factor, say $f_i^*=g_j^*$ for some $i,j$. Then we have $f_i=(f_i^*)_*=(g_j^*)_*=g_j$, which is a contradiction.

3. By \cref{lemma: radical regular sequence}, we know that $P,Q$ do not have any common factors. Hence, by part (2), we conclude that $P^*,Q^*$ also do not have any common factors in $S[z]$. Since $P^*,Q^*$ are homogeneous of positive degree, we see that $P^*,Q^*$ is a regular sequence in $S[z]$ by \cref{lemma: radical regular sequence}. Note that the minimal primes over $(P,Q)$ in $S$ are in one-to-one correspondence with the irreducible components of $V(P,Q)\subset \A^N$. Furthermore, the irreducible components of $V(P,Q)$ are in one-to-one correspondence with the irreducible components of $V(P^*,Q^*)
\subset \P^{\dim(W)}$ which are not contained in the hyperplane $V(z)$. Now the total number of  irreducible components of $V(P^*,Q^*)$ is at most $\deg(V(P^*,Q^*))$. Since $P^*,Q^*$ is a homogeneous regular sequence in $S[z]$, we know that $\deg(V(P^*,Q^*))=\deg(P^*)\deg(Q^*)=d^2$, by Bezout's theorem \cite[Proposition 8.4]{Ful84}.
\end{proof}

\subsection{Homogeneous system of parameters and Cohen-Macaulay rings}

In this subsection, we recall the standard definitions and properties of homogeneous system of parameters and Cohen-Macaulay rings.

\begin{definition}\label{definition: hsop}
Let $R=\oplus_{i\in \N} R_i$ be a finitely generated $\N$-graded $\K$-algebra such that $R_0=\K$. 
A sequence of homogeneous elements $F_1,\cdots,F_n\in R$ is a homogeneous system of parameters, denoted by \hsop\!, if $n=\dim(R)$ and $\dim(R/(F_1,\cdots,F_n))=0$. 
\end{definition}

If $R$ such a graded $\K$-algebra, an \hsop always exists. 
In particular, more is true:

\begin{theorem}\label{theorem: hsop}
Let $R=\oplus_{i\in \N} R_i$ be a finitely generated $\N$-graded $\K$-algebra such that $R_0=\K$. 
Suppose the Krull dimension of $R$ is $\dim(R)=n$ and let $\fm=\oplus_{i\geq1}R_i$ be the homogeneous maximal ideal. 
An \hsop always exists. Moreover, for any sequence of forms $F_1,\cdots,F_n$ of positive degree in $R$, the following are equivalent:

\begin{enumerate}
    \item $F_1,\cdots,F_n$ is a homogeneous system of parameters.
    \item $\fm$ is nilpotent modulo $(F_1,\cdots,F_n)$.
    \item $R/(F_1,\cdots,F_n)$ is finite dimensional as a $\K$-vector space.
    \item $R$ is module-finite over the subring $A=\K[F_1,\cdots,F_n]$, i.e. $R$ is a finitely generated $A$-module.
\end{enumerate}
Moreover, whenever these conditions hold, $F_1,\cdots,F_n$ are algebraically independent over $\K$, so that $\K[F_1,\cdots,F_n]$ is a polynomial ring.
\end{theorem}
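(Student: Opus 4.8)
\textbf{Proof plan for Theorem \ref{theorem: hsop}.}

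The plan is to establish the existence of an \hsop\ first, and then prove the cycle of equivalences $(1)\Rightarrow(2)\Rightarrow(3)\Rightarrow(4)\Rightarrow(1)$, closing with the algebraic independence claim. For existence, the standard approach is \emph{prime avoidance}: if $\dim(R)=n>0$, then $\fm$ is not contained in any minimal prime of $R$, so one can choose a form $F_1$ of positive degree (in a high enough degree, so that it lies outside all the finitely many minimal primes) with $\dim(R/(F_1)) = n-1$; iterating $n$ times produces $F_1,\dots,F_n$ with $\dim(R/(F_1,\dots,F_n))=0$. One must be slightly careful to keep the chosen elements homogeneous, which is why one works degree by degree and uses the graded version of prime avoidance (a homogeneous ideal not contained in a union of primes contains a homogeneous element avoiding all of them, since $\K$ is infinite).

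For the equivalences: $(1)\Leftrightarrow(3)$ is essentially the definition combined with the fact that a finitely generated graded $\K$-algebra of Krull dimension $0$ is finite-dimensional as a $\K$-vector space (it is Artinian, and $R_0=\K$ forces each graded piece to be finite-dimensional with only finitely many nonzero pieces). $(2)\Leftrightarrow(3)$: if $\fm$ is nilpotent mod $(F_1,\dots,F_n)$ then $R/(F_1,\dots,F_n)$ is a finitely generated $\K$-algebra in which every element of $\fm$ is nilpotent, hence it is a finite-dimensional local Artinian ring; conversely if $R/(F_1,\dots,F_n)$ is finite-dimensional then its maximal ideal (the image of $\fm$) is nilpotent. $(3)\Leftrightarrow(4)$: one direction is the graded Nakayama / Noether normalization argument --- if $R/(F_1,\dots,F_n)$ is finite-dimensional over $\K$, lift a $\K$-basis of generators to $R$ and use graded Nakayama to see these generate $R$ as a module over $A=\K[F_1,\dots,F_n]$; conversely, if $R$ is module-finite over $A$, then $\dim R = \dim A \le n$, and since $R/(F_1,\dots,F_n) = R\otimes_A A/\fm_A$ is module-finite over $A/\fm_A = \K$, it is finite-dimensional. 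The implication $(4)\Rightarrow(1)$ follows because module-finiteness gives $\dim R = \dim A = n$ (so the $F_i$ number $\dim R$) and $\dim R/(F_1,\dots,F_n) = \dim A/\fm_A = 0$.

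Finally, for algebraic independence: once we know $R$ is a finitely generated module over $A = \K[F_1,\dots,F_n]$, the inclusion $A\hookrightarrow R$ is an integral extension, so $\dim A = \dim R = n$. If $F_1,\dots,F_n$ were algebraically dependent, the Krull dimension of $A$ would be strictly less than $n$, a contradiction; hence they are algebraically independent and $A$ is isomorphic to a polynomial ring in $n$ variables. The main obstacle I expect is bookkeeping in the graded setting: ensuring all the chosen parameters and module generators can be taken homogeneous, and invoking the correct graded analogues of prime avoidance, Nakayama's lemma, and the dimension theory (e.g. that for a graded algebra, Krull dimension equals the transcendence degree of the fraction field over $\K$), rather than any deep new idea. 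These are all standard (see \cite{Eis95}), so the proof is mostly a careful assembly of known facts.
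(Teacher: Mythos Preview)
Your proof plan is correct and follows the standard textbook route. Note, however, that the paper does not actually supply a proof of this theorem: it is stated in Section~\ref{sec:algebraic-geometry} as a standard background result, with the reader referred to \cite{AM69, Eis95} for details. Your assembly of graded prime avoidance for existence, the equivalences via Artinian/Nakayama arguments, and the dimension comparison for algebraic independence is exactly the expected argument and is consistent with those references.
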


\begin{definition}[Cohen-Macaulay]\label{definition: CM}
A local ring $(R,\fm)$ of $\dim(R)=n$, is Cohen-Macaulay if there exists a regular sequence $F_1,\cdots,F_n\in \fm$ of length $n$. A Noetherian ring $R$ is Cohen-Macaulay if $R_\fp$ is a Cohen-Macaulay for all maximal (equivalently prime) ideals $\fp$ in $R$.
\end{definition}

We have the following characterization of Cohen-Macaulay graded $\K$-algebras.

\begin{theorem}\label{theorem: CM hsop}
Let $R=\oplus_{i\in \N} R_i$ be a finitely generated $\N$-graded $\K$-algebra such that $R_0=\K$. Suppose  $\dim(R)=n$ and let $\fm=\oplus_{i\geq1}R_i$ be the homogeneous maximal ideal. The following conditions are equivalent:
\begin{enumerate}
    \item Some homogeneous system of parameters is a regular sequence.
    \item Every homogeneous system of parameters is a regular sequence.
    \item For some \hsop $F_1,\cdots,F_n$, the ring $R$ is a module-finite free-module over $\K[F_1,\cdots,F_n]$.
    \item  For every \hsop $F_1,\cdots,F_n$, the ring $R$ is a module-finite free-module over $\K[F_1,\cdots,F_n]$.
    \item $R$ is Cohen-Macaulay.
\end{enumerate}
\end{theorem}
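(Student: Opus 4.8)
The plan is to prove the five conditions equivalent by closing the two cycles
\[
(2)\Rightarrow(1)\Rightarrow(5)\Rightarrow(2)
\qquad\text{and}\qquad
(5)\Rightarrow(4)\Rightarrow(3)\Rightarrow(5),
\]
relying on \cref{theorem: hsop} together with two standard facts from commutative algebra, which I would cite from \cite{Eis95, stacks-project}: the graded Auslander--Buchsbaum formula over a polynomial ring, and the translation of Cohen--Macaulayness into depth for graded algebras, namely that a finitely generated $\N$-graded $\K$-algebra $R$ with $R_0=\K$ is Cohen--Macaulay in the sense of \cref{definition: CM} if and only if $\mathrm{depth}_{\fm} R = \dim R$, where $\fm$ is the homogeneous maximal ideal.

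The implications $(2)\Rightarrow(1)$ and $(4)\Rightarrow(3)$ are immediate, since an \hsop exists by \cref{theorem: hsop}. For $(1)\Rightarrow(5)$: if some \hsop $F_1,\dots,F_n$ is a regular sequence, then it is an $R$-regular sequence contained in $\fm$, so $\mathrm{depth}_{\fm} R \geq n = \dim R$; as depth never exceeds dimension, equality holds and $R$ is Cohen--Macaulay. For $(5)\Rightarrow(2)$: I would argue by induction on $n$ that in a Cohen--Macaulay graded $\K$-algebra every homogeneous system of parameters is a regular sequence. A Cohen--Macaulay ring is equidimensional and unmixed, so the first element $F_1$ of an \hsop, which avoids every minimal prime because $\dim R/(F_1) = n-1 < n$, is a non-zerodivisor; one then passes to the quotient $R/(F_1)$, which is again Cohen--Macaulay of dimension $n-1$ with \hsop $\overline{F_2},\dots,\overline{F_n}$, and repeats.

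For the free-module statements: $(5)\Rightarrow(4)$ proceeds by taking any \hsop $F_1,\dots,F_n$, which is a regular sequence by $(2)$, so that $A := \K[F_1,\dots,F_n]$ is a polynomial ring and $R$ is a module-finite $A$-module by \cref{theorem: hsop}; since the $F_i$ generate the homogeneous maximal ideal of $A$ and form an $R$-regular sequence, $\mathrm{depth}_A R = n = \dim A$, so the graded Auslander--Buchsbaum formula gives $\mathrm{pd}_A R = 0$, hence $R$ is a graded projective and therefore free $A$-module. Conversely, $(3)\Rightarrow(5)$: if $R$ is module-finite and free over $A = \K[F_1,\dots,F_n]$ for some \hsop, then $R$ is $A$-flat and $R/\fm_A R \neq 0$ by graded Nakayama, so the $A$-regular sequence $F_1,\dots,F_n$ (the variables of $A$) stays $R$-regular; being an \hsop, it then makes $R$ Cohen--Macaulay by $(1)\Rightarrow(5)$. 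This closes both cycles.

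The main obstacle is marshalling the two graded inputs correctly: verifying that the depth criterion $\mathrm{depth}_{\fm} R = \dim R$ genuinely matches \cref{definition: CM} (the localization-at-$\fm$ reduction), and applying Auslander--Buchsbaum in the graded category, where one must know that finitely generated graded projective $A$-modules are free. Both are routine given the standard references, but they carry the real content; everything else is bookkeeping with regular sequences and the equivalences already established in \cref{theorem: hsop}.
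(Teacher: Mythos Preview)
The paper does not supply a proof of this theorem: it is stated without proof in Section~\ref{sec:algebraic-geometry} as standard background, with the reader directed to \cite{AM69, Eis95, Har77, stacks-project} at the start of that section. Your argument is a correct and standard proof of this classical characterization of graded Cohen--Macaulay rings; the two cycles close as you describe, and the honest work lies exactly where you identify it, namely in the graded depth--dimension translation for \cref{definition: CM} and the graded Auslander--Buchsbaum step (graded projective $\Rightarrow$ free over a polynomial ring), both of which are available in the references the paper already cites.
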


\begin{proposition}\label{proposition: hsop injection}
Let $R$ be a Cohen-Macaulay $\N$-graded $\K$-algebra, $x_1,\cdots,x_n$ be an \hsop for $R$  and $\cA := \K[x_1, \ldots, x_n] \subset R$.
Then there exist homogeneous elements $u_1,\cdots,u_r\in R$ that generate $R$ as a free $\cA$-module and we have an isomorphism of $\cA$-modules $R\simeq \cA^{\oplus r}$ such that the composition $\cA\subset R\simeq \cA^{\oplus r}$ is an isomorphism onto one of the direct summands of $\cA^{\oplus r}$.
\end{proposition}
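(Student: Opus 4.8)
The plan is to deduce Proposition~\ref{proposition: hsop injection} from the Cohen-Macaulay characterization in Theorem~\ref{theorem: CM hsop} together with a standard splitting argument in the graded category. First I would invoke Theorem~\ref{theorem: CM hsop}, items (3)--(4): since $R$ is a Cohen-Macaulay $\N$-graded $\K$-algebra and $x_1,\dots,x_n$ is an \hsop, the ring $R$ is a module-finite \emph{free} module over $\cA = \K[x_1,\dots,x_n]$, and by Theorem~\ref{theorem: hsop} the elements $x_1,\dots,x_n$ are algebraically independent, so $\cA$ is genuinely a polynomial ring. Because $R$ is graded and $\cA \subset R$ is a graded subring, one can choose the free basis to consist of homogeneous elements: lift a homogeneous $\K$-basis of $R/\fm_{\cA}R$ (where $\fm_\cA = (x_1,\dots,x_n)$) to homogeneous elements $u_1,\dots,u_r \in R$; by the graded Nakayama lemma these generate $R$ as an $\cA$-module, and a count of Hilbert series (or the already-known freeness) shows they form a free basis. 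This gives the graded isomorphism of $\cA$-modules $R \simeq \bigoplus_{i=1}^r \cA(-a_i)$ for appropriate shifts $a_i = \deg(u_i)$; forgetting the grading shifts gives $R \simeq \cA^{\oplus r}$ as $\cA$-modules.

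The second point — that the inclusion $\cA \subset R$ realizes $\cA$ as one of the direct summands — requires choosing the basis so that one of the $u_i$ is the unit $1 \in R$. This is legitimate: $1 \in R_0 = \K$ is homogeneous of degree $0$ and its image in $R/\fm_\cA R$ is nonzero (indeed $(R/\fm_\cA R)_0 = \K$), so we may take $u_1 = 1$ as part of our homogeneous basis. Then in the resulting isomorphism $R \simeq \cA u_1 \oplus \cdots \oplus \cA u_r$, the summand $\cA u_1 = \cA \cdot 1$ is precisely the image of the inclusion $\cA \hookrightarrow R$, so the composition $\cA \hookrightarrow R \xrightarrow{\ \sim\ } \cA^{\oplus r}$ is the inclusion of $\cA$ as the first direct summand, in particular a split injection onto a direct summand.

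I would organize the write-up as: (i) cite Theorem~\ref{theorem: CM hsop} for module-finite freeness and Theorem~\ref{theorem: hsop} for $\cA \cong$ polynomial ring; (ii) use graded Nakayama to produce a homogeneous generating set from a homogeneous lift of a basis of $R/\fm_\cA R$, and argue (via the freeness already known, or Hilbert-series/rank count) that a minimal such set is a free basis; (iii) arrange $u_1 = 1$ in this basis using $(R/\fm_\cA R)_0 = \K$; (iv) read off the direct-sum decomposition and identify the $\cA \cdot 1$ summand with the image of the structural inclusion. The main obstacle — and it is a mild one — is the bookkeeping needed to guarantee that a homogeneous lift of a basis of $R/\fm_\cA R$ is actually a \emph{free} basis rather than merely a generating set; this follows because $R$ is already known to be $\cA$-free of some rank $r'$, and graded Nakayama forces the number of minimal homogeneous generators to equal $\dim_\K(R/\fm_\cA R)$, which equals $r'$ by comparing Hilbert series, so no relations can exist among the $u_i$. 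Everything else is formal manipulation in the category of graded $\cA$-modules.
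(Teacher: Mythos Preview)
Your proposal is correct and follows essentially the same approach as the paper: lift a homogeneous $\K$-basis of $R/\fm_\cA R$ to homogeneous elements $u_1,\dots,u_r$, observe these form a free $\cA$-basis, and arrange $u_1=1$ so that the first summand is the image of $\cA$. If anything, you are more careful than the paper in justifying why the lifts are a \emph{free} basis (via graded Nakayama and a rank/Hilbert-series count) and why $1$ can be taken as a basis element; the paper simply asserts both facts.
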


\begin{proof}
Let $\fm=(x_1,\cdots,x_n)$. 
By \cref{theorem: hsop}, $V=R/\fm R$ is a finite dimensional graded $\K$-vector space. 
Let $u_1,\cdots,u_r\in R$ be homogeneous lifts of a homogeneous basis of $V$. 
Then, $u_1,\cdots,u_r$ form a free basis of $R$ as an $\cA$-module. 
Hence we have an isomorphism $R\simeq \cA^{\oplus r}$ given by $u_i\mapsto e_i$. 
Furthermore, we may assume that $u_1=1\in R$, therefore the inclusion $\cA\subset R$ is an isomorphism onto a direct summand of $\cA^{\oplus r}$.
\end{proof}

\begin{proposition}\label{proposition: strong sequence CM UFD}
Let $F_1,\cdots,F_n$ be a sequence of forms in $S=\K[x_1,\cdots,x_N]$. 
\begin{enumerate}
    \item If $F_1,\cdots, F_r$ is a regular sequence in $S$, then $S/(F_1,\cdots,F_r)$ is a Cohen-Macaulay ring.
    \item If $F_1,\cdots,F_n$ is a prime sequence, then $S/(F_1,\cdots,F_r)$ is a Cohen-Macaulay domain for all $r\in [n]$. 
    The images of $F_{r+1},\cdots,F_n$ in $S/(F_1,\cdots,F_r)$ form a prime sequence.
    \item Let $F_1,\cdots,F_n\in S$ be an $\mathcal{R}_\eta$-sequence of forms where $\eta\geq 3$ . 
    Then $S/(F_1,\cdots,F_r)$ is a Cohen-Macaulay UFD for all $r\in [n]$. 
    The images of $F_{r+1},\cdots,F_n$ in $S/(F_1,\cdots,F_r)$ is an $\mathcal{R}_\eta$-sequence.
\end{enumerate}

\end{proposition}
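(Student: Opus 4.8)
The plan is to prove the three parts by a uniform strategy: induct on $r$, using the standard dimension-drop/regular-sequence machinery together with the graded criteria from Theorems \ref{theorem: hsop} and \ref{theorem: CM hsop}, and for part (3) invoke the Ananyan--Hochster UFD result (stated in the introduction, i.e.\ \cite[Theorem A]{AH20a}) after reducing to the case of cutting down by a single hypersurface. First I would observe that in all three parts it suffices to handle $r=1$ and then proceed by induction, because once $S/(F_1)$ is known to be Cohen--Macaulay (respectively a Cohen--Macaulay domain, respectively a Cohen--Macaulay UFD) and the images $\bar F_2,\dots,\bar F_n$ again form a regular (resp.\ prime, resp.\ $\mathcal R_\eta$-) sequence there, the inductive hypothesis applied to the shorter sequence in the smaller ring closes the loop. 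So the content is: (a) regularity of $F_1,\dots,F_r$ in $S$ forces $S/(F_1,\dots,F_r)$ Cohen--Macaulay and forces $\bar F_{r+1},\dots$ to be regular in the quotient; (b) the ``domain'' and ``$\mathcal R_\eta$'' conditions are by definition inherited along the sequence, so only the ring-theoretic upgrades (CM, CM domain, CM UFD) need justification.

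For part (1): since $S$ is a polynomial ring it is Cohen--Macaulay, and $S$ is a graded $\K$-algebra with $S_0=\K$, so by Theorem \ref{theorem: CM hsop} every \hsop\ of $S$ is a regular sequence. A regular sequence of forms $F_1,\dots,F_r$ of positive degree can be extended to a full \hsop\ $F_1,\dots,F_r,G_{r+1},\dots,G_N$ of $S$ (this is the standard prime-avoidance argument: at each stage pick a form avoiding the finitely many minimal primes of the ideal generated so far, which is possible because $\K$ is infinite and the dimension is still positive); since $S$ is CM this full sequence is regular, and hence $S/(F_1,\dots,F_r)$ is a graded $\K$-algebra admitting $G_{r+1},\dots,G_N$ as a regular \hsop, so it is Cohen--Macaulay by Theorem \ref{theorem: CM hsop}. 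That $\bar F_{r+1},\dots$ remain a regular sequence in $S/(F_1,\dots,F_r)$ is immediate from the definition of the original sequence being regular.

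For part (2): the quotient by a prime sequence is, step by step, a quotient by a nonzerodivisor of a domain, hence a domain — and it is CM by part (1); inheritance of the prime-sequence property to the images is tautological from the definition, giving the inductive step. For part (3): again CM is part (1), and $\mathcal R_\eta$ is inherited by definition; the only real point is that $S/(F_1,\dots,F_r)$ is a UFD. Here I would proceed one step at a time: $S/(F_1)$ is a hypersurface ring which is regular in codimension $\leq \eta \geq 3$, so in particular it is a normal domain satisfying Serre's $R_2$ and $S_2$ (CM gives $S_k$ for all $k$), and a Noetherian normal domain that is a CM hypersurface with $R_2$ is a UFD — this is the content one gets from the $\mathcal R_\eta$ hypothesis combined with the standard fact (à la \cite[Ch.~II.6, Ex.~6.5]{Har77} / \cite{AH20a}) that a sufficiently regular-in-codimension hypersurface in a UFD is again a UFD; more precisely, Grothendieck's theorem that a Noetherian local ring that is a complete intersection and satisfies $R_3$ is a UFD (Grothendieck/Samuel–Nagata) applies to each localization. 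Then $\bar F_2,\dots,\bar F_n$ form an $\mathcal R_\eta$-sequence in the UFD $S/(F_1)$ and we induct.

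The main obstacle I expect is part (3), specifically pinning down the exact codimension/regularity hypothesis ($\eta\geq 3$) that guarantees the UFD property is preserved at each step: the clean statement is that a Noetherian domain which is a local complete intersection (equivalently here, a quotient of the regular ring $S$ by a regular sequence) and satisfies Serre's condition $R_3$ is a UFD, via the Grothendieck--Samuel theorem on factoriality of complete intersections, combined with Nagata's lemma to globalize from the localizations; the bookkeeping to see that $\mathcal R_\eta$ for the whole sequence passes to $\mathcal R_\eta$ for the truncated sequence in the quotient (so that the induction is legitimate) is routine but needs care, since height can behave subtly under quotients — though here regularity of the sequence keeps heights under control ($\operatorname{height}(\bar F_{i+1},\dots,\bar F_j) = j-i$ throughout). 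Parts (1) and (2) I regard as essentially bookkeeping on top of Theorem \ref{theorem: CM hsop}.
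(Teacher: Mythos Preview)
Your proposal is correct and follows essentially the same route as the paper. The paper's proof is slightly more direct: for part~(1) it simply cites \cite[Proposition~18.13]{Eis95} (quotient of a CM ring by a regular sequence is CM) rather than extending to an \hsop, and for part~(3) it applies the criterion ``CM $+$ $\mathcal{R}_3$ $\Rightarrow$ UFD'' (\cite[Corollary~1.5]{AH20b}, i.e.\ the Grothendieck--Samuel factoriality theorem for complete intersections) \emph{directly} to the full quotient $S/(F_1,\dots,F_r)$ as a complete intersection in the regular ring $S$, rather than inducting one hypersurface at a time. Your inductive step in part~(3) implicitly needs the hypersurface to sit in a complete intersection ring (not just an arbitrary UFD), which you do eventually invoke correctly via Grothendieck's theorem---but note that applying the theorem once to $S/(F_1,\dots,F_r)$ avoids this bookkeeping entirely and makes the globalization-from-local-rings step (your appeal to Nagata) unnecessary.
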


\begin{proof}
1. Since $F_1,\cdots,F_r$ is a regular sequence in $S$, we have that $\mathrm{ht}(F_1,\cdots,F_r)=N-r$. Since, $S$ is Cohen-Macaulay, we conclude that $S/(F_1,\cdots,F_r)$ is Cohen-Macaulay by \cite[Proposition 18.13]{Eis95}. 

2. Since primes sequences in $S$ are regular sequences, we have that $S/(F_1,\cdots,F_i)$ is Cohen-Macaulay. By definition of prime sequences, we have that $S/(F_1,\cdots, F_i)$ is a domain for all $i\in [m]$. Therefore, the images of $F_{r+1},\cdots,F_m$ in $S/(F_1,\cdots,F_r)$ form a prime sequence.

3. Similarly, if $F_1,\cdots,F_m$ is an $\cR_\eta$-sequence, then the images of $F_{r+1},\cdots,F_m$ in $S/(F_1,\cdots,F_r)$ form a $\cR_\eta$-sequence. If $F_1,\cdots,F_m$ is an $\mathcal{R}_\eta$-sequence, we note that the quotient $S/(F_1,\cdots,F_m)$ satisfies the $R_3$-property. Therefore, $S/(F_1,\cdots,F_m)$ is an UFD by \cite[Corollary 1.5]{AH20b}.
\end{proof}

\begin{proposition}\label{proposition: CM hsop and degree}
Let $R=\oplus_{i\in \N} R_i$ be a finitely generated $\N$-graded $\K$-algebra such that $R_0=\K$. 
Suppose $R$ is generated by $R_1$ as a $\K$-algebra and $R$ is Cohen-Macaulay. 
Let $\dim(R)=n$.  
Then we have:
\begin{enumerate}
    \item Every regular sequence $F_1,\cdots,F_r$ of homogeneous elements can be extended to a \hsop  $F_1,\cdots,F_n$ where $\deg(F_i)=1$ for $r+1\leq i\leq n$.
    \item If $F_1,\cdots,F_n$ is a \hsop in $R$, then for any $r\leq n-1$, we have $R/(F_1,\cdots,F_r)$ is Cohen-Macaulay and the images of $F_{r+1},\cdots,F_n$ in $R/(F_1,\cdots,F_r)$ form a \hsop
    \item Suppose $G_1,\cdots,G_t$ is a regular sequence of forms in $S$ and $R=S/(G_1,\cdots,G_t)$. 
    Let $F_1,\cdots,F_r\in R$ be a regular sequence of homogeneous elements in $R$, and $F_1,\cdots,F_n\in R$ an extension to a \hsop such that $\deg(F_i)=1$ for $r+1\leq i\leq n$. 
    Then the rank of $R$ as a free module over $\K[F_1,\cdots,F_n]$ is given by $\Pi_{j=1}^t\deg(G_j)\Pi_{i=1}^r\deg(F_i)$.
\end{enumerate}
\end{proposition}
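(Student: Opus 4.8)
\textbf{Overall strategy.} All three parts are standard facts about graded Cohen-Macaulay algebras, so the plan is to reduce each to the characterizations already stated in \cref{theorem: hsop} and \cref{theorem: CM hsop}, together with the observation that $R$ is generated in degree $1$, which is what lets us take the ``filling up'' elements of the system of parameters to be linear forms. First I would fix notation: $\fm = \oplus_{i \geq 1} R_i$ is the homogeneous maximal ideal, and throughout I use that a homogeneous sequence $F_1, \dots, F_n$ (with $n = \dim R$) is an \hsop iff $\fm$ is nilpotent modulo $(F_1, \dots, F_n)$, and that in a Cohen-Macaulay graded algebra every \hsop is a regular sequence (\cref{theorem: CM hsop}), and conversely a homogeneous regular sequence of length equal to the codimension it cuts out is part of an \hsop.

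\textbf{Part (1).} Given a homogeneous regular sequence $F_1, \dots, F_r$, the quotient $\bar R := R/(F_1,\dots,F_r)$ has dimension $n - r$ (since in a Cohen-Macaulay ring a regular sequence drops the dimension by its length), is again Cohen-Macaulay (\cite[Proposition 18.13]{Eis95} or \cref{proposition: strong sequence CM UFD}(1) in the polynomial-ring case, but the general CM statement is what is needed here), and is still generated by $\bar R_1$ as a $\K$-algebra. So it suffices to produce an \hsop of $\bar R$ consisting of $n-r$ linear forms, and then lift them; their lifts together with $F_1, \dots, F_r$ will be an \hsop of $R$ by \cref{theorem: hsop}(2), since $\fm$ nilpotent modulo the linear forms in $\bar R$ pulls back to $\fm$ nilpotent modulo $(F_1,\dots,F_r,\text{lifts})$ in $R$. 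To get $n-r$ linear forms cutting $\bar R$ down to dimension $0$: since $\bar R$ is generated in degree $1$, the associated graded / Hilbert function argument shows a generic choice of a linear form is a parameter (avoiding all the finitely many minimal primes of positive dimension — here I would invoke prime avoidance in the graded setting, using that $\K$ is infinite), and iterate $n-r$ times. The only mild subtlety is ensuring such linear forms exist, i.e. that $\dim R_1$ is large enough at each stage; this follows because if $\fm^N \not\subset (F_1,\dots,F_r, \text{linear forms chosen so far})$ for all $N$, then the radical of that ideal is a proper homogeneous ideal, hence contained in some minimal prime not containing $\fm$, and a general element of $R_1$ avoids that prime (if $R_1$ were entirely inside the union of the bad primes we would get $\fm$ itself inside one of them, contradicting that the quotient has positive dimension).

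\textbf{Part (2).} This is the graded, iterated version of the basic fact that a partial \hsop stays an \hsop in the quotient. Given an \hsop $F_1, \dots, F_n$ and $r \leq n-1$: by \cref{theorem: CM hsop}, $F_1, \dots, F_n$ is a regular sequence, so $F_1, \dots, F_r$ is a regular sequence and $R/(F_1, \dots, F_r)$ is Cohen-Macaulay of dimension $n - r$ (same citation as in Part (1)). It remains to see the images of $F_{r+1}, \dots, F_n$ form an \hsop of $R/(F_1,\dots,F_r)$: they are $n-r$ elements, the quotient by all of them is $R/(F_1,\dots,F_n)$ which is finite-dimensional over $\K$ by \cref{theorem: hsop}(3), and $\dim R/(F_1,\dots,F_r) = n-r$, so by \cref{theorem: hsop}(1)$\Leftrightarrow$(3) they are an \hsop.

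\textbf{Part (3).} Here $R = S/(G_1, \dots, G_t)$ with $G_1, \dots, G_t$ a regular sequence of forms. The plan is a multiplicativity-of-ranks argument. Consider the tower $\K[F_1, \dots, F_n] \subset R$ where $\K[F_1,\dots,F_n]$ is a polynomial ring (by \cref{theorem: hsop}, the $F_i$ are algebraically independent) and $R$ is a finite free module over it by \cref{theorem: CM hsop}(3)/(4) — note $R$ is Cohen-Macaulay by \cref{proposition: strong sequence CM UFD}(1). The rank is computed from Hilbert series: $\mathrm{Hilb}_R(\tanglet)$ equals $\mathrm{rank} \cdot \prod_{i} \frac{1}{1 - \tanglet^{\deg F_i}}$ after the free-module decomposition, but more cleanly I would use that for a finite free extension $A \subset R$ with $A$ a graded polynomial ring on generators of the given degrees, evaluating Hilbert series or passing to a further generic linear \hsop reduces everything to the Artinian case: $R/(F_1, \dots, F_n)$ has $\K$-dimension equal to $\mathrm{rank}$. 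Now compute $\dim_\K R/(F_1,\dots,F_n)$ directly. First, lift $F_{r+1}, \dots, F_n$ (which are linear) and $F_1, \dots, F_r$ to $S$; by construction $G_1, \dots, G_t, F_1, \dots, F_n$ is a regular sequence of forms in $S$ of length $t + n = \dim S$, hence an \hsop of $S$, and $S$ is Cohen-Macaulay, so $S/(G_1,\dots,G_t,F_1,\dots,F_n)$ is Artinian of $\K$-dimension $\prod_{j=1}^t \deg(G_j) \cdot \prod_{i=1}^n \deg(F_i)$ by Bézout / the fact that the socle dimension of a complete intersection is the product of the degrees (this is \cite[Proposition 8.4]{Ful84}-style, or the Koszul-complex Hilbert series computation $\prod (1 - \tanglet^{d_i})/(1-\tanglet)^{\dim S}$ evaluated appropriately). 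Since $\deg(F_i) = 1$ for $r+1 \leq i \leq n$, this product is exactly $\prod_{j=1}^t \deg(G_j) \prod_{i=1}^r \deg(F_i)$, which is the claimed rank.

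\textbf{Main obstacle.} The genuinely non-formal point is Part (3): making precise the identification of ``rank of $R$ as a free $\K[F_1,\dots,F_n]$-module'' with $\dim_\K S/(G_1,\dots,G_t,F_1,\dots,F_n)$, and correctly invoking the complete-intersection degree formula in $S$. Parts (1) and (2) are routine once one is careful about the existence of enough linear forms (prime avoidance over the infinite field $\K$); Part (3) requires genuinely combining the Cohen-Macaulayness of $S$, the regular-sequence property of the concatenated sequence, and the Bézout-type multiplicativity of degrees, so I expect that to be where the real bookkeeping lives.
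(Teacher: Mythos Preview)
Your proposal is correct and follows essentially the same route as the paper: for (1) pass to the Cohen--Macaulay quotient $R/(F_1,\dots,F_r)$ and use (homogeneous) prime avoidance to pick linear nonzerodivisors one at a time; for (2) use that an \hsop in a CM ring is regular plus \cref{theorem: hsop}(3); and for (3) identify the rank with $\dim_\K R/(F_1,\dots,F_n)$, lift the $F_i$ to $S$ so that $G_1,\dots,G_t,\widetilde F_1,\dots,\widetilde F_n$ is an \hsop of $S$, and read off the product-of-degrees formula for the Artinian complete intersection. Your identification of Part (3) as the only substantive step is accurate; the paper's proof is just a terser version of exactly this argument.
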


\begin{proof}
1. Since $F_1,\cdots,F_r$ is a regular sequence, we have that $\mathrm{ht}(F_1,\cdots,F_r)=n-r$. Therefore, $R'=R/(F_1,\cdots,F_r)$ is Cohen-Macaulay by \cite[Proposition 18.13]{Eis95}.
If $r=n$, then we are done. 
Otherwise we proceed by induction. 
The homogeneous maximal ideal of $R$ is given by $\fm=(R_1)$, and hence the homogeneous maximal ideal of $R'$ is $(R'_1)$. 
If $\fm'=(R'_1)$ is a minimal prime of $R'$, then $\dim(R')=\mathrm{ht}(\fm')=0$, and hence $F_1,\cdots, F_r$ is a homogeneous system of parameters in $R$. Otherwise, by homogeneous prime avoidance, there exists a homogeneous element $f_{r+1}\in \fm'$ which is not contained in any minimal prime of $R'$, and hence $f_{r+1}\in R'_1$ is a nonzero divisor. Let $F_{r+1}\in R_1$ be a homogeneous lift of $f_{r+1}$. Then $F_1,\cdots,F_{r+1}$ is a regular sequence and by induction we can extend to a homogeneous system of parameters  $F_1,\cdots,F_{r+1},\cdots, F_n$ such that $\deg(F_i)=1$ for $r+1\leq i\leq n$. 

2. Let $R'=R/(F_1,\cdots,F_r)$. Since, $R$ is Cohen-Macaulay, we know that $F_1,\cdots,F_r$ is a regular sequence. Hence $R'$ is Cohen-Macaulay and $\dim(R')=n-r$. Also, we have $R'/(\overline{F}_{r+1},\cdots,\overline{F}_n)\simeq R/(F_1,\cdots,F_n)$, thus the images $\overline{F}_{r+1},\cdots,\overline{F}_n$ are a homogeneous system of parameters in $R'$, by \cref{theorem: hsop}.

3. Note that we may lift $F_1,\cdots,F_n\in R$ to $\widetilde{F}_1,\cdots,\widetilde{F}_n\in S$ such that $\deg(\widetilde{F}_i)=1$ for $r+1\leq i\leq n$ and we have $G_1,\cdots,G_t,\widetilde{F}_1,\cdots,\widetilde{F}_n$ is a homogeneous system of parameters for $S$. Now rank of $R$ as a free $\K[F_1,\cdots,F_n]$-module is given by the $\K$-vector space dimension of $R/(F_1,\cdots,F_n)$. Now $R'=R/(F_1,\cdots,F_n)\simeq S/(G_1,\cdots,G_t,\widetilde{F}_1,\cdots,\widetilde{F}_n)$, which is of Krull dimension $0$. Now we have $\dim_{\K}(R')=\Pi_{j=1}^t\deg(G_j)\Pi_{i=1}^r\deg(F_i)$.
\end{proof}

Suppose that $u,v,u_1,\cdots,u_m\in S_1$ are pairwise linearly independent linear forms such that $v\in \Kspan{u,u_i}$ for all $i\in [m]$. Then we must have that $\dim(\Kspan{u,v,u_1,\cdots,u_m})=2$. In particular, if $m\geq 2$, then we have $\Kspan{u,u_i}=\Kspan{u,u_j}$ for all $i,j$. If $P,Q,P_1,\cdots,P_m\in S$ are pairwise non-associate forms of higher degree such that $Q\in \rad(P,P_i)$ for all $i\in [m]$, then it is no longer necessary  that $\rad(P,P_i)=\rad(P,P_j)$ for all $i,j$. However, the following lemma shows that if $m$ is sufficiently large, then there exist two distinct forms $P_i,P_j$ such that $\rad(P,P_i)=\rad(P,P_j)$. Note that, in \cref{lemma: finiteness of radicals}, the bound on $m$ depends on both the degree $d$ of the forms and the parameter $t$, which determines the dimension of the ring $R$. In \cref{lemma: d^2 minimal primes}, we will prove a strengthening for strong quotients of polynomial rings, where the bound on $m$ is independent of $t$, and depends only on the degree $d$ of the forms.

\begin{lemma}\label{lemma: finiteness of radicals}

Let $G_1,\cdots,G_t$ be a regular sequence of forms of degree at most $d$ in $S=\K[x_1,\cdots,x_N]$ and $R=S/(G_1,\cdots,G_t)$. Suppose $R$ is a UFD. Let $P,Q,P_1,\cdots,P_m\in R_{\leq d}$ be  homogeneous elements of positive degree where $m\geq 2^{d^{t+2}}$ and $\gcd(P,Q)=\gcd(P,P_i)=\gcd(Q,P_i)=1$ for all $i\in[m]$. If $Q\in \rad(P,P_i)$ for all $i\in [m]$, then we must have $\rad{(P,P_i)}=\rad(P,P_k)$ for two distinct $i,k\in [m]$.
\end{lemma}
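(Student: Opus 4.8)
The plan is to bound the number of distinct radical ideals of the form $\rad(P,P_i)$ by a function of $d$ and $t$ only, and then apply the pigeonhole principle. Each $\rad(P,P_i)$ is the intersection of its minimal primes, so it suffices to bound (a) the number of minimal primes of each $(P,P_i)$ in $R$, and (b) the number of possibilities for each such minimal prime. For (a): since $\gcd(P,P_i)=1$ in the UFD $R$, by \cref{lemma: radical regular sequence}(2) the sequence $P,P_i$ is a regular sequence in $R$, hence $P,P_i$ together with $G_1,\dots,G_t$ form a regular sequence of forms in $S$; the number of minimal primes of $(G_1,\dots,G_t,P,P_i)$ in $S$ is at most $\prod\deg(G_j)\cdot\deg(P)\deg(P_i)\leq d^{t+2}$ by a Bezout-type count as in \cref{lemma: d^2 minimal primes basic}(3) (iterating the homogenization argument, or directly from the degree of the complete intersection). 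This also bounds the number of minimal primes of $(P,P_i)$ in $R$.

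For (b): here is where the hypothesis $Q\in\rad(P,P_i)$ for \emph{all} $i$ enters, and this is the heart of the argument. Fix $i$ and let $\fp$ be a minimal prime of $(P,P_i)$ in $R$. Then $P\in\fp$ and $Q\in\rad(P,P_i)\subseteq\fp$. So every minimal prime of $(P,P_i)$ contains the regular sequence $P,Q$ (again regular since $\gcd(P,Q)=1$), hence contains some minimal prime $\fr$ of $(P,Q)$. Thus every minimal prime of every $(P,P_i)$ is one of the finitely many primes of $R$ that contain a minimal prime of $(P,Q)$ and have height at most $2$ — but the minimal primes of $(P,Q)$ already have height $2$, so in fact every minimal prime $\fp$ of $(P,P_i)$ \emph{equals} a minimal prime of $(P,Q)$. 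The number of minimal primes of $(P,Q)$ in $R$ is at most $d^{t+2}$ by the same Bezout count. Therefore each radical $\rad(P,P_i)$ is an intersection of a subset of the at-most-$d^{t+2}$ minimal primes of $(P,Q)$, so there are at most $2^{d^{t+2}}$ possible values for $\rad(P,P_i)$.

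Finally, since $m\geq 2^{d^{t+2}}$ (and the $P_i$ range over $m$ values but only $2^{d^{t+2}}$ radicals are available — one should check the inequality is strict in the right direction, i.e. $m > $ number of available radicals, or adjust the bound to $m\geq 2^{d^{t+2}}+1$; the stated bound $m\geq 2^{d^{t+2}}$ should be read with the convention that gives a collision), by pigeonhole there exist two distinct indices $i\neq k$ with $\rad(P,P_i)=\rad(P,P_k)$, as desired.

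\textbf{Main obstacle.} The subtle point is step (b): arguing that the minimal primes of $(P,P_i)$ are forced to literally coincide with minimal primes of $(P,Q)$, rather than merely be contained in the vanishing locus of $(P,Q)$. This uses crucially that $(P,P_i)$ is a codimension-$2$ complete intersection in the Cohen–Macaulay (in fact UFD) ring $R$, so all its associated primes are minimal of height exactly $2$; combined with $P,Q\in\fp$ and $\operatorname{ht}(P,Q)=2$, a minimal prime of $(P,Q)$ sits inside $\fp$ and equality of heights forces equality. One must also be careful that the Bezout/degree bounds survive passing to the quotient $R=S/(G_1,\dots,G_t)$ — this is where one needs $R$ to be a UFD (so $\gcd$ controls regular sequences via \cref{lemma: radical regular sequence}) and where the factor $\prod\deg(G_j)\leq d^t$ enters the count, exactly accounting for the $d^{t+2}$ in the exponent and hence the $2^{d^{t+2}}$ threshold on $m$.
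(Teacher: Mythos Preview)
Your approach is essentially the same as the paper's: show that every minimal prime of $(P,P_i)$ is in fact a minimal prime of $(P,Q)$ via the height-$2$ argument, bound the number of minimal primes of $(P,Q)$ in $R$ by $d^{t+2}$ via a degree/multiplicity count, and apply pigeonhole. Your step (a) is unnecessary (only the count for $(P,Q)$ matters), and your worry about the inequality is resolved by noting that each $\mathcal{S}_i$ is a \emph{nonempty} subset of the minimal primes of $(P,Q)$, so there are at most $2^{d^{t+2}}-1$ possibilities and $m\geq 2^{d^{t+2}}$ forces a collision.
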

\begin{proof}
Note that $P,Q$ and $P,P_i$ are regular sequences in $R$.
We have  $\rad(P,Q)\subset \rad(P,P_i)$ for all $i\in [m]$, since $Q\in \rad(P,P_i)$. Let $\mathcal{S}=\{\mathfrak{p}_1,\cdots\mathfrak{p}_\ell\}$  and $\mathcal{S}_i=\{\mathfrak{p}_{i1},\cdots, \mathfrak{p}_{i\ell_i}\}$ be the set of minimal primes over $(P,Q)$ and $(P,P_i)$ in $R$, respectively. We have $\bigcap \mathfrak{p}_j=\rad(P,Q)\subset \rad(P,P_i)=\bigcap_j \mathfrak{p}_{ij}$. Therefore, by \cite[Proposition 1.11]{AM69}, we must have that for all $i,j$, there exists $k$ such that $\mathfrak{p}_k\subset \mathfrak{p}_{ij}$. Since we know that $\mathrm{ht}(\mathfrak{p}_j)=\mathrm{ht}(\mathfrak{p}_{ij})=2$ for all $i,j$, we must have that for all $i,j$, there exists $k$ such that $\mathfrak{p}_k= \mathfrak{p}_{ij}$, i.e. $\mathcal{S}_i\subset \mathcal{S}$ for all $i \in [m]$. 

Note that $G_1,\cdots,
G_t,P,Q$ is a regular sequence in $S$. Furthermore, the minimal primes $\fp_j$ over $(P,Q)$ in $R$ are in one-to-one correspondence with the minimal primes $\widetilde{\fp}_k$ over $(G_1,\cdots,G_t,P,Q)$ in $S$. We have \[\sum_jm(\wt{\mathfrak{p}}_j)e(S/\wt{\mathfrak{p}}_j)=e(S/(G_1,\cdots,G_t,P,Q))=\deg(P)\deg(Q)\Pi_{i=1}^t \deg(G_i)\leq d^{t+2}.\] 
Since $m(\wt{\mathfrak{p}}_j)\geq 1$ and $ e(S/\wt{\mathfrak{p}}_j)\geq 1$, we conclude that $|\mathcal{S}|\leq d^{t+2}$, i.e. there exist at most $d^{t+2}$ minimal primes of $(P,Q)$. 
Therefore there are at most $2^{d^{t+2}}-1$ number of distinct choices for the set of minimal primes $\mathcal{S}_i=\{\mathfrak{p}_{ij}\}$ of the ideals $(P,P_i)$. 
By the pigeonhole principle, there exist distinct $i,k\in [m]$ such that $\mathcal{S}_i=\mathcal{S}_k$. 
Thus, $\rad(P,P_i)=\bigcap_{\mathfrak{q} \in \mathcal{S}_i}\mathfrak{q} = \bigcap_{\mathfrak{p}\in \mathcal{S}_k}\mathfrak{p} =\rad(P,P_k)$.
\end{proof}

\subsection{Intersection flatness and prime ideals}\label{subsection: intersection flatness}

Recall that if $R\rightarrow R'$ is a flat ring homomorphism, then $IR'\bigcap JR'=(I\bigcap J)R'$ for any two ideals $I,J$ in $R$. 
The notion of intersection flatness generalizes this fact to arbitrary intersections.
A flat ring homomorphism $R\rightarrow R'$ is called intersection flat if for every family $\mathcal{I}$ of ideals in $R$, we have $\bigcap_{I\in \mathcal{I}}(IR')=(\bigcap_{I\in \mathcal{I}}I)R'$. 
The following result is from the discussion on extensions of prime ideals in in \cite[Section 2]{AH20a}. 
We rephrase it here in the form of the following proposition for convenience.

\begin{proposition}[Intersection flatness]\label{proposition: intersection flatness}
Let $R$ be a finitely generated $\N$-graded $\K$-algebra with $R_0=\K$. Suppose $R$ is Cohen-Macaulay. Let $F_1,\cdots,F_m$ be a regular sequence of homogeneous elements in $R$. 
Then $\K[F_1,\cdots,F_m]\subset R$ is a free extension of rings. Furthermore, $\K[F_1,\cdots,F_m] \subset R$ is an intersection flat extension of rings.
\end{proposition}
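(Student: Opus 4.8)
The plan is to prove the two assertions—freeness and intersection flatness—separately, with the first essentially handed to us by the Cohen-Macaulay machinery assembled earlier and the second following the standard argument from \cite{AH20a}.

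\textbf{Step 1: Reduce to an hsop.} Since $R$ is a finitely generated $\N$-graded $\K$-algebra with $R_0 = \K$ and $R$ is Cohen-Macaulay, and $F_1,\dots,F_m$ is a regular sequence of homogeneous elements of positive degree, I would first invoke \cref{proposition: CM hsop and degree}, item 1, to extend $F_1,\dots,F_m$ to a homogeneous system of parameters $F_1,\dots,F_n$ of $R$, where $n = \dim(R)$ and $\deg(F_i) = 1$ for $m+1 \le i \le n$. (Here I am implicitly using that $R$ is generated in degree $1$; if the proposition as stated requires that hypothesis and the current statement does not, one can instead appeal directly to \cref{theorem: hsop} and homogeneous prime avoidance to extend the regular sequence to an hsop by elements of $R$.) By \cref{theorem: CM hsop}, since $R$ is Cohen-Macaulay, $R$ is a module-finite \emph{free} module over $\cA_n := \K[F_1,\dots,F_n]$, which is a polynomial ring. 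Then \cref{proposition: hsop injection} gives homogeneous generators $u_1,\dots,u_r$ realizing $R \cong \cA_n^{\oplus r}$ as $\cA_n$-modules.

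\textbf{Step 2: Deduce freeness over $\cA_m := \K[F_1,\dots,F_m]$.} Writing $\cA_m = \K[F_1,\dots,F_m]$ and $B = \K[F_{m+1},\dots,F_n]$, we have $\cA_n = \cA_m \otimes_\K B$ (since $F_1,\dots,F_n$ are algebraically independent, being an hsop, by \cref{theorem: hsop}), so $\cA_n$ is a free $\cA_m$-module (a polynomial ring over $\cA_m$ is free, with a monomial basis in the $F_{m+1},\dots,F_n$). Composing, $R$ is free over $\cA_n$ and $\cA_n$ is free over $\cA_m$, hence $R$ is free over $\cA_m$. This establishes that $\K[F_1,\dots,F_m] \subset R$ is a free extension; in particular it is flat.

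\textbf{Step 3: Intersection flatness.} This is the step requiring the actual argument from \cite[Section 2]{AH20a}. The key point is that $R$ is not merely free but free with a \emph{countable} homogeneous basis whose elements lie in finitely many graded pieces in each degree, and more precisely that $R$, graded, decomposes as $\cA_m$-module into a direct sum of copies of shifts of $\cA_m$. For such an extension, given a family $\{I_\lambda\}$ of ideals of $\cA_m$, the inclusion $(\bigcap_\lambda I_\lambda) R \subseteq \bigcap_\lambda (I_\lambda R)$ is automatic; for the reverse, one works degree by degree: in each degree $d$, $R_d = \bigoplus_j (\cA_m)_{d - a_j} \cdot u_j'$ is a \emph{finite} direct sum (finitely many basis elements have degree $\le d$), and for a direct sum over a finite index set $\bigcap_\lambda (I_\lambda R)_d = \bigcap_\lambda \bigoplus_j (I_\lambda)_{d-a_j} u_j' = \bigoplus_j (\bigcap_\lambda I_\lambda)_{d - a_j} u_j' = ((\bigcap_\lambda I_\lambda) R)_d$, using that intersection commutes with finite direct sums of modules and that each ideal $I_\lambda R$ is homogeneous (as $\cA_m \subset R$ is a graded inclusion and $u_j'$ are homogeneous). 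Summing over $d$ gives the claim. I would phrase this by citing \cref{proposition: intersection flatness}'s source \cite[Section 2]{AH20a} for the graded-freeness-implies-intersection-flat principle, and supply the degreewise-finiteness observation as the verification that the hypothesis applies.

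\textbf{Main obstacle.} The genuinely non-formal part is Step 3: freeness alone does not imply intersection flatness (an infinite-rank free module can fail it), so one must use the \emph{graded} structure and the finiteness of the basis in each degree. Everything in Steps 1–2 is bookkeeping with results already proved in the excerpt. I would therefore spend the bulk of the written proof making the degreewise argument precise, being careful that the $I_\lambda R$ are homogeneous ideals so that the degreewise intersection computation is legitimate.
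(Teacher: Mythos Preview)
Your Steps 1 and 2 are exactly what the paper does: extend to an hsop via \cref{proposition: CM hsop and degree}, use Cohen--Macaulayness (\cref{theorem: CM hsop}) to get freeness over $\K[F_1,\dots,F_n]$, and then compose with the polynomial extension $\K[F_1,\dots,F_m]\subset\K[F_1,\dots,F_n]$ to get freeness over $\K[F_1,\dots,F_m]$.

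The difference is in Step 3. The paper dispatches intersection flatness in one line by citing \cite[Page 41]{HH94}: \emph{free extensions are intersection flat}, full stop. Your worry that ``freeness alone does not imply intersection flatness (an infinite-rank free module can fail it)'' is mistaken. If $R\cong A^{(I)}$ is free on a (possibly infinite) basis $\{e_i\}$, then any $x\in R$ has a unique finite expansion $x=\sum a_i e_i$; one has $x\in I_\lambda R$ iff every $a_i\in I_\lambda$, hence $x\in\bigcap_\lambda I_\lambda R$ iff every $a_i\in\bigcap_\lambda I_\lambda$, i.e.\ $x\in(\bigcap_\lambda I_\lambda)R$. No graded structure or degreewise finiteness is needed---the finite support of elements in a direct sum does all the work. (You may be thinking of direct \emph{products}, where this can indeed fail.) Your degreewise argument is valid but is solving a harder problem than necessary; the ``main obstacle'' you identify is not an obstacle.
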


\begin{proof}
Since $F_1,\cdots, F_m$ is a regular sequence, we may extend to a homogeneous system of parameters $F_1,\cdots,F_n$, by \cref{proposition: CM hsop and degree}.
Now, as $R$ is Cohen-Macaulay, we have that $\K[F_1,\cdots,F_n]\subset R$ is a free extension, by \cref{theorem: CM hsop}. 
Also, $\K[F_1,\cdots,F_m]\subset \K[F_1,\cdots,F_n]$ is a free extension. 
Therefore the composition $\K[F_1,\cdots,F_m]\subset R$ is a free extension. 
By \cite[Page 41]{HH94}, free extensions are intersection flat.
\end{proof}

\emph{Hilbert Rings}. We say that $R$ is a Hilbert ring if every prime ideal is an intersection of maximal ideals. 
The polynomial ring $S$ and any finitely generated $\K$-algebra $R=S/I$ are Hilbert rings.

We recall the following result, from \cite[Theorem 2.7]{AH20a}.

\begin{theorem}\label{theorem: AH hilbert ring}
Let $A$ be a Noetherian Hilbert ring and let $R\supseteq A$ be a Noetherian $A$-algebra that is intersection flat over $A$. Suppose that for every maximal ideal $\fm$ of $A$, we have $R/\fm R$ is an integral domain. Then for every prime ideal $\fp$ of $A$, we have that $R/\fp R$ is an integral domain, i.e. $\fp R$ is a prime ideal of $R$.
\end{theorem}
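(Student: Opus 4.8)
\emph{Proof strategy.} The plan is to reduce the primality of $\mathfrak{p}R$ to the assumed primality of the ideals $\mathfrak{m}R$ for maximal $\mathfrak{m}$: I would approximate $\mathfrak{p}$ by maximal ideals using the Hilbert hypothesis, carry such intersections across the extension $A\subseteq R$ using intersection flatness, and then transfer the primality of $\mathfrak{p}$ \emph{within $A$} to primality of $\mathfrak{p}R$ via a partition argument.

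\emph{Step 1: representing $\mathfrak{p}$.} First I would invoke that $A$ is a Hilbert ring: since $\mathfrak{p}$ is prime, it is radical, hence an intersection of maximal ideals, and every maximal ideal occurring must contain $\mathfrak{p}$. Writing $M:=\{\mathfrak{m}\in\mathrm{Max}(A):\mathfrak{m}\supseteq\mathfrak{p}\}$, which is nonempty because $\mathfrak{p}\neq A$, we get $\mathfrak{p}=\bigcap_{\mathfrak{m}\in M}\mathfrak{m}$. Applying intersection flatness of $R$ over $A$ to the family $\{\mathfrak{m}\}_{\mathfrak{m}\in M}$ then gives $\mathfrak{p}R=\bigl(\bigcap_{\mathfrak{m}\in M}\mathfrak{m}\bigr)R=\bigcap_{\mathfrak{m}\in M}\mathfrak{m}R$. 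Since each $R/\mathfrak{m}R$ is a nonzero domain, each $\mathfrak{m}R$ is a proper prime ideal, so this already shows $\mathfrak{p}R$ is a proper radical ideal; the remaining work is to upgrade ``radical'' to ``prime''.

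\emph{Step 2: the partition argument.} Take $f,g\in R$ with $fg\in\mathfrak{p}R$. For every $\mathfrak{m}\in M$ we have $fg\in\mathfrak{m}R$, which is prime, so $f\in\mathfrak{m}R$ or $g\in\mathfrak{m}R$. I would partition $M=X\sqcup Y$ with $X:=\{\mathfrak{m}\in M:f\in\mathfrak{m}R\}$ and $Y:=M\setminus X$, so that $g\in\mathfrak{m}R$ for every $\mathfrak{m}\in Y$. Set $\mathfrak{a}:=\bigcap_{\mathfrak{m}\in X}\mathfrak{m}$ and $\mathfrak{b}:=\bigcap_{\mathfrak{m}\in Y}\mathfrak{m}$, with the convention that the empty intersection is $A$; intersection flatness again yields $f\in\bigcap_{\mathfrak{m}\in X}\mathfrak{m}R=\mathfrak{a}R$ and likewise $g\in\mathfrak{b}R$. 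Now $\mathfrak{a}\supseteq\mathfrak{p}$ and $\mathfrak{b}\supseteq\mathfrak{p}$, while $\mathfrak{a}\cap\mathfrak{b}=\bigcap_{\mathfrak{m}\in X\cup Y}\mathfrak{m}=\bigcap_{\mathfrak{m}\in M}\mathfrak{m}=\mathfrak{p}$, so $\mathfrak{a}\mathfrak{b}\subseteq\mathfrak{a}\cap\mathfrak{b}=\mathfrak{p}$. Since $\mathfrak{p}$ is prime in $A$, one of $\mathfrak{a},\mathfrak{b}$ is contained in $\mathfrak{p}$, hence equal to $\mathfrak{p}$; therefore $f\in\mathfrak{a}R=\mathfrak{p}R$ or $g\in\mathfrak{b}R=\mathfrak{p}R$. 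This proves $\mathfrak{p}R$ is prime, and it is proper by Step 1, so $R/\mathfrak{p}R$ is an integral domain.

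\emph{Main obstacle.} There are no delicate computations, but the argument genuinely requires \emph{intersection} flatness rather than ordinary flatness: it is used twice — to pass $\bigcap_{\mathfrak{m}\in M}\mathfrak{m}$ and $\bigcap_{\mathfrak{m}\in X}\mathfrak{m}$ through extension to $R$ — and the index sets in play are in general infinite, whereas plain flatness only commutes with finite intersections. The only step that is not completely forced is the partition of $M$ according to whether $f$ or $g$ lies in $\mathfrak{m}R$; this is precisely the device that lets the primality of $\mathfrak{p}$ in the base $A$ be transported to primality of $\mathfrak{p}R$ in $R$ without the various $\mathfrak{m}R$ having to interact with one another.
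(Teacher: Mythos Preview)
Your proof is correct. Note, however, that the paper does not supply its own proof of this theorem: it is quoted verbatim as \cite[Theorem 2.7]{AH20a} and used as a black box. Your argument is in fact the standard one given in that reference --- the Hilbert-ring representation $\mathfrak{p}=\bigcap_{\mathfrak{m}\supseteq\mathfrak{p}}\mathfrak{m}$, the two applications of intersection flatness, and the partition of the maximal ideals according to which factor they absorb --- so there is nothing to compare.
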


In particular, we have the following corollary (\cite[Corollary 2.9]{AH20a}): if $g_1, \cdots, g_s\in S$ is a prime sequence, then prime ideals in $\K[g_1,\cdots,g_s]$ extend to prime ideals in $S$.

\begin{corollary}\label{lemma: extension of prime}
Let $S$ be our polynomial ring and $g_1,\cdots,g_s$ be a prime sequence of forms in $S$. 
Then for any prime ideal $\mathfrak{p}\subset \K[g_1,\cdots,g_s]$, the extension ideal $\mathfrak{p}S$ is also prime.
\end{corollary}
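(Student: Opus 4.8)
\textbf{Proof proposal for \cref{lemma: extension of prime}.}

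The plan is to deduce this corollary directly from \cref{theorem: AH hilbert ring} by taking $A = \K[g_1,\cdots,g_s]$ and $R = S$, so the main task is to verify the three hypotheses of that theorem: that $A$ is a Noetherian Hilbert ring, that $R$ is intersection flat over $A$, and that $R/\fm R$ is a domain for every maximal ideal $\fm$ of $A$. First I would observe that since $g_1,\cdots,g_s$ is a prime sequence, it is in particular a regular sequence of forms in the polynomial ring $S$, so by \cref{remark: relation reta prime} the elements $g_1,\cdots,g_s$ are algebraically independent over $\K$; hence $A = \K[g_1,\cdots,g_s]$ is (isomorphic to) a polynomial ring in $s$ variables, which is Noetherian. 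It is also a Hilbert ring, since finitely generated $\K$-algebras are Hilbert rings, as recorded just before \cref{theorem: AH hilbert ring}.

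Next I would establish intersection flatness of $A \subset S$. Since $S$ is a polynomial ring it is Cohen-Macaulay, and $g_1,\cdots,g_s$ is a regular sequence of homogeneous elements in $S$; therefore \cref{proposition: intersection flatness} (applied with $R = S$) gives that $\K[g_1,\cdots,g_s] \subset S$ is an intersection flat extension of rings. This is the only place where Cohen-Macaulayness is used, and it applies cleanly because $S$ itself is the ambient polynomial ring.

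The remaining and most delicate point is to check that for every maximal ideal $\fm$ of $A$, the quotient $S/\fm S$ is an integral domain. Here I would use the Hilbert Nullstellensatz over the algebraically closed field $\K$: every maximal ideal $\fm$ of the polynomial ring $A \cong \K[y_1,\cdots,y_s]$ has the form $\fm = (y_1 - a_1, \cdots, y_s - a_s)$ for some point $(a_1,\cdots,a_s) \in \K^s$, i.e. $\fm = (g_1 - a_1, \cdots, g_s - a_s)$ inside $A$. Then $S/\fm S = S/(g_1 - a_1, \cdots, g_s - a_s)$. Since $g_1,\cdots,g_s$ is a prime sequence, the quotient $S/(g_1, \cdots, g_i)$ is a domain for each $i$; one wants the analogous statement after the affine shift by the constants $a_i$. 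I would argue this by a homogenization/dehomogenization or specialization argument: the forms $g_i$ being homogeneous, one can pass to the projective closure, or alternatively note that $g_1 - a_1, \cdots, g_s - a_s$ remains a regular sequence and that the generic fiber being a domain forces all fibers over the affine space $\Spec A$ to be domains once intersection flatness and the Hilbert ring property are in hand — but since that is precisely the conclusion of \cref{theorem: AH hilbert ring}, I must instead verify the domain property of $S/\fm S$ by hand. The cleanest route: the map $A \to A$, $g_i \mapsto g_i - a_i$ does not visibly preserve the prime-sequence structure inside $S$, so instead I would invoke that $g_1,\cdots,g_s$ being a prime sequence of \emph{forms} means each $S/(g_1,\cdots,g_i)$ is a graded domain, and a graded domain modulo a general homogeneous linear-in-the-$g$'s translation stays a domain because the associated projective scheme $\mathrm{Proj}(S/(g_1,\cdots,g_i))$ is integral, hence so is the affine cone over it and its generic hyperplane-type sections.

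The hard part will be exactly this last verification that $S/(g_1 - a_1, \cdots, g_s - a_s)$ is a domain for every $(a_1,\cdots,a_s) \in \K^s$; everything else is bookkeeping citing \cref{proposition: intersection flatness}, \cref{theorem: AH hilbert ring}, and the Nullstellensatz. If one prefers to avoid this point entirely, one can alternatively cite \cite[Corollary 2.9]{AH20a} directly, as the excerpt indicates this corollary is precisely that result; but assuming we want a self-contained argument within the framework of the excerpt, the specialization-of-a-prime-sequence step is the crux and should be handled by the projective geometry of graded domains as sketched above.
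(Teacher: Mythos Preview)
Your overall strategy is exactly right and matches what the paper does: the paper simply cites \cite[Corollary 2.9]{AH20a} for this corollary, and in the proof of the more general \cref{proposition: extension of primes general} it follows precisely your outline---verify $A$ is Hilbert, verify intersection flatness via \cref{proposition: intersection flatness}, then check $S/\fm S$ is a domain for every maximal ideal $\fm=(g_1-a_1,\ldots,g_s-a_s)$ and apply \cref{theorem: AH hilbert ring}.

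The gap is in your handling of the ``hard part.'' Your sketch invokes $\mathrm{Proj}$ being integral and ``generic hyperplane-type sections,'' but the claim must hold for \emph{every} point $(a_1,\ldots,a_s)\in\K^s$, not just a general one; Bertini-style arguments only give integrality of generic sections, so that route does not close the argument. The paper resolves this step by citing \cite[Proposition 2.8]{AH20a}, whose content is exactly that if $g_1,\ldots,g_s$ is a regular sequence of forms with $(g_1,\ldots,g_s)$ prime, then $(g_1-c_1,\ldots,g_s-c_s)$ is prime for all $(c_1,\ldots,c_s)\in\K^s$. This is the missing ingredient you should invoke in place of your projective-geometry sketch.
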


We prove that this result can be generalized to certain quotients of $S$.

\begin{proposition}\label{proposition: extension of primes general}
Let $G_1,\cdots, G_t$ be a prime sequence of forms in $S$. Let $R=S/(G_1,\cdots,G_t)$. Let $F_1,\cdots,F_m$ be a prime sequence in $R$. Then for any prime ideal $\fp\subset \K[F_1,\cdots,F_m]$, the extension ideal $\fp R$ is a prime ideal in $R$.
\end{proposition}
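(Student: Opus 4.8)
The plan is to reduce to the already-established case of prime sequences in a polynomial ring, namely \cref{lemma: extension of prime}, by combining it with the transfer mechanism supplied by \cref{theorem: AH hilbert ring}. First I would observe that since $G_1,\dots,G_t$ is a prime sequence in $S$, by \cref{proposition: strong sequence CM UFD}(2) the ring $R=S/(G_1,\dots,G_t)$ is a Cohen-Macaulay domain, and the images of any further forms behave well. Moreover $R$ is a finitely generated $\K$-algebra, hence a Hilbert ring. Now $F_1,\dots,F_m$ is a prime sequence in $R$, so in particular it is a regular sequence of homogeneous elements; by \cref{proposition: intersection flatness} the inclusion $A:=\K[F_1,\dots,F_m]\subset R$ is a free, hence intersection flat, extension. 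Also $A$ is a polynomial ring (since a regular sequence is algebraically independent, by \cref{theorem: hsop} or \cref{remark: relation reta prime}), in particular a Noetherian Hilbert ring.

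To apply \cref{theorem: AH hilbert ring} with this $A\subset R$, the one remaining hypothesis to check is that $R/\fm R$ is a domain for every maximal ideal $\fm$ of $A=\K[F_1,\dots,F_m]$. This is the step I expect to be the main obstacle, because unlike the graded maximal ideal $(F_1,\dots,F_m)$, a general maximal ideal of $A$ is of the form $(F_1-a_1,\dots,F_m-a_m)$ with $a_i\in\K$ (using the Nullstellensatz, as $\K$ is algebraically closed and $A$ is a polynomial ring), and the $F_i-a_i$ are no longer homogeneous, so one cannot directly invoke the graded statements. The idea to handle this is to lift: write $R=S/(G_1,\dots,G_t)$ and lift $F_i$ to forms $\widetilde F_i\in S$. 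Then $R/\fm R \cong S/(G_1,\dots,G_t,\widetilde F_1-a_1,\dots,\widetilde F_m-a_m)$, and I want to show the ideal $(G_1,\dots,G_t,\widetilde F_1-a_1,\dots,\widetilde F_m-a_m)$ is prime in $S$. Equivalently, I would consider the (non-graded) sequence $G_1,\dots,G_t,\widetilde F_1 - a_1,\dots,\widetilde F_m - a_m$ in $S$ and argue it remains a prime sequence: since $G_1,\dots,G_t$ is a prime sequence of forms in $S$ and $\widetilde F_1,\dots,\widetilde F_m$ map to a prime sequence $F_1,\dots,F_m$ in $R$, the combined sequence $G_1,\dots,G_t,\widetilde F_1,\dots,\widetilde F_m$ is a prime sequence of forms in $S$; then applying \cref{lemma: extension of prime} to this prime sequence in $S$ together with the prime (maximal) ideal $(\widetilde F_1 - a_1,\dots,\widetilde F_m-a_m)$ of $\K[G_1,\dots,G_t,\widetilde F_1,\dots,\widetilde F_m]$ — wait, this needs care, since $G_i$ are not among the $\widetilde F_j - a_j$.

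Let me instead organize the reduction more cleanly, working in two stages. \emph{Stage 1:} Apply \cref{lemma: extension of prime} directly. We have the prime sequence $G_1,\dots,G_t,\widetilde F_1,\dots,\widetilde F_m$ in $S$; let $B=\K[G_1,\dots,G_t,\widetilde F_1,\dots,\widetilde F_m]\subset S$, a polynomial ring on these $t+m$ variables. Given a prime ideal $\fp\subset A=\K[F_1,\dots,F_m]$, it corresponds (via the isomorphism $A\cong \K[\widetilde F_1,\dots,\widetilde F_m]\subset B$) to a prime ideal of $\K[\widetilde F_1,\dots,\widetilde F_m]$, whose extension $\fP:=(G_1,\dots,G_t)B + \fp B$ to $B$ is prime because $B/\fP \cong (\K[\widetilde F_1,\dots,\widetilde F_m]/\fp)$ is a domain (we are quotienting a polynomial ring $B$ by the variables $G_i$ and by $\fp$ in the remaining variables). \emph{Stage 2:} By \cref{lemma: extension of prime} applied to the prime sequence $G_1,\dots,G_t,\widetilde F_1,\dots,\widetilde F_m$ in $S$ and the prime ideal $\fP\subset B$, the extension $\fP S$ is prime in $S$. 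But $\fP S = (G_1,\dots,G_t)S + \fp S$, so $\fP S / (G_1,\dots,G_t)S = \fp R$ is prime in $R = S/(G_1,\dots,G_t)$, which is exactly what we want. This route actually sidesteps \cref{theorem: AH hilbert ring} entirely and reduces the whole statement to \cref{lemma: extension of prime}; the main points to verify carefully are that $G_1,\dots,G_t,\widetilde F_1,\dots,\widetilde F_m$ is genuinely a prime sequence in $S$ (which follows from $G_\bullet$ being a prime sequence of forms and $F_\bullet$ being a prime sequence in the quotient $R$, together with \cref{proposition: strong sequence CM UFD}(2)) and that $\fp\mapsto \fP$ sends primes to primes, which is the elementary polynomial-ring computation indicated above.
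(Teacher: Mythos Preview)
Your final two-stage argument is correct and takes a genuinely different route from the paper. The paper works entirely inside $R$: it establishes intersection flatness of $A=\K[F_1,\dots,F_m]\subset R$ via \cref{proposition: intersection flatness}, applies \cref{theorem: AH hilbert ring} to reduce to maximal ideals $\fm=(F_1-c_1,\dots,F_m-c_m)$, and then invokes \cite[Proposition 2.8]{AH20a} (a deformation lemma: if a homogeneous regular sequence generates a prime ideal, so does any constant shift) to conclude each $\fm R$ is prime. Your route instead lifts the whole problem to $S$: you assemble the combined prime sequence $G_1,\dots,G_t,\widetilde F_1,\dots,\widetilde F_m$ of forms in $S$, package $\fp$ into the prime $\fP=(G_1,\dots,G_t)B+\widetilde\fp B$ of the polynomial subring $B$ (using only the elementary fact that $B/\fP\cong \K[\widetilde F_1,\dots,\widetilde F_m]/\widetilde\fp$), apply \cref{lemma: extension of prime} once in $S$, and then pass to the quotient. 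What your approach buys is self-containment: you need only the already-stated \cref{lemma: extension of prime} and avoid the extra external citation \cite[Proposition 2.8]{AH20a}; what the paper's approach buys is that it works directly in $R$ and makes transparent why the Hilbert-ring/intersection-flatness machinery is the operative mechanism at the level of $R$ itself. Both arguments ultimately rest on the same underlying result of \cite{AH20a}, just invoked at different levels of the tower $\K[F_\bullet]\subset R\subset S$ versus $\K[G_\bullet,\widetilde F_\bullet]\subset S$.
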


\begin{proof}
Note that, $R$ is a finitely generated graded $\K$-algebra, with $R_0=\K$. 
Also, $R$ is generated by $R_1$ as a $\K$-algebra. 
By \cref{proposition: strong sequence CM UFD} we have $R$ is a Cohen-Macaulay integral domain. 
By \cref{proposition: intersection flatness}, we have that  $A:=\K[F_1,\cdots,F_m]\subset R$ is intersection flat. 
Therefore, by \cref{theorem: AH hilbert ring}, it is enough to show that $\fm R$ is a prime ideal for every maximal ideal $\fm$ of $A$. 
Since, $A$ is isomorphic to a polynomial ring, we know that $\fm=(F_1-c_1,\cdots,F_m-c_m)$ for some $(c_1,\cdots,c_m)\in \K^m$. 
Now, $F_1,\cdots,F_m$ is a regular sequence of homogeneous elements in $R$ and the ideal $(F_1,\cdots,F_m)$ is a prime ideal in $R$. 
Then, by \cite[Proposition 2.8]{AH20a}, we conclude that $(F_1-c_1,\cdots,F_m-c_m)R$ is a prime ideal in $R$ for all $(c_1,\cdots,c_m)\in \K^m$. 
Therefore $\fm R$ is a prime ideal in $R$ for all maximal ideals $\fm$ of $A$, and we are done.
\end{proof}

\begin{lemma}\label{lem:factor-ufd-algebra}Let $G_1,\cdots, G_t$ be an $\cR_\eta$-sequence of forms in $S$, such that $\eta\geq 3$. Let $R=S/(G_1,\cdots,G_t)$. Let $F_1,\cdots,F_m$ be a prime sequence in $R$.
Let $A=\K[F_1,\cdots,F_m]$. 

\begin{enumerate}
    
    \item Let $Q\in A$. Then $Q$ is irreducible in $A$ iff $Q$ is irreducible in $R$.
    \item Let $P_1, \ldots, P_k \in R$ be irreducible elements and $d_i \in \N_+$ such that $ \prod_{i=1}^k P_i^{d_i} \in A$. Then $P_i \in A$ for $i \in [k]$.
\end{enumerate}

\end{lemma}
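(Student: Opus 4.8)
\emph{Setup.} The plan is first to collect the ring-theoretic facts that make all three statements essentially formal. Since $G_1,\dots,G_t$ is an $\cR_\eta$-sequence with $\eta\ge 3$, it is in particular a regular sequence, and \cref{proposition: strong sequence CM UFD} says each $S/(G_1,\dots,G_r)$ is a Cohen--Macaulay UFD, hence a domain; so $G_1,\dots,G_t$ is in fact a \emph{prime} sequence and $R$ itself is a UFD. As $R$ is an $\N$-graded domain with $R_0=\K$, its unit group is $R^{\times}=\K^{\times}$. Because $F_1,\dots,F_m$ is a prime sequence in $R$, it is algebraically independent over $\K$ (extend it to an \hsop using \cref{proposition: CM hsop and degree} and invoke \cref{theorem: hsop}), so $A=\K[F_1,\dots,F_m]$ is a polynomial ring, in particular a UFD with $A^{\times}=\K^{\times}$. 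Finally \cref{proposition: extension of primes general} applies to the prime sequences $G_\bullet$ in $S$ and $F_\bullet$ in $R$, giving: every prime ideal $\fp\subset A$ extends to a prime ideal $\fp R\subset R$.

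\emph{Part (1).} For the implication ``irreducible in $R$ $\Rightarrow$ irreducible in $A$'' I would argue formally: $Q$ is a nonzero non-unit of $R$, hence of $A$ since the unit groups agree, and any factorization $Q=ab$ in $A$ is also one in $R$, which forces $a$ or $b$ into $R^{\times}=\K^{\times}=A^{\times}$. For the converse, suppose $Q\in A$ is irreducible in $A$; since $A$ is a UFD, $(Q)$ is a nonzero prime ideal of $A$, so $QR=(Q)R$ is a prime ideal of $R$ by \cref{proposition: extension of primes general}, and it is proper because $Q\notin\K^{\times}=R^{\times}$. Hence $Q$ is a prime element of $R$, and therefore irreducible in the domain $R$.

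\emph{Part (2).} Set $Q:=\prod_{i=1}^k P_i^{d_i}$, which lies in $A$ by hypothesis and is a nonzero non-unit of $R$. I would factor $Q$ inside the UFD $A$ as $Q=c\prod_j Q_j^{e_j}$ with $c\in\K^{\times}$ and each $Q_j\in A$ irreducible in $A$; by Part (1) each $Q_j$ is then irreducible in $R$. Now $\prod_i P_i^{d_i}=c\prod_j Q_j^{e_j}$ are two factorizations of $Q$ into irreducibles of the UFD $R$, so uniqueness of factorization forces every $P_i$ to be an $R$-associate of some $Q_j$. Writing $P_i=u\,Q_j$ with $u\in R^{\times}=\K^{\times}\subset A$ and $Q_j\in A$ then gives $P_i\in A$, as desired.

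\emph{Main obstacle.} There is no deep obstacle here once \cref{proposition: extension of primes general} is available; the only care needed is in the \emph{Setup} step --- checking that the $\cR_\eta$-sequence $G_\bullet$ is genuinely a prime sequence so that \cref{proposition: extension of primes general} applies, and that $A$ is a polynomial subring --- together with the bookkeeping that $A^{\times}=R^{\times}=\K^{\times}$, which is exactly what makes ``associate in $A$'' and ``associate in $R$'' coincide and lets irreducibility and the factorization in Part (2) pass between the two rings.
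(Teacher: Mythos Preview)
Your proof is correct and follows essentially the same approach as the paper: both use that $R$ is a UFD (via \cref{proposition: strong sequence CM UFD}), that $A$ is a polynomial ring with $A^\times=R^\times=\K^\times$, and that primes in $A$ extend to primes in $R$ (via \cref{proposition: extension of primes general}) to transfer irreducibility; Part (2) is then a direct comparison of the two irreducible factorizations in the UFD $R$. Your Setup is in fact slightly more explicit than the paper's, in that you spell out why the $\cR_\eta$-sequence $G_\bullet$ is a prime sequence (needed for \cref{proposition: extension of primes general}) and why $QR$ is proper.
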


\begin{proof}
1. By \cref{proposition: strong sequence CM UFD}, we know that $R$ is a Cohen-Macaulay UFD. 
Since a prime sequence is algebraically independent, we know that $A$ is isomorphic to a polynomial ring, and hence an UFD. Note that the units in $A$ and $R$ are only elements of $\K$. Let $Q\in A$. If $Q=H_1H_2$ is a factorization of $Q$ into non-units in $A$, then it is also a factorization into non-units in $R$. Therefore, if $Q$ is irreducible in $R$, then it is irreducible in $A$. Conversely, suppose $Q$ is irreducible in $A$. Then $(Q)$ is a prime ideal in $A$. By \cref{proposition: extension of primes general}, we have that $(Q)\cdot R$ is a prime ideal in $R$. 
Therefore, $Q$ is irreducible in $R$, as $R$ is a UFD.

2. Let $F = \prod_{i=1}^k P_i^{d_i}$.
Since  $A$ is a UFD and $F \in A$, we have that $F = \prod_{i=1}^t Q_i^{e_i}$, where each $Q_i \in A$ is irreducible.
Therefore, by part (1), we have that $Q_i$ is irreducible in $R$ for all $i\in [k]$. 
Hence, $F = \prod_{i=1}^t Q_i^{e_i}$ is an irreducible factorization of $F$ in $R$. 
Since $R$ is a UFD, by uniqueness of irreducible factorization, we must have that $P_i = Q_i$ and $d_i = e_i$ (possibly after a permutation of the indices). 
Hence $P_i\in A$ for all $i\in[k]$.
\end{proof}

Note that, in \cref{lem:factor-ufd-algebra}, it is necessary to assume that $A$ is generated by a prime sequence. 
For example, consider the subalgebra $A=k[xy]\subset k[x,y]$. 
Now $A$ is isomorphic to a polynomial ring in one variable and hence an UFD. However $xy\in A$ but $x,y\not\in A$.

\begin{proposition}\label{proposition: radicals and prime sequences}
Let $G_1,\cdots, G_t$ be a prime sequence of forms in $S$ and $R=S/(G_1,\cdots,G_t)$. 
Let $F_1,\cdots,F_m$ be a prime sequence in $R$ and $A:=\K[F_1,\cdots,F_m]\subset R$. 
Let $I\subset A$ be an ideal.

\begin{enumerate}
    \item For any ideal $J\subset A$ we have $JR\cap A=J$.
    \item  If $\fp\subset A$ is a minimal prime over $I$ in $A$, then $\fp R$ is a minimal prime over $IR$ in $R$. 
    Conversely, for any minimal prime $\fq\subset R$ over $IR$ in $R$, there is a minimal prime $\fp\subset A$ over $I$ in $A$ such that $\fq=\fp R$. \item We have
$$ \rad_R(IR) \cap A = \rad_{A}(I) \ \text{ and } \ \rad_{A}(I)\cdot R = \rad_R(IR). $$
where $\rad_R(IR)$ denotes the radical of the ideal $IR$ in the ring $R$, and $\rad_A(I)$ is the radical of the ideal $I$ in the ring $A$.
\item $I$ is radical in $A$ iff $IR$ is radical in $R$.
\item We have that $\fq_0\subsetneq \cdots \subsetneq \fq_h$ is a strict chain of prime ideals in $A$ iff the extension $\fq_0 S\subsetneq \cdots \subsetneq \fq_h S$ is a strict chain of prime ideals in $S$. 
\end{enumerate} 
\end{proposition}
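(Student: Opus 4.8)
The plan is to exploit the intersection-flat extension $A \subset R$ established in \cref{proposition: intersection flatness}, together with the prime-extension property of \cref{proposition: extension of primes general}, and prove the five items essentially in the order listed, each feeding into the next. First I would prove (1): since $A \subset R$ is a free (hence faithfully flat) extension by \cref{proposition: intersection flatness}, the composition $A \to R \to R/JR$ has kernel exactly $JR \cap A$, and faithful flatness gives $JR \cap A = J$ by the standard fact that for a faithfully flat extension $A \to R$ and any ideal $J \subset A$ one has $JR \cap A = J$ (equivalently, $R/JR$ is faithfully flat over $A/J$, so $A/J \hookrightarrow R/JR$). This is routine and should be dispatched in one or two lines.

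Next I would establish (2). For the forward direction, let $\fp$ be a minimal prime over $I$ in $A$. Localizing, we may assume $I$ is $\fp$-primary-ish; more directly, $\fp R$ is prime by \cref{proposition: extension of primes general}, it contains $IR$, and if $\fq'$ were a prime with $IR \subset \fq' \subsetneq \fp R$ then $\fq' \cap A$ is a prime of $A$ containing $I$ and contained in $\fp R \cap A = \fp$ (using (1)); minimality of $\fp$ forces $\fq' \cap A = \fp$, and then faithful flatness of $A/\fp \to R/\fp R$ (a domain extension) together with $\fq'/IR \subsetneq \fp R/IR$ yields a contradiction via going-down for flat extensions — going-down lets us find a prime of $R$ inside $\fq'$ lying over $\mathbf{0}$ of $A/\fp$, i.e. $\fp R \subset \fq'$, contradiction. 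For the converse, given a minimal prime $\fq$ over $IR$, set $\fp := \fq \cap A$; then $\fp \supset I$, and by lying-over/going-down for the faithfully flat extension there is a minimal prime $\fp_0 \subset \fp$ over $I$ in $A$ with $\fp_0 R \subset \fq$; by the forward direction $\fp_0 R$ is a minimal prime over $IR$, and minimality of $\fq$ forces $\fq = \fp_0 R$, hence $\fp = \fp_0$.

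Items (3)--(5) then follow formally. For (3): $\rad_A(I) = \bigcap \fp$ over minimal primes $\fp$ of $I$ in $A$, and $\rad_R(IR) = \bigcap \fq$ over minimal primes $\fq$ of $IR$ in $R$; by (2) the latter collection is exactly $\{\fp R : \fp \text{ minimal over } I\}$, so $\rad_R(IR) = \bigcap_\fp \fp R = \left(\bigcap_\fp \fp\right) R$, where the last equality is precisely \emph{intersection flatness} of $A \subset R$ from \cref{proposition: intersection flatness} — this is the one place the full strength of intersection flatness (as opposed to ordinary flatness, which only handles finite intersections) is genuinely needed, since the family of minimal primes, while finite here, is cleanest to handle this way; alternatively one notes the family is finite and ordinary flatness suffices. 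Intersecting with $A$ and using (1) gives $\rad_R(IR) \cap A = \left(\bigcap_\fp \fp\right) R \cap A = \bigcap_\fp \fp = \rad_A(I)$. Item (4) is immediate: $I = \rad_A(I) \iff IR = \rad_A(I) R = \rad_R(IR)$, using (3) and faithful flatness for the forward implication and (3) plus intersecting with $A$ for the reverse. Item (5) is the chain statement: stated for $S$ (which is the case $t=0$, or one applies the result with $R$ replaced by $S$ and the prime sequence being $G_1,\dots,G_t$ followed by $F_1,\dots,F_m$, noting a prime sequence in $R$ lifts to one in $S$ by \cref{proposition: strong sequence CM UFD}); each $\fq_i S$ is prime by \cref{lemma: extension of prime}, strictness of $\fq_i S \subsetneq \fq_{i+1} S$ follows from $\fq_i S \cap A = \fq_i$ via (1), and conversely a strict chain downstairs contracts to a strict chain upstairs since contraction of primes is a prime and strictness is preserved because $\fq_i = \fq_i S \cap A$.

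The main obstacle I anticipate is item (2), specifically getting the minimal-prime correspondence exactly right in both directions without circularity: one must be careful that "minimal over $I$" is preserved, which requires going-down (available since the extension is flat) rather than just lying-over, and one must handle the possibility that distinct minimal primes of $I$ could a priori extend to comparable primes of $R$ — ruled out precisely because contraction recovers them by (1). Everything else is bookkeeping with faithful flatness and the already-established \cref{proposition: extension of primes general} and \cref{proposition: intersection flatness}.
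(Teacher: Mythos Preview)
Your proposal is correct and follows essentially the same route as the paper: item (1) via faithful flatness of the free extension $A\subset R$, item (2) via the prime-extension property \cref{proposition: extension of primes general} combined with contraction from (1), items (3)--(4) from the minimal-prime bijection plus flatness, and item (5) from \cref{lemma: extension of prime} and (1). The only cosmetic difference is in the forward direction of (2): you invoke going-down to conclude $\fp R\subseteq\fq'$ from $\fq'\cap A=\fp$, but the paper (and you, on reflection) can simply observe that $\fp=\fq'\cap A\subseteq\fq'$ and $\fq'$ is an $R$-ideal, so $\fp R\subseteq\fq'$ directly---no going-down is needed at that step.
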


\begin{proof}
1. Note that $R$ is a Cohen-Macaulay domain. Since $F_1,\cdots,F_m$ is a regular sequence, we know that $A\subset R$ is a free extension. Hence, $A\subset R$ is a faithfully flat extension. Therefore, for any ideal $J$ of the ring $A$, we have that the contraction of the ideal $JR$ to the ring $A$ is $I$ itself. Thus $IR\cap A=I$.

2. Let $\fp\subset A$ be a minimal prime over the ideal $I$ in $A$. Then, $\fp R$ is a prime ideal in $R$, by \cref{proposition: extension of primes general}. Let $IR\subseteq\fq\subseteq \fp R$ be a minimal prime over $IR$ in $R$. Then $I=IR\cap A\subseteq \fq\cap A\subseteq \fp R\cap A=\fp$. Since $\fp$ is a minimal prime, we see that $\fq\cap A=\fp$ and hence $\fp R\subseteq \fq$. Therefore $\fq=\fp R$ and $\fp R$ is a minimal prime over $IR$ in $R$.

Conversely, let $IR \subseteq\fq \subset R $ be a minimal prime over $IR$ in $R$. Then $I=IR\cap A\subseteq\fq \cap A$. Hence there exists a minimal prime $I\subseteq\fp$ in $A$ such that $\fp \subseteq \fq \cap A$. Now, $\fp R$ is a prime ideal in $R$. We have $IR \subseteq\fp R\subseteq (\fq\cap A)R\subseteq \fq$. As $\fq$ is a minimal prime over $IR$ in $R$, we conclude that $\fq =\fp R$.

3. We have $\rad_R(IR)=\cap_{i=1}^a\fq_i$, where $\fq_i$ are the minimal primes over $IR$ in $R$. 
Similarly, we have $\rad_A(I)=\cap_{j=1}^b\fp_j$ where $\fp_j$ are the minimal primes over $I$ in $A$. 
Now, by part (2), we know that for each $i\in [a]$, there exists $j\in [b]$ such that $\fq_i=\fp_j R$. 
We also have that for each $j\in [b]$, there exists $i\in [a]$ such that $\fp_j R=\fq_i$. 
Hence we have $\{\fq_i|i\in[a]\}=\{\fp_j R|j\in [b]\}$. 
Therefore, $\rad_R(IR)\cap A=(\cap_{i=1}^a\fq_i)\cap A=(\cap_{j=1}^b\fp_j R)\cap A=\cap_{j=1}^b(\fp_j R\cap A)=\cap_{j=1}^b\fp_j=\rad_A(I)$. 
Conversely, we have $\rad_A(I)\cdot R=(\cap_{j=1}^b\fp_j)\cdot R=\cap_{j=1}^b(\fp_j R)=\cap_{i=1}^a\fq_i=\rad_R(IR)$, where the second equality holds as $A\subset R$ is flat.

4. If $I$ is radical in $A$, then $\rad_R(IR)=\rad_A(I)\cdot R=IR$. Conversely, if $IR$ is radical in $R$, then $\rad_A(I)=\rad_R(IR)\cap A=IR\cap A=I$.

5. By part (1) we know that $\fq_i S\cap A=\fq_i$. Thus, if $\fq_i S$ is a prime then $\fq_i$ is a prime. Conversely, by \cref{lemma: extension of prime}, we know that if $\fq_i$ is a prime in $A$ then $\fq_i S$ is a prime in $S$ for all $i$. If $\fq_i S=\fq_{i+1} S$ for some $i$, then we have $\fq_i=(\fq_i S)\cap A=(\fq_{i+1} S)\cap A=\fq_{i+1}$ which is a contradiction. Conversely, if $\fq_i=\fq_{i+1}$ then $\fq_i S=\fq_{i+1} S$. Therefore, we have that $\fq_0\subsetneq \cdots \subsetneq \fq_h$ is a strict chain of prime ideals in $A$ iff the extension $\fq_0 S\subseteq \cdots \subsetneq \fq_h S$ is a strict chain of prime ideals in $S$.  
\end{proof}

\subsection{A primality criterion}

Recall that an ideal $I\subset S$ is prime iff $S/I$ is an integral domain iff the affine scheme $\mathrm{Spec}(S/I)$ is irreducible and reduced. Also, $I$ is radical iff $\mathrm{Spec}(S/I)$ is reduced. The following lemma provides a method of showing that a Cohen-Macaulay affine scheme $Y$ is irreducible or reduced, if there is a suitable morphism $\pi:Y\rightarrow X$ to an affine scheme $X$, such that $X$ and general fibers of $\pi$ are irreducible or reduced. We will use this to prove a primality criterion in \cref{lemma: irreducible prime} and a critierion for reducedness in \cref{lemma: CM discriminant radical}.

\begin{lemma}\label{lemma: property of fibers}
Let $\phi: \cA \rightarrow \cB$ be a homomorphism of finitely generated $\K$-algebras. Let $ Y=\mathrm{Spec}(\cB)$, $X=\mathrm{Spec}(\cA)$ and $\pi:Y\rightarrow X$ be the corresponding morphism of affine schemes. Suppose that every irreducible component of $Y$ dominates some irreducible component of $X$. 
\begin{enumerate}
    \item Suppose that $X$ is irreducible and reduced, i.e. $\cA$ is an integral domain. If $\pi^{-1}(x)$ is irreducible for a general closed point $x\in X$, then $Y$ is irreducible.
    \item Suppose $X$ is reduced and $Y$ satisfies Serre's property $\cS_1$, i.e. $Y$ does not have embedded primes. If for every irreducible component $W$ of $X$, we have that $\pi^{-1}(x)$ is reduced for a general closed point $x\in W$, then $Y$ is reduced.
\end{enumerate}
\end{lemma}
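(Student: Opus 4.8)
The plan is to prove both parts by reducing to standard facts about generic flatness and the behavior of fibers in a dominant morphism. For part (1), I would first apply generic flatness: since $\cA$ is a domain and $\cB$ is a finitely generated $\cA$-algebra, there is a nonzero $f \in \cA$ such that $\cB_f$ is free (hence faithfully flat) over $\cA_f$. This means $\pi$ is flat over the dense open $U = D(f) \subset X$. Now suppose $Y$ is reducible, so $Y = Y_1 \cup Y_2$ with $Y_1, Y_2$ proper closed subsets, and by hypothesis each $Y_i$ dominates $X$ (the only irreducible component of $X$). Then each $Y_i \to X$ is dominant, so its image contains a dense open subset of $X$, and after shrinking $U$ we may assume both $Y_1 \cap \pi^{-1}(U)$ and $Y_2 \cap \pi^{-1}(U)$ surject onto $U$. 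Picking a general closed point $x \in U$, the fiber $\pi^{-1}(x)$ meets both $Y_1$ and $Y_2$ in nonempty closed subsets; since neither $Y_i$ contains the other near the generic fiber, one shows $\pi^{-1}(x) = (Y_1 \cap \pi^{-1}(x)) \cup (Y_2 \cap \pi^{-1}(x))$ is a union of two proper closed subsets (using flatness to control dimensions of fibers, so that no component of a fiber is absorbed), contradicting irreducibility of the general fiber. More carefully, I would argue at the level of the generic fiber $Y_K := Y \times_X \operatorname{Spec} K(X)$: the decomposition $Y = Y_1 \cup Y_2$ induces $Y_K = (Y_1)_K \cup (Y_2)_K$ with both pieces nonempty (by dominance), so $Y_K$ is reducible; but $Y_K$ being reducible is an open condition that propagates to general closed fibers, contradicting the hypothesis. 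Hence $Y$ is irreducible.

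For part (2), I would combine the $\cS_1$ hypothesis with the fact that reducedness can be checked on the generic points of the irreducible components. Since $Y$ satisfies $\cS_1$, it has no embedded associated primes, so $Y$ is reduced if and only if the local ring at each generic point of each irreducible component of $Y$ is reduced (indeed a field). Let $W$ be an irreducible component of $X$ and let $\eta_W$ be its generic point; by hypothesis every component of $Y$ dominating $W$ maps to $\eta_W$. Restricting to the reduced subscheme $W_{\mathrm{red}}$ (which equals $W$ since $X$ is reduced) and applying generic flatness over $W$, I get a dense open $U_W \subset W$ over which $\pi$ is flat. For a general closed point $x \in U_W$, the fiber $\pi^{-1}(x)$ is reduced by hypothesis. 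Flatness plus reducedness of the general closed fiber implies, by semicontinuity-type arguments (the non-reduced locus of the fibers is constructible, and if it contained a dense subset of closed points it would contain $\eta_W$), that the generic fiber $\pi^{-1}(\eta_W) = Y \times_X \operatorname{Spec} \kappa(\eta_W)$ is reduced. Therefore the local rings of $Y$ at the generic points of its components lying over $\eta_W$ are reduced. Running this over all components $W$ of $X$ handles all generic points of $Y$ (since every component of $Y$ dominates some component of $X$), and by $\cS_1$ this forces $Y$ to be reduced.

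The main obstacle I anticipate is making the passage "reduced/irreducible for a general closed fiber $\Rightarrow$ same for the generic fiber over a component" fully rigorous, rather than hand-waving with "general" versus "generic". The clean way is to note that by Grothendieck's results (EGA IV, or the relevant constructibility statements in the Stacks Project), for a finite-type morphism the loci $\{x \in X : \pi^{-1}(x) \text{ is geometrically reduced}\}$ and $\{x : \pi^{-1}(x) \text{ is geometrically irreducible}\}$ are constructible (in fact, over the flat locus, the non-reduced/reducible locus is closed-ish), so if such a locus contains a dense set of closed points of an irreducible $X$ it must contain the generic point. Since $\K$ is algebraically closed of characteristic $0$, geometric reducedness/irreducibility coincide with the absolute notions on $\K$-points, removing subtleties about inseparability. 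I would cite these constructibility facts from \cite{stacks-project} rather than reprove them. A secondary subtlety is ensuring that when $Y = Y_1 \cup Y_2$, the induced decomposition of a general fiber is genuinely into \emph{proper} closed subsets — this is where dominance of each component of $Y$ over a component of $X$ is essential, together with flatness so that $\dim \pi^{-1}(x) \cap Y_i$ behaves as expected and no piece collapses.
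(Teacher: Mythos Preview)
Your proposal is correct and takes essentially the same approach as the paper: both parts pass from the general closed fiber to the generic fiber via constructibility results (the paper cites Stacks Project Tags 0554 and 0575 directly rather than routing through generic flatness), and part (2) uses $\cS_1$ to reduce to reducedness at the minimal primes of $\cB$. The only differences are cosmetic: your generic-flatness setup is not actually needed for those Stacks results, and the paper spells out the final step of part (2)---that $\cB_{\mathfrak{p}}$ is reduced once the fiber $\cB \otimes_{\cA} \cA_{\mathfrak{q}}$ is---via an explicit element-chasing argument, which you could replace by the one-line observation that $\cB_{\mathfrak{p}}$ is a further localization of $\cB \otimes_{\cA} \cA_{\mathfrak{q}}$ since $\phi(\cA \setminus \mathfrak{q}) \subset \cB \setminus \mathfrak{p}$.
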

\begin{proof}
(1) Since $\cA\rightarrow \cB$ is a homomorphism of finitely generated $\K$-algebras, the morphsim $\pi$ is of finite type. Therefore, by \cite[\href{https://stacks.math.columbia.edu/tag/0554}{Tag 0554}]{stacks-project}, the generic fiber of $\pi$ is irreducible. Since there is bijection between the irreducible components of $Y$ that dominate $X$ and the irreducible components of the generic fiber \cite[Chapter 0, 2.1.8]{Gro60}, we conclude that $Y$ is irreducible. \\

(2) Since $\cB$ satisfies the $\cS_1$-property, in order to show that $\cB$ is reduced, it is enough to show that $\cB$ is generically reduced, i.e. $\cB_{\mathfrak{p}}$ is reduced for any minimal prime $\mathfrak{p}$ of $\cB$.

Note that the minimal primes $\mathfrak{p}$  of $\cB$ correspond to the generic points of the irreducible components of $Y$. Similarly the generic points of irreducible components $W$ of $\mathrm{Spec}(\cA)$ correspond to minimal primes $\fq$ in $\cA$ and we have $W=\mathrm{Spec}(A/\fq)$. Since every irreducible component of $Y$ dominates some irreducible component of $X$, any such generic point $\mathfrak{p}$ maps to a minimal prime $\fq$ of $\cA$ under $\mathrm{Spec}(\cB)\rightarrow \mathrm{Spec}(\cA)$. Therefore, for any minimal prime $\fp$ of $\cB$, there exists a minimal prime $\fq$ of $\cA$ such that $\phi^{-1}(\mathfrak{p})=\fq$. Note that we have $Y\times_X W =\mathrm{Spec}(\cB\otimes_{\cA}\cA/\fq)$ and we have the fiber product diagram

\[\begin{tikzcd}
  {Y\times_X W} \arrow[d, "\pi_W"] \arrow[r] & Y \arrow[d,"\pi"]\\
  W \arrow[r] & X\\
\end{tikzcd}
\]

Since $\cA$ is reduced and $\fq$ is a minimal prime of $\cA$, we know that the localization $\cA_\fq$ is a reduced local ring of dimension zero. Hence $\cA_\fq$ is a field and $\fq\cA_\fq=(0)$. Therefore, we have $\cA_\fq\simeq \cA_\fq/\fq\cA_\fq\simeq (\cA/\fq)_{(0)}\simeq K(\cA/\fq)$, where $K(\cA/\fq)$ is the field of fractions of the integral domain $\cA/\fq$. We have that the fiber of $\pi$ over $\fq$ is given by $Y_\fq=\mathrm{Spec}(\cB\otimes_{\cA}\cA_\fq)$ and the the generic fiber of $\pi_W$ is given by $ (Y\times_X W)_\fq=\mathrm{Spec}(\cB\otimes_{\cA/\fq}K(\cA/\fq)) $.  Note that, by the universal property of fiber products, we have $Y_\fq\simeq (Y\times_X W)_\fq$.
Now $\pi_W:Y\times_X W \rightarrow W$ is a morphism of finite type and $W$ is irreducible with generic point $\fq$. By assumption, a the fiber $\pi_W^{-1}(x)$ is reduced for a general closed point $x\in W$. By \cite[\href{https://stacks.math.columbia.edu/tag/0575}{Tag 0575}]{stacks-project} we conclude that the generic fiber $(Y\times_X W)_\fq$ is reduced. Therefore, the scheme $Y_\fq$ is reduced and the ring 
$\mathrm{Spec}(\cB\otimes_{\cA}\cA_\fq)$ is reduced.

Let $\frac{f}{t}\in \cB_{\mathfrak{p}}$ be a nilpotent element. Therefore $sf^k=0$ in $\cB$ for some $s\not\in \mathfrak{p}$. So $(sf)^k=0$ in $\cB$ and hence $(sf)^k\otimes1=0$ in $\cB\otimes_\cA \cA_\fq$. Therefore  $sf\otimes 1=0$  in $\cB\otimes_\cA \cA_\fq$ by reducedness.  Let $T\subset \cA$ be the multiplicatively closed set $\cA\setminus \fq$.  Consider the $\cA$-bilinear map $\psi: \cB\times \cA_\fq\rightarrow T^{-1}\cB$ given by $(s,\frac{a}{b})=\frac{a\cdot s}{b}$. If $sf\otimes 1=0$, then by the universal property of tensor products we must have $sf=\psi(sf, 1)=0$ in $T^{-1}\cB$. Therefore there exists an $a\in T=\cA\setminus \fq$, such that $a\cdot sf=0$ in $\cB$. Note that $s\not\in \mathfrak{p}$. We also have $\phi(a)\not\in \mathfrak{p}$, as $\phi^{-1}(\mathfrak{p})=\fq$ and $a\not \in \fq$. So we have $\phi(a)sf=a\cdot sf=0\in \mathfrak{p}$, where $\phi(a)s\not \in \mathfrak{p}$. Therefore we conclude that $f=0$ in $\cB_{\mathfrak{p}}$. Thus $\cB_{\mathfrak{p}}$ is reduced, since $\frac{f}{t}$ was an arbitrary nilpotent element.
\end{proof}

\begin{proposition}\label{proposition: dominant}
Let $S:=\K[z_1,\cdots,z_m,x_1,\cdots,x_s]$ be the graded polynomial ring with $\deg(z_i)=d_i\geq 1$ and $\deg(x_i)=e_i\geq 1$. Let  $P,Q_1,\cdots,Q_k\in $ be homogeneous elements of positive degree. Let $A=\K[x_1,\cdots,x_s]$ and $Q_1,\cdots,Q_k\in A$. Suppose $P\not \in (x_1,\cdots,x_s)$ and the ideal $(Q_1,\cdots,Q_k)$ does not have any embedded primes in $S$. Then the following holds:

\begin{enumerate}
    \item $P$ is a non-zero divisor in $S/(Q_1,\cdots,Q_k)$. 
    \item For any minimal prime $\fp$ over $(P,Q_1,\cdots,Q_k)$ in $S$, we have that $\fp\cap A$ is a minimal prime over $(Q_1,\cdots,Q_k)$ in $A$.
 \end{enumerate}

\end{proposition}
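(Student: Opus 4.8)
The plan is to prove both statements by passing to the subalgebra $A = \K[x_1, \ldots, x_s]$ and using that $S$ is a polynomial ring over $A$ (indeed $S = A[z_1, \ldots, z_m]$), together with the absence of embedded primes in $(Q_1, \ldots, Q_k)$. Write $\bar{S} = S/(Q_1, \ldots, Q_k)$ and $\bar{A} = A/(Q_1, \ldots, Q_k)$, so that $\bar{S} = \bar{A}[z_1, \ldots, z_m]$ is a polynomial extension of $\bar{A}$.

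For part (1), I would argue that the minimal primes of $(Q_1, \ldots, Q_k)$ in $S$ are exactly the ideals $\fq S$ where $\fq$ ranges over the minimal primes of $(Q_1, \ldots, Q_k)$ in $A$: since $S = A[z_1, \ldots, z_m]$ is a faithfully flat (even free) polynomial extension of $A$, extension of primes stays prime and the correspondence of minimal primes is standard. The hypothesis that $(Q_1, \ldots, Q_k)$ has no embedded primes in $S$ means that the associated primes of $\bar{S}$ are precisely these extended minimal primes $\fq S$. Now $P \notin (x_1, \ldots, x_s)$ means $P$ has a nonzero image modulo $(x_1, \ldots, x_s) \supseteq$ any $\fq S$ with $\fq \subseteq (x_1, \ldots, x_s)$; more carefully, since $\fq \subset A$ and $(x_1,\ldots,x_s)$ is the homogeneous maximal ideal of $A$, every such $\fq$ is contained in $(x_1,\ldots,x_s)$, hence $\fq S \subseteq (x_1,\ldots,x_s)S$, and therefore $P \notin \fq S$ for every associated prime $\fq S$ of $\bar S$. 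A non-zerodivisor on a ring is precisely an element avoiding all associated primes, so $P$ is a non-zerodivisor in $\bar{S}$.

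For part (2), let $\fp$ be a minimal prime over $(P, Q_1, \ldots, Q_k)$ in $S$. First I would observe that $\fp \cap A$ contains $(Q_1, \ldots, Q_k)$ (as elements of $A$), so it lies over some minimal prime $\fq$ of $(Q_1, \ldots, Q_k)$ in $A$; the goal is to show $\fp \cap A = \fq$. By part (1), $P$ is a non-zerodivisor modulo $(Q_1,\ldots,Q_k)$, so by Krull's principal ideal theorem every minimal prime $\fp$ over $(P,Q_1,\ldots,Q_k)$ has height exactly one more than the height of the minimal prime of $(Q_1,\ldots,Q_k)$ it contains — in particular $\fp$ strictly contains a unique such extended minimal prime $\fq_0 S$ with $\fq_0 \subseteq A$ minimal over $(Q_1,\ldots,Q_k)$, and $\dim (S/\fp) = \dim(S/\fq_0 S) - 1 = \dim(A/\fq_0) + m - 1$. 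On the other hand, since $P \notin (x_1,\ldots,x_s)S = \mathfrak{n}S$ (where $\mathfrak n$ is the maximal ideal of $A$), the image of $P$ in $\bar A[z_1,\ldots,z_m]/\fq_0\bar S = (A/\fq_0)[z_1,\ldots,z_m]$ is not a zerodivisor and not a unit, so it defines a proper hypersurface there whose components all dominate $\mathrm{Spec}(A/\fq_0)$; equivalently $P$ is a non-zerodivisor in $(A/\fq_0)[z_1,\ldots,z_m]$ precisely because $A/\fq_0 \hookrightarrow (A/\fq_0)[z_1,\ldots,z_m]$ and $P$ is not in the maximal ideal. The fibre-dimension count then forces $\fp \cap A = \fq_0$: if $\fp \cap A$ strictly contained $\fq_0$ then $\dim(S/\fp) \le \dim(A/(\fp\cap A)) + m \le \dim(A/\fq_0) - 1 + m$, but $\fp$ could still be minimal only if this matches, giving a contradiction unless equality $\fp\cap A = \fq_0$ holds.

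The step I expect to be the main obstacle is the dimension/going-down bookkeeping in part (2): one must rule out that $\fp$ "jumps" down to a strictly smaller prime of $A$ while still being a minimal prime over $(P, Q_1, \ldots, Q_k)$ in $S$. The cleanest way to handle this is via \cref{lemma: property of fibers} or a direct flatness argument — the extension $A/\fq_0 \to S/\fq_0 S = (A/\fq_0)[z_1,\ldots,z_m]$ is flat, so going-down holds, and the hypothesis $P \notin \mathfrak{n}S$ guarantees that $V(P) \cap \mathrm{Spec}(S/\fq_0 S)$ dominates $\mathrm{Spec}(A/\fq_0)$, so every irreducible component of $V(P,Q_1,\ldots,Q_k)$ inside $V(\fq_0 S)$ surjects onto $\mathrm{Spec}(A/\fq_0)$; combined with flatness and the fact that $\fq_0$ is minimal over $(Q_1,\ldots,Q_k)$, the generic point of such a component contracts exactly to $\fq_0$. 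I would assemble these observations into the two claimed statements, being careful to invoke the no-embedded-primes hypothesis only where it is genuinely needed, namely in part (1) to identify $\mathrm{Ass}(\bar S)$ with the set of extended minimal primes.
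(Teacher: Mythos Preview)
Your argument for part (1) is correct and matches the paper's: the associated primes of $S/(Q_1,\ldots,Q_k)$ are exactly the extensions $\fq S$ of minimal primes $\fq$ of $(Q_1,\ldots,Q_k)$ in $A$ (using no embedded primes), each such $\fq$ is homogeneous hence contained in $(x_1,\ldots,x_s)$, and $P\notin (x_1,\ldots,x_s)S$ then forces $P$ to avoid every associated prime.

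For part (2), however, there is a genuine gap. Your height/dimension bookkeeping is correct up to the point where you observe that the two bounds
\[
\dim(S/\fp)=\dim(A/\fq_0)+m-1 \quad\text{and}\quad \dim(S/\fp)\le \dim(A/(\fp\cap A))+m\le \dim(A/\fq_0)+m-1
\]
are compatible rather than contradictory --- and you correctly flag this. But your proposed resolution (via domination, going-down, or \cref{lemma: property of fibers}) does not work as stated: going-down for the flat map $A/\fq_0 \to (A/\fq_0)[z_1,\ldots,z_m]$ does not by itself show that \emph{each} component of $V(\bar P)$ dominates the base, only that the total space does; and \cref{lemma: property of fibers} takes such domination as a hypothesis rather than proving it, so invoking it here would be circular (indeed, the paper uses the present proposition precisely to verify that hypothesis later).

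What you are missing is one more line of algebra that turns your dimension equality into a contradiction. If $\fp\cap A\supsetneq\fq_0$, then $\mathrm{ht}(\fp\cap A)\ge h+1$ where $h=\mathrm{ht}(\fq_0)$. A chain of primes of length $h+1$ in $A$ ending at $\fp\cap A$ extends (since $S=A[z_1,\ldots,z_m]$) to a chain of primes of length $h+1$ in $S$ ending at $(\fp\cap A)S\subseteq\fp$. But you already computed $\mathrm{ht}(\fp)=h+1$, so this forces $\fp=(\fp\cap A)S$. Now $\fp\cap A\subseteq (x_1,\ldots,x_s)$, hence $P\in\fp=(\fp\cap A)S\subseteq (x_1,\ldots,x_s)S$, contradicting the hypothesis. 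This is exactly how the paper finishes; once you see that the equality case pins $\fp$ down as an extended ideal from $A$, the contradiction with $P\notin(x_1,\ldots,x_s)$ is immediate and no geometric fibre argument is needed.
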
 

\begin{proof}
1. Since $(Q_1,\cdots,Q_k)$ does not have any embedded primes in $S$, we know that all associated primes of $(Q_1,\cdots,Q_k)$ are minimal primes. If $P$ is a zero-divisor in $S/(Q_1,\cdots,Q_k)$, then there exists a minimal prime $\fp$ over $(Q_1,\cdots,Q_k)$ in $S$ such that $P\in \fp$. By \cref{proposition: radicals and prime sequences}, we know that there exists a minimal prime $\fq$ over $(Q_1,\cdots,Q_k)$ in $A$ such that $\fp=\fq\cdot S$. Since $(Q_1,\cdots,Q_k)$ is a homogeneous ideal in $A$, the minimal prime $\fq$ is also homogeneous. Hence $\fq\subset (x_1,\cdots,x_s)$, which is a contradiction as $P\not\in (x_1,\cdots,x_s)$.

2. Since primary decomposition commutes with polynomial extensions, we know that $(Q_1,\cdots,Q_k)$ does not have any embedded primes in $A$ as well. Let $\fp$ be a minimal prime over $(P,
Q_1,\cdots,Q_k)$ in $S$. Since $(P,Q_1,\cdots,Q_k)$ is a homogeneous ideal, we know that $\fp$ is homogeneous and $\fp\subset (z_1,\cdots,z_m,x_1,\cdots,x_s)$ in $S$. Now $(Q_1,\cdots,Q_k)\subset \fp\cap A$. Hence there exists a minimal prime $\fq$ over $(Q_1,\cdots,Q_k)$ in $A$ such that $\fq \subseteq \fp\cap A$. We will show that $\fq=\fp\cap A$.

By \cref{proposition: radicals and prime sequences}, we have that $\fq S$ is a minimal prime over $(Q_1,\cdots,Q_s)$ in $S$. Suppose $\mathrm{ht}(\fq)=h$. Then $\mathrm{ht}(\fq S)=h$, as $S/\fq S= (A/\fq)[z_1,\cdots,z_m]$. Note that $\fp$ is a minimal prime over $(\fq S,P)$ in $S$. By part (1), we know that $P$ is a non-zero divisor in $S/\fq S$. Therefore, $\mathrm{ht}(\fp)=h+1$. Suppose $\fp\cap A\neq \fq$. Then $\mathrm{ht}(\fp\cap A)\geq h+1$. Let $\fq_0\subsetneq\fq_1\cdots \subsetneq \fq_{h+1}=\fp\cap A$ be a strict chain of primes of length $h+1$ in $A$. Then by \cref{proposition: radicals and prime sequences}, we have a chain $\fq_0S\subsetneq\fq_1S \subsetneq\cdots \subsetneq (\fq_{h+1} S)= (\fp\cap A)S\subseteq \fp$ of length at least $h+1$ in $S$. Hence we must have $\fp=(\fp\cap A)S$. Now $\fq\cap A\subset (z_1,\cdots,z_m,x_1,\cdots,x_s)\cap A =(x_1,\cdots,x_s)$ in $A$. Therefore we have a contradiction, since $P\in \fp=(\fp\cap A)S$ and $P\not \in (x_1,\cdots,x_s)$.
\end{proof}

We note that the proof of part (1) of \cref{proposition: dominant} works for quotients of $S$ by prime sequences. We provide a proof below for completeness.

\begin{corollary}\label{corollary: dominant in quotient}
Let $H_1,\cdots,H_t\in S$ be a prime sequence of forms in $S$. Let $R=S/(H_1,\cdots,H_t)$. Suppose $g_1,\cdots,g_r,f_1,\cdots,f_s$ is a prime sequence of homogeneous elements in $R$. Let $P,Q_1,\cdots,Q_k\in R$  be homogeneous elements of positive degree. Let $A=\K[f_1,\cdots,f_s]$ and $A'=\K[g_1,\cdots,g_r,f_1,\cdots,f_s]$. Suppose $P\not \in (f_1,\cdots,f_s)$ and the ideal $(Q_1,\cdots,Q_k)$ does not have any embedded primes in $R$. Then  $P$ is a non-zero divisor in $R/(Q_1,\cdots,Q_k)$. 
\end{corollary}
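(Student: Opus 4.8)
The plan is to run the argument of \cref{proposition: dominant}(1) almost verbatim, with the polynomial ring $S$ there replaced by $R$, and with the transfer principle of \cref{proposition: radicals and prime sequences} taking the place of the elementary facts about extensions of primes in polynomial rings. The one genuinely new point is that $f_1,\dots,f_s$ need not itself be a prime sequence in $R$, so \cref{proposition: radicals and prime sequences} does not apply directly to the extension $\K[f_1,\dots,f_s]\subseteq R$; this is exactly why the hypothesis supplies the auxiliary forms $g_1,\dots,g_r$ completing it to a prime sequence, and the reduction proceeds in two steps, $A\subseteq A'\subseteq R$.

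First I would record the setup. Since $g_1,\dots,g_r,f_1,\dots,f_s$ is a prime sequence in $R$, it is algebraically independent over $\K$ by \cref{remark: relation reta prime}, so $A'=\K[g_1,\dots,g_r,f_1,\dots,f_s]$ is a polynomial ring, graded by the degrees in $R$ of the $g_i,f_j$, and $A=\K[f_1,\dots,f_s]\subseteq A'$ is the polynomial subalgebra with $A'=A[g_1,\dots,g_r]$. By \cref{proposition: strong sequence CM UFD} the ring $R=S/(H_1,\dots,H_t)$ is a Cohen-Macaulay domain, finitely generated and graded with $R_0=\K$; hence \cref{proposition: radicals and prime sequences} applies to the extension $A'\subseteq R$, and in particular, for every ideal $J\subseteq A'$, the minimal primes of $JR$ in $R$ are exactly the ideals $\mathfrak{P}R$ with $\mathfrak{P}$ a minimal prime of $J$ in $A'$.

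Then comes the core of the argument. Suppose, towards a contradiction, that $P$ is a zero-divisor on $R/(Q_1,\dots,Q_k)R$. Since $(Q_1,\dots,Q_k)R$ has no embedded primes, all its associated primes are minimal, so $P$ lies in some minimal prime $\fp$ over $(Q_1,\dots,Q_k)R$. As $Q_1,\dots,Q_k\in A\subseteq A'$, the correspondence above gives $\fp=\mathfrak{P}R$ for some minimal prime $\mathfrak{P}$ of $(Q_1,\dots,Q_k)A'$ in $A'$; and since $A'=A[g_1,\dots,g_r]$ is a polynomial extension of $A$ with $(Q_1,\dots,Q_k)\subseteq A$, the standard behaviour of minimal primes under polynomial extension yields $\mathfrak{P}=\fq A'$ for a minimal prime $\fq$ of $(Q_1,\dots,Q_k)$ in $A$, whence $\fp=\fq R$. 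Now $(Q_1,\dots,Q_k)A$ is a homogeneous ideal of the graded polynomial ring $A=\K[f_1,\dots,f_s]$, so its minimal prime $\fq$ is homogeneous and hence contained in the irrelevant ideal $(f_1,\dots,f_s)A$. Consequently $\fp=\fq R\subseteq(f_1,\dots,f_s)R$, and $P\in\fp$ forces $P\in(f_1,\dots,f_s)R$, contradicting the hypothesis $P\notin(f_1,\dots,f_s)$. Therefore $P$ is a non-zero divisor on $R/(Q_1,\dots,Q_k)R$.

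The step I expect to require the most care is this reduction from $R$ down to the small polynomial ring $A=\K[f_1,\dots,f_s]$: one must thread the transfer principle through the intermediate polynomial ring $A'$, where \cref{proposition: radicals and prime sequences} is available because $g_1,\dots,g_r,f_1,\dots,f_s$ is a prime sequence, and then appeal to the purely formal behaviour of minimal primes under the polynomial extension $A\subseteq A'$. Checking that $R$ and this prime sequence meet the hypotheses of \cref{proposition: radicals and prime sequences}, and that $(Q_1,\dots,Q_k)A$ is a proper homogeneous ideal so that its minimal primes sit inside the irrelevant ideal, is then routine, and no input beyond \cref{proposition: dominant}(1) and the transfer principle is needed.
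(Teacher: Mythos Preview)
Your proof is correct and follows the same strategy as the paper's: reduce to a minimal prime of $(Q_1,\dots,Q_k)$ in the small polynomial subalgebra, observe it is homogeneous and hence contained in $(f_1,\dots,f_s)$, and derive a contradiction. In fact your write-up is more careful than the paper's on the one subtle point: the paper invokes \cref{proposition: radicals and prime sequences} directly for the extension $A=\K[f_1,\dots,f_s]\subseteq R$, whereas you correctly note that only the full sequence $g_1,\dots,g_r,f_1,\dots,f_s$ is assumed to be prime, and so route the argument through $A'\subseteq R$ first and then descend along the polynomial extension $A\subseteq A'$. Both you and the paper tacitly use that $Q_1,\dots,Q_k\in A$, which is not stated in the corollary but is the intended hypothesis (cf.\ the applications in \cref{proposition: three forms regular sequence} and the analogous assumption in \cref{proposition: dominant}).
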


\begin{proof}
1. Since $(Q_1,\cdots,Q_k)$ does not have any embedded primes in $R$, we know that all associated primes of $(Q_1,\cdots,Q_k)$ are minimal primes. If $P$ is a zero-divisor in $R/(Q_1,\cdots,Q_k)$, then there exists a minimal prime $\fp$ over $(Q_1,\cdots,Q_k)$ in $R$ such that $P\in \fp$. By \cref{proposition: radicals and prime sequences}, we know that there exists a minimal prime $\fq$ over $(Q_1,\cdots,Q_k)$ in $A$ such that $\fp=\fq\cdot S$. Since $(Q_1,\cdots,Q_k)$ is a homogeneous ideal in $A$, the minimal prime $\fq$ is also homogeneous. Hence $\fq\subset (f_1,\cdots,f_s)$, which is a contradiction as $P\not\in (f_1,\cdots,f_s)$.
\end{proof}

The following lemma proves a criterion for an ideal $(P,Q)$ to be prime. 

\begin{lemma}[Primality criterion]\label{lemma: irreducible prime}
Let $P,Q$ be irreducible forms in $S=\K[y_1,\cdots,y_n]$ . Let $g_1,\cdots,g_s$ be an $\mathcal{R}_\eta$-sequence in $S$ with $\eta\geq 3$. Suppose that $Q\in \K[g_1,\cdots,g_s]$  and $P$ is irreducible in $S/\mathfrak{I}_Q$, where $\mathfrak{I}_Q=(g_1,\cdots,g_s)$ . Then the ideal $(P,Q)$ is prime.
\end{lemma}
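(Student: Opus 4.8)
\textbf{Proof plan for Lemma~\ref{lemma: irreducible prime}.}

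The plan is to apply the fibrewise irreducibility criterion of \cref{lemma: property of fibers}(1) to a suitable projection. Let $A = \K[g_1, \ldots, g_s]$; since $g_1, \ldots, g_s$ is an $\mathcal{R}_\eta$-sequence with $\eta \geq 3$, it is in particular a regular sequence, hence algebraically independent, so $A$ is a polynomial ring, and by \cref{proposition: strong sequence CM UFD} the ring $S/(g_1,\ldots,g_s)$ is a Cohen-Macaulay UFD and $A \subset S$ is intersection flat (\cref{proposition: intersection flatness}). Consider $\cB := S/(P, Q)$. Since $Q \in A$ is irreducible, $(Q)$ is prime in $A$, so by \cref{proposition: extension of primes general} (applied with the trivial prime sequence $G_1, \ldots, G_t$, i.e. $R = S$, $A = \K[g_1,\ldots,g_s]$) the ideal $QS$ is prime in $S$, hence $S/QS$ is a domain; since $P \notin QS$ (as $P$ is irreducible in $S/\mathfrak{I}_Q$, in particular nonzero there, and one checks $P \notin (Q)$), the element $P$ is a nonzero divisor on $S/QS$, so $(P,Q)$ has pure codimension $2$ and $\cB$ is Cohen-Macaulay, hence has no embedded primes. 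Now consider $\cA := A/(Q) = \K[g_1,\ldots,g_s]/(Q)$; by \cref{lem:factor-ufd-algebra}(1), $Q$ irreducible in $S$ forces $Q$ irreducible in $A$ (here I use that $Q$ is irreducible in $S$, which is given), so $\cA$ is a domain. The inclusion $A \hookrightarrow S$ induces a ring map $\phi : \cA = A/(QA) \to S/(QS) \to S/(P,Q) = \cB$, giving $\pi : Y := \Spec(\cB) \to X := \Spec(\cA)$.

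First I would check the hypothesis of \cref{lemma: property of fibers}(1) that every irreducible component of $Y$ dominates an irreducible component of $X$: since $X = \Spec(A/(Q))$ is irreducible, this amounts to showing $\pi$ is dominant, equivalently that $\phi$ is injective, equivalently $QS \cap A = (Q)A$ together with $(P,Q)S \cap A \subseteq (Q)A$. The first is intersection flatness (contraction of an extended ideal), the second follows because $A/(Q) \hookrightarrow S/(QS)$ is a domain extension and $P$ is a nonzero divisor mod $QS$, so no element of $A$ outside $(Q)$ becomes a zero divisor and lands in $(P,Q)$; more carefully, if $a \in A$ and $a \in (P,Q)S$ then $a \in QS$ by primality of $(P,Q)S/QS$ being generated by a nonzerodivisor image of $P$, hence $a \in QS \cap A = (Q)A$. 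Thus $\pi$ is dominant and every component of $Y$ dominates $X$.

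Next, the heart of the argument: the generic fibre of $\pi$ should be irreducible. The generic fibre is $\Spec\big(\cB \otimes_{\cA} K(\cA)\big)$ where $K(\cA)$ is the fraction field of $\cA = A/(Q)$. Extending scalars from $A$ to $\widetilde A := A_{(0)} = \K(g_1,\ldots,g_s)$ (the fraction field of the polynomial ring $A$), the fibre of $\Spec(S/(P,Q)) \to \Spec(A)$ over the generic point of $A$ is $\Spec\big((S \otimes_A \widetilde A)/(P,Q)\big)$; but $S \otimes_A \widetilde A = \widetilde A[\,\text{remaining variables}\,]$ is a polynomial ring over the field $\widetilde A$, in which $Q$ is a unit, so this fibre is $\Spec\big((S\otimes_A \widetilde A)/(P)\big)$, and this is irreducible precisely when $P$ is irreducible in $S \otimes_A \widetilde A$. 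The link to the hypothesis is that $P$ is assumed irreducible in $S/\mathfrak{I}_Q = S/(g_1,\ldots,g_s)$, i.e.\ in $S/(A_+)S$ where $A_+ = (g_1,\ldots,g_s)$ is the irrelevant ideal of $A$; I would deduce irreducibility over the generic point $(0)$ of $A$ from irreducibility over the closed point $A_+$ by an openness/semicontinuity argument for the locus of irreducibility of fibres of the flat family $\Spec(S/(P)) \to \Spec(A)$ — concretely, since $A \subset S$ is flat (indeed $S$ is free over $A$, as $g_1,\ldots,g_s$ extends to an hsop by Cohen-Macaulayness) and $S/(P)$ is flat over $A$ after checking $P$ is a nonzerodivisor in each fibre, the set of points of $\Spec(A)$ with geometrically irreducible fibre is open by \cite[\href{https://stacks.math.columbia.edu/tag/0559}{Tag 0559}]{stacks-project} or the cited Tag 0554-type results, and being nonempty (it contains the closed point) it contains the generic point. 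Then \cref{lemma: property of fibers}(1) yields $Y = \Spec(S/(P,Q))$ irreducible; combined with $\cB$ having no embedded primes and being reduced (it is $\cS_1$ and generically reduced since its generic fibre over $X$, a localization of $(S\otimes_A\widetilde A)/(P)$, is reduced as $P$ is irreducible hence squarefree), we get $(P,Q)$ prime.

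\textbf{Main obstacle.} The delicate point is the passage from irreducibility of $P$ in the special fibre $S/(g_1,\ldots,g_s)$ (closed point of $\Spec A$) to irreducibility in the generic fibre $S \otimes_A \K(g_1,\ldots,g_s)$. This is a semicontinuity statement that runs in the ``wrong'' direction for naive upper-semicontinuity of fibre dimension; it works because irreducibility (more precisely, geometric irreducibility and geometric reducedness) of fibres of a flat finite-type morphism is an \emph{open} condition on the base, so a property holding at a special point propagates to the generic point of the same component. I would need the $\mathcal{R}_\eta$-hypothesis with $\eta \geq 3$ precisely to guarantee (via \cref{proposition: strong sequence CM UFD}) that $S/(g_1,\ldots,g_s)$ is a normal domain, ensuring $P$ being irreducible there is equivalent to $(P) + (g_1,\ldots,g_s)$ being prime, and that the family behaves well enough (e.g. is flat with integral total space) for the openness result to apply; verifying these flatness and fibre-dimension bookkeeping details is where the real work lies.
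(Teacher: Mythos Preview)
Your overall architecture matches the paper: both set up $\pi:Y=\Spec(S/(P,Q))\to X=\Spec(A/(Q))$ and invoke \cref{lemma: property of fibers}. The divergence is entirely in how fibre irreducibility is verified, and here your argument has real problems.

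First, a concrete error: the fibre of $\Spec(S/(P,Q))\to\Spec(A)$ over the generic point of $\Spec(A)$ is \emph{empty}, not $\Spec\bigl((S\otimes_A\widetilde A)/(P)\bigr)$ --- once $Q$ becomes a unit, $(P,Q)$ is the unit ideal. What you actually need is the fibre of $Y\to X$ over the generic point of $X$, i.e.\ over the height-one prime $(Q)\subset A$, which is a different computation. (Also, $S\otimes_A K(A)$ is not a polynomial ring over $K(A)$ once the $g_i$ have degree $>1$; it is only a localization of $S$.) Second, the genuine gap: your openness-of-irreducible-fibres argument for $\Spec(S/(P))\to\Spec(A)$ requires flatness, which you do not establish. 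Verifying flatness amounts to checking that $P$ is a nonzerodivisor in $S/(g_1-c_1,\ldots,g_s-c_s)$ for each $c$, and the clean way to see this already uses the key fact you are trying to circumvent.

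The paper's route is shorter and sidesteps both issues. Since $S/\mathfrak{I}_Q$ is a UFD (this is exactly where $\eta\ge 3$ is used) and $P$ is irreducible there, the ideal $(g_1,\ldots,g_s,P)$ is prime in $S$; hence $g_1,\ldots,g_s,P$ is a \emph{prime sequence}. Then \cite[Proposition~2.8]{AH20a} gives immediately that $(g_1-c_1,\ldots,g_s-c_s,P)$ is prime for \emph{every} $c\in\K^s$. Thus every closed fibre $(\pi|_Y)^{-1}(x)=\Spec\bigl(S/(\mathfrak m_xS,P)\bigr)$ is integral --- irreducible and reduced in one stroke --- and \cref{lemma: property of fibers} finishes. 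No flatness bookkeeping, no semicontinuity, and reducedness comes for free rather than needing a separate generic-reducedness check.
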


\begin{proof}
Since $g_1,\cdots, g_s$ is an $\mathcal{R}_\eta$-sequence, we know that the subalgebra $\K[g_1,\cdots,g_s]$  is isomorphic to a polynomial ring $\K[x_1,\cdots,x_s]$. We identify $\K[g_1,\cdots,g_s]$ with $\K[x_1,\cdots,x_s]$ using this isomorphism and, by abuse of notation, the polynomial in $\K[x_1,\cdots,x_s]$ corresponding to $Q$ is still denoted by $Q$. Let $Y\subset \mathbb{A}^n$ and $X\subset \mathbb{A}^s$ be the closed subschemes defined by the ideals $(P,Q)$ and $(Q)$ respectively. Let $B=S/(P,Q)$ and $A=\K[x_1,\cdots,x_s]/(Q)$. We have $\mathrm{Spec}(B)=Y$ and $\mathrm{Spec}(A)=X$. We have an inclusion of rings $A\hookrightarrow B$, since $(P,Q)\cap \K[g_1,\cdots,g_s]=(Q)$ by Proposition \ref{proposition: dominant}. Let $\pi: \mathbb{A}^n\rightarrow \mathbb{A}^s$ be the projection morphism defined by $\pi(y)=(g_1(y),\cdots,g_s(y))$. Note that $\pi(Y)\subset X$. We will show that $Y$ is irreducible and reduced.\\

First we will show that $(\pi|_Y)^{-1}(x)$ is irreducible and reduced for any closed point $x\in X$. Let $x=(c_1,\cdots,c_s)$ be a closed point in $X$ and let $\mathfrak{m}_x=(x_1-c_1,\cdots,x_s-c_s)$ the maximal ideal corresponding to $x$. Let $\mathfrak{m}_x\cdot S$ denote the ideal generated by $(g_1-c_1,\cdots,g_s-c_s)$ in $S$. By Lemma \ref{lemma: extension of prime}, we know that $\mathfrak{m}_x\cdot S$ is a prime ideal in $S$. Note that we have  $$(\pi|_Y)^{-1}(x)\simeq \mathrm{Spec}(S/(\mathfrak{m}_x\cdot S,P))$$ for $x\in X$. Since $\mathfrak{m}_x\cdot S=(g_1-c_1,\cdots,g_s-c
_s)$, it is enough to show that the ideal $(g_1-c_1,\cdots,g_s-c_s,P)$ is prime for any $x=(c_1,\cdots,c_s)\in X$. Since $g_1,\cdots,g_s$ is an $\mathcal{R}_\eta$ sequence with $\eta\geq 3$, we know that  $S/\mathfrak{J}_Q$ is an UFD. As the image $\overline{P}$ is irreducible in $S/\mathfrak{J}_Q$, we have that $(\overline{P})$ is a prime ideal in $S/\mathfrak{J}_Q$ and $(\mathfrak{J}_Q,P)$ is a prime ideal in $S$. Therefore $(g_1,\cdots,g_s,P)$ is a prime ideal in $S$. Hence $(g_1,\cdots,g_s,P)$ is a prime sequence in $S$, because $(g_1,\cdots,g_s)$ was an $\mathcal{R}_\eta$ sequence.  By \cite[Proposition 2.8]{AH20a}, we conclude that $(g_1-c_1,\cdots,g_s-c_s,P)$ is a prime ideal for any $(c_1,\cdots,c_s)\in X$. Therefore $(\pi|_Y)^{-1}(x)$ is an integral scheme, in particular it is irreducible and reduced.\\

Note that by \cref{proposition: dominant}, every irreducible component of $Y$ dominates $X$. We know that $X$ is irreducible, since $Q$ is irreducible. Since $P,Q$ is a regular sequence, we know that $Y$ is Cohen-Macaulay.  We have shown that $(\pi|_Y)^{-1}(x)$ is irreducible for all $x\in X$. 
Therefore $Y$ is irreducible and reduced by Lemma \ref{lemma: property of fibers}. 
\end{proof}

\subsection{Elimination, Resultants and Discriminant Lemma}

We now prove some structural results on the elimination ideals of radical ideals.

\begin{lemma}[Elimination of radical]\label{lemma: principal eliminations linear}
Let $P, Q_1,\cdots, Q_k \in S := \K[x_1, \ldots, x_r, y_1, \ldots, y_s]$ be polynomials of positive degree. Let $S'=\K[x_2, \ldots, x_r, y_1, \ldots, y_s]$. Suppose the following conditions hold:
\begin{enumerate}
    \item The ideal $(Q_1,\cdots,Q_k)$ is a radical ideal in $S$, and $Q_1,\cdots,Q_k \in \K[y_1, \ldots, y_s]$.
    \item The polynomial $P$ depends on the variable $x_1$, i.e. $P=\sum_{j=0}^e p_j x_1^j$, where $e>0$, $p_e\neq 0$ and $p_j\in S'$ for all $j$.
    \item We have that $p_e$ is a non-zero divisor in $S/(Q_1,\cdots,Q_k)$.
\end{enumerate}
Then the elimination ideal $\rad(P, Q_1,\cdots,Q_k)_{x_1}=\rad(P,Q_1,\cdots,Q_k)\cap S'$ is generated by $Q_1,\cdots, Q_k$, i.e. $\rad(P, Q_1,\cdots,Q_k)_{x_1} = (Q_1,\cdots,Q_k)$. In particular, $\rad{(P,Q_1,\cdots,Q_k)}\cap \K[y_1,\cdots,y_s]=(Q_1,\cdots,Q_k)$.
\end{lemma}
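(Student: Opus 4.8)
The plan is to show both inclusions. The inclusion $(Q_1,\dots,Q_k) \subseteq \rad(P,Q_1,\dots,Q_k) \cap S'$ is immediate since each $Q_i \in \K[y_1,\dots,y_s] \subseteq S'$ and $Q_i \in (P,Q_1,\dots,Q_k)$. The content is the reverse inclusion: if $g \in S'$ lies in $\rad(P,Q_1,\dots,Q_k)$, then $g \in (Q_1,\dots,Q_k)$. Here is the strategy I would follow. Since $(Q_1,\dots,Q_k)$ is radical by hypothesis (1), and $p_e$ is a nonzerodivisor mod $(Q_1,\dots,Q_k)$ by hypothesis (3), I want to argue prime-by-prime: it suffices to show that for every minimal prime $\fq$ over $(Q_1,\dots,Q_k)$ in $S$, we have $\rad(P,Q_1,\dots,Q_k) \cap S' \subseteq \fq$. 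Indeed, $(Q_1,\dots,Q_k) = \rad(Q_1,\dots,Q_k) = \bigcap_\fq \fq$, so this would give the claim. Because $(Q_1,\dots,Q_k)$ is extended from $\K[y_1,\dots,y_s]$, each such $\fq$ is also extended, i.e. $\fq = \fQ \cdot S$ for a minimal prime $\fQ$ over $(Q_1,\dots,Q_k)$ in $\K[y_1,\dots,y_s]$; this uses that primary decomposition commutes with polynomial extension (the variables $x_1,\dots,x_r$ are free over $\K[y_1,\dots,y_s]$), exactly as invoked in the proof of \cref{proposition: dominant}.

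The key step is the following localization/elimination argument at a fixed minimal prime $\fq = \fQ S$. Pass to the integral domain $D := S/\fq = (\K[y_1,\dots,y_s]/\fQ)[x_1,\dots,x_r]$, and localize at the image $\bar p_e$, which is a nonzero element of $D$ by hypothesis (3); write $D' := D[\bar p_e^{-1}]$. In $D'$ the polynomial $\bar P = \sum_{j=0}^e \bar p_j x_1^j$ has invertible leading coefficient in $x_1$, so $D'/(\bar P)$ is a free $D'_0$-module where $D'_0$ is the localization of $(\K[y_1,\dots,y_s]/\fQ)[x_2,\dots,x_r,y_1,\dots,y_s]$ — in particular the map $D'_0 \hookrightarrow D'/(\bar P)$ is injective (the division algorithm in $x_1$ with a unit leading coefficient shows no nonzero element of $D'_0$ can be a multiple of $\bar P$ in $D'$). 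Therefore the contraction $(\bar P) \cdot D' \cap D'_0 = (0)$. Now if $g \in S' = \K[x_2,\dots,x_r,y_1,\dots,y_s]$ lies in $\rad(P,Q_1,\dots,Q_k)$, then some power $g^M \in (P,Q_1,\dots,Q_k) \subseteq \fq + (P)$, so the image of $g^M$ in $D/(\bar P)$ — hence in $D'/(\bar P)$ — is zero; its image lies in $D'_0$, so by the injectivity just established the image of $g^M$ in $D'_0$ is zero, which means $g^M$ maps to zero in $D_0 := (\K[y_1,\dots,y_s]/\fQ)[x_2,\dots,x_r,y_1,\dots,y_s]$ after inverting $\bar p_e$. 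But $D_0$ is a domain (quotient of a domain by a prime, then polynomial ring), and $g$ itself maps into $D_0$ (it involves none of the variables $x_1$, so no localization is needed once we know $g^M = 0$ in the localization — here one checks $\bar p_e$ does not divide into a relation, or more simply: $D_0 \to (D_0)_{\bar p_e}$ is injective since $D_0$ is a domain and $\bar p_e \neq 0$), so $g^M = 0$ in $D_0$ forces $g = 0$ in $D_0$, i.e. $g \in \fq$. This holds for every minimal prime $\fq$, so $g \in \bigcap_\fq \fq = (Q_1,\dots,Q_k)$.

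The last sentence of the statement, $\rad(P,Q_1,\dots,Q_k) \cap \K[y_1,\dots,y_s] = (Q_1,\dots,Q_k)$, then follows by induction on $r$: having eliminated $x_1$, the ideal $(P,Q_1,\dots,Q_k) \cap S'$ has radical $(Q_1,\dots,Q_k)$; one can either iterate the same lemma after choosing a suitable variable $P$ genuinely depends on, or simply observe $\rad(P,Q_1,\dots,Q_k) \cap \K[y_1,\dots,y_s] = \big(\rad(P,Q_1,\dots,Q_k) \cap S'\big) \cap \K[y_1,\dots,y_s]$ and use that $\rad(P,Q_1,\dots,Q_k)\cap S' = \rad\big((Q_1,\dots,Q_k) S'\big) = (Q_1,\dots,Q_k) S'$, whose contraction to $\K[y_1,\dots,y_s]$ is exactly $(Q_1,\dots,Q_k)$.

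The main obstacle I anticipate is the bookkeeping around the hypothesis (3) and the localization at $p_e$: one must be careful that $p_e$ being a nonzerodivisor modulo $(Q_1,\dots,Q_k)$ in $S$ translates correctly to $\bar p_e \neq 0$ in each $D = S/\fq$ (this is where reducedness of $(Q_1,\dots,Q_k)$ enters, ensuring the associated primes are exactly the minimal primes $\fq$, so a nonzerodivisor avoids all of them), and that the division-with-unit-leading-coefficient argument genuinely gives $(\bar P)D' \cap D'_0 = 0$ rather than merely a weaker statement. Everything else is standard commutative algebra that the excerpt's earlier results (\cref{proposition: dominant}, \cref{proposition: radicals and prime sequences}, \cref{lemma: radical regular sequence}) already put in place.
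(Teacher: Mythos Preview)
Your argument is correct, but it takes a genuinely different route from the paper's proof. The paper argues directly in $S$: given $F\in\rad(P,Q_1,\dots,Q_k)\cap S'$, write $F^D = PA + \sum_t Q_tB_t$, expand $A$ and the $B_t$ as polynomials in $x_1$ with coefficients in $S'$, and compare coefficients of the top power $x_1^{\ell}$ (where $\ell=\deg_{x_1}(PA)\ge e>0$). This yields $p_e\cdot a_{\ell-e}\in(Q_1,\dots,Q_k)$, so by hypothesis (3) the leading coefficient $a_{\ell-e}$ of $A$ lies in $(Q_1,\dots,Q_k)$; inducting on $\deg_{x_1}(A)$ gives $A\in(Q_1,\dots,Q_k)$, hence $F^D\in(Q_1,\dots,Q_k)$, and radicality of this ideal finishes. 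In other words, the paper does one pseudo-division in $x_1$ over the ring $S'/(Q_1,\dots,Q_k)$ and never decomposes into primes or localizes.

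Your approach instead passes to each minimal prime $\fq=\fQ S$ of $(Q_1,\dots,Q_k)$, localizes at $\bar p_e$ to make the leading coefficient a unit, and then invokes the free-module structure of $D'/(\bar P)$ over $D'_0$ to get injectivity. This is more conceptual (it isolates the geometric content: the fibers of the projection eliminating $x_1$ are nonempty over each component of $V(Q_1,\dots,Q_k)$), but it is also heavier: you use the primary-decomposition fact that $(Q_1,\dots,Q_k)$ has no embedded primes and that each minimal prime is extended from $\K[y_1,\dots,y_s]$, plus the domain/localization bookkeeping you flagged. The paper's leading-term induction avoids all of that and uses hypothesis (3) exactly once, without ever invoking that $(Q_1,\dots,Q_k)$ is unmixed. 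Either way works; the paper's is shorter and more elementary, while yours makes the underlying geometry more visible. (Minor note: your definitions of $D'_0$ and $D_0$ accidentally list the $y$-variables twice; you mean $A_0[x_2,\dots,x_r]$ with $A_0=\K[y_1,\dots,y_s]/\fQ$.)
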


\begin{proof}
By item (2), we have $P = \sum_{k=0}^e p_j x_i^j$, where $0 < e$ and $p_j \in S'$, with $p_e \neq 0$.
Let $J = \rad(P, Q_1,\cdots,Q_k) \cap S'$. Since $Q_1,\cdots,Q_k \in J$, we have that $(Q_1,\cdots,Q_k) \subseteq J$.

To show the other inclusion, if $F \in J$, we have that 
$$ F^D = P A + Q_1B_1+\cdots+Q_kB_k $$
for some $A, B_1,\cdots,B_k \in S$. We will now show that $A \in (Q_1,\cdots,Q_k)$.
To prove this, it is enough to show that for any $A, B_1,\cdots,B_k \in S$ that satisfy the equation above, we must have that the leading term of $A$ is in $(Q_1,\cdots,Q_k)$, when $A$ considered as a polynomial in $x_1$ with coefficients in $S'$.

We have $\deg_{x_1}(F^D) = \deg_{x_1}(Q_t) = 0$ for all $t\in [k]$. 
Let $\ell:=\deg_{x_1}(PA)$. Note that $\ell\geq e$. Consider $A$ as a polynomial in $x_1$ with coefficients in $S'$ and then we may write $A = \sum_{j=0}^{\ell-e} a_j x_1^j$  where $a_{\ell-e}\neq 0$. Similarly, for all $t\in [k]$, we may write  $B_t = \sum_{j=0}^{\ell_t}b_{tj} x_1^j$. Comparing the coefficients of $x_1^\ell$ on both sides of the equation above, we obtain

$$p_e \cdot a_{\ell-e} + Q_1 b_{1\ell}+\cdots+Q_k b_{k\ell}= 0.$$ Therefore $p_ea_{\ell-e}\in (Q_1,\cdots,Q_k)$. Since $p_e$ is a non-zero divisor in $S/(Q_1,\cdots,Q_k)$
, we conclude that $a_{\ell-e}\in (Q_1,\cdots,Q_k)$. Thus, by induction, we must have that $A \in (Q_1,\cdots,Q_k)$, which proves that $F^D \in (Q_1,\cdots,Q_k)$. Since $(Q_1,\cdots,Q_k)$ is radical, we have that $F\in (Q_1,\cdots,Q_k)$.  Hence $\rad(P,Q_1,\cdots,Q_k)\cap S'=(Q_1,\cdots,Q_k)$.
\end{proof}

The following result generalizes \cref{lemma: principal eliminations linear} to the setting of quotients $S$ by prime sequences.

\begin{lemma}[Elimination of radical in quotient rings]\label{lemma: principal eliminations}

Let $H_1,\cdots,H_t\in S$ be a prime sequence of forms in $S$. Let $R=S/(H_1,\cdots,H_t)$. Suppose $g_1,\cdots,g_r,f_1,\cdots,f_s$ is a prime sequence of homogeneous elements in $R$. Let $P,Q_1,\cdots,Q_k\in R$  be homogeneous elements of positive degree. Let $A= \K[g_1,\cdots,g_r,f_1,\cdots,f_s]$ and $A'=\K[g_2,\cdots,g_r,f_1,\cdots,f_s]$. Suppose the following holds:
\begin{enumerate}
    \item We have $P\in \cA$ and $Q_1,\cdots,Q_k \in \K[f_1,\ldots, f_s]$.
    \item The ideal $(Q_1,\cdots,Q_k)$ is a radical ideal in $R$.
    \item As an element of the algebra $A$, we have that $P$ depends on $g_1$, i.e. $P=\sum_{j=0}^e p_j g_1^j$, where $e>0$, $p_e\neq 0$ and $p_j\in \cA'$ for all $j$.
    \item We have that $p_e$ is a non-zero divisor in $R/(Q_1,\cdots,Q_k)$.
\end{enumerate}
Then the elimination ideal $\rad_R(P,Q_1,\cdots,Q_k)\cap A'$ is generated by $Q_1,\cdots,Q_k$, i.e. we have  \[\rad_R(P, Q_1,\cdots,Q_k)\cap A' = (Q_1,\cdots,Q_k).\]
\end{lemma}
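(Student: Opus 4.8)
The strategy is to reduce the statement to the polynomial-ring version already proved in \cref{lemma: principal eliminations linear} via the transfer principle encapsulated in \cref{proposition: radicals and prime sequences}. First I would set $B := \K[g_2,\dots,g_r,f_1,\dots,f_s] = A'$ and observe that, since $g_1,\dots,g_r,f_1,\dots,f_s$ is a prime sequence in $R$, the subalgebra $A = \K[g_1,\dots,g_r,f_1,\dots,f_s]$ is isomorphic to the polynomial ring $B[g_1]$, with $g_1$ a genuine variable over $A'$. In particular $A' \subset A$, and by \cref{proposition: radicals and prime sequences}(1) applied to the prime sequence $g_1,\dots,g_r,f_1,\dots,f_s$ in $R$ (with the role of $S$ there played by $R$ and the role of $A$ played by our $A$), we have $JR \cap A = J$ for any ideal $J$ of $A$, and similarly $J'A \cap A' = J'$ for ideals $J'$ of $A'$, since $A'$ is itself generated by a prime subsequence.

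The next step is to transfer the hypotheses into the polynomial ring $A$. Let $\fa := (Q_1,\dots,Q_k) \subset A$ be the ideal generated inside $A$ (equivalently inside $\K[f_1,\dots,f_s]$, then extended). By \cref{proposition: radicals and prime sequences}(4), since $(Q_1,\dots,Q_k)R$ is radical in $R$ by hypothesis (2), and $(Q_1,\dots,Q_k)R = \fa R$, we get that $\fa$ is radical in $A$; here one should be a little careful and note that hypothesis (2) gives radicality of the extension to $R$, and the cleanest route is to first deduce radicality in $\K[f_1,\dots,f_s]$ (again by part (4) of that proposition, using that $f_1,\dots,f_s$ is a prime sequence), then note radicality is preserved under the polynomial extension $\K[f_1,\dots,f_s] \subset A$. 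Hypothesis (3) says $P = \sum_{j=0}^e p_j g_1^j$ with $p_e \neq 0$ and $p_j \in A'$, which is exactly the statement that $P \in A = A'[g_1]$ depends on the variable $g_1$ with leading coefficient $p_e$. For hypothesis (4): $p_e$ is a non-zerodivisor in $R/(Q_1,\dots,Q_k)$, and I would use faithful flatness of $A/\fa \to R/\fa R$ (which follows from \cref{proposition: radicals and prime sequences}(1) plus $A \subset R$ being free, hence faithfully flat, by \cref{proposition: intersection flatness} and \cref{proposition: strong sequence CM UFD}) to conclude $p_e$ is a non-zerodivisor in $A/\fa$.

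Now all four hypotheses of \cref{lemma: principal eliminations linear} hold in the polynomial ring $A \cong \K[g_1,\dots,g_r,f_1,\dots,f_s]$ — with $g_1$ playing the role of $x_1$, the $g_2,\dots,g_r,f_1,\dots,f_s$ playing the role of $x_2,\dots,x_r,y_1,\dots,y_s$, and the $f_i$ alone playing the role of $y_1,\dots,y_s$. That lemma yields
\[
\rad_A(P,Q_1,\dots,Q_k) \cap A' = (Q_1,\dots,Q_k) \text{ in } A'.
\]
The final step is to push this back up to $R$. By \cref{proposition: radicals and prime sequences}(3), $\rad_R((P,Q_1,\dots,Q_k)R) \cap A = \rad_A(P,Q_1,\dots,Q_k)$. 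Intersecting further with $A'$ and using the displayed identity gives
\[
\rad_R(P,Q_1,\dots,Q_k) \cap A' = \bigl(\rad_R(P,Q_1,\dots,Q_k) \cap A\bigr) \cap A' = \rad_A(P,Q_1,\dots,Q_k) \cap A' = (Q_1,\dots,Q_k),
\]
which is exactly the claim. The main obstacle I anticipate is purely bookkeeping: making sure the transfer principle of \cref{proposition: radicals and prime sequences} is applied with the correct "base ring" at each step (it is stated for $A \subset R$ with $R$ a quotient of a polynomial ring $S$ by a prime sequence, and here we also need the nested inclusion $A' \subset A$, which requires recognizing $A'$ as generated by a prime subsequence — this is where one uses \cref{proposition: strong sequence CM UFD}(2) to see that dropping $g_1$ from a prime sequence leaves a prime sequence, after reordering). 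Once the tower $A' \subset A \subset R$ and the faithful flatness at each stage are in place, the argument is a direct reduction and no genuinely new computation is required.
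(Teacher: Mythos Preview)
Your proposal is correct and follows essentially the same approach as the paper: transfer the hypotheses down to the polynomial ring $A$ via \cref{proposition: radicals and prime sequences}, apply \cref{lemma: principal eliminations linear} there, and then intersect back with $A'$ using part (3) of that proposition. The only cosmetic difference is that the paper verifies $p_e$ is a non-zerodivisor in $A/(Q_1,\dots,Q_k)$ by the minimal-primes correspondence (part (2) of \cref{proposition: radicals and prime sequences}) rather than by faithful flatness, and it invokes part (4) directly for radicality in $A$ rather than routing through $\K[f_1,\dots,f_s]$; both of your variants are fine.
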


\begin{proof}
Since $(Q_1,\cdots,Q_k)$ is radical in $R$, we know that $(Q_1,\cdots,Q_k)$ is radical in the algebra $A$, by \cref{proposition: radicals and prime sequences}. Since $p_e$ is a non-zero divisor in $R/(Q_1,\cdots,Q_k)$, we know that $p_e\not\in \fp$ for any minimal prime over $(Q_1,\cdots,Q_k)$ in $R$. By \cref{proposition: radicals and prime sequences}, we know that any minimal prime $\fq\subset A$ over $(Q_1,\cdots,Q_k)$ in $A$ is given by $\fq=\fp\cap A$ for some minimal prime $\fp$ over $(Q_1,\cdots,Q_k)$ in $R$. Therefore, $p_e\not \in \fq$
 for any any minimal prime over $(Q_1,\cdots,Q_k)$ in $A$, and hence $p_e$ is a non-zero divisor in $A/(Q_1,\cdots,Q_k)$. Therefore, by applying
\cref{lemma: principal eliminations linear} to $A$, we obtain $\rad_A(P,Q_1,\cdots,Q_k)\cap A'=(Q_1,\cdots,Q_k)$. Now, by \cref{proposition: radicals and prime sequences}, we know that $\rad_R(P,Q_1,\cdots,Q_k)\cap A=\rad_A(P,Q_1,\cdots,Q_k)$. Therefore we have \[\rad_R(P,Q_1,\cdots,Q_k)\cap A'=\rad_A(P,Q_1,\cdots,Q_k)\cap A'=(Q_1,\cdots,Q_k).\]
\end{proof}

The following result proves a criterion for showing that an ideal is radical, based on discriminants of one of the generators.

\begin{lemma}\label{lemma: CM discriminant radical}
Let $S:=\K[z_1,\cdots,z_r,x_1,\cdots,x_s]$ be the graded polynomial ring with $\deg(z_i)=d_i\geq 1$ and $\deg(x_i)=e_i\geq 1$. Let $A=\K[x_1,\cdots,x_s]$ as a graded subalgebra. Let  $P,Q_1,\cdots,Q_k\in S$ be homogeneous elements of positive degree. Suppose that the following conditions hold:
\begin{enumerate}
    \item We have $Q_1,\cdots,Q_k\in A$ and the ideal $(Q_1,\cdots,Q_k)$ is a Cohen-Macaulay, radical ideal in $A$.
    \item $P$ does not have multiple factors and $P\not \in (x_1,\cdots,x_s)$ in $S$.
    \item For all $i\in[r]$ such that $P$ depends on the variable $z_i$, we have $\mathrm{Disc}_{z_i}(P)\not\in \fq\cdot S$ for any minimal prime $\fq$ of $(Q_1,\cdots,Q_k)$ in $A$.
\end{enumerate} 
Then the ideal $(P,Q_1,\cdots,Q_k)$ is radical in $S$.
\end{lemma}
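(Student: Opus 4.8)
The plan is to apply \cref{lemma: property of fibers}(2) to the morphism of finite type $\pi \colon Y := \mathrm{Spec}(\cB) \to X := \mathrm{Spec}(\cA)$ induced by the inclusion of finitely generated $\K$-algebras $\cA := A/(Q_1,\dots,Q_k) \hookrightarrow \cB := S/(P,Q_1,\dots,Q_k)$; that $\cB$ be reduced is precisely the assertion that $(P,Q_1,\dots,Q_k)$ is radical in $S$. To invoke that lemma I must check four things: $X$ is reduced; every irreducible component of $Y$ dominates an irreducible component of $X$; $Y$ satisfies Serre's condition $\cS_1$; and for each irreducible component $W$ of $X$ the fibre of $\pi$ over a general closed point of $W$ is reduced.

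The first three are the formal part. Reducedness of $X$ is hypothesis (1). Since $(Q_1,\dots,Q_k)$ is Cohen--Macaulay in $A$ it is unmixed, so it has no embedded primes in $A$, hence (primary decomposition commuting with polynomial extensions) none in $S$; together with $P \notin (x_1,\dots,x_s)$, \cref{proposition: dominant} then gives that $P$ is a nonzerodivisor on $S/(Q_1,\dots,Q_k)$ and that the contraction to $A$ of every minimal prime of $(P,Q_1,\dots,Q_k)$ in $S$ is a minimal prime of $(Q_1,\dots,Q_k)$ in $A$, which is exactly the domination statement. For $\cS_1$: $A/(Q_1,\dots,Q_k)$ is Cohen--Macaulay, hence so is the polynomial ring $S/(Q_1,\dots,Q_k)S = (A/(Q_1,\dots,Q_k))[z_1,\dots,z_r]$, and killing the nonzerodivisor $P$ leaves it Cohen--Macaulay; thus $\cB$ is Cohen--Macaulay and in particular $\cS_1$.

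The substance is the fibre condition. Fix a component $W = V(\fq)$ of $X$, with $\fq$ a minimal prime of $(Q_1,\dots,Q_k)$ in $A$, and set $\kappa := \mathrm{Frac}(A/\fq)$. Because $S/\fq S = (A/\fq)[z_1,\dots,z_r]$, the generic fibre of $\pi$ over $W$ is $\mathrm{Spec}\big(\kappa[z_1,\dots,z_r]/(\bar P)\big)$, with $\bar P$ the image of $P$ (which is nonzero, since $P \notin (x_1,\dots,x_s)$ forces $P$ to contain a monomial in the $z_i$ only); as $\kappa$ is perfect in characteristic $0$, by semicontinuity of reducedness of fibres (\cite[\href{https://stacks.math.columbia.edu/tag/0575}{Tag 0575}]{stacks-project}) it suffices to prove $\bar P$ is squarefree in $\kappa[z_1,\dots,z_r]$. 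If it were not, some irreducible $g \in \kappa[z_1,\dots,z_r]$ would satisfy $g^2 \mid \bar P$; such a $g$ involves some variable $z_j$, and then $P$ also involves $z_j$ (else $\bar P$ would not), so $\deg_{z_j}(\bar P) \ge 2\deg_{z_j}(g) \ge 2$, and by Gauss's lemma $g$ divides $\bar P$ with multiplicity $\ge 2$ already in $\kappa(z_1,\dots,\widehat{z_j},\dots,z_r)[z_j]$, whence $\mathrm{Disc}_{z_j}(\bar P) = 0$. Now the discriminant is a universal polynomial in the coefficients, and reduction modulo $\fq$ --- which may drop the $z_j$-degree from $e := \deg_{z_j}(P)$ to $\bar e := \deg_{z_j}(\bar P)$ --- obeys the usual identity $\mathrm{Disc}_{z_j}(P) \bmod \fq = \big(\mathrm{lc}_{z_j}(\bar P)\big)^{2(e-\bar e)}\,\mathrm{Disc}_{z_j}(\bar P)$ for discriminants under a degree drop; hence $\mathrm{Disc}_{z_j}(P) \in \fq S$, contradicting hypothesis (3). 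Therefore $\bar P$ is squarefree, the general fibre over $W$ is reduced, and \cref{lemma: property of fibers}(2) yields that $\cB$ is reduced, as desired.

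I expect the main difficulty to be exactly this fibre analysis: justifying that a repeated factor of $\bar P$ must genuinely involve one of the variables $z_j$ on which $P$ depends, correctly tracking $\mathrm{Disc}_{z_j}$ through the possible drop of $z_j$-degree modulo $\fq$, and passing cleanly between the generic fibre and a general closed fibre. The structural checks --- in particular that $\cB$ is $\cS_1$, for which the Cohen--Macaulay hypothesis is precisely what is used --- are comparatively routine.
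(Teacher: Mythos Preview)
Your approach is essentially the paper's: set up $\pi\colon Y\to X$ and verify the hypotheses of \cref{lemma: property of fibers}(2), with the same checks for reducedness of $X$, domination via \cref{proposition: dominant}, and $\cS_1$ via Cohen--Macaulayness. The only variation is in the fibre analysis --- the paper works at closed points $x\in W$ with a dimension count on the loci $Z_i=V(\mathrm{Disc}_{z_i}(P))$, whereas you work directly at the generic point of $W$; both are fine.

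One correction, though. Your ``usual identity'' $\mathrm{Disc}_{z_j}(P)\bmod\fq = \big(\mathrm{lc}_{z_j}(\bar P)\big)^{2(e-\bar e)}\,\mathrm{Disc}_{z_j}(\bar P)$ is false as stated: for $P=xz^3+z+1$ and $\fq=(x)$ the left side is $0$ while the right side is $1$. Fortunately you only need the implication ``$\bar P$ has a repeated factor involving $z_j$ $\Rightarrow$ $\mathrm{Disc}_{z_j}(P)\in\fq S$'', and this holds for a simpler reason. With the paper's convention $\mathrm{Disc}_{z_j}(P)=\mathrm{Res}_{z_j}(P,\partial_{z_j}P)$, the resultant is a polynomial in the coefficients, so reduction mod $\fq$ gives $\overline{\mathrm{Disc}_{z_j}(P)}=\mathrm{Res}_{e,e-1}\!\big(\bar P,(\bar P)'\big)$ computed with the \emph{formal} degrees $(e,e-1)$. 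This Sylvester determinant vanishes whenever $\bar P$ and $(\bar P)'$ share a factor of positive $z_j$-degree, regardless of any degree drop, because the identity $\mathrm{Res}=u\bar P+v(\bar P)'$ forces $\mathrm{Res}$ to vanish at any common root. So your argument goes through once the formula is replaced by this direct observation.
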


\begin{proof}
Without loss of generality, suppose that $P$ only depends $z_1,\cdots,z_t$ for some $t\leq r$. By applying \cref{proposition: radicals and prime sequences}, it is enough to prove that $(P,Q_1,\cdots,Q_k)$ is radical in $\K[z_1,\cdots,z_t,x_1,\cdots,x_s]$. Therefore, without loss of generality, we may assume that $P$ depends on all the $z_i$ variables in $S$.

Let $J=(Q_1,\cdots,Q_k)$ in $A$ and $I=JS=(Q_1,\cdots,Q_k)$ in $S$. By \cref{proposition: radicals and prime sequences}, we have that $I$ is radical in $S$, since $J=(Q_1,\cdots,Q_k)$ is radical in $A$. Furthermore, we have $I\cap A= J$. Since $A/J$ is Cohen-Macaulay, we know that $S/IS\simeq(A/J)[z_1,\cdots,z_r]$ is also Cohen-Macaulay. By \cref{proposition: dominant}, we know that $P$ is a non-zero divisor in $S/I$. Since $S/I$ is Cohen-Macaulay by assumption, we conclude that $S/((P)+I)$ is also Cohen-Macaulay and hence it has no embedded primes.  Let $\pi:\mathbb{A}^{r+s}\rightarrow \mathbb{A}^s$ be the morphism corresponding to $A\subset S$. Note that $\pi(z_1,\cdots,z_r,x_1,\cdots,x_s)=(x_1,\cdots,x_s)$ for closed points. Consider the affine schemes $Y=\mathrm{Spec}(S/((P)+I))$ and $X=\mathrm{Spec}(A/J)$. Note that we have a homomorphism of finitely generated $\K$-algebras $A/J\rightarrow S/((P)+I)$, as $J\subset ((P)+I)\cap A$. Note that the corresponding morphism of affine schemes is given by $\pi|_Y:Y\rightarrow X$ and we have commutative diagram.

\[\begin{tikzcd}
 Y \arrow[r]\arrow[d,"\pi|_Y"] & \A^{r+s}\arrow[d,"\pi"]\\
 X \arrow[r] & \A^s\\
\end{tikzcd}\]

Since $J$ is a radical ideal in $A$, we have that $X$ is reduced. Since $S/((P)+I)$ does not have any embedded primes, we know that $Y$ is satisfies the $\cS_1$-property. By \cref{proposition: dominant}, we know that for every minimal prime $\fp$ over $(P)+I$ in $S$, there exists a minimal prime $\fq$ over $J$ in $A$ such that $\fp\cap A=\fq$. In particular, every irreducible component of $Y$ dominates some irreducible component of $X$. Therefore, by \cref{lemma: property of fibers}, it is enough to show that for every irreducible component $W$ of $X$, we have that $\pi|_Y^{-1}(x)$ is reduced for a general closed point $x\in W$.

 Let $x=(c_1,\cdots,c_s)\in X$ be a closed point. Let $P_x\in \K[z_1\cdots,z_r]$ denote the polynomial $P(z_1\cdots,z_r,c_1,\cdots,c_s)$. Note that $\pi|_Y^{-1}(x)=\mathrm{Spec}(\K[z_1,\cdots,z_r]/(P_x))$, as $x\in X$. Since $P$ is homogeneous and $P\not\in (x_1,\cdots,x_s)$, we conclude that $P_x\not\equiv 0$ for any $x\in X$. Now $\pi|_Y^{-1}(x)$ is not reduced iff the polynomial $P_x$ has a multiple factor, i.e. $\mathrm{Disc}_{z_k}(P_x)\equiv0$ for some variable $z_k$ such that $P_x$ depends on $z_k$. For $i\in [r]$, let $Z_i\subset \mathbb{A}^{r+s}$ be the closed subscheme $\mathrm{Spec}(S/\mathrm{Disc}_{z_i}(P))$ defined by $\mathrm{Disc}_{z_i}(P)$. Note that $\mathrm{Disc}_{z_i}(P)\not\equiv 0$, since $P$ does not have multiple factors. Now, for any variable $z_i$ such that $P_x$ depends on $z_i$, we have \[\mathrm{Disc}_{z_i}(P_x)\equiv0 \iff \mathrm{Disc}_{z_i}(P)(z_1,\cdots,z_r,c_1,\cdots,c_s)=0\iff \pi^{-1}(x)\subset Z_i.\] To summarize, for a closed point $x\in X$, we have that $\pi|_Y^{-1}(x)$ is reduced if  $\pi^{-1}(x)\not\subset Z_i$ for all $i$ such that $P_x$ depends on $z_i$. Therefore it is enough to show that, for any irreducible component $W$ of $X$ and a general closed point $x\in W$, we have that $\pi^{-1}(x)\not\subset Z_i$ for all $i\in [r]$.

Note that $\mathrm{Spec}(S/I)=\pi^{-1}(X)=X\times \mathbb{A}^r$. Fix $i\in [r]$. Let $W$ be an irreducible component of $X$ corresponding to a minimal prime $\fq$ over $J$ in $A$. Let $\dim(W)=n$ and we have $\pi^{-1}(W)=W\times \A^r$. By assumption (3), we have that $Z_i\cap \pi^{-1}(W)$ is of dimension at most $n+r-1$. 
Let $T$ be an irreducible component of $Z_i\cap \pi^{-1}(W)$ with the reduced scheme structure. Note that $\dim(T)\leq n+r-1$. If $T$ does not dominate $W$, then for a general $x\in W$ we have $\pi^{-1}(x)\cap T=\emptyset$. If $T$ dominates $W$, then the composition morphism $T\rightarrow Z_i\cap \pi^{-1}(W) \rightarrow \pi^{-1}(W) \xrightarrow[]{\pi_W}W$ is a dominant morphism of integral separated schemes of finite type. Therefore, there exists an open subset $U\subset W$ such that $\dim(\pi^{-1}(x)\cap T)= \dim(T)-\dim(W)\leq r-1$ for all $x\in U$, by \cite[Corollary 14.5]{Eis95} or \cite[Theorem 11.4.1]{Vak17}. Since there are finitely many possibilites for the irreducible components $T$ of $Z_i\cap \pi^{-1}(W)$, we conclude that $\dim(\pi^{-1}(x)\cap(Z_i\cap \pi^{-1}(W)))\leq r-1$ for a general $x\in W$. Since $\dim(\pi^{-1}(x))=r$, we conclude that $\pi^{-1}(x)\not\subset Z_i$ for a general $x\in W$. Since there are finitely many choices for $Z_i$, we conclude that for a general $x\in W$ we have $\pi^{-1}(x)\not\subset Z_i$ for all $i\in [r]$. This completes the proof.
\end{proof}

\begin{example}
Note that in \cref{lemma: CM discriminant radical}, we need that $\mathrm{Disc}_{z_i}(P)\not\in \fq S$ for all $z_i$ such that $P$ depends on $z_i$. It is not enough to assume that this property holds for just one such $z_i$. One can construct such an example as follows. Let $P=y^3+vy^2+(xu^2-z^3)$ and $Q=xu^2-z^3$ in $ \K[x,y,z,u,v]$. Then $Q\in \K[x,u,z]$ and $\mathrm{Disc}_v(P)=y^2\not\in (Q)$. However, the ideal $(P,Q)$ is not radical as $\sqrt{(P,Q)}=(y^2+yv,xu^2-z^3)$. This occurs because we have  $\mathrm{Disc}_y(P)\in (Q)$.
\end{example}

\begin{definition} Let $R$ be a finitely generated $\K$-algebra and $g_1,\cdots,g_s$ a regular sequence in $R$. Let  $\phi:\K[y_1,\cdots,y_s]\rightarrow A$ be the isomorphism defined as $\phi(y_i)=g_i$.
Let $P,Q\in \cA$. Let $\widetilde{P}=\phi^{-1}(P)$ and $\widetilde{Q}=\phi^{-1}(Q)$. We define the resultant of $P,Q$ with respect to $g_i$ in the subalgebra $\mathcal{A}$ as 
\[\mathrm{Res}_{g_i}^{\mathcal{A}}(P,Q)= \phi(\mathrm{Res}_{y_i}(\widetilde{P},\widetilde{Q})).\]
Similarly we define the discriminant of $P$ with respect to $g_i$ in the sublagebra $\mathcal{A}$ as
\[\mathrm{Disc}_{g_i}^{\mathcal{A}}(P)=\phi(\mathrm{Res}_{y_i}(\widetilde{P},\frac{\partial \widetilde{P}}{\partial y_i})).\]
\end{definition}

Using the isomorphism $\phi$ we can translate the usual properties of resultants in polynomial rings to similar properties in the subalgebra $\mathcal{A}$. 
Note that $\mathrm{Res}_{g_i}^{\mathcal{A}}(P,Q)$ is in the subalgebra generated by $\{g_1,\cdots,g_s\}\setminus \{g_i\}$ in $S$. Also, we have $\mathrm{Res}_{g_i}^{\mathcal{A}}(P,Q)\in (P,Q)$.\\

\begin{corollary}

\label{lemma: generalized discriminant lemma}
Let $H_1,\cdots,H_t\in S$ be a prime sequence of forms in $S$. Let $R=S/(H_1,\cdots,H_t)$. Suppose $g_1,\cdots,g_r,f_1,\cdots,f_s$ is a prime sequence of homogeneous elements in $R$.  Let $A'= \K[g_1,\cdots,g_r,f_1,\cdots,f_s]$ and $A=\K[f_1,\cdots,f_s]$. Let $P,Q_1,\cdots,Q_k\in R$  be homogeneous elements of positive degree. Suppose the following conditions hold:
\begin{enumerate}
    \item We have $Q_1,\cdots,Q_k\in A$ and the ideal $(Q_1,\cdots,Q_k)$ is a Cohen-Macaulay, radical ideal in $A$.
    \item We have $P\in A'$ and $P$ does not have multiple factors in $A'$. Suppose $P\not \in (f_1,\cdots,f_s)$ in $R$.
    \item For all $i\in[r]$ such that $P$ depends on $g_i$ in $A'$, we have $\mathrm{Disc}_{g_i}^{A'}(P)\not\in \fq\cdot R$ for any minimal prime $\fq$ of $(Q_1,\cdots,Q_k)$ in $A$.
\end{enumerate}
Then the ideal $(P,Q_1,\cdots,Q_k)$ is a radical ideal in $R$.
\end{corollary}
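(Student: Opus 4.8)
The idea is that $A'$ is, up to the canonical isomorphism coming from algebraic independence, a graded polynomial ring, so the statement is exactly \cref{lemma: CM discriminant radical} transported into $A'$ and then pushed out to $R$ via the transfer principle of \cref{proposition: radicals and prime sequences}. First I would fix the isomorphism: since $g_1,\dots,g_r,f_1,\dots,f_s$ is a prime sequence in $R$, by \cref{remark: relation reta prime} these elements are algebraically independent, so there is a graded $\K$-algebra isomorphism $\phi\colon \widetilde S := \K[z_1,\dots,z_r,x_1,\dots,x_s] \xrightarrow{\sim} A'$ with $\phi(z_i)=g_i$, $\phi(x_j)=f_j$, where $\widetilde S$ is graded by $\deg z_i = \deg g_i \geq 1$ and $\deg x_j = \deg f_j \geq 1$; it restricts to an isomorphism $\widetilde A := \K[x_1,\dots,x_s]\xrightarrow{\sim} A$. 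Put $\widetilde P := \phi^{-1}(P)$ and $\widetilde Q_j := \phi^{-1}(Q_j)$, which are forms of positive degree in $\widetilde S$ with $\widetilde Q_j \in \widetilde A$.

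Next I would verify that the hypotheses (1)--(3) of \cref{lemma: CM discriminant radical} hold for $\widetilde P,\widetilde Q_1,\dots,\widetilde Q_k$ in $\widetilde S$ with subalgebra $\widetilde A$. For (1): $\phi$ identifies $(\widetilde Q_1,\dots,\widetilde Q_k)\subset\widetilde A$ with $(Q_1,\dots,Q_k)\subset A$, which is Cohen--Macaulay and radical by hypothesis. For (2): $\phi$ is a ring isomorphism, so $\widetilde P$ has no multiple factors in $\widetilde S$ because $P$ has none in $A'$; and if $\widetilde P\in(x_1,\dots,x_s)$ then $P\in(f_1,\dots,f_s)A'\subseteq(f_1,\dots,f_s)R$, contradicting hypothesis (2), so $\widetilde P\notin(x_1,\dots,x_s)$. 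For (3): the variables $z_i$ on which $\widetilde P$ depends are exactly those with $P$ depending on $g_i$ in $A'$, and by definition $\phi(\mathrm{Disc}_{z_i}(\widetilde P))=\mathrm{Disc}^{A'}_{g_i}(P)$, which moreover lies in $A'$ (it is in the subalgebra of $A'$ generated by the $g$'s other than $g_i$ together with the $f$'s); also the minimal primes $\widetilde{\fq}$ of $(\widetilde Q_1,\dots,\widetilde Q_k)$ in $\widetilde A$ are precisely $\phi^{-1}(\fq)$ for $\fq$ a minimal prime of $(Q_1,\dots,Q_k)$ in $A$. Since $\fq A' = \fq R\cap A'$ by \cref{proposition: radicals and prime sequences}(1) (applied to the prime sequence $g_1,\dots,g_r,f_1,\dots,f_s$ and the ideal $\fq A'$ of $A'$), hypothesis (3) gives $\mathrm{Disc}^{A'}_{g_i}(P)\notin \fq A'$, i.e.\ $\mathrm{Disc}_{z_i}(\widetilde P)\notin\widetilde{\fq}\,\widetilde S$, as required.

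Then \cref{lemma: CM discriminant radical} shows $(\widetilde P,\widetilde Q_1,\dots,\widetilde Q_k)$ is radical in $\widetilde S$; applying $\phi$, the ideal $(P,Q_1,\dots,Q_k)$ is radical in $A'$. Finally, since $g_1,\dots,g_r,f_1,\dots,f_s$ is a prime sequence in $R$, \cref{proposition: radicals and prime sequences}(4) transfers radicality across $A'\subset R$, so $(P,Q_1,\dots,Q_k)R$ is radical in $R$, which is the claim. The only delicate point is the bookkeeping in the second paragraph: one must check that the discriminant non-vanishing condition and the condition $P\notin(f_1,\dots,f_s)$ survive the passage between $R$, $A'$ and $\widetilde S$, which relies on $\mathrm{Disc}^{A'}_{g_i}(P)$ genuinely belonging to $A'$ and on the contraction/extension compatibilities for primes from \cref{proposition: radicals and prime sequences}; everything else is a formal translation through the graded isomorphism $\phi$.
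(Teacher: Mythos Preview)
Your proposal is correct and follows essentially the same approach as the paper: reduce to $A'$ via \cref{proposition: radicals and prime sequences}(4), identify $A'$ with a graded polynomial ring, verify the three hypotheses of \cref{lemma: CM discriminant radical}, and conclude. Your treatment is more careful about the bookkeeping (e.g.\ you invoke \cref{proposition: radicals and prime sequences}(1) to get $\fq A'=\fq R\cap A'$, whereas the paper simply observes $\fq A'\subseteq \fq R$ so $\mathrm{Disc}_{g_i}^{A'}(P)\notin\fq R$ immediately gives $\notin\fq A'$), but the argument is the same.
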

\begin{proof}
By \cref{proposition: radicals and prime sequences}, it is enough to show that $(P,Q_1,\cdots,Q_k)$ is a radical ideal in the $\K$-subalgebra $A'$. Note that we have an isomorphism of graded $\K$-algebras $A'\simeq\K[z_1,\cdots,z_r,x_1,\cdots,x_s]$ given by $g_i\mapsto z_i$ and $f_i\mapsto x_i$ where $\deg(z_i)=\deg(g_i)$ and $\deg(x_i)=\deg(f_i)$. Therefore, condition (1) of \cref{lemma: CM discriminant radical} holds. Note that $P\not \in(f_1,\cdots,f_s)R\cap A'=(f_1,\cdots,f_s)$ in $A'$. Therefore condition (2) of \cref{lemma: CM discriminant radical} holds. Now, we also note that for any minimal prime $\fq$ over $(Q_1,\cdots,Q_k)$ in $A$, we have $\mathrm{Disc}_{g_i}^{A'}(P)\not\in \fq\cdot A'$, by condition (3) above, Hence, we may apply \cref{lemma: CM discriminant radical} to conclude that $(P,Q_1,\cdots,Q_k)$ is radical in $A'$ as desired.
\end{proof}

\begin{corollary}\label{corollary: best corollary discriminant}
Let $\eta\geq 3$ and $H_1,\cdots,H_t\in S$ be an $\cR_\eta$-sequence of forms in $S$. Consider the UFD  $R=S/(H_1,\cdots,H_t)$. Suppose $g_1,\cdots,g_r,f_1,\cdots,f_s$ is a prime sequence of homogeneous elements in $R$.  Let $P\in R$  be a square-free homogeneous element of degree $d\geq 1$. Suppose we have $P\in  \K[g_1,\cdots,g_r,f_1,\cdots,f_s]$ and $P\not\in (f_1,\cdots,f_s)$ in $R$. Let $\cH=\{Q_1,\cdots,Q_t\} \subset  \K[f_1,\cdots,f_s]$ be such that: 

\begin{enumerate}
    \item $Q_i$ is square-free homogeneous of positive degree in $R$ for all $i\in[t]$.
    \item $Q_i,Q_j$ do not have common factors in $R$ for any $i\neq j$. 
    \item $(P,Q_i)$ is not radical in $R$ for all $i$.

\end{enumerate} 

Then we have $|\cH|=t\leq d^2(2d-1)$. In particular, there exist at most $d^2(2d-1)$ number of distinct elements in $\K[f_1,\cdots,f_s]$ satisfying the conditions (1),(2) and (3) above.
\end{corollary}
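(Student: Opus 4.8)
The plan is to apply the discriminant--radical criterion of \cref{lemma: generalized discriminant lemma} to every pair $(P,Q_i)$, extract from each \emph{non}-radical pair a distinct irreducible factor of $Q_i$ forced to divide one of the discriminants of $P$, and then count these factors. Throughout set $A:=\K[f_1,\ldots,f_s]$ and $A':=\K[g_1,\ldots,g_r,f_1,\ldots,f_s]$. Since an $\cR_\eta$-sequence with $\eta\ge 3$ is a prime sequence (by \cref{proposition: strong sequence CM UFD}), $R$ is a UFD and all of the results of \cref{subsection: intersection flatness} apply; in particular the given prime sequence is algebraically independent (\cref{remark: relation reta prime}), so $A,A'$ are polynomial rings, $A'\subseteq R$ is faithfully flat, and $\mathfrak q A'=\mathfrak q R\cap A'$ for ideals $\mathfrak q$ of $A'$ (\cref{proposition: radicals and prime sequences}(1)).

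First I would set up the criterion for a fixed $i$. As $Q_i$ is square-free of positive degree, $(Q_i)\subseteq A$ is radical, and it is Cohen--Macaulay because principal in a regular ring, so hypothesis (1) of \cref{lemma: generalized discriminant lemma} holds. Since $P$ is square-free it has no multiple factor in $A'$ (using \cref{lem:factor-ufd-algebra}), and $P\notin(f_1,\ldots,f_s)$ by assumption, so hypothesis (2) holds. If hypothesis (3) also held, \cref{lemma: generalized discriminant lemma} would force $(P,Q_i)$ radical in $R$, contradicting assumption (3) of the statement; hence (3) fails. Thus there is an index $j(i)$ with $P$ depending on $g_{j(i)}$ and a minimal prime $\fq_i$ of $(Q_i)$ in $A$ with $\mathrm{Disc}^{A'}_{g_{j(i)}}(P)\in\fq_iR$. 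Because $Q_i$ is square-free, $\fq_i=(\pi_i)$ for an irreducible factor $\pi_i\in A$ of $Q_i$; by \cref{lem:factor-ufd-algebra} the element $\pi_i$ is still irreducible in $R$, so $\fq_iR=\pi_iR$, and the identity $\pi_iA'=\pi_iR\cap A'$ turns the containment into a divisibility $\pi_i\mid\mathrm{Disc}^{A'}_{g_{j(i)}}(P)$ inside the polynomial ring $A'$. Since the $Q_i$ are pairwise without common factors, the $\pi_i$ are pairwise non-associate.

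Next I would bound the relevant discriminants. If $\deg_{g_j}(P)=1$ then $\mathrm{Disc}^{A'}_{g_j}(P)$ is a unit, so every $j(i)$ has $e_{j(i)}:=\deg_{g_{j(i)}}(P)\ge 2$. For $e_j\ge2$ the form $\mathrm{Disc}^{A'}_{g_j}(P)=\mathrm{Res}^{A'}_{g_j}(P,\partial_{g_j}P)$ is nonzero: as $\mathrm{char}\,\K=0$ and $P$ is square-free, $P$ is separable in $g_j$ over $\mathrm{Frac}(A')$ (exactly as in the proof of \cref{lemma: CM discriminant radical}), so the resultant of $P$ and $\partial_{g_j}P$ does not vanish. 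Estimating the $(2e_j-1)\times(2e_j-1)$ Sylvester determinant, whose entries are forms of degree at most $d$, and using $e_j\le d$, gives $\deg\mathrm{Disc}^{A'}_{g_j}(P)\le d(2d-1)$; in particular it has at most $d(2d-1)$ pairwise non-associate irreducible factors.

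Finally I would combine: grouping the indices $i$ by the value $j(i)$, the $\pi_i$ sharing a common $j(i)=j^{*}$ are pairwise non-associate divisors of the single form $\mathrm{Disc}^{A'}_{g_{j^{*}}}(P)$, hence number at most $d(2d-1)$; so $t\le d(2d-1)\cdot\#\{\text{values taken by }j(i)\}$, and it remains to bound the number of relevant directions by $d$. I expect this last point to be the main obstacle, because a priori $P$ may depend non-linearly on many of the $g_j$, so one cannot naively bound the number of candidate directions. The way I would resolve it is to exploit that $P$ is square-free of degree $d$ with $P\notin(f_1,\ldots,f_s)$: after a generic linear change among the $g_j$ of each fixed degree one can arrange that $P$ is monic up to a nonzero scalar in a single variable $g_1'$ (of $g_1'$-degree $d/\deg(g_1')$), and then any repeated factor of $P\bmod\pi_i$ must involve $g_1'$ — a non-unit cannot divide the scalar leading coefficient of $P$ in $g_1'$ — so that $\pi_i\mid\mathrm{Disc}^{A'}_{g_1'}(P)$ for one fixed direction; carrying this out compatibly with the prime-sequence hypotheses is the delicate step, and it yields $t\le d(2d-1)$, a fortiori the claimed bound $d^2(2d-1)$. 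The closing assertion of the corollary is then immediate, since any family of elements of $\K[f_1,\ldots,f_s]$ satisfying (1)--(3) is pairwise coprime by (2).
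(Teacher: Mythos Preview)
Your overall architecture matches the paper exactly: apply \cref{lemma: generalized discriminant lemma} to each pair $(P,Q_i)$, deduce from non-radicality that some irreducible factor $\pi_i$ of $Q_i$ divides one of the discriminants $\mathrm{Disc}^{A'}_{g_j}(P)$, use pairwise coprimality of the $Q_i$ to make the $\pi_i$ distinct, and then bound $t$ by the total number of irreducible factors of the product of the relevant discriminants, each of degree at most $d(2d-1)$. That part is correct and is precisely what the paper does.

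The gap is in your final step, bounding the number of ``relevant directions.'' Your proposed fix --- a generic linear change among the $g_j$ of each fixed degree so that $P$ becomes monic (up to a scalar) in a single $g_1'$ --- does not work in general. Monicity in $g_1'$ requires $P$ to contain a monomial $c\cdot g_1'^{e}$, and after a linear change \emph{within each degree class} such a term can only arise from a pure-$g$ monomial of $P$ whose $g$-factors all lie in the \emph{same} degree class as $g_1'$. But $P\notin(f_1,\ldots,f_s)$ only guarantees \emph{some} pure-$g$ monomial, which may mix degrees: e.g.\ if the surviving monomial is $g_1g_2$ with $\deg g_1\neq\deg g_2$, no linear change within degree classes produces a pure power. (Incidentally, the prime-sequence compatibility you worried about would actually be fine; the obstruction is purely the grading.) So you cannot in general reduce to one discriminant, and the sharper bound $d(2d-1)$ you aim for is not obtained by this route.

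The paper's resolution is much simpler and stays at the bound $d^2(2d-1)$. Since $P\notin(f_1,\ldots,f_s)$, pick any nonzero pure-$g$ monomial $\alpha\, g_{i_1}^{e_1}\cdots g_{i_k}^{e_k}$ in $P$; because $\deg P=d$ and every $\deg g_i\ge 1$, at most $k\le d$ distinct $g_i$'s appear in it. Relabel so these are $g_1,\ldots,g_k$, and \emph{move the remaining $g_{k+1},\ldots,g_r$ to the base}: set $A^{\mathrm{new}}=\K[g_{k+1},\ldots,g_r,f_1,\ldots,f_s]$. Then still $Q_i\in A^{\mathrm{new}}$ and now $P\notin(g_{k+1},\ldots,g_r,f_1,\ldots,f_s)$ (the chosen monomial survives), so \cref{lemma: generalized discriminant lemma} applies with only the $k\le d$ upper variables $g_1,\ldots,g_k$. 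Hence every $\pi_i$ divides $\prod_{\ell=1}^{k}\mathrm{Disc}^{A'}_{g_\ell}(P)$, a form of degree at most $k\cdot d(2d-1)\le d^2(2d-1)$, and the bound follows. Your argument becomes correct once you replace the monicity manoeuvre by this monomial-selection step.
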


\begin{proof}
Let $\cB=\K[g_1,\cdots,g_r]$. Consider $P$ as a polynomial in $\cB[f_1,\cdots,f_s]$. Since $P\not\in (f_1,\cdots,f_s)$, there exists a monomial $\alpha \prod_j g_{i_j}^{e_j}$ in $P$, where $\alpha\in \K\setminus \{0\}$ and $g_{i_j}\in \{g_1,\cdots,g_r\}$. Without loss of generality, let $g_1,\cdots,g_k$ be the elements appearing in this monomial. Note that we must have $k\leq d$, since $P\in R_{d}$ and $\deg(g_i)\geq 1$ for all $i\in [r]$. Now, we have that $P\not\in (g_{k+1},\cdots,g_r,f_1,\cdots,f_s)$.

Let $A=\K[g_{k+1},\cdots,g_r,f_1,\cdots,f_s]$. Then $Q_i\in A$ for all $i\in [t]$. Now, $Q_i$ is non-zero and hence a non-zero divisor in $A$. Therefore we have that $(Q_i)$ is a Cohen-Macaulay ideal. Since $Q_i$ is square-free in $R$, it is square-free in $A$ and hence $(Q_i)$ is a radical ideal in $A$ for all $i\in [t]$. Note that $P\in A':=\K[g_1,\cdots,g_r,f_1,\cdots,f_s]$ and $P$ is square-free in $R$ by assumption. Therefore $P$ is also square-free in $A'$. We also have $P\not\in (f_1,\cdots,f_s)$ in $R$, by assumption. Therefore, by \cref{lemma: generalized discriminant lemma}, if $(P,Q_i)$ is not radical in $R$ then there exists $\ell\leq k$ and some minimal prime $\fq$ over $(Q_i)$ in $A$ such that $\mathrm{Disc}_{g_\ell}^{A'}(P)\in \fq R$. Now the minimal primes of $\fq$ over $(Q_i)$ in $A$ are given by the irreducible factors of $Q_i$ in $A$. Therefore, if $(P,Q_i)$ is not radical, then there exists some irreducible factor of $Q_i$ in $A$ that divides $\prod_{\ell=1}^k\mathrm{Disc}_{g_\ell}^{A'}(P)$. Now $\deg(\mathrm{Disc}_{g_\ell}^{A'}(P))\leq d(2d-1)$ for all $i\in [k]$, as $\deg(P)=d$. Therefore, we have $\deg(\prod_{\ell=1}^k\mathrm{Disc}_{g_\ell}^{A'}(P))\leq d^2(2d-1)$ as $k\leq d$. Therefore the number of irreducible factors of $\prod_{\ell=1}^k\mathrm{Disc}_{g_\ell}^{A'}(P)$ in $R$ is at most $d^2(2d-1)$. If $t>d^2(2d-1)$, then by the pigeonhole principle there exist $i\neq j$ such that $Q_i,Q_j$ have a common irreducible factor in $A$. This is a contradiction since $Q_i,Q_j$ do not have common factors in $R$ for any $i\neq j$.
\end{proof}


\section{Strength of forms and Strong Ananyan-Hochster spaces}\label{sec:strong-algebras}


In this section we describe the main tool which we will use to prove our Sylvester-Gallai theorem -- strong Ananyan-Hochster algebras.
As mentioned in \cref{sec:intro}, we would like to construct small algebras which behave as polynomial rings and contain many of the polynomials in our configuration.
As we saw in \cref{sec:algebraic-geometry}, for an algebra to ``behave as a polynomial ring'' it is enough to construct an algebra whose generators form a prime sequence.
Another property that we would like from our algebras, is that they are robust with regards to certain augmentations, as we will increase such algebras to contain more and more forms from our configuration. 
Moreover, we would like to preserve the structure of the original algebra inside of the augmented one -- a concept which we will make precise in \cref{proposition: constructing AH algebras}.

In a recent breakthrough work by Ananyan-Hochster \cite{AH20a}, where they positively answer Stillman's conjecture, they show that given a set of polynomials $\cH := \{H_1, \ldots, H_r\}$ in a polynomial ring $S$, one can construct a sub-algebra $\cA\subset S$ such that $\cH \subset \cA$  and the number of generators of $\cA$ is uniformly bounded 
by a function of the size of $\cH$ and the degrees of the polynomials in $\cH$.

Building on \cite{AH20a}, we define the notion of a strong Ananyan-Hochster algebra (which we denote henceforth by strong AH algebra). 
We show that strong AH algebras have excellent algebraic-geometric properties and are particularly suitable for applications to Sylvester-Gallai problems as described above.
In order to construct such algebras, we need to define a notion of rank for a form in $S$.
The notion of rank that we describe below  is called strength and was introduced in \cite{AH20a, AH20b}. 
This notion can be seen as a symmetric generalization of the notion of slice rank of a tensor ~\cite{T16}.

\subsection{Strength}
Let $R=\oplus_{d\geq 0} R_d$ be a finitely generated graded $\K$-algebra,  generated by $R_1$. 
In \cite{AH20a} the notions of collapse and strength were defined for a polynomial ring. 
We will extend those definitions to finitely generated graded $\K$-algebras and prove the necessary properties below.
Henceforth, we refer to a homogeneous element of $R$ as a \emph{form}, adopting the same notation for polynomial rings.

\begin{definition}[Collapse]\label{def:collapse} 
Given a non-zero form $F \in R_d$, we say that $F$ has a \emph{$k$-collapse} if there exist $k$ forms $G_1, \ldots, G_k$ such that $1 \leq \deg(G_i) < d$ and $F \in (G_1, \ldots, G_k)$.
\end{definition}

\begin{definition}[Strength]\label{def:strength} 
Given a non-zero form $F \in R_d$, the \emph{strength} of $F$, denoted by $s(F)$, is the least positive integer such that $F$ has a $(s(F)+1)$-collapse but it has no $s(F)$-collapse.
We say that $s(F) \geq t$ whenever $F$ does not have a $t$-collapse.
\end{definition}

\begin{remark}
By the definitions above, a form $x\in R_1$ does not have a $k$-collapse for any $k \in \N$. 
Thus, we say that for any $x \in R_1$, $s(x) = \infty$. In particular, linear forms in the polynomial ring $S$ have infinite strength.

We will make the convention that $s(0) = -1$.
\end{remark}

\begin{definition}[Minimum collapse]\label{definition: minimum collapse}
Given a non-zero form $F \in R_d$ and $s \in \N^*$ such that $s(F) = s-1$, a \emph{minimum collapse} of $F$ is any identity of the form $F = G_1 H_1 + \cdots + G_s H_s$, where $G_i, H_i$ are forms of degree in $[d-1]$.
\end{definition}

It is useful to define the min and max strength of a linear system of forms of the same degree.

\begin{definition}[Min and max strength]\label{definition: smin smax}
Given a set of forms $F_1, \ldots, F_r \in R_d$,  define 
$\smin{F_1, \ldots, F_r}$ as the \emph{minimum} strength of a \emph{non-zero} form in $\Kspan{F_1, \ldots, F_r}$ and $\smax{F_1, \ldots, F_r}$ as the \emph{maximum} strength of a form in $\Kspan{F_1, \ldots, F_r}$. 

In particular, given any non-zero finite dimensional vector space $V \subset R_d$, define $\smin{V}$ ($\smax{V}$) as the minimum (maximum) strength of any non-zero form in $V$. If $V=(0)$, then there are no non-zero forms in $V$. In this case, by convention we define $\smin{(0)}=\smax{(0)}=\infty$.
We will say that a vector space $V$ is $k$-strong if $\smin{V}\geq k$. Note that the zero vector space is infinitely strong.
\end{definition}

\begin{proposition}\label{proposition: strong with z} Let $z$ be a new variable and consider the polynomial ring $S[z]$.
\begin{enumerate}
    \item Let $F\in S$ be a form of degree $d$ with $s(F)=s$. Then, $s(F)=s$ as a form in $S[z]$, and for any $\alpha\in \K$, we have $s \leq s(F-\alpha z^d)\leq s+1$ in $S[z]$.
    \item Let $F_1,\cdots,F_m\in S$ be linearly independent forms.  Let $G_i\in S[z]$ be defined as $G_i=F_i-\alpha_iz^{\deg(F_i)}$ for some $\alpha_i\in \K$. Then $G_1,\cdots,G_m$ are linearly independent.
    \item Let $V\subset S_d$ be a $k$-strong vector space with a basis $F_1,\cdots, F_m$. Let $G_i\in S[z]$ be defined as $G_i=F_i-\alpha_iz^d$ for some $\alpha_i\in \K$. Then $\Kspan{G_1,\cdots,G_m}$ is a $k$-strong vector space in $S[z]$.
\end{enumerate}
   
\end{proposition}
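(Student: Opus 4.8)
The plan is to derive all three parts from a single observation about the specialization $z\mapsto 0$. Let $\pi\colon S[z]\to S$ be the $\K$-algebra surjection with $\pi(z)=0$ and $\pi|_S=\mathrm{id}$. The key point is that $\pi$ fixes every $z$-free form and, since it sends a homogeneous form of degree $e$ either to $0$ or to a homogeneous form of the \emph{same} degree $e$, it carries a collapse in $S[z]$ of a $z$-free form to a collapse in $S$ of no larger size. Throughout I would assume $d\geq 2$; the case $d=1$ is immediate since then $F$, $F-\alpha z^d$ and all the $G_i$ are linear (hence of infinite strength by the convention in \cref{def:strength}) or zero.

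For part (1), I would first prove $s(F)=s$ in $S[z]$. The inequality $s_{S[z]}(F)\leq s_S(F)$ is trivial, as an $r$-collapse $F\in(G_1,\dots,G_r)$ in $S$ is one in $S[z]$. For the reverse, given an $r$-collapse $F=\sum_{i=1}^r G_iA_i$ in $S[z]$ with each $G_i$ homogeneous of degree in $[1,d-1]$, apply $\pi$: then $F=\sum_i\pi(G_i)\pi(A_i)$, the nonzero $\pi(G_i)$ are homogeneous of the same degrees in $[1,d-1]$, so discarding the terms with $\pi(G_i)=0$ exhibits $F$ in an ideal generated by $\leq r$ forms of degree in $[1,d-1]$; padding with a degree-$1$ variable (available since $d\geq 2$) makes this an $r$-collapse in $S$. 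Hence the least size of a collapse is the same over $S$ and $S[z]$, so the two strengths agree. The same $\pi$-argument applied to $F-\alpha z^d$, whose $\pi$-image is $F$, shows that an $s$-collapse of $F-\alpha z^d$ in $S[z]$ would yield an $s$-collapse of $F$ in $S$, contradicting $s(F)=s$; this gives $s(F-\alpha z^d)\geq s$. For the upper bound, take a minimum collapse $F=\sum_{i=1}^{s+1}G_iH_i$ in $S$ (all degrees in $[1,d-1]$, cf.\ \cref{definition: minimum collapse}) and write $F-\alpha z^d=\sum_{i=1}^{s+1}G_iH_i+(-\alpha z)\,z^{d-1}$, an explicit $(s+2)$-collapse since $1\leq d-1$; thus $s(F-\alpha z^d)\leq s+1$.

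For part (2), I would apply $\pi$ to a relation $\sum_i c_iG_i=0$: since $\pi(G_i)=\pi(F_i-\alpha_i z^{\deg(F_i)})=F_i$, this gives $\sum_i c_iF_i=0$ in $S$, whence all $c_i=0$ by linear independence of the $F_i$. For part (3), let $G=\sum_{i=1}^m c_iG_i$ be a nonzero element of $\Kspan{G_1,\dots,G_m}$; then $G=F-\beta z^d$ where $F:=\sum_i c_iF_i\in V$ and $\beta:=\sum_i c_i\alpha_i\in\K$. If $F=0$ then all $c_i=0$, since $F_1,\dots,F_m$ is a basis of $V$, forcing $G=0$, a contradiction; so $F\neq 0$, hence $s_S(F)\geq\smin{V}\geq k$. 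By part (1), $s_{S[z]}(G)=s_{S[z]}(F-\beta z^d)\geq s_S(F)\geq k$, so $\Kspan{G_1,\dots,G_m}$ is $k$-strong in $S[z]$.

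The only point requiring care — and the closest thing to an obstacle — is the bookkeeping in part (1): one must verify that $\pi$ does not lower the degree of a surviving generator (this is exactly where homogeneity is used), handle the cases where some $\pi(G_i)$ vanishes, and re-pad a smaller collapse back up to the desired size using a degree-$1$ form (which is why the hypothesis $d\geq 2$ matters), alongside the degenerate conventions ($s(0)=-1$, linear forms of infinite strength). Once the role of $\pi$ and homogeneity is pinned down, everything else is routine.
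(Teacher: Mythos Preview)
Your proof is correct and takes essentially the same approach as the paper: both argue by reducing modulo $(z)$ (your map $\pi$) to transport a collapse of $F$ or $F-\alpha z^d$ in $S[z]$ to a collapse of $F$ in $S$, and both derive parts (2) and (3) by the same direct computations. Your write-up is somewhat more explicit about the bookkeeping (padding an $\ell$-collapse up to an $r$-collapse, the $d=1$ case, homogeneity of the $\pi$-images), but the underlying idea is identical.
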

\begin{proof}
   1. If $F$ has a $k$-collapse in $S$, then it is also a $k$-collapse in $S[z]$. Conversely, let $F=f_1g_1+\cdots+f_kg_k$ be a $k$-collapse in $S[z]$. Then modulo $(z)$, we have $F=\overline{f}_1\overline{g}_1+\cdots+\overline{f}_k\overline{g}_k$, which is a $\ell$-collapse for some 
    $\ell\leq k$, where $\overline{f}_i,\overline{g}_j$ are the images of $f_i,g_j$ modulo $(z)$. Thus we must have $k\geq \ell \geq s+1$ in $S$. Therefore strength of $s(F)=s$ in $S[z]$.
    
    If $F$ has a $k$-collapse in $S$, then $F-\alpha z^d$ has a $(k+1)$-collapse in $S[z]$. Hence $s(F-\alpha z^d)\leq s+1$. Now let $F-\alpha z^d=f_1g_1+\cdots+f_kg_k$ be a $k$-collapse in $S[z]$. Then modulo $(z)$, we have $F=\overline{f}_1\overline{g}_1+\cdots+\overline{f}_k\overline{g}_k$, which is a $\ell$-collapse for some 
    $\ell\leq k$, where $\overline{f}_i,\overline{g}_j$ are the images of $f_i,g_j$ modulo $(z)$. Thus we must have $k\geq s+1$. Hence $s(F-\alpha z^d)\geq s$.
    
    2. If $\sum \beta_iG_i=\sum \beta_i F_i-(\sum\beta_i\alpha_i)z^d=0$ for some choice of $\beta_i \in \K$, then we have $\sum \beta_i F_i=(\sum\beta_i\alpha_i)z^d$. 
    Thus we must have $\sum \beta_i F_i=0$ and hence $\beta_i=0$ for all $i$.
    
    3. Let $U=\Kspan{G_1,\cdots,G_m}$. 
    Then for any $G\in U\setminus \{0\}$, we have $G=\sum_i\lambda_i (F_i-\alpha_i z^d)=F-\alpha z^d$ for some $\lambda_i,\alpha \in \K$ where $F=\sum_i\lambda_i F_i$. 
    Note that $F\neq 0$ as $G\neq 0$. Thus $s(G)=s(F-\alpha z^d)\geq s(F)\geq k$, by part (1).
\end{proof}

\subsection{Strong Ananyan-Hochster Vector Spaces}

In \cite{AH20a}, the authors proved that if a form $F$ is \emph{sufficiently strong} then the singular locus of the hypersurface $V(F)$ is small. 
More generally they show that if $\smin{F_1,\cdots,F_m}$ is sufficiently high then $F_1,\cdots, F_m$ is an $\mathcal{R}_\eta$ sequence for suitable $\eta\geq 1$. We recall their main theorem, \cite[Theorem A]{AH20a} below.

\begin{theorem}\label{theorem: Ananyan-Hochster}
Let $\K$ be an algebraically closed field, $N$ be a positive integer and $S := \K[x_1,\cdots, x_N]$. 
\begin{enumerate}
    \item There exists an ascending function $A(\eta,d)\geq d-1\geq 0$ of $\eta,d\in \Z_{+}$, such that if $F\in S$ is a form of degree $d\geq 1$ with $s(F)\geq A(\eta,d)$, then the codimension of the singular locus in $S/(F)$ is at least $\eta+1$, i.e. $S/(F)$ satisfies the $\mathcal{R}_\eta$-property.
    
    \item  For every $\eta\in \N$, there exists an ascending function $ B_\eta=( B_{\eta,1},\cdots, B_{\eta,d}):\N^d\rightarrow \N^d$ with the following property:
    
    If $V=\oplus_{i=1}^d V_i \subset S$ is a graded $\K$-vector subspace with $\dim V=n$ and dimension sequence $\delta=(\delta_1,\cdots,\delta_d)$, such that for every $i\in [d]$, we have $\smin{V_i}\geq B_{\eta,i}(\delta)$, then every sequence of $\K$-linearly independent forms in $V$ is an $\mathcal{R}_\eta$-sequence.
    \item In part (2) above, it is sufficient to have $B_{\eta,i}(\delta)=A(\eta,i)+3(n-1)$, where $\delta=(\delta_1,\cdots,\delta_d)\in \N^d$ and $n=\sum_{i=1}^d\delta_i$.
\end{enumerate}
\end{theorem}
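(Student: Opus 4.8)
The plan is to recall the strategy of \cite{AH20a}, since Theorem \ref{theorem: Ananyan-Hochster} is exactly its Theorem A and is not a formal consequence of the material recalled above. The three parts are proved together by a simultaneous induction on the degree $d$ (with an auxiliary recursion on $\eta$), and the base case $d=1$ is immediate: linear forms have infinite strength and $S/(\ell)$ is a polynomial ring, hence $\mathcal R_\eta$ for every $\eta$, so one may take $A(\eta,1)=0$. The combinatorial heart of the argument is the principle that, up to functions depending only on $d$, the strength of a form $F$ and the codimension of the singular locus of the hypersurface $V(F)$ control each other. One inequality is elementary: if $F\in S_d$ has a $k$-collapse $F=\sum_{i=1}^k G_iH_i$ with $1\le\deg G_i<d$, then every partial derivative satisfies $\partial_j F\in (G_1,\cdots,G_k,H_1,\cdots,H_k)$, so the singular locus of $V(F)$ — which by Euler's identity equals $V(\partial_1F,\cdots,\partial_NF)$ — contains a set of codimension at most $2k$ in $\mathbb A^N$; hence $S/(F)$ being $\mathcal R_\eta$ already forces $s(F)$ to be large, of order $\eta$. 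The content of part (1) is the reverse implication: large strength forces the singular locus to be small, equivalently a form whose singular locus has bounded codimension admits a bounded-size collapse.

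For this reverse implication, proceeding by induction on $d$, I would argue as follows. If $F$ has at most $B$ essential variables then, after a linear change of coordinates, $F\in\K[x_1,\cdots,x_B]\subseteq(x_1,\cdots,x_B)$, which is a $B$-collapse; so one may assume $F$ has many essential variables, i.e.\ the partials span a large-dimensional subspace $W=\Kspan{\partial_1F,\cdots,\partial_NF}\subseteq S_{d-1}$. The crucial — and genuinely hard — lemma is that strength is essentially inherited by generic derivatives: the derivative of $F$ in a generic direction (after a generic coordinate change), equivalently a generic element of $W$, has strength bounded below by an increasing function of $s(F)$. Granting this, a generic tuple of such derivatives consists of highly strong forms of degree $d-1$, so by the inductive hypothesis (part (2) in degree $d-1$) they form an $\mathcal R_\eta$-sequence, hence a regular sequence; this is what lets one bound from below the codimension of the common zero locus of all the partials, and iterating the construction about $\eta+2$ times forces the singular locus of $V(F)$ to have codimension at least $\eta+2$ in $\mathbb A^N$, which is the $\mathcal R_\eta$ property. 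The same circle of ideas simultaneously yields the companion facts needed to keep the induction running inside the quotient — that $(F)$ is prime and $S/(F)$ is a UFD once $\eta\ge 3$, via the normality/UFD criterion invoked in \cref{proposition: strong sequence CM UFD} — which is why strength and collapse were set up for arbitrary graded $\K$-algebras.

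Part (2) is obtained by running the same singular-locus analysis for the complete intersection $V(F_1,\cdots,F_j)$ rather than a single hypersurface: the Jacobian criterion describes its non-smooth locus, and if that locus had codimension $\le\eta$ one would extract a bounded collapse of one of the $F_i$ — or of a suitable linear combination drawn from a single graded piece $V_i$ — contradicting $\smin{V_i}$ being large; carrying this out for $j=1,\cdots,n$, together with primality of each $(F_1,\cdots,F_j)$, gives the $\mathcal R_\eta$-sequence property. Part (3) is the explicit bookkeeping: at each of the at most $n-1$ stages where one form is used up and one passes to the next sub-quotient, the strength available to the remaining forms can drop, but only by a bounded amount (one checks that $3$ suffices), so requiring $\smin{V_i}\ge A(\eta,i)+3(n-1)$ in degree $i$ leaves enough slack throughout; ascendingness of $A$, and hence of $B_\eta$, is clear from the construction.

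The main obstacle is the key lemma of the second paragraph — bounding below the strength of a generic partial derivative, equivalently extracting a bounded collapse of $F$ from a large-dimensional component of its singular locus. This is precisely the technical core of \cite{AH20a}; everything else is the elementary derivative identity, the Jacobian criterion for singularities of complete intersections, and dimension-counting.
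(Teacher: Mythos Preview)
The paper does not prove this theorem: it is quoted verbatim as \cite[Theorem A]{AH20a} and used as a black box throughout. So there is no ``paper's own proof'' to compare against. Your proposal correctly recognizes this and offers a high-level sketch of the Ananyan--Hochster argument itself; that sketch is faithful to the structure of \cite{AH20a} --- the simultaneous induction on $d$, the elementary direction (a $k$-collapse forces the Jacobian ideal into $2k$ lower-degree forms), the hard direction via the key lemma that a generic first partial of a strong form is strong in degree $d-1$, the extension to sequences via the Jacobian criterion for complete intersections, and the $3(n-1)$ bookkeeping in part (3). Since the present paper only cites the result, your outline is more than what is required here; it is accurate as a summary, with the honest caveat you already flag, namely that the genuinely hard step is the generic-derivative strength lemma, whose proof you do not reproduce.
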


Note that the bounds $A(\eta,d)$ and $B_\eta$ are independent of the field $\K$ and the number of variables $N$ in $S=\K[x_1,\cdots,x_N]$. \cref{theorem: Ananyan-Hochster} is the main technical ingredient in the proof of Stillman's conjecture in \cite{AH20a}, where the authors proved that the projective dimension of an ideal $I\subset S$ admits a bound independent of $\K$ and $N$.

The theorem above motivates our definition of strong Ananyan-Hochster algebras. 
Informally, a strong Ananyan-Hochster algebra $\cA\subset S$ is a $\K$-subalgebra generated by a sequence $F_1,\cdots, F_m$ of forms such that $\Kspan{F_1,\cdots,F_m}$ is extremely strong. 
In our setting, we will enforce the strength to be much higher than the Ananyan-Hochster functions $B_{\eta,i}$, which will ensure that $F_1,\cdots,F_m$ remains an $\mathcal{R}_\eta$-sequence even after adjoining a constant number of forms to the subalgebra. 
This robustness property with respect to augmentation of forms is crucial in our applications to Sylvester-Gallai configurations.

\begin{definition}[Strong Ananyan-Hochster vector spaces]\label{def:strong-AH-algebra} 
Let $R=\oplus_{d\geq 0} R_d$ be a finitely generated graded $\K$-algebra,  generated by $R_1$. 
For any function $B=(B_1,\cdots,B_d):\N^d\rightarrow \N^d$, we will say that a non-zero graded vector subspace $V=\oplus_{i=1}^dV_i\subset R$, with dimension sequence $\delta$, is a $B$-strong AH vector space if $V_i$ is $B_i(\delta)$-strong for all $i$, i.e. $\smin{V_i}\geq B_i(\delta)$.  
The subalgebra $\K[V]\subset R$ generated by a $B$-strong AH vector space $V$ is called a $B$-strong AH algebra.
\end{definition}

Note that if $V=(0)$, then $V$ is $B$-strong for any function $B$, since the vector space $\smin{(0)}=\infty$.
We note that \cref{theorem: Ananyan-Hochster} implies that any homogeneous basis of a strong AH-vector space in $S$ is a $\mathcal{R}_\eta$-sequence if $B_i(\delta)$ are sufficiently large.

\begin{corollary}\label{corollary: strong sequence R_eta}
Let $V=\oplus_{i=1}^d V_i\subset S$ be a $B$-strong AH vector space for some $B:\N^d\rightarrow \N^d$. Suppose $B_i(\delta)\geq A(\eta,i)+3(\sum_i\delta_i-1)$ for some $\eta\in \N$. Then any sequence of $\K$-linearly independent forms in $V$ is an $\mathcal{R}_\eta$-sequence. If $\eta\geq 3$, then $S/(V)$ is a Cohen-Macaulay, unique factorization domain.
\end{corollary}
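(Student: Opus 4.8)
The plan is to deduce \cref{corollary: strong sequence R_eta} directly from \cref{theorem: Ananyan-Hochster}(2)--(3) together with \cref{proposition: strong sequence CM UFD}. First I would unwind the definitions: by \cref{def:strong-AH-algebra}, saying $V = \oplus_{i=1}^d V_i$ is a $B$-strong AH vector space means precisely that $\smin{V_i} \geq B_i(\delta)$ for every $i \in [d]$, where $\delta = (\delta_1,\dots,\delta_d)$ is the dimension sequence of $V$. The hypothesis $B_i(\delta) \geq A(\eta,i) + 3(\sum_i \delta_i - 1)$ says that $B$ dominates the Ananyan-Hochster function $B_\eta$ from part (3) of \cref{theorem: Ananyan-Hochster}, since that result states it is enough to take $B_{\eta,i}(\delta) = A(\eta,i) + 3(n-1)$ with $n = \sum_i \delta_i = \dim V$. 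Hence $\smin{V_i} \geq B_{\eta,i}(\delta)$ for all $i$, and \cref{theorem: Ananyan-Hochster}(2) immediately yields that every sequence of $\K$-linearly independent forms in $V$ is an $\mathcal{R}_\eta$-sequence.

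For the second assertion, suppose $\eta \geq 3$. Pick a homogeneous basis $F_1,\dots,F_m$ of $V$, which by the first part is an $\mathcal{R}_\eta$-sequence in $S$; note $(V) = (F_1,\dots,F_m)$. Since $\eta \geq 3$, the sequence is in particular an $\mathcal{R}_3$-sequence (the $\mathcal{R}_\eta$ property is monotone: $\mathcal{R}_\eta \Rightarrow \mathcal{R}_{\eta'}$ for $\eta' \leq \eta$, because the condition on $R_\fp$ is only imposed on a smaller set of primes). Then \cref{proposition: strong sequence CM UFD}(3), applied with $r = m$, gives that $S/(F_1,\dots,F_m) = S/(V)$ is a Cohen-Macaulay UFD, which is exactly what we want.

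This is a short bookkeeping argument, so there is no serious obstacle; the only point requiring a word of care is matching the strength hypothesis against part (3) of \cref{theorem: Ananyan-Hochster}, i.e. checking that the ``$n$'' appearing there (the total dimension $\dim V$) is the same as $\sum_i \delta_i$ in the stated inequality $B_i(\delta) \geq A(\eta,i) + 3(\sum_i \delta_i - 1)$, which it is by definition of the dimension sequence. One might also note the trivial edge case $V = (0)$, where $\smin{(0)} = \infty$ by convention so $V$ is $B$-strong for every $B$, the empty sequence is vacuously an $\mathcal{R}_\eta$-sequence, and $S/(V) = S$ is itself a Cohen-Macaulay UFD; so the statement holds in that degenerate situation as well.
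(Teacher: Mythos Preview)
Your proposal is correct and follows essentially the same approach as the paper's proof: verify that the strength hypothesis dominates the Ananyan--Hochster bound $B_{\eta,i}(\delta)$ from \cref{theorem: Ananyan-Hochster}(3), apply \cref{theorem: Ananyan-Hochster}(2) to obtain the $\mathcal{R}_\eta$-sequence conclusion, and then invoke \cref{proposition: strong sequence CM UFD}(3) for the Cohen--Macaulay UFD statement when $\eta \geq 3$. Your write-up is in fact more detailed than the paper's two-line argument, and the extra remarks on monotonicity of $\mathcal{R}_\eta$ and the $V=(0)$ edge case are correct but not strictly needed.
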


\begin{proof}
    Since $B_i
    (\delta)\geq B_{\eta,i}(\delta)$, we see that any sequence of $\K$-linearly independent forms in $V$ is an $\mathcal{R}_\eta$-sequence by \cref{theorem: Ananyan-Hochster}. If $\eta\geq 3$, then $S/(V)$ is a Cohen-Macaulay UFD by \cref{proposition: strong sequence CM UFD}.  
\end{proof}

For any $\mu\in \N^d$, we define the translation function $t_\mu:\N^d\rightarrow\N^d$ as $t_\mu=(t_{\mu,1},\cdots, t_{\mu,d})$ where the $i$-th component is defined by $t_{\mu,i}(\delta)=\delta_i+\mu_i$. In other words, for all $i\in [d]$ we add $\mu_i$ to the $i$-th component of $\delta$. For any $n\in \N$, we let $t_n:=t_{(n,\cdots,n)}$.

\begin{proposition}\label{proposition: properties of quotient strength} 
   Let $B:\N^d \rightarrow \N^d$ be an ascending function and $U\subset S$ be a graded vector space in $S$ with dimension sequence $\delta\in \N^d$. Let $R=S/(U)$ and $V\subset R$ be a graded vector subspace with dimension sequence $\mu\in \N^d$.
   \begin{enumerate}
           \item Let $F\in S$ be a form and $\overline{F}\in R$ be the image of $F$ in $R$. Then $ s(\overline{F})\leq s(F) $. Furthermore, there exists a form $H_1\in U$, such that $s(\overline{F})\geq s(F-H_1)-\dim(U)$.
           \item If $V$ is $B$-strong in $R$, then for any homogeneous basis $F_1,\cdots,F_m\in R$ of $V$ and any set of homogeneous lifts $\widetilde{F}_1,\cdots,\widetilde{F}_m\in S$, we have that $\Kspan{\widetilde{F}_1,\cdots,\widetilde{F}_m}$ is $B$-strong in $S$.
       \item Suppose $U$ is $B\circ t_\mu$-strong in $S$ and $V$ is $B\circ t_\delta$-strong in $R$. Then, for any basis $F_1,\cdots,F_m\in R$ of $V$ and any set of lifts $\widetilde{F}_1,\cdots,\widetilde{F}_m\in S$, we have that $U+\Kspan{\widetilde{F}_1,\cdots,\widetilde{F}_m}$ is $B$-strong in $S$.
       \item Suppose $B_i(\alpha+\beta)\geq B_i(\beta)+ \sum_j\alpha_j$ for all $\alpha,\beta\in \N^d$ and $i\in [d]$. Suppose that for any homogeneous basis $F_1,\cdots,F_m$ of $V$ and any set of homogeneous lifts $\widetilde{F}_1,\cdots,\widetilde{F}_m\in S$, we have  $U+\Kspan{\widetilde{F}_1,\cdots,\widetilde{F}_m}$ is $B$-strong in $S$. Then $V$ is $B$-strong in $R$. 
       \item Suppose $U$ is $B\circ t_n$-strong for some $n\in \N$. Let $a_1,\cdots,a_n\in R_1$ be linear forms. Then $a_1,\cdots,a_n$ is a prime sequence in $R$.
   \end{enumerate}
\end{proposition}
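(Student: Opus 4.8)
The plan is to reduce the statement to the Ananyan--Hochster theorem in the polynomial ring $S$, by lifting the $a_i$ to $S$, assembling a single large strong vector space, and then descending back to $R$. First I would observe that we may assume the $a_i$ are $\K$-linearly independent in $R_1$, since a regular sequence of length $n$ must consist of $\K$-linearly independent forms (if $a_i\in\Kspan{a_1,\dots,a_{i-1}}$ then $a_i=0$ in the nonzero ring $R/(a_1,\dots,a_{i-1})$, which cannot be a non-zerodivisor). Choose homogeneous lifts $\widetilde a_1,\dots,\widetilde a_n\in S_1$; since $R_1=S_1/U_1$, the independence of the $a_i$ forces the $\widetilde a_i$ to be linearly independent modulo $U_1$. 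Fix a homogeneous basis $g_1,\dots,g_t$ of $U$, so that $g_1,\dots,g_t,\widetilde a_1,\dots,\widetilde a_n$ is a homogeneous basis of the graded vector space $W:=U+\Kspan{\widetilde a_1,\dots,\widetilde a_n}\subset S$, whose dimension sequence is $\delta+(n,0,\dots,0)$, where $\delta$ is the dimension sequence of $U$. Note also that $R=S/(U)=S/(g_1,\dots,g_t)$.

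Next I would check that $W$ is $B$-strong in $S$. Its degree-$1$ component $U_1+\Kspan{\widetilde a_1,\dots,\widetilde a_n}$ consists of linear forms, hence has infinite strength; and for $j\ge 2$ the degree-$j$ component of $W$ is exactly $U_j$ (the $\widetilde a_i$ being linear), so $\smin{W_j}=\smin{U_j}\ge B_j(t_n(\delta))\ge B_j\big(\delta+(n,0,\dots,0)\big)$, where the first inequality is the hypothesis that $U$ is $(B\circ t_n)$-strong and the second holds because $B$ is ascending and $t_n(\delta)=\delta+(n,\dots,n)$ dominates $\delta+(n,0,\dots,0)$ componentwise. (Equivalently, this is item~(3) of the present proposition applied with $V=\Kspan{a_1,\dots,a_n}\subset R_1$, which is trivially $(B\circ t_\delta)$-strong as it is spanned by linear forms.) By \cref{corollary: strong sequence R_eta} (with $\eta=3$), every homogeneous basis of $W$ is then an $\mathcal R_3$-sequence in $S$, and therefore, by \cref{proposition: strong sequence CM UFD}(3), a prime sequence in $S$; in particular $g_1,\dots,g_t,\widetilde a_1,\dots,\widetilde a_n$ is a prime sequence in $S$.

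Finally, since $R=S/(g_1,\dots,g_t)$ and $g_1,\dots,g_t,\widetilde a_1,\dots,\widetilde a_n$ is a prime sequence in $S$, \cref{proposition: strong sequence CM UFD}(2) shows that the images of $\widetilde a_1,\dots,\widetilde a_n$ in $R$ — namely $a_1,\dots,a_n$ — form a prime sequence in $R$, which is exactly the claim (the intermediate domain conditions $R/(a_1,\dots,a_i)$ being built into the notion of a prime sequence). The argument is short, and the only substantive input is the Ananyan--Hochster theorem, invoked through \cref{corollary: strong sequence R_eta}; so the ``hard part'' is really just the bookkeeping with dimension sequences. The point to get right is that the hypothesis on $U$ is phrased with the shift $t_n$ precisely so that after adjoining $n$ linear forms — which, by their independence modulo $U_1$, contribute exactly $n$ to the degree-$1$ count and nothing to higher degrees — and re-evaluating the ascending function $B$ at the enlarged dimension sequence, the strength bound on $W$ still holds; once that is in place the conclusion is immediate.
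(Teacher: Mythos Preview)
Your argument is correct and is essentially the paper's own proof: lift the $a_i$ to $S$, observe that $W=U+\Kspan{\widetilde a_1,\dots,\widetilde a_n}$ is $B$-strong (the paper packages your dimension-sequence computation as an appeal to part~(3) of the same proposition, writing ``part~(4)'' in what appears to be a typo), invoke \cref{corollary: strong sequence R_eta} to get an $\mathcal{R}_\eta$-sequence, and descend via \cref{proposition: strong sequence CM UFD}. Your explicit flagging of the linear-independence hypothesis is a point the paper glosses over, and both proofs tacitly assume $B$ satisfies the Ananyan--Hochster bound $B_i(\delta)\ge A(\eta,i)+3(\sum_j\delta_j-1)$ needed to apply \cref{corollary: strong sequence R_eta}.
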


\begin{proof}
 1. Suppose $F\in S$ has a $k$-collapse in $S$. Then we have that the image $\overline{F}$ in $R$ has an at most $k$-collapse in $R$. Thus $s(\overline{F})\leq s(F)$. Suppose $F$ is of degree $i$ and $\overline{F}=g_1h_1+\cdots+g_kh_k$ is a $k$-collapse in $R$. Then we have $F=H+\widetilde{g}_1\widetilde{h}_1+\cdots+\widetilde{g}_k\widetilde{h}_k$ where $H\in (U)$ and $\widetilde{g}_j,\widetilde{h}_j$ are homogeneous lifts in $S$. We may write $H=H_1+H_2$, where $H_1\in U_i$ and $H_2\in (U_{<i})$. Then $F-H_1=H_2+\widetilde{g}_1\widetilde{h}_1+\cdots+\widetilde{g}_k\widetilde{h}_k$ is an at most $(\dim(U)+k)$-collapse of $F-H_1$. Hence, we have $\dim(U)+k\geq s(F-H_1)+1$. Therefore, $k\geq s(F-H_1)-\dim(U)+1$. Since $k$ was arbitrary, we conclude that $s(\overline{F})\geq s(F-H_1)-\dim(U)$. 
 
  2.  Note that $\Kspan{\widetilde{F}_1,\cdots,\widetilde{F}_m}$ has the same dimension sequence as $V$. For any non-zero $G\in \Kspan{\widetilde{F}_1,\cdots,\widetilde{F}_m}$, we have that $s(G)\geq s(\overline{G})$ by part $(1)$. Thus $\Kspan{\widetilde{F}_1,\cdots,\widetilde{F}_m}$ is also $B$-strong.
  
 3.  Let $W=U+\Kspan{\widetilde{F}_1,\cdots,\widetilde{F}_m}$. If $G\in U_i$ is a non-zero form, then $G$ is $B_i(\delta+\mu)$-strong. Let $G\in W_i\setminus U_i$. We may write $G=H+\sum \alpha_j \widetilde{F_j}$ where $H\in U_i$ and $\deg(\widetilde{F}_j)=i$. Then the image $\overline{G}=\sum \alpha_j F_j\in V$ is non-zero in $R$, as $F_1,\cdots,F_m$ are linearly independent in $R$. Then by part $(2)$, we have that $G$ is $B_i(\delta+\mu)$-strong. Hence $W$ is $B$-strong.
   
   4.  Let $F\in V_i$ be a non-zero homogeneous element and $\widetilde{F}\in S_i$ be a lift. We may construct a basis of $V$ by extending $F$ and choose a set of lifts $\widetilde{F}_1,\cdots,\widetilde{F}_m$ extending $\widetilde{F}_1=\widetilde{F}$. By part $(1)$, we have $s(F)\geq s(\widetilde{F}-H_1)-\dim(U)$, for some $H_1\in U_i$. Now we have $s(\widetilde{F}-H_1)\geq B_i(\delta+\mu)\geq B_i(\mu)+\dim(U)$. Hence $s(F)\geq B_i(\mu)$ and thus $V$ is $B$-strong in $R$. 
   
   5. Let $\wt{a}_1,\cdots,\wt{a}_n\in S_1$ be homogeneous lifts of $a_1,\cdots,a_n$to $S$. Since $U$ is $B\circ t_n$-strong in $S$ and $\Kspan{a_1,\cdots,a_n}$ is infinitely strong, we know that the vector space $U+\Kspan{\wt{a}_1,\cdots,\wt{a}_n}$ is $B$-strong in $S$ by part (4). Hence, by \cref{corollary: strong sequence R_eta}, we know that any homogeneous basis of $U+\Kspan{\wt{a}_1,\cdots,\wt{a}_n}$ is an $\cR_\eta$-sequence, and in particular a prime sequence in $S$. Therefore, by \cref{proposition: strong sequence CM UFD}, we conclude that $a_1,\cdots,a_n$ is a prime sequence in $R$.
\end{proof}

\subsection{Lifted strength}

Suppose that $F_1,\cdots, F_m\in S$ are forms such that $V=\Kspan{F_1,\cdots,F_m}$ is $B$-strong in $S$, where $B$ satisfies the Ananyan-Hochster bound given by $B_i(\delta)\geq A(3,i)+3(\sum_i \delta_i-1)$. Then by \cref{corollary: strong sequence R_eta}, we know that $S/(V)$ is a UFD. We would like to obtain a similar statement where $V$ is a strong vector space in a finitely generated $\K$-algebra $R$. However, the analogous statement is not necessarily true even if $V$ is infinitely strong and $R$ is a Cohen-Macaulay UFD, as shown in the following example.

\begin{example} \label{example: AH in quotients counterexample}
    Let $F=x_1^2+\cdots+x_5^2$ and $R=S/(F)$. Then $R$ is a Cohen-Macaulay UFD. Consider the images $x_1,x_2\in R$. Then $V=\Kspan{x_1,x_2}$ is $B$-strong in $R$ for any function $B:\N\rightarrow \N$. However, $R/(V)=S/(x_3^2+x_4^2+x_5^2)$ is not a UFD.
\end{example}

The above example shows that a direct generalization of \cref{theorem: Ananyan-Hochster} or \cref{corollary: strong sequence R_eta} is not true for UFDs in general. However, note that if $R=S/(x_1^2+\cdots+x_7^2)$, then $R/(x_1,x_2)$ is a UFD. In this case $x_1,x_2, x_1^2+\cdots+x_7^2$ is a sequence strong enough so that the quotient of $S$ is a UFD. Motivated by this observation, we define the notion of lifted strength. We show that if $R$ is a finitely generated $\K$-algebra which is Cohen-Macaulay and a UFD, and $V$ is a lifted strong vector space, then the quotient $R/(V)$ is also a Cohen-Macaulay UFD.

\begin{definition}[Lifted strength] 
Let $U\subset S$ be a graded vector space and $R=S/(U)$. 
Let $F\in R_d$ be a non-zero form. 
We define the lifted strength of $F$ with respect to $U$ as $$\lsmin{U,F}:=\min\{\smin{U_d+\Kspan{\widetilde{F}}}\}$$
where $\widetilde{F}$ varies over all forms in $S_d$ such that the image of $\widetilde{F}$ in $R$ is $F$. 
Given a set of forms $F_1,\cdots,F_m\in R_d$, we define $$\lsmin{U,F_1,\cdots,F_m}=\min\{\smin{U_d+\Kspan{\widetilde{F}_1,\cdots,\widetilde{F}_m}}\},$$ where $\widetilde{F}_i$ varies over all forms in $S_d$ such that the image of $\widetilde{F}_i$ in $R$ is $F_i$. 
Given a non-zero vector space $V\subset R_d$, we define $$\lsmin{U,V}=\min\{\lsmin{U_d,F_1,\cdots,F_m}\},$$ where $F_1,\cdots,F_m$ vary over all possible bases of $V$. 
We say that $V\subset R_d$ is $k$-lifted strong with respect to $U$ if $\lsmin{U,V}\geq k$. 
For simplicity, we omit $U$ from the notation and write $\lsmin{V}$ when $U$ is clear from the context.

Let $V=\oplus_{i=1}^dV_i\subset R$ be a graded vector space. 
For any function, $B:\N^d\rightarrow \N^d$ we will say that $V$ is $B$-lifted strong with respect to $U$, if $V_i$ is $B_i(\dim(U_i)+\dim(V_i))$-lifted strong, i.e. $\lsmin{U,V_i}\geq B_i(\dim(U_i)+\dim(V_i))$ for all $i\in [d]$.  
In other words, $V$ is $B$-lifted strong with respect to $U$, if the vector space $U+\Kspan{\widetilde{F}_1,\cdots,\widetilde{F}_m}$ is $B$-strong in $S$, for any homogeneous basis $F_1,\cdots,F_m\in R$ of $V$ and any set of homogeneous lifts $\widetilde{F}_1,\cdots,\widetilde{F}_m\in S$.
\end{definition}

\begin{remark}\label{remark: lifted strong implies UFD}
Throughout this remark, we will have $R := S/(U)$.
\begin{enumerate}
    \item If $U=0$, then $V\subset S$ is $B$-lifted strong iff $V$ is $B$-strong.
    \item The two notions $B$-lifted strong and $B$-strong in $R$ are not equivalent in general. 
    For example, let $S=\K[x_1,\cdots,x_n,y_1,\cdots,y_{2m}]$ and $U =\Kspan{x_1,\cdots,x_n}$. 
    Let $F=y_1y_2+\cdots+y_{2m-1}y_{2m}$ and $V=\Kspan{F}\subset R$. 
    Then $s(F)=m-1$ and $V$ is $(m-1)$-strong in $R$. 
    Now if $B:\N^2\rightarrow \N^2$ is a function such that $B_2(n,1)>B_2(0,1)=m-1$, then $V$ is $B$-strong in $R$ but it is not $B$-lifted strong with respect to $U$.  
    We will show in \cref{proposition: 2B implies B lifted strong}, that under suitable conditions on the function $B$, we have that $B$-lifted strong implies $B$-strong in $R$.
    \item If $0 \neq U \subset S$ has dimension sequence $\delta$ and $V\subset R$ is a $B$-lifted strong vector space with dimension sequence $\mu$, then we must have that $U_i$ is $B_i(\delta+\mu)$-strong for all $i\in [d]$. 
    Hence, a $B$-lifted strong vector space of dimension sequence $\mu$ can exist in $R$ only if $U$ itself was $B(\delta+\mu)$-strong in $S$ to begin with. 
    In particular, if $B_i(\delta)\geq A(\eta,i)+3(\sum_i\delta_i-1)$ for some $\eta\geq 3$, by \cref{corollary: strong sequence R_eta}, any sequence of $\K$-linearly independent forms in $U$ is an $\cR_\eta$-sequence in $S$. 
    Thus, $R$ is a Cohen-Macaulay UFD. 
    In other words, if there exists a $B$-lifted strong vector space $V\subset R$, then $R$ is a Cohen-Macaulay UFD.
\end{enumerate}
\end{remark} 
 \begin{proposition}\label{proposition: 2B implies B lifted strong}
    Let $B:\N^d \rightarrow \N^d$ be an ascending function such that $B_i(\delta)\geq A(\eta,i)+3(\sum_i\delta_i-1)$ for some $\eta\geq 3$. Let $U\subset S$ be a graded vector space in $S$ with dimension sequence $\delta\in \N^d$. Let $R=S/(U)$ and $V\subset R$ be a graded vector subspace with dimension sequence $\mu\in \N^d$. 
    \begin{enumerate}
        \item We have $\smin{V_i}\leq \lsmin{U,V_i}$, for all $i \in [d]$
        \item If $V$ is $B$-lifted strong with respect to $U$, then $V$ is $B$-strong in $R$.
        \item If there exists a homogeneous basis $F_1,\cdots,F_m$ of $V$ and a set of homogeneous lifts $\widetilde{F}_1,\cdots,\widetilde{F}_m\in S$, such that  $U+\Kspan{\widetilde{F}_1,\cdots,\widetilde{F}_m}$ is $2B$-strong in $S$, then $V$ is $B$-lifted strong with respect to $U$.
        \item If $V\subset R$ is $B$-lifted strong with respect to $U$, then any sequence of $\K$-linearly independent homogenous elements of $V$ is an $\mathcal{R}_\eta$-sequence (and also a prime sequence in $R$), and $R/(V)$ is a Cohen-Macaulay UFD.
    \end{enumerate}
 \end{proposition}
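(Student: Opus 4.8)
The plan is to derive all four assertions from the dictionary between strength in $R$ and strength of homogeneous lifts in $S$ recorded in \cref{proposition: properties of quotient strength}, together with the Ananyan-Hochster bound in the form of \cref{corollary: strong sequence R_eta} and the descent statements of \cref{proposition: strong sequence CM UFD}. Throughout, $\delta$ and $\mu$ denote the dimension sequences of $U$ and of $V$, so that for any basis $F_1,\dots,F_m$ of $V$ and any homogeneous lifts $\widetilde F_1,\dots,\widetilde F_m\in S$ the space $U+\Kspan{\widetilde F_1,\dots,\widetilde F_m}$ has dimension sequence $\delta+\mu$. Parts (1), (2) and (4) should be short consequences of these tools; the substance is part (3).

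For (1), I would use part (1) of \cref{proposition: properties of quotient strength}, which says the quotient map $S\to R$ never increases strength: for any basis $F_1,\dots,F_m$ of $V_i$ and lifts $\widetilde F_1,\dots,\widetilde F_m\in S_i$, a nonzero $P\in U_i+\Kspan{\widetilde F_1,\dots,\widetilde F_m}$ either maps to a nonzero element $\overline P\in V_i$, whence $s(P)\ge s(\overline P)\ge\smin{V_i}$, or else lies in $U_i$, which in the relevant regime is already strong (once $V$ is $B$-lifted strong, item (3) of \cref{remark: lifted strong implies UFD} forces $U_i$ to be $B_i(\delta+\mu)$-strong); minimizing over all basis and lift choices gives $\smin{V_i}\le\lsmin{U,V_i}$. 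For (2), I would apply part (4) of \cref{proposition: properties of quotient strength} directly: the hypothesis there that $U+\Kspan{\widetilde F_1,\dots,\widetilde F_m}$ be $B$-strong in $S$ for \emph{every} basis of $V$ and \emph{every} choice of lifts is exactly the definition of $V$ being $B$-lifted strong with respect to $U$, and the arithmetic hypothesis $B_i(\alpha+\beta)\ge B_i(\beta)+\sum_j\alpha_j$ holds for the functions $B$ in play (for $B_i(\delta)=A(\eta,i)+3(\sum_j\delta_j-1)$ one even has $B_i(\alpha+\beta)=B_i(\beta)+3\sum_j\alpha_j$), so the conclusion is that $V$ is $B$-strong in $R$.

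Part (3) is the crux. Let $F_1,\dots,F_m$ be the basis of $V$ and $\widetilde F_1,\dots,\widetilde F_m$ the lifts for which $W:=U+\Kspan{\widetilde F_1,\dots,\widetilde F_m}$ is $2B$-strong in $S$, and let $F_1',\dots,F_m'$ be an arbitrary homogeneous basis of $V$ with arbitrary homogeneous lifts $\widetilde F_1',\dots,\widetilde F_m'$; I must show $U+\Kspan{\widetilde F_1',\dots,\widetilde F_m'}$ is $B$-strong. Fixing a degree $i\ge 2$ (degree $1$ being vacuous, as linear forms are infinitely strong), each degree-$i$ lift $\widetilde F_j'$ can be written $\widetilde F_j'=\sum_k c_{jk}\widetilde F_k+u_j+w_j$ with $u_j\in U_i$ and $w_j$ in the degree-$i$ part of the ideal $(U_{<i})$, since $\widetilde F_j'-\sum_k c_{jk}\widetilde F_k$ maps to $0$ in $R$. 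Collecting terms, any nonzero $P$ in the degree-$i$ part of $U+\Kspan{\widetilde F_1',\dots,\widetilde F_m'}$ can be written $P=P_0+w$ with $P_0\in W_i$, $w\in(U_{<i})$, and $\overline{P_0}=\overline P$ in $R$. If this image is nonzero then $P_0\ne 0$, so $s(P_0)\ge 2B_i(\delta+\mu)$ by $2B$-strength of $W$; since $w$ lies in the ideal generated by at most $\dim(U_{<i})$ forms of degrees in $[1,i-1]$, any $k$-collapse of $P$ yields a collapse of $P_0$ of size at most $k+\dim(U_{<i})$, whence $s(P)\ge s(P_0)-\dim(U_{<i})\ge 2B_i(\delta+\mu)-\dim U\ge B_i(\delta+\mu)$, the final inequality because $B_i(\delta+\mu)\ge A(\eta,i)+3(\dim U+\dim V-1)\ge \dim U$ (the cases $\dim U\le 1$ being immediate); if the image is zero then $P\in U_i\subset W$ and $s(P)\ge 2B_i(\delta+\mu)$ outright. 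As the basis $F_j'$ and the lifts were arbitrary, $V$ is $B$-lifted strong with respect to $U$.

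For (4), I would fix a homogeneous basis $F_1,\dots,F_m$ of $V$, lifts $\widetilde F_1,\dots,\widetilde F_m\in S$ and a homogeneous basis $\widetilde u_1,\dots,\widetilde u_p$ of $U$, so that $W:=U+\Kspan{\widetilde F_1,\dots,\widetilde F_m}$ is $B$-strong in $S$ with $B_i(\delta+\mu)\ge A(\eta,i)+3(\dim U+\dim V-1)$. Then \cref{corollary: strong sequence R_eta} gives that every $\K$-linearly independent sequence of forms in $W$ is an $\mathcal{R}_\eta$-sequence in $S$, and since $\eta\ge 3$, part (3) of \cref{proposition: strong sequence CM UFD} makes every quotient of $S$ by an initial segment of such a sequence a Cohen-Macaulay UFD. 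Applying this to $\widetilde u_1,\dots,\widetilde u_p,\widetilde F_1,\dots,\widetilde F_m$ (a $\K$-basis of $W$), I obtain that $R=S/(\widetilde u_1,\dots,\widetilde u_p)$ is a Cohen-Macaulay UFD, that the images $F_1,\dots,F_m$ of $\widetilde F_1,\dots,\widetilde F_m$ form an $\mathcal{R}_\eta$-sequence, hence a prime sequence, in $R$, and that $R/(V)\cong S/(W)$ is a Cohen-Macaulay UFD; an arbitrary $\K$-linearly independent homogeneous family in $V$, in any order, extends after reordering to such a basis, so the same conclusion applies to it. The step I expect to be the main obstacle is part (3): controlling, uniformly over all homogeneous lifts, the strength loss caused by the correction term lying in $(U_{<i})$, and checking that the extra factor of $2$ in the hypothesis provides exactly the slack needed to absorb it.
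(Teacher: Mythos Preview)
Your proposal is correct and follows essentially the same route as the paper: parts (1), (2), (4) are direct applications of \cref{proposition: properties of quotient strength} together with \cref{corollary: strong sequence R_eta} and \cref{proposition: strong sequence CM UFD}, and for part (3) you carry out the same decomposition $P=P_0+w$ with $P_0$ in the $2B$-strong space $W$ and $w\in(U_{<i})$, then absorb the $\dim U$ loss via $2B_i(\delta+\mu)-\dim U\geq B_i(\delta+\mu)$. Your handling of the case $\overline P=0$ in part (3) (forcing $P\in U_i$ by linear independence of the $F'_j$) is slightly more explicit than the paper's, but the argument is the same.
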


\begin{proof}
    1. Let $F_1,\cdots,F_m$ of $V_i$ be a homogeneous basis and a set of homogeneous lifts $\widetilde{F}_1,\cdots,\widetilde{F}_m\in S$. 
    For any non-zero form  $F\in U+\Kspan{\widetilde{F}_1,\cdots,\widetilde{F}_m}$, we have $s(\overline{F})\leq s(F)$ by \cref{proposition: properties of quotient strength}.
    Thus we have $\smin{V}\leq \lsmin{U,V}$.
    
    2. Note that $B_i(\alpha+\beta)\geq B_i(\beta)+\sum_j\alpha_j$for all $\alpha,\beta\in \N^d$ and $i\in [d]$. Thus by \cref{proposition: properties of quotient strength} part $(4)$, we have $V$ is $B$-strong in $R$.
    
    3. Let $G_1,\cdots,G_m$ be another homogeneous basis of $V$ and  $\widetilde{G}_1,\cdots,\widetilde{G}_m\in S$ a set of homogeneous lifts. For any non-zero form $G\in U+\Kspan{\widetilde{G}_1,\cdots,\widetilde{G}_m}$ of degree $i$, we have that $G=F+H$, where $F\in \Kspan{\widetilde{F}_1,\cdots,\widetilde{F}_m}$ and $H\in (U)$ are homogeneous of degree $i$. We may write $H=H_1+H_2$, where $H_1\in U_i$ and $H_2\in (U_{<i})$ are homogeneous of degree $i$. Therefore, if $G$ has a $k$-collapse, then $F+H_1$ has an at most $(\dim(U)+k)$-collapse. Now $F+H_1\in U+\Kspan{\widetilde{F}_1,\cdots,\widetilde{F}_m}$, which is a $2B$-strong vector space of dimension sequence $\delta+\mu$. Hence, $\dim(U)+k\geq 2B_i(\delta+\mu)+1 \geq B_i(\delta+\mu)+\dim(U)+1$. Therefore we have $k\geq B_i(\delta+\mu)+1$. So we have $\lsmin{U,V_i}\geq B_i(\delta+\mu)$ for all $i\in[d]$ and hence $V$ is $B$-lifted strong.
    
    4.  If $V\subset R$ is $B$-lifted strong, then  $W:=U+\Kspan{\widetilde{F}_1,\cdots,\widetilde{F}_m}$ is $B$-strong in $S$ for any homogeneous basis $F_1,\cdots,F_m$ of $V$. 
    Therefore, by \cref{corollary: strong sequence R_eta}, any homogeneous basis of $W$ is an $\mathcal{R}_\eta$-sequence in $S$. 
    In particular, any homogeneous basis of $W$, extending $\widetilde{F}_1,\cdots,\widetilde{F}_m$ is an $\mathcal{R}_\eta$-sequence in $S$. 
    By \cref{proposition: strong sequence CM UFD}, $F_1,\cdots,F_m$ is an $\mathcal{R}_\eta$-sequence in $R=S/(U)$. 
    By \cref{proposition: strong sequence CM UFD} and $\eta \geq 3$, $S/(W)$ is a Cohen-Macaulay UFD. 
    Hence $R/(V)=S/(W)$ is a Cohen-Macaulay UFD.
\end{proof}

\subsection{Strengthening and Robustness}

We will show that given a graded vector space $U$, we can always find a strong AH-vector space $V$ such that $\K[U]\subset \K[V]$. Furthermore, we can construct $V$ in a way such that any element of $U$ that was \emph{sufficiently strong} is an element of $V$. We follow the proof of \cite[Theorem B]{AH20a} to construct such strong AH-algebras below.

\begin{lemma}[Strengthening of Algebras]\label{proposition: constructing AH algebras} For any $d\in \N$ and a function $B:\N^d\rightarrow N^d$,
   there exist functions  $C_B:\N^d\rightarrow\N^d$ and $h_B:\N^d\rightarrow\N^d$, depending on $B$, such that the following holds:
    
    Given a graded vector space  $U=\oplus_{i=1}^dU_i\subset S$ with dimension sequence $\delta\in \N^d$, there exists a $B$-strong AH vector space $V=\oplus_{i=1}^d V_i$ such that 
    \begin{enumerate}
        \item $\K[U]\subset \K[V]$,
        \item for all $i\in [d]$, we have $\dim(V_i)\leq C_{B,i}(\delta)$, where $C_{B,i}$ denotes the $i$-th component of $C_B=(C_{B,1},\cdots, C_{B,d}):\N^d\rightarrow \N^d$.
    \end{enumerate}
    
Furthermore, suppose   $H=\oplus_{i=1}^dH_i\subset U$ is a graded subspace such that $\smin{H_i}\geq h_{B,i}(\delta)$ for all $i\in [d]$. 
Then there exists a $B$-strong AH vector space $V$  satisfying (1) and (2) above such that $H\subset V$.
\end{lemma}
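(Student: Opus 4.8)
The plan is to make the construction of \cite[Theorem B]{AH20a} quantitative and iterative, tracking carefully two things that the original argument does not need: an explicit upper bound $C_B$ on the dimension of the output space, and a \emph{robustness} guarantee that strong forms already present in $U$ survive into $V$. The key mechanism is a \emph{collapse-refinement} step: if some form $F\in U_i$ has strength less than the target $B_i(\delta)$, then by definition $F$ admits a collapse $F = G_1H_1+\cdots+G_sH_s$ with $s < B_i(\delta)+1$ and $1\le\deg(G_j),\deg(H_j)<i$; we then replace $U$ by $U' := U + \Kspan{G_1,H_1,\ldots,G_s,H_s}$. Note $F$ is now \emph{decomposable} over $\K[U']$ (it lies in the ideal generated by lower-degree elements of $U'$), $\K[U]\subset\K[U']$, and the number of generators we added to degrees $<i$ is bounded by $2B_i(\delta)$. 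Iterating this top-down — first fix degree $d$, then $d-1$, etc. — terminates because at each stage the multiset of "bad" degrees strictly decreases in an appropriate well-founded order, and each replacement only adds finitely many generators whose count is controlled by $B$ evaluated at the running dimension sequence. The output of the iteration is a vector space $V$ with $\smin{V_i}\ge B_i(\dim V)$ in each degree (this needs a small argument: after the process stabilizes in degree $i$, every nonzero form of degree $i$ in the final space has strength $\ge B_i$ because otherwise we would not have stabilized), hence $V$ is $B$-strong, and $\K[U]\subset\K[V]$.

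For the dimension bound, I would set up a recursion: let $n_i$ be the final dimension of $V_i$. Processing degree $i$ starting from a space of dimension sequence $(\ldots)$ produces at most (number of forms to kill in degree $i$) $\times$ (forms added per collapse) $\le \dim(\text{current space})\cdot 2B_i(\cdot)$ new generators in lower degrees; but each such addition can itself trigger work in degrees $<i$. So $C_B$ is defined by a downward recursion $C_{B,d} = \delta_d$ (the top degree forms are never split, only their collapses add lower-degree generators — wait, actually degree-$d$ forms themselves may need to be checked for strength, but if one has low strength we collapse it and it leaves the "must be strong" requirement... more carefully: we do \emph{not} remove $F$ from the span, we enlarge the span so that $F$ becomes redundant as an algebra generator, then pass to a minimal generating subspace). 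The cleanest bookkeeping: after the process, take $V$ to be spanned by a maximal $\K$-linearly independent, $B$-strong subset in each degree of the final enlarged space; primality/independence is not needed here, only the dimension count. The recursion $C_{B,i}$ depends on $C_{B,j}$ for $j>i$ and on $B$, and is finite because the iteration terminates. The function $h_B$ is then defined so that any $F$ with $s(F)\ge h_{B,i}(\delta)$ in $U$ is never chosen as a form to collapse during the entire process: we need $h_{B,i}(\delta)$ to exceed $B_i$ evaluated at the \emph{largest} dimension sequence that can ever occur, i.e. $h_{B,i}(\delta) > B_i(C_B(\delta))$ roughly, together with a buffer accounting for the fact that enlarging the ambient space can only \emph{decrease} strength of a fixed form by the number of added generators (by the same reasoning as \cref{proposition: properties of quotient strength}(1)); so one takes $h_{B,i}(\delta) := B_i(C_B(\delta)) + |C_B(\delta)|$ or similar. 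Then such $F$ stays strong throughout and is automatically in $V$.

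I would organize the proof as: (i) define the single refinement step and verify $\K[U]\subset\K[U']$ and the generator-count bound; (ii) define the iteration as "repeatedly pick the highest-degree form witnessing failure of $B_i$-strength and refine", prove termination via a well-founded measure (lexicographic on the vector of dimensions-of-low-strength-subspaces, processed top-down); (iii) extract $V$ and verify $B$-strength and $\K[U]\subset\K[V]$; (iv) unwind the recursion to get $C_B$ explicitly as a function of $B$ and $\delta$ only (crucially independent of $\K$ and $N$, since strength and collapses are field/dimension-independent in the sense used by \cite{AH20a}); (v) choose $h_B$ large enough and check that a form in $H_i$ with $\smin{H_i}\ge h_{B,i}(\delta)$ is never a valid witness for refinement, hence its image survives and $H\subset V$.

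The main obstacle I expect is step (ii)–(iv): making the termination argument and the resulting dimension recursion genuinely clean, because refining a degree-$i$ form adds generators in \emph{all} lower degrees, which can re-open work in those degrees, so a naive "process degrees $d,d-1,\ldots,1$ once" does not suffice and one needs either a nested induction on degree with an inner loop, or a single potential function that strictly decreases. The original \cite{AH20a} argument handles exactly this kind of descent, so the structure exists, but transcribing it with explicit bounds and with the extra robustness clause tracked simultaneously is the delicate part; in particular one must ensure that the buffer $|C_B(\delta)|$ in the definition of $h_B$ is consistent with the recursion defining $C_B$ (no circularity: $C_B$ is defined first, purely from $B$, then $h_B$ from $C_B$ and $B$).
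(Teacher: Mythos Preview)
Your proposal is essentially the paper's own argument: iteratively pick the highest-degree form $P$ of strength below $B_\ell(\delta)$, add its collapse terms to lower degrees, and prove termination by reverse lexicographic well-ordering on dimension sequences, defining $C_B$ and $h_B$ recursively along this descent. Two small corrections to your bookkeeping: (i) the paper \emph{explicitly removes} $P$ from $V_\ell$ at each step (this is harmless for $\K[U]\subset\K[V]$ since $P\in\K[\text{collapse terms}]$, and it is what makes the reverse-lex decrease clean), rather than keeping it and later passing to a minimal generating set; (ii) strength of a fixed form in $S$ is intrinsic and does not change when you enlarge the ambient subspace $V$ --- your invocation of \cref{proposition: properties of quotient strength}(1) concerns \emph{quotients}, not subspaces --- so no additive ``buffer'' is needed in $h_B$; what is needed is simply $h_{B,i}(\delta)\ge B_i(\delta')$ for every intermediate dimension sequence $\delta'$, which the paper arranges by defining $h_B$ recursively in parallel with $C_B$.
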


\begin{proof}
    For any graded vector space $U=\oplus_{i=1}^d U_i$ with dimension sequence $\delta=(\delta_1,\cdots,\delta_d)\in \N^d$, we define an iterative process below.
    We will construct $V$ as an outcome of this iterative process.
    \begin{enumerate}
        \item Set $V=\oplus_{i=1}^dV_i:=U$, with $U_i=V_i$ for all $i\in [d]$.
        \item While $V$ is not $B$-strong:
        \begin{itemize}
            \item Let $\ell:=\max\{i\in [d]| \smin{V_i}<B_i(\delta)\}$.
        \item Pick $P\in V_\ell\setminus (0)$ such that $s(P)=s-1<B_i(\delta)$. Choose a basis $\cF$ of $V_\ell$ that contains $P$. Let $P=f_1g_1+\cdots+f_sg_s$ be a $s$-collapse. 
        \item Set $V_\ell\leftarrow\Kspan{\cF\setminus \{P\}}$ and $\oplus_{i<\ell}V_i\leftarrow \oplus_{i<\ell}V_i+\Kspan{f_1,\cdots,f_s,g_1,\cdots,g_s}$ as a graded vector space .
       \item Set $V\leftarrow \oplus_{i=1}^dV_i$. \end{itemize}
    \end{enumerate}
Note that after each iteration of the while loop we preserve the condition $\K[U]\subset \K[V]$. Consider the reverse lexicographic ordering on the dimension sequences $\delta\in \N^d$. Note that the reverse lexicographic ordering is a well-ordering. We will inductively define the functions $C_{B,i}(\delta)$ and prove by induction on $\delta$ that the iterative process terminates after finitely many steps with the desired bound on dimension. If $d=1$, then $U$ is $B$-strong for any function $B$. Therefore the iterative process ends at the first step with $V=U$ and we may take $C_{B,1}(\delta)=\delta=\dim(U)$ in this case. Thus the statement holds for any $d\in \N$ and any dimension sequence of the form $\delta=(\delta_1,0,\cdots,0)\in \N^d$ with $C_{B,i}(\delta)=\delta_i$ for all $i\in [d]$.  

 Let $d\geq 2$. 
Suppose that the iterative process terminates for all $\delta'<\delta$, and $\dim(V)$ is bounded by a function $C_{B,i}(\delta')$ defined for all $\delta'<\delta$. 
If $U$ is $B$-strong, the iterative process stops at the first step and we may take $V=U$. 
If $U$ is not $B$-strong, then let $\ell\in [d]$ be as in the definition of the while loop above. 
Then after one iteration of the while loop we see that $\dim(V_{>\ell})$ remains unchanged, $\dim(V_\ell)$ drops by $1$ and $\dim(V_{<\ell})$ increases by at most $2B_\ell(\delta)$. 
Thus, after the first iteration of the while loop we end up with a vector space $V$ with dimension sequence $\delta'<\delta$ and $\sum_{i=1}^d\delta'_i< \sum_{i=1}^d\delta_i+2B_\ell(\delta)$. 
Therefore, by induction the iterative process stops after finitely many steps. 
Note that $\dim(V_d)$ never increases throughout the process, so it is enough to take $C_{B,d}(\delta)=\delta_d$. 
For $i<d$, we define $C_{B,i}(\delta)=\max\{\delta_i,C_{B,i}(\delta')\}$ where $\delta'$ varies over the set of $\delta'<\delta$ such that $\sum_{i=1}^d\delta'_i< \sum_{i=1}^d\delta_i+2B_{\max}(\delta)$ with $B_{\max}(\delta)=\max\{B_i(\delta)|i\in [d]\}$. 

We will define the function $h_B$ inductively and show that if $\smin{H_i}\geq h_{B,i}(\delta)$ then we may choose $P$ and $\cF$ at each step of the iterative process such that the condition $H\subset V$ is preserved. Suppose $d=1$, then $U=V$ and we have $H\subset 
V=U$. So we may take $h_{B,1}(\delta)=B_1(\delta)$ in this case.
Suppose $d\geq 2$ and $\delta=(\delta_1,0\cdots,0)$. Then again $H\subset U=V$ and we may take $h_{B,i}(\delta_1,0,\cdots,0)=B_i(\delta)$ for all $i$. In general, we define $h_{B,i}(\delta)=\max\{B_i(\delta),h_{B,i}(\delta')\}$ where $\delta'$ varies over the set of $\delta'<\delta$ such that $\sum_{i=1}^d\delta'_i< \sum_{i=1}^d\delta_i+2B_{\max}(\delta)$.

Let $d\geq 2$. Suppose that for any $\delta'<\delta$ and a subspace $H\subset U$ with $\smin{H_i}\geq h_{B,i}(\delta')$, we can run the iterative process such that at each step we preserve the condition $H\subset V$, and the process terminates with $H\subset V$. Let $U$ be a graded vector space of dimension sequence $\delta$ and $H\subset U$ a graded subspace such that $\smin{H_i}\geq h_{B,i}(\delta)$. We fix a basis $\cF_H$ of $H$. If $U$ is $B$-strong then we have $H\subset U=V$. If $U$ is not $B$-strong, let $\ell\in [d]$ be as in the definition of the while loop above. Note that $\smin{H_\ell}\geq h_{B,\ell}(\delta)\geq  B_\ell(\delta)$. Therefore, we must have $P\not \in H_\ell$ in the iterative process above. Thus we may choose a basis $\cF$ of $V_\ell$, containing $P$ such that it also contains the basis $\cF_H$ of $H$. Then, at the end of this iteration of the while loop, we end up with a vector space $V$ with dimension sequence $\delta'<\delta$ and $\sum_{i=1}^d\delta'_i< \sum_{i=1}^d\delta_i+2B_{\max}(\delta)$. We see that $H_i\subset V_i$ for all $i\in [d]$, by the choice of the basis $\cF$. Now, $\smin{H_i}\geq h_{B,i}(\delta)\geq h_{B,i}(\delta')$ for all $i\in [d]$. Therefore, by induction, we may run the iterative process such that it  terminates with a vector space $V$ with $H\subset 
V$.
\end{proof}

\begin{corollary}[Robustness of strong algebras]\label{corollary: robust strong algebra} Let $B,G:\N^d\rightarrow\N^d$ and $\mu\in\N^d$. Suppose that $B_i(\delta)\geq h_{G,i}(\delta+\mu)$ for all $\delta\in \N^d$ and $i\in [d]$, where $h_G:\N^d\rightarrow \N^d$ is the function defined in \cref{proposition: constructing AH algebras}. Let $U\subset S$ be a $B$-strong AH vector space and $W\subset S$ is a graded vector space with dimension sequences $\delta$ and $\mu$ respectively. Then there exists a $G$-strong AH vector space $V$ such that 
\begin{enumerate}
    \item $\K[U+W]\subset\K[V]$,
    \item $U\subset V$,
    \item for all $i\in [d]$, $\dim(V_i)\leq C_{G,i}(\delta+\mu)$, where $C_G:\N^d\rightarrow \N^d$ is the function defined in \cref{proposition: constructing AH algebras}.
\end{enumerate}
\end{corollary}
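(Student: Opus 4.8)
The plan is to deduce \cref{corollary: robust strong algebra} directly from \cref{proposition: constructing AH algebras}, applied to the vector space $U' := U + W$ with target strength function $G$ and with $H := U$ in the role of the distinguished subspace to be preserved. So first I would set up the input of that lemma: $U'$ is a graded vector space whose dimension sequence $\delta'$ satisfies $\delta'_i = \dim(U_i + W_i) \le \dim(U_i) + \dim(W_i) = \delta_i + \mu_i$, hence $\delta' \le \delta + \mu$ componentwise; and $U$ is a graded subspace of $U'$.

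The only hypothesis of the ``Furthermore'' clause of \cref{proposition: constructing AH algebras} that needs checking is that $\smin{U_i} \ge h_{G,i}(\delta')$ for all $i \in [d]$. Since $U$ is a $B$-strong AH vector space we have $\smin{U_i} \ge B_i(\delta)$, and by hypothesis $B_i(\delta) \ge h_{G,i}(\delta + \mu)$; it then remains to see that $h_{G,i}(\delta + \mu) \ge h_{G,i}(\delta')$. This is where I would use that the auxiliary functions $h_G$ and $C_G$ built in the proof of \cref{proposition: constructing AH algebras} are nondecreasing in the componentwise order on $\N^d$: this is a short induction along the reverse-lexicographic well-ordering used in that proof, the two points to verify being that $\delta' <_{\mathrm{revlex}} \delta \le_{\mathrm{comp}} \delta^+$ implies $\delta' <_{\mathrm{revlex}} \delta^+$, and that the side condition $\sum_j \delta'_j < \sum_j \delta_j + 2\, G_{\max}(\delta)$ is preserved upon replacing $\delta$ by $\delta^+$ (using that $G$ is ascending). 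Granting this monotonicity, $\smin{U_i} \ge h_{G,i}(\delta')$, so \cref{proposition: constructing AH algebras} applies.

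Invoking \cref{proposition: constructing AH algebras} now produces a $G$-strong AH vector space $V$ with $\K[U'] = \K[U+W] \subset \K[V]$ (its conclusion (1)), with $\dim(V_i) \le C_{G,i}(\delta')$ (its conclusion (2)), and — from the ``Furthermore'' clause applied to $H = U$ — with $U \subset V$. This gives items (1) and (2) of the corollary immediately, and item (3) follows from $\dim(V_i) \le C_{G,i}(\delta') \le C_{G,i}(\delta + \mu)$, again by monotonicity of $C_G$. The main (really the only) obstacle is this bookkeeping observation about monotonicity of $h_G$ and $C_G$; everything else is a direct unwinding of \cref{proposition: constructing AH algebras}. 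If one prefers, this monotonicity can instead be recorded as part of the statement of \cref{proposition: constructing AH algebras}, after which the corollary is essentially immediate.
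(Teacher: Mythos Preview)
Your proposal is correct and follows essentially the same approach as the paper: apply \cref{proposition: constructing AH algebras} to $U' = U+W$ with $H = U$, using $\smin{U_i} \ge B_i(\delta) \ge h_{G,i}(\delta+\mu) \ge h_{G,i}(\delta')$ and $C_{G,i}(\delta') \le C_{G,i}(\delta+\mu)$. You are in fact slightly more careful than the paper, which uses the componentwise monotonicity of $h_G$ and $C_G$ without comment; your remark that this monotonicity follows by induction along the construction in \cref{proposition: constructing AH algebras} is a welcome clarification.
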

\begin{proof}
    Consider the graded vector space $U'=U+W$. Then we have that $\delta'_i:=\dim(U'_i)\leq \delta_i+\mu_i$ for all $i\in [d]$. Since $U$ is $B$-strong, we have $\smin{U_i}\geq B_i(\delta)$ for all $i$. Now $U\subset U'$ is a graded subspace such that $\smin{U_i}\geq B_i(\delta)\geq h_{G,i}(\delta+\mu)\geq h_{G,i}(\delta')$. Let $V$ be the $G$-strong AH vector space constructed by applying \cref{proposition: constructing AH algebras} to the vector space $U'$ along with the subspace $H=U$. Then, by \cref{proposition: constructing AH algebras}, we know that $\K[U']\subset \K[V]$ and $\dim(V_i)\leq C_{G,i}(\delta')\leq C_{G,i}(\delta+\mu)$. Furthermore, we have $U\subset V$, since $\smin{U_i}\geq h_{G,i}(\delta')$ for all $i\in [d]$.
\end{proof}

\begin{corollary}\label{corollary: practical robustness }
 Let $B:\N^d\rightarrow \N^d$. Let $U\subset S$ be a graded vector space with dimension sequence $\delta_U\in \N^d$ and let $R=S/(U)$. Let $V\subset R$ is a graded vector space with dimension sequence $\delta_V\in \N^d$. Suppose $V$ is  $h_{2B}\circ t_k$-lifted strong with respect to $U$. Let $P_1,\cdots,P_k\in R_{\leq d}$ be homogeneous elements. Then there exists a graded vector space $V'\subset R_{\leq d}$ such that:

\begin{enumerate}
    \item $V'$ is $B$-lifted strong with respect to $U$.
    \item $P_1,\cdots,P_k\in \K[V']$.
    \item $V\subset V'$.
    \item for all $i\in [d]$, we have $\dim(V'_i)\leq C_{2B,i}(t_k(\delta_U+\delta_V))-\delta_{U,i}$.
\end{enumerate}
\end{corollary}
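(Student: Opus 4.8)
\emph{Overall approach.} The plan is to lift the whole configuration to the polynomial ring $S$, perform a single application of the absolute robust strengthening result \cref{corollary: robust strong algebra} there, and then descend the resulting strong algebra back to $R=S/(U)$. The function $h_{2B}\circ t_k$ appearing in the hypothesis is engineered precisely so that this reduction goes through without loss in the bounds. (Throughout, as is standard for these statements, $B$ is assumed ascending and at least the Ananyan--Hochster bound, and $C_{2B},h_{2B}$ denote the corresponding ascending functions of \cref{proposition: constructing AH algebras}.)

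\emph{Setup and strengthening in $S$.} Fix a homogeneous basis $F_1,\dots,F_m$ of $V$ and homogeneous lifts $\widetilde F_1,\dots,\widetilde F_m\in S$, and set $W_0:=U+\Kspan{\widetilde F_1,\dots,\widetilde F_m}\subseteq S$. Since $F_1,\dots,F_m$ is a basis of $V\subseteq R=S/(U)$, the $\widetilde F_j$ are $\K$-linearly independent modulo $(U)$, so $W_0$ has dimension sequence exactly $\delta_U+\delta_V$; and by definition the hypothesis that $V$ is $h_{2B}\circ t_k$-lifted strong with respect to $U$ says exactly that $W_0$ is an $(h_{2B}\circ t_k)$-strong AH vector space in $S$. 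Also fix homogeneous lifts $\widetilde P_1,\dots,\widetilde P_k\in S_{\le d}$ of $P_1,\dots,P_k$, and note that $\Kspan{\widetilde P_1,\dots,\widetilde P_k}$ has dimension sequence $\mu\le(k,\dots,k)$ componentwise, being spanned by at most $k$ forms in all. Now apply \cref{corollary: robust strong algebra} to the $(h_{2B}\circ t_k)$-strong AH vector space $W_0$ and the extra graded vector space $\Kspan{\widetilde P_1,\dots,\widetilde P_k}$, with target strength function $2B$; in the notation of that corollary the input strength function is $h_{2B}\circ t_k$ and the second dimension sequence is $\mu$, so the required inequality reads $h_{2B,i}\bigl(t_k(\delta)\bigr)\ge h_{2B,i}(\delta+\mu)$, which holds because $t_k(\delta)=\delta+(k,\dots,k)$, $\mu\le(k,\dots,k)$ and $h_{2B}$ is ascending. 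The corollary produces a $2B$-strong AH vector space $\widehat V\subseteq S_{\le d}$ (the collapses in \cref{proposition: constructing AH algebras} never raise degrees) with $\K\bigl[W_0+\Kspan{\widetilde P_1,\dots,\widetilde P_k}\bigr]\subseteq\K[\widehat V]$, with $W_0\subseteq\widehat V$ (hence $U\subseteq\widehat V$ and each $\widetilde F_j\in\widehat V$), and with $\dim\widehat V_i\le C_{2B,i}\bigl((\delta_U+\delta_V)+\mu\bigr)\le C_{2B,i}\bigl(t_k(\delta_U+\delta_V)\bigr)$ for every $i$.

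\emph{Descending to $R$.} Choose a graded complement $E=\bigoplus_i E_i$ of $U$ inside $\widehat V$ and a homogeneous basis $e_1,\dots,e_b$ of $E$. Since $\widehat V$ is $2B$-strong AH, \cref{corollary: strong sequence R_eta} shows that a homogeneous basis of $\widehat V$ extending a basis of $U$ by the $e_l$'s is an $\cR_\eta$-sequence, hence a prime sequence, in $S$; then \cref{proposition: strong sequence CM UFD} shows the images $\overline e_1,\dots,\overline e_b$ form a prime sequence in $R$, so in particular they are $\K$-linearly independent. Set $V':=\Kspan{\overline e_1,\dots,\overline e_b}\subseteq R_{\le d}$; then $\dim V'_i=\dim E_i=\dim\widehat V_i-\delta_{U,i}\le C_{2B,i}\bigl(t_k(\delta_U+\delta_V)\bigr)-\delta_{U,i}$, which is item (4). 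Writing $\widetilde F_j=u_j+e'_j$ with $u_j\in U$, $e'_j\in E$, its image in $R$ is $F_j=\overline{e'_j}\in V'$, so $V\subseteq V'$, which is item (3). Since $\widetilde P_j\in\K[\widehat V]$ is a polynomial in the basis $\{u_1,\dots,u_a\}\cup\{e_1,\dots,e_b\}$ of $\widehat V=U\oplus E$ (with $u_1,\dots,u_a$ a basis of $U$), reducing modulo $(U)$ kills the $U$-part and leaves $P_j\in\K[\overline e_1,\dots,\overline e_b]=\K[V']$, which is item (2). Finally $U+\Kspan{e_1,\dots,e_b}=\widehat V$ is $2B$-strong in $S$ and $(e_1,\dots,e_b)$ is a set of lifts of the basis $(\overline e_1,\dots,\overline e_b)$ of $V'$, so \cref{proposition: 2B implies B lifted strong}(3) gives that $V'$ is $B$-lifted strong with respect to $U$, which is item (1).

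\emph{Main obstacle.} The essential content is organizing the argument so that no quantitative slack accumulates: the only genuine subtlety is keeping track of which dimension sequence each strength function is evaluated at, and the choice $h_{2B}\circ t_k$ is exactly what makes the hypothesis of \cref{corollary: robust strong algebra} hold as an identity and makes the output dimension bound come out as $C_{2B,i}\bigl(t_k(\delta_U+\delta_V)\bigr)$ on the nose. Once this is in place, the descent to $R$ is routine, because over a sufficiently strong quotient a graded complement of $U$ in $\widehat V$ maps onto a prime sequence in $R$, which is precisely what is needed to invoke \cref{proposition: 2B implies B lifted strong}(3) and to read off the dimensions.
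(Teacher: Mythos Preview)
Your proof is correct and follows essentially the same route as the paper: lift $V$ and the $P_j$ to $S$, apply \cref{corollary: robust strong algebra} to the $(h_{2B}\circ t_k)$-strong space $U+\Kspan{\widetilde F_j}$ together with $\Kspan{\widetilde P_j}$ to obtain a $2B$-strong $\widehat V\supseteq U$, then push $\widehat V$ down to $R$ and invoke \cref{proposition: 2B implies B lifted strong}(3). The only difference is that where the paper simply lets $V'$ be the image of $\widehat V$ in $R$ and asserts $\dim V'_i=\dim\widehat V_i-\delta_{U,i}$, you make this explicit by choosing a graded complement $E$ of $U$ in $\widehat V$ and verifying via \cref{corollary: strong sequence R_eta} and \cref{proposition: strong sequence CM UFD} that its image in $R$ is a prime sequence (hence has the expected dimension); this extra care is welcome, since the equality $\ker(\widehat V_i\to R_i)=U_i$ does require the strength of $\widehat V$.
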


\begin{proof}
     Let $m=\delta_V$ and $\widetilde{F}_1,\cdots,\widetilde{F}_m$ be homogeneous lifts of a homogeneous basis of $V$. Let $\widetilde{P}_1,\cdots,\widetilde{P}_k\in S$ be homogeneous lifts of $P_1,\cdots,P_k$. Note that $U+\Kspan{\widetilde{F}_1,\cdots,\widetilde{F}_m}$ is $h_{2B}\circ t_k$-strong in $S$, as $V$ is $h_{2B}\circ t_k$-lifted strong in $R$. By applying \cref{corollary: robust strong algebra} to the vector space $U+\Kspan{\widetilde{F}_1,\cdots,\widetilde{F}_m,\widetilde{P}_1,\cdots,\widetilde{P}_k}$, we may construct a $2B$-strong vector space $V''\subset S$ such that $V''$ contains $U+\Kspan{\widetilde{F}_1,\cdots,\widetilde{F}_m}$ and $\wt{P}_1,\cdots,\wt{P}_k\in \K[V'']$. Let $V'\subset R$ be the image of $V''$ in $R$. Then $V\subset V'$ and $P_1,\cdots,P_k\in \K[V']$. By \cref{proposition: 2B implies B lifted strong}, we know that $V'$ is $B$-lifted strong in $R$. Furthermore, we know that $\dim(V''_i)\leq C_{2B,i}(t_k(\delta_U+\delta_V))$. Therefore, we have $\dim(V'_i)\leq C_{2B,i}(t_k(\delta_U+\delta_V))-\delta_{U,i}$.  
\end{proof}

\begin{proposition}\label{proposition: three forms regular sequence}
   Let $\eta\geq 3$. Let $U\subset S_{\leq d}$ be a graded vector space and let  $R=S/(U)$. Let $V\subset 
    R_{\leq d}$ be a graded vector space. Suppose that $V$ is $h_{2B}\circ t_1$-lifted strong with respect to $U$, where $B:\N^d\rightarrow \N^d$ is such that $B_i(\delta)\geq A(\eta,i)+3(\sum_i\delta_i-1)$ for all $i\in [d]$. Let $F,G,H\in R_{\leq d}$ be homogeneous elements of positive degree. Suppose $F,G\in \K[V]$ and $F,G$ do not have any common factors in $R$. If $H\not \in (V)$ then $F,G,H$ is a regular sequence in $R$.
\end{proposition}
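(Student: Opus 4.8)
The plan is to exploit the fact that $V$ being $h_{2B}\circ t_1$-lifted strong gives us enough room to absorb one more form and still land inside an $\cR_\eta$-setting. First I would use \cref{corollary: practical robustness } with $k=1$ applied to the single form $H$: this yields a graded vector space $V'\subset R_{\leq d}$ which is $B$-lifted strong with respect to $U$, contains $V$, and satisfies $F,G,H\in\K[V']$. By \cref{proposition: 2B implies B lifted strong}(4), since $B_i(\delta)\geq A(\eta,i)+3(\sum_i\delta_i-1)$ with $\eta\geq 3$, any sequence of $\K$-linearly independent homogeneous elements of $V'$ is a prime sequence in $R$, so $\cA:=\K[V']$ is (isomorphic to) a polynomial ring sitting inside $R$ as a prime sequence, and the transfer principle of \cref{proposition: radicals and prime sequences} and \cref{lem:factor-ufd-algebra} applies to ideals generated by elements of $\cA$.

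Next I would reduce the whole statement to a statement inside $\cA$. Since $F,G\in\K[V]\subset\cA$ and $F,G$ have no common factor in $R$, \cref{lem:factor-ufd-algebra}(2) forces all irreducible factors of $F$ and of $G$ to lie in $\cA$, so $F,G$ have no common factor in $\cA$ either; as $\cA$ is a UFD, \cref{lemma: radical regular sequence}(2) gives that $F,G$ is a regular sequence in $\cA$. By \cref{proposition: strong sequence CM UFD} (or directly, since $\cA$ is a polynomial ring) $\cA/(F,G)$ is Cohen-Macaulay of dimension $\dim\cA-2$, hence has no embedded primes. Now $H\in\cA$ and $H\not\in(V)\supseteq$ ... — here I need to be slightly careful: the hypothesis is $H\not\in(V)R$, and I want $H\not\in(V')$ in the appropriate sense. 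Since $V\subset V'$, the condition $H\notin (V)$ does not immediately give $H\notin (V')$; instead I would argue with $V$ directly. Write $\cA_0:=\K[V]\subset\cA$; then $F,G\in\cA_0$, $H\notin (V)=V\cdot R$, and by \cref{proposition: radicals and prime sequences}(1) we have $(V)R\cap\cA_0 = (V)\cA_0$ (the irrelevant-type ideal of $\cA_0$), so $H\notin (V)\cA_0$, i.e. $H$ is not in the homogeneous maximal-type ideal generated by a generating set of $V$. Applying \cref{corollary: dominant in quotient} (the non-zero-divisor criterion) with the prime sequence coming from $V'$ — taking the ``$f$'' variables to be a basis of $V$ and the ``$g$'' variables the remaining basis vectors of $V'$ — shows $H$ is a non-zero divisor on $\cA/(F,G)$: indeed $(F,G)$ has no embedded primes, its minimal primes are homogeneous and contained in $(V)\cA$, while $H\notin(V)\cA$. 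Hence $F,G,H$ is a regular sequence in $\cA$.

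Finally I would transfer back to $R$. The extension $\cA\subset R$ is intersection flat (\cref{proposition: intersection flatness}, via \cref{proposition: extension of primes general} / \cref{proposition: radicals and prime sequences}), in particular faithfully flat, and it sends the prime sequence generators to a prime sequence; faithful flatness preserves regular sequences, so $F,G,H$ being a regular sequence in $\cA$ implies it is a regular sequence in $R$. (Alternatively: $(F,G)R$ is Cohen-Macaulay and unmixed by \cref{proposition: strong sequence CM UFD}, its minimal primes are of the form $\fp R$ for $\fp$ minimal over $(F,G)$ in $\cA$, hence homogeneous and inside $(V)R$, and $H\notin(V)R$ by hypothesis, so $H$ is a non-zero divisor mod $(F,G)R$.) This completes the proof. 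The main obstacle is the bookkeeping in the middle step: correctly matching the hypothesis ``$H\notin(V)$'' (a condition in $R$) with a non-zero-divisor statement inside the auxiliary polynomial algebra, and making sure the strength budget — the extra $t_1$ shift and the $2B$ vs $B$ factor in \cref{corollary: practical robustness } — is exactly enough to run \cref{corollary: dominant in quotient}; everything else is a routine application of the transfer machinery already developed.
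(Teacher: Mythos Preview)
Your proposal is correct and in essence follows the paper's proof, but you take an unnecessary detour through the subalgebra $\cA=\K[V']$ before arriving (in your ``alternatively'' clause) at exactly what the paper does. The paper never leaves $R$: after constructing $V'$ via \cref{corollary: practical robustness }, it notes that $R$ is a Cohen--Macaulay UFD, so $F,G$ is a regular sequence in $R$ by \cref{lemma: radical regular sequence} and $(F,G)$ has no embedded primes in $R$; then it applies \cref{corollary: dominant in quotient} \emph{directly in $R$} (with the $f$-variables a basis of $V$ and the $g$-variables the remaining basis of $V'$) to conclude that $H\notin(V)$ forces $H$ to be a non-zero-divisor modulo $(F,G)$ in $R$. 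No transfer step from $\cA$ back to $R$ is needed, because the conclusion of \cref{corollary: dominant in quotient} is already about $R$.

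One small wobble in your main path: the line ``$(V)R\cap\cA_0=(V)\cA_0$, so $H\notin(V)\cA_0$'' is off, since $H$ need not lie in $\cA_0=\K[V]$ at all. What you actually want is $H\notin(V)\cA$, which follows from $H\in\cA$, $H\notin(V)R$, and the contraction identity $(V)R\cap\cA=(V)\cA$ from \cref{proposition: radicals and prime sequences}(1). With that fix your $\cA$-route works, but it buys nothing over the paper's direct argument.
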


\begin{proof}
     Let $V'\subset R$ be the $B$-lifted strong graded vector space constructed by applying \cref{corollary: practical robustness } to $V$ and $H$. In particular, $V\subset V'$ and $H\in \K[V']$. Note that, by \cref{remark: lifted strong implies UFD}, we know that any homogeneous basis of $U$ is an $\cR_\eta$-sequence in $S$ and that $R$ is a graded UFD. Furthermore, by \cref{proposition: 2B implies B lifted strong}, we know that any homogeneous basis of $V$ or $V'$ is an $\cR_\eta$-sequence (and also a prime sequence) in $R$. 

 Note that $F,G$ is a regular sequence in $R$, by \cref{lemma: radical regular sequence}. Since $R$ is Cohen-Macaulay by \cref{corollary: strong sequence R_eta}, we know that $(F,G)$ does not have any embedded primes in $R$. Since $H\not\in (V)$, we conclude that $H$ is a non-zero divisor in $R/(F,G)$, by \cref{corollary: dominant in quotient}. Therefore, $F,G,H$ is a regular sequence in $R$.
 
\end{proof}

\begin{lemma}\label{lemma: d^2 minimal primes}
Let $d,e,\eta\in \N$ with $1\leq d \leq e$ and $\eta\geq 3$. Let $U\subset S_{\leq e}$ be a $h_{2B}\circ t_2$-strong vector space in $S$, where $B:\N^e\rightarrow \N^e$ is such that $B_i(\delta)\geq A(\eta,i)+3(\sum_i\delta_i-1)$ for all $i\in [e]$. Consider the unique factorization domain $R=S/(U)$. Let $P,Q,P_1,\cdots,P_m\in R_{\leq d}$ be  homogeneous elements of positive degree where $m\geq 2^{d^2}$ and $\gcd(P,Q)=\gcd(P,P_i)=\gcd(Q,P_i)=1$ for all $i\in[m]$. If $Q\in \rad(P,P_i)$ for all $i\in [m]$, then we must have $\rad{(P,P_i)}=\rad(P,P_k)$ for two distinct $i,k\in [m]$.
\end{lemma}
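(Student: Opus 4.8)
The plan is to reduce the count of minimal primes of $(P,Q)$ to a genuine polynomial ring by passing to a strong Ananyan-Hochster subalgebra of $R$, and then to re-run the pigeonhole argument of \cref{lemma: finiteness of radicals} essentially verbatim. The only new input needed is that $(P,Q)$ has at most $\deg(P)\deg(Q)\le d^2$ minimal primes in $R$, with \emph{no dependence} on $\dim(U)$ (whereas \cref{lemma: finiteness of radicals} gave a bound that grows with the number $t$ of defining equations of the quotient). This independence is exactly the role of the hypothesis that $U$ is $h_{2B}\circ t_2$-strong.

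First I would build the subalgebra. Since $U$ is $h_{2B}\circ t_2$-strong in $S$, the zero subspace $V=(0)\subset R$ is $h_{2B}\circ t_2$-lifted strong with respect to $U$, so \cref{corollary: practical robustness } applied with $k=2$ to the two forms $P,Q$ produces a graded vector space $V'\subset R_{\le d}$ that is $B$-lifted strong with respect to $U$ and satisfies $P,Q\in\cA:=\K[V']$. Because $B_i(\delta)\ge A(\eta,i)+3(\sum_i\delta_i-1)$ for our $\eta\ge 3$, part (4) of \cref{proposition: 2B implies B lifted strong} shows that every homogeneous basis of $V'$ is a prime sequence in $R$; likewise, as $U$ is in particular $B$-strong, \cref{corollary: strong sequence R_eta} shows that a homogeneous basis of $U$ is a prime sequence in $S$ and that $R$ is a Cohen-Macaulay UFD. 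Thus $\cA$ is isomorphic to a polynomial ring and the extension $\cA\subset R$ satisfies the hypotheses of \cref{proposition: radicals and prime sequences}.

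Next I would bound the set $\mathcal{S}$ of minimal primes of $(P,Q)$ in $R$. Since $\cA\subset R$ has only scalar units and $\gcd_R(P,Q)=1$, the forms $P,Q$ have no common factor in $\cA$; being homogeneous of positive degree in the UFD $\cA$, they form a regular sequence there by \cref{lemma: radical regular sequence}. Fixing an isomorphism $\cA\cong\K[y_1,\dots,y_s]$ — under which $P$ and $Q$ become polynomials of total degree at most $d$, since each generator of $V'$ has degree $\ge 1$ in $R$ — part (3) of \cref{lemma: d^2 minimal primes basic} bounds the number of minimal primes of $(P,Q)$ in $\cA$ by $\deg(P)\deg(Q)\le d^2$. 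By part (2) of \cref{proposition: radicals and prime sequences}, the extension map $\mathfrak{p}\mapsto\mathfrak{p}R$ is a bijection from the minimal primes of $(P,Q)$ in $\cA$ onto $\mathcal{S}$, so $|\mathcal{S}|\le d^2$.

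Finally, I would run the pigeonhole argument exactly as in \cref{lemma: finiteness of radicals}. For each $i$, $\gcd(P,P_i)=1$ makes $P,P_i$ a regular sequence in $R$, so $(P,P_i)$ has height $2$ and all members of its set $\mathcal{S}_i$ of minimal primes have height $2$; from $Q\in\rad(P,P_i)$ we get $\rad(P,Q)\subseteq\rad(P,P_i)$, so by prime avoidance every element of $\mathcal{S}_i$ contains an element of $\mathcal{S}$, and equality of heights in the domain $R$ forces $\mathcal{S}_i\subseteq\mathcal{S}$. Each $\mathcal{S}_i$ is thus a nonempty subset of a set of size at most $d^2$, so there are at most $2^{d^2}-1$ possibilities; since $m\ge 2^{d^2}$, some $i\ne k$ satisfy $\mathcal{S}_i=\mathcal{S}_k$, and then $\rad(P,P_i)=\bigcap_{\mathfrak{q}\in\mathcal{S}_i}\mathfrak{q}=\bigcap_{\mathfrak{q}\in\mathcal{S}_k}\mathfrak{q}=\rad(P,P_k)$. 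I expect the main obstacle to be the first two steps: invoking the strong-algebra machinery correctly so that $\cA$ is a polynomial ring that preserves primality, radicality and minimal primes under extension to $R$, and checking that $P,Q$ descend to a regular sequence of bounded total degree inside it; once the situation is transferred into a genuine polynomial ring, both the Bezout-type bound on minimal primes and the combinatorial pigeonhole step are routine.
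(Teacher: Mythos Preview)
Your proposal is correct and follows essentially the same approach as the paper: both construct a $B$-lifted strong subalgebra $\cA\subset R$ containing $P,Q$ via \cref{corollary: practical robustness } (applied to the zero subspace with $k=2$), transfer the minimal-prime count to the polynomial ring $\cA$ using \cref{proposition: radicals and prime sequences}, invoke \cref{lemma: d^2 minimal primes basic} for the $d^2$ bound, and then rerun the height-$2$ containment and pigeonhole argument from \cref{lemma: finiteness of radicals}. The only cosmetic difference is the order of exposition---the paper sets up the pigeonhole reduction first and then bounds $|\mathcal{S}|$, while you do the reverse.
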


\begin{proof}
Note that $P,Q$ and $P,P_i$ are regular sequences in $R$.
We have  $\rad(P,Q)\subset \rad(P,P_i)$ for all $i\in [m]$, since $Q\in \rad(P,P_i)$. Let $\mathcal{S}=\{\mathfrak{p}_1,\cdots\mathfrak{p}_\ell\}$  and $\mathcal{S}_i=\{\mathfrak{p}_{i1},\cdots, \mathfrak{p}_{i\ell_i}\}$ be the set of minimal primes over $(P,Q)$ and $(P,P_i)$ in $R$, respectively. We have $\bigcap \mathfrak{p}_j=\rad(P,Q)\subset \rad(P,P_i)=\bigcap_j \mathfrak{p}_{ij}$. Therefore, by \cite[Proposition 1.11]{AM69}, we must have that for all $i,j$, there exists $k$ such that $\mathfrak{p}_k\subset \mathfrak{p}_{ij}$. Since we know that $\mathrm{ht}(\mathfrak{p}_j)=\mathrm{ht}(\mathfrak{p}_{ij})=2$ for all $i,j$, we must have that for all $i,j$, there exists $k$ such that $\mathfrak{p}_k= \mathfrak{p}_{ij}$, i.e. $\mathcal{S}_i\subset \mathcal{S}$ for all $i \in [m]$. Therefore it is enough to show that there exist at most $d^2$ minimal primes of $(P,Q)$. 
Indeed, suppose $|\cS|\leq d^2$, then there are at most $2^{d^2}-1$ number of distinct choices for the set of minimal primes $\mathcal{S}_i=\{\mathfrak{p}_{ij}\}$ of the ideals $(P,P_i)$. 
By the pigeonhole principle, there exist distinct $i,k\in [m]$ such that $\mathcal{S}_i=\mathcal{S}_k$. 
Thus, $\rad(P,P_i)=\bigcap_{\mathfrak{q} \in \mathcal{S}_i}\mathfrak{q} = \bigcap_{\mathfrak{p}\in \mathcal{S}_k}\mathfrak{p} =\rad(P,P_k)$. 

Now we will show that $|\cS|\leq d^2$. Since $U$ is $h_{2B}\circ t_2$-strong in $S$, we note that the vector space $(0)\subset R$ is $h_{2B}\circ t_2$-lifted strong. Hence, by \cref{corollary: practical robustness }, there exists a $B$-lifted strong vector space $W\subset R$ such that $P,Q\in \K[W]$. Let $A:=\K[W]$. Note that \cref{proposition: 2B implies B lifted strong}, any homogeneous basis of $W$ is an $\cR_\eta$-sequence (and also a prime sequence) in $R$. By \cref{proposition: radicals and prime sequences}, we know that the minimal primes over $(P,Q)$ in $A$ are in one-to-one correspondence with the minimal primes over $(P,Q)$ in $R$. Now $A$ is isomorphic to a polynomial ring and $P,Q$ are of degree at most $d$ in $A$. Note that $P,Q$ do not have any common factors in $A$, since they do not have any common factors in $R$. Also, $(P,Q)\neq A$, since $P,Q \in (W)$ as they are homogeneous. Therefore, by \cref{lemma: radical regular sequence}, we have that $P,Q$ is a regular sequence in $A$. Therefore, by \cref{lemma: d^2 minimal primes basic}, we conclude that the number of minimal primes over $(P,Q)$ in $A$ is at most $d^2$.
 
\end{proof}

\section{General Quotients}\label{sec:general-quotients}

As discussed in the introduction, our inductive strategy for proving \cref{theorem: radical SG theorem over UFD} is to reduce the degree of the SG-configuration by applying a sequence of quotient homomorphisms $R\rightarrow R[z]/(W)$, where $W$ is a sufficiently strong vector space. In this section, we describe general quotients and prove the useful properties we need from them in order to reduce the degree of radical Sylvester-Gallai configurations.
The quotients defined here are generalizations of the projection maps used in \cite{S20, PS22, OS22, GOS22}.

Throughout this section, we fix positive integers $d,\eta\in \N$ with $\eta\geq 3$, and $B:\N^d\rightarrow \N^d$ denotes an ascending function such that $B_i(\delta)\geq A(\eta,i)+3(\sum_i\delta_i-1)$ for all $i\in [d]$, where $A(\eta,i)$ is the function provided by \cref{theorem: Ananyan-Hochster}.

\begin{definition}[Graded Quotients]\label{definition: quotients}
Let $U=\oplus_{i=1}^d U_i\subset S$ be a graded vector space of dimension sequence $\delta$ in $S$ and $R := S/(U)$ be the quotient ring. 
Let $V=\oplus_{i=1}^dV_i \subset R$ be a graded subspace of dimension sequence $\mu$.

Let $F_1, \ldots, F_n$ be a homogeneous basis for $V$ and $z$ be a new variable. 
For $\alpha \in \K^n$, let $V_\alpha := \Kspan{F_1 - \alpha_1 z^{\deg(F_1)}, \dots, F_n - \alpha_n z^{\deg(F_n)}}$ and $I_\alpha\subset R[z]$ be the homogeneous ideal $I_\alpha = (V_\alpha)$. 
We define the graded quotient map $\varphi_{V, \alpha}$ as the quotient homomorphism of finitely generated graded $\K$-algebras given by
$$ \varphi_{V, \alpha} : R[z] \to R[z]/I_\alpha. $$
\end{definition}

For simplicity we will often drop the subscripts $V$ or $\alpha$, and write  $\varphi_{\alpha}$ or $\varphi$ for our quotient map when there is no ambiguity about the vector space $V$ or the vector $\alpha$. 

\begin{remark}
In the above definition we abuse notation and define $V_\alpha$ with a fixed basis in mind. 
This abuse of notation is only to simplify our definition of the quotients we will use, and the choice of basis is not very important, since \cite{AH20a} shows that any basis of $V_\alpha$ forms an $\cR_\eta$ sequence.
\end{remark}

\begin{proposition}\label{proposition: graded quotient is UFD}
Suppose $V\subset R$ is $B$-lifted strong with respect to $U$. Then $R[z]$ and $R[z]/I_\alpha$ are quotients of $S[z]$ by $\cR_\eta$-sequences, for any choice of $\alpha \in \K^n$. 
In particular, they are Cohen-Macaulay UFDs.
    
\end{proposition}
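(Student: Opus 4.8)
The plan is to reduce the statement about $R[z]$ and $R[z]/I_\alpha$ to a statement about a quotient of the polynomial ring $S[z]$ by an $\cR_\eta$-sequence, and then invoke the tools already developed. First I would write $R = S/(U)$ and pick homogeneous lifts $\widetilde{F}_1,\ldots,\widetilde{F}_n \in S$ of the chosen basis $F_1,\ldots,F_n$ of $V$. Since $V$ is $B$-lifted strong with respect to $U$, by definition the vector space $U + \Kspan{\widetilde{F}_1,\ldots,\widetilde{F}_n} \subset S$ is $B$-strong in $S$. Because $B_i(\delta) \geq A(\eta,i) + 3(\sum_i \delta_i - 1)$ for all $i$, \cref{corollary: strong sequence R_eta} (equivalently \cref{theorem: Ananyan-Hochster}) applies, so any sequence of $\K$-linearly independent forms in $U + \Kspan{\widetilde{F}_1,\ldots,\widetilde{F}_n}$ is an $\cR_\eta$-sequence in $S$, and in particular any homogeneous basis of $U + \Kspan{\widetilde{F}_1,\ldots,\widetilde{F}_n}$ is such a sequence.

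The next step is to move to $S[z]$. Adjoining the new variable $z$ is harmless: a form $G \in S$ has the same strength viewed in $S[z]$ (\cref{proposition: strong with z}, part (1)), so $U + \Kspan{\widetilde{F}_1,\ldots,\widetilde{F}_n}$ remains $B$-strong in $S[z]$, and hence any homogeneous basis of it is still an $\cR_\eta$-sequence in $S[z]$ by \cref{theorem: Ananyan-Hochster}. Now $R[z] = S[z]/(U)$, so $R[z]$ is a quotient of $S[z]$ by (a basis of) $U$, which is part of such an $\cR_\eta$-sequence; by \cref{proposition: strong sequence CM UFD} (part 3) $R[z]$ is a Cohen-Macaulay UFD, and moreover the images of $\widetilde{F}_1,\ldots,\widetilde{F}_n$ in $R[z]$ form an $\cR_\eta$-sequence there.

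For $R[z]/I_\alpha$, the point is that $I_\alpha$ is generated by $F_i - \alpha_i z^{\deg(F_i)}$, i.e. by a ``twist'' of the basis of $V$ by scalar multiples of powers of $z$. I would argue that $U + \Kspan{\widetilde{F}_1 - \alpha_1 z^{\deg(F_1)},\ldots,\widetilde{F}_n - \alpha_n z^{\deg(F_n)}}$ is again $B$-strong in $S[z]$: this is exactly the content of \cref{proposition: strong with z}, parts (2) and (3), which say that subtracting $\alpha_i z^{\deg}$ from a strong basis preserves linear independence and preserves $k$-strength of the span (one has to check this componentwise across the graded pieces, but the cited proposition handles each homogeneous degree). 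Combined with the fact that $U$ itself is strong, this gives that $U$ together with the twisted forms spans a $B$-strong vector space in $S[z]$, hence its homogeneous basis is an $\cR_\eta$-sequence in $S[z]$ by \cref{theorem: Ananyan-Hochster}. Since $R[z]/I_\alpha = S[z]/\big((U) + (\widetilde{F}_i - \alpha_i z^{\deg(F_i)})\big)$, it is the quotient of $S[z]$ by (a basis of) this $\cR_\eta$-sequence, so \cref{proposition: strong sequence CM UFD} (part 3) again gives that it is a Cohen-Macaulay UFD.

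The main obstacle I anticipate is the bookkeeping in the twisted case: one must be careful that $U$ and the twisted forms $\widetilde{F}_i - \alpha_i z^{\deg(F_i)}$ together form a graded vector space whose minimum strength in each degree is still bounded below by the relevant $B_i$ evaluated at the correct dimension sequence — the dimension sequence is the same as that of $U + \Kspan{\widetilde{F}_1,\ldots,\widetilde{F}_n}$, since twisting by powers of $z$ changes neither the dimensions nor, by \cref{proposition: strong with z}, the strength. A secondary subtlety is confirming that "$B$-lifted strong with respect to $U$'' is precisely the hypothesis needed, namely that $U + \Kspan{\widetilde{F}_1,\ldots,\widetilde{F}_n}$ is $B$-strong for \emph{every} choice of lifts; this is the definition of $B$-lifted strength, so no extra work is required, but it should be cited explicitly. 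Everything else is a direct application of \cref{theorem: Ananyan-Hochster}, \cref{proposition: strong with z}, and \cref{proposition: strong sequence CM UFD}.
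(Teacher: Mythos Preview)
Your proposal is correct and follows essentially the same approach as the paper's proof: lift to $S$, use the definition of $B$-lifted strength to get that $U+\Kspan{\widetilde{F}_1,\ldots,\widetilde{F}_n}$ is $B$-strong, invoke \cref{proposition: strong with z} to pass to $S[z]$ and to handle the twisted generators $\widetilde{F}_i-\alpha_i z^{\deg(F_i)}$, and then conclude via \cref{corollary: strong sequence R_eta} and \cref{proposition: strong sequence CM UFD}. The paper's version is terser but the logical structure is the same.
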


\begin{proof}
    Note that by \cref{remark: lifted strong implies UFD}, we know that any homogeneous basis of $U$ is an $\cR_\eta$-sequence in $S$. Therefore, if $G_1,\cdots,G_t$ is a homogeneous basis of $U$ in $S$, then $G_1,\cdots,G_t,z$ is an $\cR_\eta$-sequence in $S[z]$.  Let $\widetilde{F}_1,\cdots,\widetilde{F}_n\in S$ be homogeneous lifts of a homogeneous basis of $V$. Then $U+\Kspan{\widetilde{F}_1,\cdots,\widetilde{F}_n}$ is $B$-strong. Let $H_i=\widetilde{F}_i-\alpha_i z^{\deg(F_i)} \in S[z]$. Then by \cref{proposition: strong with z}, we have that $W:=U+\Kspan{H_1,\cdots,H_n}$ is $B$-strong in $S[z]$. Thus, any basis of $W$ consists of an $\cR_\eta$ sequence in $S[z]$. Then by \cref{corollary: strong sequence R_eta}, we know that $R=S/(U)$, $R[z]=S[z]/(U)$ and $R[z]/I_\alpha\simeq S[z]/(W)$ are all Cohen-Macaulay UFD.
\end{proof}

\emph{General quotients}. We say that a property $\mathcal{P}$ holds for a  \emph{general} $ \alpha \in \K^n$, if there exists a non-empty open subset $\cX \subset\K^n$ such that the property $\mathcal{P}$ holds for all $\alpha\in \cX$. Here $\cX \subset \K^n$ is open with respect to the Zariski topology. 
Hence $\cX$ is the complement of the zero set of finitely many polynomial functions on $\K^n$. 

In particular, given a graded vector space $V \subset R$ as in \cref{definition: quotients}, 
we will say that a property $\mathcal{P}$ holds for a \emph{general quotient} $\varphi_{\alpha}$, if there exists a non-empty open subset $\cX \subset\K^n$ such that the property holds for all $\varphi_{\alpha}$ with $\alpha\in \cX$. The general choice of the element $\alpha$ defining a general homomorphism $\varphi_{\alpha}$ allows us to say that such homomorphisms will avoid any finite set of polynomial constraints. As shown in \cite{S20, PS22, OS22} in the case of general projections, these maps preserve several important properties of polynomials.
In this section we generalize such properties to the setting of graded quotients.
\\

Suppose $V\subset R$ is a $B$-strong vector space. Then the subalgebra $\K[V]$ is isomorphic to a polynomial ring $\K[y_1,\cdots,y_n]$, where $n=\dim(V)$. For $\alpha\in \K^n$, consider the quotient homomorphism $\varphi_\alpha:R\rightarrow R[z]/(V_\alpha)$ as defined above. The restriction of $\varphi_\alpha$ to the subalgebra $\K[V]$ is given by $y_i\rightarrow \alpha_iz^{d_i}$ under the isomorphism with $\K[y_1,\cdots,y_n]$. Therefore, we first consider such non-homogeneous quotients of a polynomial ring. 
\begin{lemma}\label{lemma: general projection basic}
Let $S=\K[x_1,\cdots,x_N]$ and $z$ be a new variable. Fix positive integers $d_1,\cdots,d_n\in \N$. For $\alpha\in \K^n$, let $I_\alpha=(x_1-\alpha_1 z^{d_1},\cdots,x_n-\alpha_nz^{d_n})$. Let $\varphi_\alpha:S[z]\rightarrow S[z]/I_\alpha$ be the quotient ring homomorphism. 

\begin{enumerate}
    \item The ideal $I_\alpha$ is prime in $S[z]$, and the composition morphism $\K[z]\hookrightarrow S[z]\rightarrow S[z]/I_\alpha$ is injective.
    \item  If $F\in \K[x_1,\cdots,x_n]$ is a non-zero polynomial, then $\varphi_\alpha(F)\neq 0$ in $S[z]/I_\alpha$ for a general $\alpha\in \K^n$.
    \item Let $F\in S\setminus \K[x_1,\cdots,x_n]$, then $\varphi_\alpha(F)\not\in \K[z]$ in $S[z]/I_\alpha$, for a general $\alpha\in \K^n$.
    \item If $F\in S$ is a non-zero polynomial, then $\varphi_\alpha(F)\neq 0$ in $S[z]/I_\alpha$ for a general $\alpha\in \K^n$.
    \item If $F,G\in S$ have no common factor, then $\gcd(\varphi_\alpha(F),\varphi_\alpha(G))\in\K[z]$ for a general $\alpha\in \K^n$. 
    \item If $F\in S$ is square-free. 
    Then, for a general $\alpha\in \K^n$, the multiple factors of $\varphi_\alpha(F)$ must be in $\K[z]$.
\end{enumerate}
\end{lemma}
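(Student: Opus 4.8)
The plan is to prove the six items in order, since each one is used as a stepping stone for the next. For item (1), the ideal $I_\alpha = (x_1 - \alpha_1 z^{d_1}, \ldots, x_n - \alpha_n z^{d_n})$ is generated by a sequence whose leading terms (in an elimination order with $x_1, \ldots, x_n$ large) are $x_1, \ldots, x_n$, so $S[z]/I_\alpha \cong \K[x_{n+1}, \ldots, x_N, z]$, a polynomial ring, which is a domain; hence $I_\alpha$ is prime. The composite $\K[z] \hookrightarrow S[z] \to S[z]/I_\alpha$ sends $z$ to $z$, which is a nonzero, non-nilpotent element of the polynomial ring $\K[x_{n+1}, \ldots, x_N, z]$, so it is injective. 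For item (2), identify $S[z]/I_\alpha$ with $\K[x_{n+1}, \ldots, x_N, z]$ via $x_i \mapsto \alpha_i z^{d_i}$ for $i \leq n$; then $\varphi_\alpha(F) = F(\alpha_1 z^{d_1}, \ldots, \alpha_n z^{d_n})$. Writing $F = \sum_{\beta} c_\beta x^\beta$ with $c_\beta \in \K$ not all zero, we get $\varphi_\alpha(F) = \sum_\beta c_\beta \alpha^\beta z^{\sum_j \beta_j d_j}$; this is a polynomial in the $\alpha_i$ (with the $z$-powers as formal bookkeeping) and it is not identically zero as a function of $\alpha$ — for instance, specializing to the substitution $\alpha_i = t^{w_i}$ for generic integer weights $w_i$ makes the exponents $\sum_j \beta_j(d_j + w_j)$ pairwise distinct across $\beta$ with $c_\beta \neq 0$ after a suitable generic perturbation, so no cancellation occurs. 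Hence $\varphi_\alpha(F) \neq 0$ for $\alpha$ outside the zero locus of a nonzero polynomial, i.e. for general $\alpha$.

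For item (3), write $F = F' + G$ where $F' \in \K[x_1, \ldots, x_n]$ collects all monomials of $F$ involving only $x_1, \ldots, x_n$ and $G$ collects the rest, so $G \neq 0$ and every monomial of $G$ contains some $x_j$ with $j > n$. Then $\varphi_\alpha(F) = \varphi_\alpha(F') + \varphi_\alpha(G)$, where $\varphi_\alpha(F') \in \K[z]$ and $\varphi_\alpha(G)$ lies in the ideal $(x_{n+1}, \ldots, x_N)$ of $\K[x_{n+1}, \ldots, x_N, z]$; it suffices to show $\varphi_\alpha(G) \notin (x_{n+1},\ldots,x_N)$ is itself nonzero as an element modulo nothing — more precisely, that $\varphi_\alpha(G) \neq 0$ for general $\alpha$, which then forces $\varphi_\alpha(F) \notin \K[z]$ since the $z$-part and the positive-$x$-degree part are separated. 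Apply item (2)'s argument to each homogeneous-in-$(x_{n+1},\dots,x_N)$ graded piece of $G$: collecting the coefficient of a fixed monomial $x^\gamma$ in $x_{n+1},\ldots,x_N$ gives a polynomial in $\K[x_1,\ldots,x_n]$ which, by construction of $G$, is nonzero for at least one $\gamma \neq 0$; then item (2) applied to that coefficient polynomial shows $\varphi_\alpha(G) \neq 0$ generically. Item (4) is an immediate consequence: if $F \in S$ is nonzero, decompose $F = F' + G$ as above; if $F' \neq 0$ use item (2), otherwise $G = F \neq 0$ and the argument just given applies.

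For items (5) and (6) we use that $S[z]/I_\alpha$ is a UFD (it is a polynomial ring) together with a general-position argument. For (5): $F, G$ have no common factor in $S$, hence, after homogenizing or just working in $S$, $F, G$ is a regular sequence (Lemma \ref{lemma: radical regular sequence}), and the variety $V(F,G) \subset \A^N$ has codimension $2$. The map $\A^{N-n} \times \A^1 \to \A^N$ (on $\operatorname{Spec}$ level, parametrizing the affine linear subspace cut out by $I_\alpha$) for general $\alpha$ meets $V(F,G)$ properly, and the key point is that a common factor of $\varphi_\alpha(F)$ and $\varphi_\alpha(G)$ not lying in $\K[z]$ would be a hypersurface in $\K[x_{n+1},\ldots,x_N,z]$ containing $\varphi_\alpha(V(F) \cap V(G))$ in codimension $1$, contradicting that the latter has codimension $2$ in $\operatorname{Spec}(S[z]/I_\alpha)$ for general $\alpha$; the genericity is a standard Bertini-type / dimension-count statement, expressible as avoidance of finitely many polynomial conditions on $\alpha$ (the resultant $\operatorname{Res}_{x_1}$, or rather the content of $\gcd$, being forced into $\K[z]$). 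I expect this to be the main obstacle: making precise, in the non-homogeneous setting with the $z$-powers, why for general $\alpha$ no new common factor involving the surviving variables appears — this is where one should invoke a resultant computation (the resultant of $F$ and $G$ with respect to one variable, transported through $\varphi_\alpha$) and argue it does not vanish identically in $\alpha$. Item (6) follows the same template applied to $F$ and its partial derivative $\partial F / \partial x_i$ (or via the discriminant $\operatorname{Disc}_{x_i}(F)$), which is nonzero for some $i$ since $F$ is square-free: a repeated factor of $\varphi_\alpha(F)$ involving a surviving variable would force $\varphi_\alpha(F)$ and $\varphi_\alpha(\partial_{x_i} F)$ to share that factor, and the genericity argument of (5) rules this out, leaving only repeated factors in $\K[z]$.
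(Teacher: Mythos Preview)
Your outline is broadly correct and follows the same architecture as the paper: items (1)--(4) build up the basic nonvanishing facts, and (5)--(6) are handled via resultants/discriminants reduced to (4). Two points are worth flagging.

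For item (2), the paper's argument is much shorter than your weight-specialization idea: since $\varphi_\alpha(F)=F(\alpha_1 z^{d_1},\ldots,\alpha_n z^{d_n})\in\K[z]$, simply set $z=1$ to obtain $F(\alpha)$; hence $\varphi_\alpha(F)=0$ forces $\alpha\in V(F)$, and the complement of $V(F)$ is the desired open set. Your argument works but is unnecessarily elaborate.

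For item (5), your Bertini/dimension-count sketch is not a proof (as you note), and the paper does not go this route. Instead it uses the single concrete observation that for $i>n$ the homomorphism $\varphi_\alpha$ does not touch $x_i$, so
\[
\varphi_\alpha\bigl(\mathrm{Res}_{x_i}(F,G)\bigr)=\mathrm{Res}_{x_i}\bigl(\varphi_\alpha(F),\varphi_\alpha(G)\bigr).
\]
Since $F,G$ have no common factor, $\mathrm{Res}_{x_i}(F,G)\neq 0$ in $S$; by item (4) its image under $\varphi_\alpha$ is nonzero for general $\alpha$, so $\varphi_\alpha(F)$ and $\varphi_\alpha(G)$ can have no common factor depending on $x_i$ for any $i>n$, forcing $\gcd(\varphi_\alpha(F),\varphi_\alpha(G))\in\K[z]$. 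This is exactly the resultant computation you allude to at the end, and it is the whole argument --- no geometric genericity statement beyond (4) is needed. Item (6) then follows verbatim with $G=\partial F/\partial x_i$ for each $i\in[n+1,N]$ (not just ``some $i$''), using that $\varphi_\alpha(\partial F/\partial x_i)=\partial\varphi_\alpha(F)/\partial x_i$ for $i>n$.
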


\begin{proof}
1. Consider the ring homomorphism $\vartheta: S[z]\rightarrow \K[x_{m+1},\cdots,x_N,z]$ defined by $x_i\mapsto \alpha_iz^{d_i}$ for $i\in[m]$, and $x_i\mapsto x_i$ for $m+1\leq i \leq N$ and $z\mapsto z$. 
Note that $\vartheta$ is surjective and $\mathrm{ker}(\varphi)=I_\alpha$. Therefore $I_\alpha$ is a prime ideal and $S[z]/I_\alpha\simeq \K[x_{m+1},\cdots,x_N,z]$. 

Suppose $F\in \K[z]$ such that $\varphi_\alpha(F)=0$. Then $F\in \K[z]\cap I_\alpha$ in $S[z]$. 
Since $I_\alpha\subset(x_1,\cdots,x_n,z)$, we note that if $g\in \K[z]$ is a polynomial with non-zero constant term, then $g\not\in I_\alpha$. 
If $F\neq 0$, let $F=z^rg(z)$, where $r$ is the highest power of $z$ diving $F$, and $g(0)\neq 0$. 
Therefore, we have that $z^r\in I_\alpha$, as $I_\alpha$ is a prime ideal. 
Hence, $z\in I_\alpha$ and thus $(x_1,\cdots,x_n,z)\subset I_\alpha$, which implies $\mathrm{ht}(x_1,\cdots,x_n,z)=n+1$. 
However, $I_\alpha$ is generated by $n$ polynomials, and thus $\mathrm{ht}(I_\alpha)\leq n$.
This is a contradiction.

2. We have $\varphi_\alpha(F)=F(\alpha_1z^{d_1},\cdots,\alpha_n z^{d_n})$ is a polynomial in $\K[z]\subset S[z]/I_\alpha$. 
If $\varphi_\alpha(F)=0$ in $\K[z]$, then by setting $z=1$, we obtain $F(\alpha)=0$. 
Therefore if $\alpha\in \K^n\setminus V(F)$, then $\varphi_\alpha(F)\neq 0$.

3. We may write $F$ as a polynomial in $x_{m+1},\cdots,x_N$ with coefficients in $A:=\K[x_1,\cdots,x_n]$. 
Since $F\not\in A$, we know that there is a non-zero polynomial $g(x_1,\cdots,x_n)\in A$ such that $g$ is the coefficient of some monomial $\Pi_{j=m+1}^N x_j^{e_j}$ in $F$. 
By part (2), we know that $\varphi_\alpha(g)$ is a non-zero polynomial in $\K[z]$, for general $\alpha\in \K^n$. 
Therefore, for a general $\alpha\in \K^n$, the coefficient of $\Pi_{j=m+1}^N x_j^{e_j}$ in $\varphi_\alpha(F)$ is $\varphi_\alpha(g)\neq 0$. Hence $\varphi_{\alpha}(F)\not \in \K[z]$ for a general $\alpha\in \K^n$.

4. Follows immediately from parts (2) and (3) above.

5. If $F,G$ have no common factor in $S$, then $\mathrm{Res}_{x_i}(F,G)\neq 0$ for all $x_i$. By, part (4), we know that for a general $\alpha\in \K^n$, we have $\varphi_\alpha(\mathrm{Res}_{x_i}(F,G))\neq 0$ for all $i$. 
Note that $S[z]/I_\alpha=\K[x_{m+1},\cdots,x_N,z]$ for a general $\alpha\in \K^n$. 
Now, $\varphi_\alpha(\mathrm{Res}_{x_i}(F,G))=\mathrm{Res}_{x_i}(\varphi_\alpha(F),\varphi_\alpha(G))$ for $m+1\leq i\leq N$. 
Therefore, $\mathrm{Res}_{x_i}(\varphi_\alpha(F),\varphi_\alpha(G))\neq 0$ for $m+1\leq i \leq N$. 
Hence, $\varphi_\alpha(F),\varphi_\alpha(G)$ cannot have any common factor depending on $x_i$ in $\K[x_{m+1},\cdots,x_N,z]$. Hence, $\gcd(\varphi_\alpha(F),\varphi_\alpha(G))\in \K[z]$.

6. Since $F$ is square-free, we know that $F$ and $\frac{\partial F}{\partial x_i}$ do not have a common factor for any $i\in [N]$. 
Note that we have $\varphi_\alpha(\frac{\partial F}{\partial x_i})=\frac{\partial \varphi_\alpha(F)}{\partial x_i}$ for $m+1\leq i \leq N$. 
If $\varphi_\alpha(F)$ has a multiple factor that is not in $\K[z]$, then there exists $k\in [m+1,N]$ such that $\frac{\partial \varphi_\alpha(F)}{\partial x_k}$ and $\varphi_\alpha(F)$ have a common factor that depends on $x_k$. 
However, by part (5), we know that $\varphi_\alpha(\frac{\partial F}{\partial x_k})$ and $\varphi_\alpha(F)$ do not have a common factor depending on $x_k$ , for a general $\alpha\in \K^n$.
\end{proof}

\begin{proposition}\label{proposition: general quotient general}
Let $V \subset R$ be a $B$-lifted strong vector space and $\varphi_\alpha :R[z]\rightarrow R[z]/I_\alpha$ be a graded quotient as defined in \cref{definition: quotients}.
\begin{enumerate}
    \item The ideal $I_\alpha$ is a prime ideal in $R[z]$ and the composition $\K[z]\hookrightarrow R[z]\rightarrow R[z]/I_\alpha$ is injective.
    \item  If  $F \in R \setminus \{0\}$, then $\varphi_\alpha(F)\neq 0$ for a general $\alpha\in \K^n$.
    \item  If $F\not\in \K[V] \subset R$, then $\varphi_\alpha(F)\not\in \K[z]$ for a general $\alpha\in \K^n$.
    
\end{enumerate}
\end{proposition}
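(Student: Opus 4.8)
The plan is to deduce everything from \cref{lemma: general projection basic} by a transfer argument through a suitable strong subalgebra of $R$. First I would invoke \cref{proposition: graded quotient is UFD} to know that $R[z]$ and $R[z]/I_\alpha$ are Cohen-Macaulay UFDs, and that if $G_1,\dots,G_t$ is a homogeneous basis of $U$ and $\widetilde F_1,\dots,\widetilde F_n\in S$ are homogeneous lifts of a basis $F_1,\dots,F_n$ of $V$, then $W := U+\Kspan{\widetilde F_1-\alpha_1 z^{\deg F_1},\dots,\widetilde F_n-\alpha_n z^{\deg F_n}}$ is $B$-strong in $S[z]$ (using \cref{proposition: strong with z}), so that its basis together with $z$ extends to an $\cR_\eta$-sequence and $R[z]/I_\alpha \simeq S[z]/(W)$. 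In particular $H_i := \widetilde F_i - \alpha_i z^{\deg F_i}$, $i\in[n]$, form a prime sequence in $S[z]$, and $\K[V]\subset R$ corresponds, under the quotient $S[z]\to S[z]/(U)[z]=R[z]$ composed with $\varphi_\alpha$, to the map on a polynomial subring $\K[y_1,\dots,y_n]\cong\K[\widetilde F_1,\dots,\widetilde F_n]$ sending $y_i\mapsto \alpha_i z^{\deg F_i}$.

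For item (1): $I_\alpha = (V_\alpha)$ is generated by the image in $R[z]$ of the $B$-strong space $\Kspan{H_1,\dots,H_n}$, whose basis is a prime sequence in $R[z]$ (by \cref{proposition: properties of quotient strength} and \cref{corollary: strong sequence R_eta}, since $V$ is $B$-lifted strong means $U+\Kspan{\widetilde F_i}$ is $B$-strong, hence adjoining the $z$-perturbations stays $B$-strong by \cref{proposition: strong with z}); therefore $I_\alpha$ is prime. For injectivity of $\K[z]\hookrightarrow R[z]/I_\alpha$, I would note $R[z]/I_\alpha \simeq S[z]/(W)$ and apply part (1) of \cref{lemma: general projection basic} in the ambient polynomial ring after extending $W$'s basis to an $\cR_\eta$-sequence together with $z$; alternatively, since $R[z]/I_\alpha$ is a domain of dimension $\dim R[z]-n = \dim R + 1 - \dim V \geq 1$ containing $z$ as a nonzerodivisor (it is part of a system of parameters), the subring $\K[z]$ injects.

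For items (2) and (3): lift $F$ to $\widetilde F\in S[z]$ (in fact to $S$), and observe that $\varphi_\alpha(F)$ is computed inside $S[z]/(W)\simeq R[z]/I_\alpha$. Applying \cref{corollary: robust strong algebra} (robustness of strong algebras) to the $B$-strong space $U+\Kspan{\widetilde F_i}$ together with the single extra generator $\widetilde F$, I get a $G$-strong AH vector space $V'\supset U$ in $S$, for $G$ large enough that its quotient is an $\cR_\eta$-ring, with $\widetilde F\in\K[V']$ and $\widetilde F_i\in\K[V']$; passing to $S[z]$ and adding the $z$-perturbations keeps this strong. Now $\K[V']$ is a polynomial ring, $\widetilde F\in\K[V']$, and the restriction of the quotient map to this polynomial ring is a specialization $y_j\mapsto \alpha_j z^{\deg}$ of the variables $y_j$ corresponding to the $\widetilde F_i$'s (the remaining generators of $V'$ mapping to themselves). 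If $F\neq 0$ in $R$, then $\widetilde F\notin (U)$, and using part (1) of \cref{proposition: radicals and prime sequences} / \cref{lem:factor-ufd-algebra}-style facts, its image in the polynomial ring $\K[V']/(U\text{-part})$ is a nonzero polynomial; then part (4) of \cref{lemma: general projection basic} applied to this polynomial ring gives $\varphi_\alpha(F)\neq 0$ for general $\alpha$. Similarly, if $F\notin\K[V]$ but $F\in\K[V']$, writing $\widetilde F$ as a polynomial in the ``new'' variables of $V'$ with coefficients involving the $\widetilde F_i$-variables, the hypothesis $F\notin\K[V]$ means $\widetilde F$ genuinely involves some new variable; part (3) of \cref{lemma: general projection basic} then yields $\varphi_\alpha(F)\notin\K[z]$ for general $\alpha$.

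The main obstacle I expect is the bookkeeping in the transfer step for (2) and (3): one must ensure that the robust strengthening \cref{corollary: robust strong algebra} can be carried out \emph{compatibly with the $z$-perturbation and the quotient by $U$} — i.e., that the subalgebra $\K[V']$ can be chosen to contain $U$, to contain the lift $\widetilde F$, to be strong enough that $S[z]/(V'+\{z\text{-terms}\})$ is still an $\cR_\eta$-UFD, and such that the quotient map restricted to it is literally a coordinate specialization of a polynomial ring. Given these, the reduction to \cref{lemma: general projection basic} is clean, since ``general $\alpha$'' is stable under finite intersection of nonempty Zariski-open conditions, and only finitely many conditions (one per item, one per relevant monomial coefficient) are imposed.
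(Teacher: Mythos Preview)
Your approach to item (1) is essentially the paper's: primality of $I_\alpha$ follows because $V_\alpha$ has a basis forming a prime sequence in $R[z]$ (via \cref{proposition: strong with z}, \cref{proposition: 2B implies B lifted strong}, and \cref{proposition: strong sequence CM UFD}), and injectivity of $\K[z]\hookrightarrow R[z]/I_\alpha$ is then deduced from the polynomial-ring case.

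For items (2) and (3), however, your plan has a genuine gap. You want to apply robustness (\cref{corollary: robust strong algebra}) to enlarge $U+\Kspan{\widetilde F_1,\dots,\widetilde F_n}$ to a strong space $V'\subset S$ that still contains $U+\Kspan{\widetilde F_i}$ as a subspace and also has $\widetilde F\in\K[V']$, so that $\varphi_\alpha$ restricts to a coordinate specialization of the polynomial ring $\K[V'][z]$ and \cref{lemma: general projection basic} applies directly. But robustness requires the input space to be $h_G\circ t_1$-strong for the target strength $G$, and $G$ must itself meet the Ananyan--Hochster bound so that $\K[V']$ behaves like a polynomial ring inside $S$. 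The hypothesis here is only that $V$ is $B$-lifted strong, i.e.\ $U+\Kspan{\widetilde F_i}$ is $B$-strong; when $B$ is the minimal function with $B_i(\delta)\ge A(\eta,i)+3(\sum_j\delta_j-1)$ there is no slack to run the strengthening while preserving the $\widetilde F_i$ as generators. (Compare \cref{proposition: gcd after projection}, where the hypothesis on $V$ is explicitly upgraded to $h_B\circ t_2$-lifted strong precisely so that this kind of argument goes through.)

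The paper circumvents this by a different transfer mechanism that needs no extra strength. Since $R$ is Cohen--Macaulay and $F_1,\dots,F_n$ is a regular sequence, one extends it to a full homogeneous system of parameters and lets $A$ be the polynomial subalgebra it generates; then $A[z]\subset R[z]$ is a module-finite \emph{free} extension (\cref{theorem: CM hsop}, \cref{proposition: hsop injection}). Writing $R[z]\simeq A[z]^{\oplus r}$ and letting $J_\alpha\subset A[z]$ be the ideal generated by the $F_i-\alpha_i z^{\deg F_i}$, the map $\varphi_\alpha$ is identified with $\vartheta_\alpha^{\oplus r}$ where $\vartheta_\alpha:A[z]\to A[z]/J_\alpha$ is exactly of the form treated in \cref{lemma: general projection basic}. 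Item (2) follows because a nonzero $F$ has some nonzero component $f_i\in A[z]$, and item (3) because the inclusion $A[z]\hookrightarrow R[z]$ lands in a single summand, so $\K[z]\subset (A[z]/J_\alpha)\cdot e_1$ while $F\notin\K[V]$ forces either a nonzero component off $e_1$ or (when $F\in A$) dependence on the extra HSOP generators, and in either case \cref{lemma: general projection basic} gives $\varphi_\alpha(F)\notin\K[z]$.
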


\begin{proof}
Let $F_1,\cdots,F_n$ be the homogeneous basis of $V$ used in the quotient map $\varphi_\alpha$. By \cref{proposition: 2B implies B lifted strong}, we know that  $F_1,\cdots,F_n$ is an $\cR_\eta$-sequence in $R$. 
Thus, $F_1,\cdots,F_n,z$ is an $\cR_\eta$-sequence in $R[z]$. We may extend $F_1,\cdots,F_n$ to a homogeneous system of parameters $F_1,\cdots,F_n$ of $R$. 
Let $A=\K[F_1,\cdots,F_n]$, $G_i=F_i-\alpha_iz^{\deg(F_i)}\in A[z]$ for 
$i\in [n]$ and $J_\alpha=(G_1,\cdots,G_n)\subset A[z]$. 
Let $\vartheta_\alpha: A[z]\rightarrow A[z]/J_\alpha$ be the quotient map. 
We have that $A\subset R$ and $A[z]\subset R[z]$ are module-finite free extensions, by \cref{theorem: CM hsop}. 
Now we prove the desired statements below.

1. Note that $R[z]$ is the quotient of $S[z]$ by an ideal generated by an $\cR_\eta$-sequence, by \cref{proposition: graded quotient is UFD}.  
Note that, $J_\alpha$ is a prime ideal in $A[z]$, by \cref{lemma: general projection basic}. 
Therefore, by \cref{proposition: radicals and prime sequences}, $I_\alpha := J_\alpha R[z]$ is prime in $R[z]$ and $I_\alpha\cap A[z]= J_\alpha$ in $A[z]$.  In particular, $A[z]/J_\alpha\rightarrow R[z]/I_\alpha$ is injective. 
By \cref{lemma: general projection basic}, $\K[z]\hookrightarrow A[z]\rightarrow A[z]/J_\alpha$ is injective. Therefore, $\K[z]\hookrightarrow R[z]\rightarrow R[z]/I_\alpha$ is injective.

2.  Suppose $\mathrm{rank}(R)=r$ as a free $A$-module. 
By tensoring with $A[z]$, we obtain that $A[z]\subset R[z]$ is a free extension of rank $r$. 
Therefore, $R[z]\xrightarrow[]{\sim}A[z]^{\oplus r}$ as $A[z]$-modules. 
Hence, 
$R[z]/I_\alpha\simeq R[z]\otimes_{A[z]} (A[z]/J_\alpha)$ is also a free $A[z]/J_\alpha$-module of rank $r$. Therefore, we have 
$R[z]/I_\alpha\xrightarrow[]{\sim}(A[z]/J_\alpha)^{\oplus r}$. Let $F\in R\subset R[z]$ have components $(f_1,\cdots,f_r)\in (A[z])^{\oplus r}$ under this isomorphism. Note that $\varphi_\alpha: R[z]\rightarrow R[z]/I_\alpha$ commutes with the quotient map $\vartheta_\alpha^{\oplus r}:(A[z])^{\oplus r}\rightarrow (A[z]/J_\alpha)^{\oplus r}$ via the isomorphisms above, i.e. the following diagram commutes.

\[\begin{tikzcd}
 R[z] \arrow[r, "\varphi_\alpha"] \arrow[d, "\simeq"] & R[z]/I_\alpha \arrow[d,"\simeq"]\\
 (A[z])^{\oplus r} \arrow[r, "(\vartheta_\alpha)^{\oplus r}"] & (A[z]/J_\alpha)^{\oplus r} \\
\end{tikzcd}\]

If $F\neq 0$, then $f_i\neq 0$ for some $i\in [r]$. Now, by \cref{lemma: general projection basic}, we know that for a general $\alpha\in \K^n$, the image $\vartheta_\alpha(f_i)\neq 0$. Since $\vartheta_\alpha(f_i)\neq 0$, we see that $\varphi_\alpha(F)\neq 0$ for a general $\alpha\in \K^n$, by the commutative diagram above.

3. Note that, by \cref{proposition: hsop injection}, we may assume that the inclusion $A[z]\subset R[z]$ is an isomorphsim onto one of the direct summands of $A[z]^{\oplus r}$ via the isomorphism $R[z]\simeq A[z]^{\oplus r}=A[z]\cdot e_1\oplus \cdots \oplus A[z]\cdot e_r$.  Without loss of generality, we may assume that it is an isomorphism with the first direct summand $A[z]\cdot e_1$. Tensoring the isomorphism with $A[z]/J_\alpha$, we see that the inclusion $A[z]/J_\alpha\subset R[z]/I_\alpha$ is an injection onto the first direct summand of $(A[z]/J_\alpha)^{\oplus r}$. Let $F\in R\subset R[z]$ have components $(f_1,\cdots,f_r)\in (A[z])^{\oplus r}$ under this isomorphism, as above. If $F\in R$ and $F\not \in A$, then we must have that $f_i\neq 0$ for some $i>1$. Therefore, we must have that $\vartheta_\alpha(f_i)\neq 0 $ for a general $\alpha\in \K^n$, by \cref{lemma: general projection basic}. Therefore, $\varphi_\alpha(F)\not \in (A[z]/J_\alpha)\cdot e_1\subset (A[z]/J_\alpha)^{\oplus r}$. Now, note that $\K[z]\subset A[z]\subset R[z]$, therefore in the quotient, we have $\K[z]\subset (A[z]/J_\alpha)\cdot e_1$. Therefore, $\varphi_\alpha(F)\not \in \K[z]$ for a general $\alpha\in \K^n$. 
\end{proof}

The next proposition is a generalization of \cite[Claim 2.23]{PS22}. Given $F,G\in R$, we show that under a general quotient $\varphi_\alpha$, the common factors of $\varphi_\alpha(F),\varphi_\alpha(G)$ can only be elements of $\K[z]$. In the proof, we would like to construct a strong subalgebra of $R$ containing $F,G$ and the vector space $V$, where the generators of $V$ are preserved. Therefore we need to assume that $V$ is sufficiently strong to begin with, so that we can apply \cref{corollary: robust strong algebra}. Recall that for any $a\in \N$ the translation function $t_a:\N^d\rightarrow \N^d$ is given by $t_a(\delta_1,\cdots,\delta_d)=(\delta_1+a,\cdots,\delta_d+a)$ and $h_B$ is the function defined in \cref{proposition: constructing AH algebras}. 

\begin{proposition}\label{proposition: gcd after projection}
Let $G:\N^d\rightarrow \N^d$ be a function such that $G_i(\delta)\geq h_{B,i}\circ t_2(\delta)$ for all $\delta\in \N^d$. 
Let $V \subset R_{\leq d}$ be a $G$-lifted strong vector space and $\varphi_\alpha :R[z]\rightarrow R[z]/I_\alpha$ be a general quotient as defined in \cref{definition: quotients}. 
Let $F, G \in R_{\leq d}$ be such that they have no common factor.
Then,
\begin{enumerate}
    \item $\gcd(\varphi_\alpha(F), \varphi_\alpha(G)) \in \K[z]$ 
    \item If $F,G$ are homogeneous, then 
    $\gcd(\varphi_\alpha(F), \varphi_\alpha(G)) = z^k $ for some $k \in \N$. 
    In particular, we have $\gcd(\varphi_\alpha(zF), \varphi_\alpha(zG)) = z^{k+1} $ for some $k \in \N$. Furthermore, if $F,G\not \in K[V]\subset R$ then $\varphi_\alpha(F)$, $\varphi_\alpha(G)$ are linearly independent.
    \item If $F\in R$ is a square-free form, then 
    $\varphi_\alpha(F)$ does not have multiple factors other than $z^k$ for some $k\in \N$.
\end{enumerate}

\end{proposition}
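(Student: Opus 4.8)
The plan is to reduce all three assertions to the analogous facts in a polynomial ring, i.e. to \cref{lemma: general projection basic}, by first passing to a suitable strong subalgebra of $R$ that contains $V$, $F$ and $G$, and then transferring the conclusions back to $R[z]/I_\alpha$ using the structural results of \cref{sec:strong-algebras} together with \cref{proposition: radicals and prime sequences}.

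\emph{Constructing the subalgebra.} Since $V$ is lifted strong with respect to $U$ with the prescribed strength (which majorizes $h_B \circ t_2$) and since $F, G$ contribute at most two new homogeneous generators, \cref{corollary: robust strong algebra} yields a graded vector space $W \subset R_{\le d}$ with $V \subset W$, with $F, G \in \K[W]$, and with $W$ strong enough to be ($2B$-lifted, hence $B$-lifted) strong with respect to $U$; moreover the construction can be arranged so that the homogeneous basis $F_1, \dots, F_n$ of $V$ used to define $\varphi_\alpha$ is part of the chosen basis of $W$. Write $A := \K[W]$. By \cref{proposition: 2B implies B lifted strong}, a homogeneous basis of $W$ extending that of $V$ is a prime sequence in $R$, so $A$ is isomorphic to a polynomial ring; by \cref{proposition: graded quotient is UFD}, $R[z]/I_\alpha$ is a Cohen--Macaulay UFD. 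Finally, using the stability of strength under the substitutions $F_i \mapsto F_i - \alpha_i z^{\deg F_i}$ (\cref{proposition: strong with z}) together with the fact that $z$ has infinite strength, the image $\bar A := \varphi_\alpha(A)$ is again a polynomial subalgebra of $R[z]/I_\alpha$ generated by a prime sequence: it is generated by the images of the generators of $W$ complementary to $V$ together with $z$, and these images extend the defining sequence of $I_\alpha$ to a full $\cR_\eta$-sequence by \cref{corollary: strong sequence R_eta}. Hence, by \cref{lem:factor-ufd-algebra} and \cref{proposition: radicals and prime sequences}: irreducibles of $\bar A$ remain irreducible in $R[z]/I_\alpha$; every irreducible factor in $R[z]/I_\alpha$ of an element of $\bar A$ already lies in $\bar A$; and greatest common divisors of elements of $\bar A$ computed in $\bar A$ or in $R[z]/I_\alpha$ agree up to units.

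\emph{Finishing.} For (1): if $F, G$ have no common factor in $R$, then by \cref{lem:factor-ufd-algebra} they have no common factor in $A$, and the restriction of $\varphi_\alpha$ to the polynomial ring $A$ is precisely a general projection of the kind analyzed in \cref{lemma: general projection basic}; thus \cref{lemma: general projection basic}(5) gives $\gcd(\varphi_\alpha(F), \varphi_\alpha(G)) \in \K[z]$ in $\bar A$ for general $\alpha$, and by the transfer above this is also their gcd in $R[z]/I_\alpha$. For (2): $\varphi_\alpha$ is a graded homomorphism because $I_\alpha$ is a homogeneous ideal, so when $F, G$ are forms their images are forms, and the only homogeneous elements of $\K[z] \hookrightarrow R[z]/I_\alpha$ are scalar multiples of powers of $z$; hence $\gcd(\varphi_\alpha(F), \varphi_\alpha(G)) = z^k$ for some $k$, and since $\varphi_\alpha(z) = z$ we get $\gcd(\varphi_\alpha(zF), \varphi_\alpha(zG)) = z^{k+1}$. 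If moreover $F, G \notin \K[V]$, then $\varphi_\alpha(F), \varphi_\alpha(G) \notin \K[z]$ by \cref{proposition: general quotient general}(3); were these two forms linearly dependent they would be scalar multiples of one another (both being nonzero by \cref{proposition: general quotient general}(2)), forcing one of them to equal $z^k$ up to a scalar and hence to lie in $\K[z]$, a contradiction. For (3): if $F$ is a square-free form in $R$, then by \cref{lem:factor-ufd-algebra} and uniqueness of factorization $F$ is square-free in $A$, so \cref{lemma: general projection basic}(6) shows that every multiple factor of $\varphi_\alpha(F)$ in $\bar A$ lies in $\K[z]$, i.e. is a power of $z$; transferring back, $\varphi_\alpha(F)$ has no multiple factor other than $z^k$.

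\emph{Main difficulty.} The crux is the transfer step: one must guarantee that, after the general quotient, the subalgebra $\bar A = \varphi_\alpha(\K[W])$ is still generated by a prime sequence inside the UFD $R[z]/I_\alpha$, so that the notions ``irreducible'', ``coprime'' and ``square-free'' are insensitive to whether they are tested in $\bar A$ or in the full ring. This is exactly where the strength hypothesis on $V$ (lifted strong beyond $h_B \circ t_2$, so that adjoining the two forms $F, G$ keeps the ambient space strong enough for \cref{corollary: strong sequence R_eta}) and the invariance of strength under $F_i \mapsto F_i - \alpha_i z^{\deg F_i}$ (\cref{proposition: strong with z}) are used; once the transfer is in place, everything reduces to the elementary statements of \cref{lemma: general projection basic}.
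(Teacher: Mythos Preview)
Your proof is correct and follows essentially the same approach as the paper: build a strong algebra $A=\K[W]$ containing $V,F,G$ via the robustness corollary, use \cref{proposition: strong with z} to show that the image $\bar A\subset R[z]/I_\alpha$ is still generated by a prime ($\cR_\eta$-) sequence, apply \cref{lemma: general projection basic} inside $A$, and transfer factorizations back via \cref{lem:factor-ufd-algebra}. The paper carries out the construction of $W$ explicitly at the level of $S$ (lifting $V,F,G$ and applying \cref{corollary: robust strong algebra} there before passing to $R$), but the content and the key transfer step you highlighted are identical.
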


\begin{proof}
   1.  Recall that $R=S/(U)$. 
   Let $\widetilde{F}_1,\cdots,\widetilde{F}_n$ be homogeneous lifts of a homogeneous basis of $V$. 
   Let $\widetilde{F},\widetilde{G}\in S$ be homogeneous lifts of $F,G$. 
   Note that $U+\Kspan{\widetilde{F}_1,\cdots,\widetilde{F}_n}$ is $G$-strong in $S$. 
   By applying \cref{corollary: robust strong algebra} to the vector space $U+\Kspan{\widetilde{F}_1,\cdots,\widetilde{F}_n,\widetilde{F},\widetilde{G}}$, we may construct a $B$-strong vector space $\wt{W}\subset S$ such that $\wt{W}$ contains $U+\Kspan{\widetilde{F}_1,\cdots,\widetilde{F}_n}$ and $\wt{F},\wt{G}\in \K[\wt{W}]$. 
   Let $W\subset R$ be the image of $\wt{W}$ in $R$. 
   Then $V\subset W$ and $F,G\in \K[W]$. 
   Note that any homogeneous basis of $\wt{W}$ is an $\cR_\eta$-sequence in $S$, by \cref{corollary: strong sequence R_eta}. 
   Hence we know that $W$ is generated by an $\cR_\eta$-sequence in $R$, by \cref{proposition: strong sequence CM UFD}. 
   Let $A=\K[W]\subset R$. Let $G_i=F_i-\alpha_iz^{\deg{F_i}}$ and $J_\alpha=(G_1,\cdots,G_n)\subset A[z]$. 
   Let $\vartheta_\alpha:A[z]\rightarrow A[z]/J_\alpha$. 
   Note that $F,G$ do not have a common factor in $A$. 
   Then, by \cref{lemma: general projection basic}, we know that for a general $\alpha\in \K^n$, we have $\gcd(\vartheta_\alpha(F), \vartheta_\alpha(G)) \in \K[z]$ in $A[z]/J_\alpha$. 
   Since $A[z]$ is generated by an $\cR_\eta$-sequence in $R[z]$, defining $I_\alpha := J_\alpha R[z]$, we note that $I_\alpha\cap A=J_\alpha$, by \cref{proposition: radicals and prime sequences}. 
   Then we have $A[z]/J_\alpha\rightarrow R[z]/I_\alpha$ is injective and that $A[z]/J_\alpha$ can be identified with the image of $A[z]\rightarrow R[z]/I_\alpha$. 
   Hence we have, $\vartheta_\alpha(F)=\varphi_\alpha(F)$ and $\vartheta_\alpha(G)=\varphi_\alpha(G)$. We will show that
    $\gcd(\vartheta_\alpha(F), \vartheta_\alpha(G))$ in $A[z]/J_\alpha$ is equal to $\gcd(\varphi_\alpha(F), \varphi_\alpha(G))$ in $R[z]/I_\alpha$.
    
    Let $\wt{G}_i=\wt{F}_i-\alpha_iz^{\deg(F_i)}$. Let $\cF$ be a homogeneous basis of $\wt{W}$ in $S$ that contains $\wt{F}_1,\cdots,\wt{F}_n$. Let $\cF'$ be the sequence of forms obtained by replacing $\wt{F}_i$ with $\wt{G}_i$ for all $i$. Note that, by \cref{proposition: strong with z}, we have that $\Kspan{\cF'}$ is $B$-strong in $S[z]$ and hence an $\cR_\eta$-sequence in $S[z]$, by \cref{corollary: strong sequence R_eta}. Note that the image of $\Kspan{\cF'}$ in $R[z]/I_\alpha$, generates the image of $A[z]$.
    Therefore, we see that $A[z]/J_\alpha$ is identified with a subalgebra generated by an $\cR_\eta$-sequence in $R[z]/I_\alpha$. Therefore, \cref{lem:factor-ufd-algebra} implies that any common factor of $\varphi_\alpha(F)$ and $\varphi_\alpha(G)$ in $R[z]/I_\alpha$ must be a in $A[z]/J_\alpha$. Therefore, $\gcd(\varphi_\alpha(F), \varphi_\alpha(G))=\gcd(\vartheta_\alpha(F), \vartheta_\alpha(G))\in \K[z]$ in $R[z]/I_\alpha$, for a general $\alpha\in \K^n$.

    2. By \cref{proposition: homogeneous factors}, we see that if $h\in \K[z]\subset R[z]/I_\alpha$ and $h$ is a factor of $F$, then $h$ is homogeneous. Thus we must have $h=z^r$ for some $r$. Similarly for $G$. Therefore,  for a general $\alpha\in \K^n$, we have that  $\gcd(\varphi_\alpha(F), \varphi_\alpha(G))=z^k$ for some $k\in \N$, by part (1). If $F,G\not \in \K[V]$, then by \cref{proposition: general quotient general}, we know that $\varphi_\alpha(F),\varphi_\alpha(G)\not \in \K[z]$ for a general $\alpha\in \K^n$. If $\varphi_\alpha(F),\varphi_\alpha(G)$ were linearly dependent, then we must have $\varphi_\alpha(F),\varphi_\alpha(G)\in \K[z]$ which is a contradiction.
    
    3. Since $F\in A=\K[W]$ as above, and $F$ is square-free in $R$, we know that $F$ is square-free in $A$. 
    Let $\varphi_\alpha(F)=z^kf_1^{e_1}\cdots f_r^{e_r}$ be an irreducible factorization in $R[z]/I_\alpha$. 
    Since $\varphi_\alpha(F)\in A[z]/J_\alpha$, we know that $f_1,\cdots,f_r\in A[z]/J_\alpha$ and  $\varphi_\alpha(F)=z^kf_1^{e_1}\cdots f_r^{e_r}$ is an irreducible factorization in $A[z]/J_\alpha$, by \cref{lem:factor-ufd-algebra}. 
    By applying \cref{lemma: general projection basic} to $A$, we know that the multiple factors of $\varphi_\alpha(F)$ in $A[z]/J_\alpha$ must be in $\K[z]$, for a general $\alpha\in \K^n$. 
    Since $\varphi_\alpha(F)$ is a homogeneous element of $R[z]/I_\alpha$, we conclude that any multiple factor of $\varphi_\alpha(F)$ in $A[z]/J_\alpha$ must be a homogeneous element of $R[z]/I_\alpha$, and hence of the form $z^\ell$ for some $\ell\in \N$. 
    Since $z$ does not divide $f_1,\cdots,f_r$ in $A[z]/J_\alpha$, we conclude that $e_i=1$ for all $i\in [r]$. 
    Hence, for a general $\alpha\in \K^n$, we see that the only multiple factor $\varphi_\alpha(F)$ is $z^k$ for some $k\in \N$.
\end{proof}

\begin{remark}
    Given two elements $F,G\in R$, the results above show that under a general quotient $\varphi_\alpha$, we have that  $\varphi_\alpha(F), \varphi_\alpha(G)$ satisfy certain properties such as non-vanishing and $\gcd(\varphi_\alpha(F), \varphi_\alpha(G))\in\K[z]$ etc. In other words, given any such property $\mathcal{P}$, there exists a non-empty open subset $\mathcal{X}\subset \K^n$ (depending on $F,G$) such that $\mathcal{P}$ holds for all $\alpha\in \mathcal{X}$. Let $\cF=\{F_1,\cdots, F_m\}\subset R$ be a finite set and  $\mathcal{P}$ a property as above. We may take the finite intersection of the non-empty open subsets $\cX_{ij}$ corresponding to pairs of forms $F_i,F_j$ to assume that for a general quotient $\varphi_\alpha$, all elements of $\cF$ satisfy the property $\mathcal{P}$ such as non-vanishing and $\gcd(\varphi_\alpha(F_i), \varphi_\alpha(F_j))\in\K[z]$ etc. Therefore we may assume that the results proved in this section apply to elements of any finite set $\cF$ and
 general quotients $\varphi_\alpha$. 
\end{remark}

\subsection{Lifting from general quotients}
As part of our inductive strategy, we would like to show that if a general quotient $\varphi_\alpha(\cF)$ of a SG-configuration $\cF$ has small vector space dimension then $\cF$ must also have small dimension. In this section we prove that we can indeed lift the bounds on dimensions under a general quotients. We prove our lifting result for strong quotients of polynomial rings and generalize previous results on quadratics \cite[Claim 2.26]{PS22} and for forms in a polynomial ring \cite[Proposition 2.11]{OS22}.

Let $V\subset R$ be a $B$-strong vector space. Let $F_1,\cdots,F_n$ be a homogeneous basis of $V$. Then the graded subalgebra $\K[V]$ is isomorphic to the polynomial ring $\K[y_1,\cdots,y_n]$ via the isomorphism $y_i\mapsto F_i$. Note that if $\K[y_1,\cdots,y_n]$ is the standard polynomial ring with $\deg(y_i)=1$, then the isomorphism above is not a homomorphism of graded rings. However, if we consider $\K[y_1,\cdots,y_n]$ with the grading $\deg(y_i)=\deg(F_i)$, then we have an isomorphism of graded $\K$-algebras. Therefore, in the results below we use the grading described above to compute the dimensions of the graded factors and obtain the dimension bounds.

\begin{lemma}\label{lemma: lifting from quotient basic}
Fix positive integers $d,d_1,\cdots,d_N\in \N$. 
Let $S=\K[x_1,\cdots,x_N]$ be the polynomial ring with the grading $\deg(x_i)=d_i$ and $z$ be a new variable with $\deg(z)=1$. 
For $\alpha\in \K^n$, let $I_\alpha=(x_1-\alpha_1 z^{d_1},\cdots,x_n-\alpha_nz^{d_n})$ in $S[z]$.
Let $\varphi_\alpha:S[z]\rightarrow S[z]/I_\alpha$ be the quotient ring homomorphism. 
Let $\cF\subset S_{\leq d}$ be a finite set of homogeneous polynomials of positive degree. 
Suppose there exists $D\in \N$ such that $\dim(\Kspan{\varphi_\alpha(\cF)})\leq D$ for general $\alpha\in \K^n$. 
Then we have $$\dim(\Kspan{\cF})\leq d^2(1+d)^{2n+2}D.$$
\end{lemma}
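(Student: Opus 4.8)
\textbf{Proof proposal for \cref{lemma: lifting from quotient basic}.}

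The plan is to analyze how the quotient map $\varphi_\alpha$ acts on the $\K$-span of $\cF$ as $\alpha$ varies, and to extract a bound on $\dim(\Kspan{\cF})$ from the uniform bound $D$ on $\dim(\Kspan{\varphi_\alpha(\cF)})$. First I would observe that $\varphi_\alpha$ factors through the substitution homomorphism $\psi_\alpha : S \to \K[x_{n+1},\ldots,x_N][z]$ sending $x_i \mapsto \alpha_i z^{d_i}$ for $i \le n$ and fixing the remaining variables, since $S[z]/I_\alpha \cong \K[x_{n+1},\ldots,x_N,z]$. So it suffices to bound $\dim(\Kspan{\cF})$ in terms of a uniform bound on $\dim(\Kspan{\psi_\alpha(\cF)})$. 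Writing each $F \in \cF$ in the form $F = \sum_{\beta} c_{F,\beta}(x_1,\ldots,x_n)\, m_\beta(x_{n+1},\ldots,x_N)$, where $m_\beta$ ranges over monomials in the last $N-n$ variables, the image $\psi_\alpha(F) = \sum_\beta c_{F,\beta}(\alpha_1 z^{d_1},\ldots,\alpha_n z^{d_n})\, m_\beta$. Grouping by $m_\beta$ and then by powers of $z$, each $\psi_\alpha(F)$ is a linear combination of the "monomial-times-power-of-$z$" basis elements $z^j m_\beta$, and the number of such basis elements that can appear is controlled: since $\deg F \le d$ and all variables have degree $\ge 1$, the total degree is $\le d$, so $j \le d$ and $m_\beta$ has degree $\le d$; moreover only monomials $m_\beta$ with $\deg m_\beta \le d$ matter.

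The key step is the following linear-algebra/interpolation argument. Fix a basis-independent description: let $T$ be the (finite) set of pairs $(\beta, j)$ with $z^j m_\beta$ of total degree $\le d$ that can occur, and for each such pair, the coefficient of $z^j m_\beta$ in $\psi_\alpha(F)$ is a polynomial function $P_{F,\beta,j}(\alpha)$ in $\alpha \in \K^n$, of degree at most $d$ in $\alpha$ (since $c_{F,\beta}$ has degree $\le d$ as a polynomial, hence $\le d$ "monomials in $\alpha$" after the substitution collapses powers of $z$). Thus the assignment $F \mapsto \psi_\alpha(F)$ is, after choosing the $z^j m_\beta$-coordinates, a linear map $L_\alpha : \Kspan{\cF} \to \K^{|T|}$ whose matrix entries are polynomials in $\alpha$ of bounded degree. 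The hypothesis says $\operatorname{rank}(L_\alpha) \le D$ for general $\alpha$, hence (rank is lower semicontinuous, and the locus where it is $\le D$ is closed) for \emph{all} $\alpha \in \K^n$. I claim this forces $\dim(\Kspan{\cF}) \le D'$ for an explicit $D'$. Indeed, if $\dim(\Kspan{\cF}) = D' > D$, pick $D'$ linearly independent elements $F_1,\ldots,F_{D'} \in \Kspan{\cF}$; these are distinct polynomials, so their difference-vectors span a $D'$-dimensional space, and one can exhibit $\alpha$ at which $L_\alpha$ has rank $> D$ unless $D' \le$ (number of coordinates) $\cdot$ (degree bound)-type quantity. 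More carefully: the kernel of $L_\alpha$ has dimension $\ge D' - D$ for every $\alpha$; I would argue that the "generic kernel" (intersection of $\ker L_\alpha$ over a Zariski-dense set, equivalently the common kernel once we clear out the rank drop) must then have dimension $\ge D' - D$ — but a nonzero element of the common kernel is a polynomial $G = \sum a_F F$ with $\psi_\alpha(G) = 0$ for all $\alpha$, which by \cref{lemma: general projection basic}(4) (applied with $\cF$ replaced by $G$: a nonzero polynomial maps to nonzero under a general $\psi_\alpha$) forces $G = 0$, a contradiction. Hence the common kernel is $0$, and the bound $D' \le D \cdot (\text{number of fibers of the rank-drop stratification})$ or a direct counting gives $\dim(\Kspan{\cF}) \le d^2 (1+d)^{2n+2} D$.

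To get the explicit constant $d^2(1+d)^{2n+2}$, I would use the following more hands-on version rather than the soft argument above. Partition $\cF$ according to total degree $e \in \{1,\ldots,d\}$ of its elements — at most $d$ classes — and bound each class separately. Within degree $e$, after substitution the relevant $z$-powers $j$ run over $\{0,\ldots,e\} \subseteq \{0,\ldots,d\}$, giving a factor $\le (1+d)$, and the monomials $m_\beta$ in the last variables have degree $\le e \le d$, which after accounting for the substitution's effect on each of the $n$ substituted variables (each $x_i \mapsto \alpha_i z^{d_i}$ contributes a factor controlling how many $\alpha$-monomials appear, bounded by $(1+d)$ per variable) yields a factor $\le (1+d)^{2n}$ or so. The point is that $\psi_\alpha$, restricted to degree-$e$ forms, is a specialization at $\alpha$ of a single "universal" linear map over $\K[\alpha_1,\ldots,\alpha_n]$ whose target is free of rank $\le (1+d)^{2n+1}$ over $\K[\alpha]_{\le d}$; a standard fact (a matrix of polynomials of degree $\le d$ in $n$ variables whose specializations all have rank $\le D$ has its "polynomial rank" $\le D \cdot (\text{something polynomial in } d)$, via evaluating at enough points, or via the theory of determinantal ideals) then gives $\dim(\Kspan{\cF_e}) \le (1+d)^{2n+1} D$; summing over the $\le d$ degrees and absorbing slack gives $d \cdot (1+d)^{2n+1} D \le d^2(1+d)^{2n+2}D$. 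The main obstacle I anticipate is making the counting of $z^j m_\beta$-coordinates and $\alpha$-monomials tight enough to land exactly at the stated constant $d^2(1+d)^{2n+2}$ — the conceptual content (semicontinuity of rank plus \cref{lemma: general projection basic}(4) to kill the common kernel) is straightforward, but bookkeeping the grading conventions (recall $\deg(x_i) = d_i$, $\deg z = 1$) and the degrees of the coefficient polynomials $c_{F,\beta}$ in $\alpha$ requires care, and is where I'd spend the bulk of the write-up.
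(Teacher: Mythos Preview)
Your setup is right, and your instinct to decompose each $F\in\cF$ according to monomials in $x_1,\ldots,x_n$ with coefficients in $A=\K[x_{n+1},\ldots,x_N]$ matches the paper. But the ``common kernel'' step is a genuine gap: knowing that $\bigcap_\alpha \ker L_\alpha=0$ together with $\operatorname{rank}(L_\alpha)\le D$ for all $\alpha$ does \emph{not} bound $\dim\Kspan{\cF}$. For instance, take $V=\K^m$, $W=\K$, and $L_\alpha(v)=\sum_i \alpha_i v_i$ for $\alpha\in\K^m$; then every $L_\alpha$ has rank $\le 1$ and the common kernel is $0$, yet $m$ is arbitrary. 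What you actually need is a bound on \emph{how many} specializations $\alpha^{(1)},\ldots,\alpha^{(s)}$ suffice to make $\bigcap_l \ker L_{\alpha^{(l)}}=0$; then $\dim\Kspan{\cF}\le sD$. Your second paragraph gestures at ``evaluating at enough points'' but never pins down $s$.

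The paper's proof supplies exactly this missing ingredient. After reducing to a single degree, it looks at the coefficient of $z^k$ in $\varphi_\alpha(F_i)$, which equals $\sum_{j=1}^{r_k} M_j^{(k)}(\alpha)\,F_{ij}^{(k)}$ where $M_j^{(k)}$ are the degree-$k$ monomials in $x_1,\ldots,x_n$ and $F_{ij}^{(k)}\in A$ are their coefficients in $F_i$. The key step is \cref{lemma: general vandermonde invertible}: one can choose $r_k$ general points $\alpha^{(1)},\ldots,\alpha^{(r_k)}$ so that the $r_k\times r_k$ matrix $[M_i^{(k)}(\alpha^{(j)})]$ is invertible. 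Inverting this matrix expresses each $F_{ij}^{(k)}$ as a linear combination of the $z^k$-coefficients of $\varphi_{\alpha^{(l)}}(F_i)$ over $l\in[r_k]$, so $\dim\Kspan{F_{ij}^{(k)}:i,j}\le r_k D$. Since $r_k\le(1+d)^n$, summing over $k$ and reconstructing $F_i=\sum_{k,j}M_j^{(k)}F_{ij}^{(k)}$ gives the stated bound. This Vandermonde inversion is the heart of the argument and is precisely what your proposal is missing.
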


\begin{proof}
Note that it is enough prove that $\dim(\Kspan{\cF\cap S_\ell}) \leq \ell(1+\ell)^{2n+2}D$ for all $\ell\in [d]$. 
Therefore, without loss of generality, we may assume that $\cF\subset S_d$.  Let $\cF=\{F_1,\cdots,F_t\}\subset S_d$. 
We will show that $\dim(\Kspan{\cF})\leq d(1+d)^{2n+2}D$. 
Consider the graded $\K$-subalgebras given by $R=\K[x_1,\cdots,x_n]$ and $A=\K[x_{n+1},\cdots, x_N]$. 
For any $k\geq 1$, let $r_k$ denote the number of distinct monomials of degree $k$ in $R$. 
Note that $r_k\leq (1+k)^n\leq (1+d)^n$ for all $k\in [d]$. 
Let $M^{(k)}_1,\cdots,M^{(k)}_{r_k}$ denote these monomials that span $R_k$. 
    
    Consider $F_i$ as a polynomial in $A[x_1,\cdots,x_n]$. 
    Let $F_i^{(0)}\in A$ be the degree zero part of $F_i$ as a polynomial in $A[x_1,\cdots,x_n]$. For $k\geq 1$, let $F^{(k)}_{ij}\in A$ denote the coefficient of the degree $k$ monomial $M^{(k)}_j$ in $F_i$. Note that if  $F^{(k)}_{ij}\neq 0$, then it is a homogeneous polynomial of degree $d-k$ in $A$. Let $V_d=\Kspan{F_i^{(0)}|i\in [t]}$ and for $k\in [d-1]$ we let $V_{d-k}=\Kspan{F^{(k)}_{ij}|i\in[t], j\in [r_k]}$. Let
$V=V_1+\cdots+V_{d-1}+V_d$. Then we have $\cF\subset \K[V,x_1,\cdots,x_n]$. We will show that $\dim(V_{d-k})\leq r_kD$ for all $k\in[d-1]$ and $\dim(V_d)\leq D$.

    Fix $k\in [d-1]$. For simplicity, let $F_{ij}$ denote the forms $F^{(k)}_{ij}$ and $M_j$ denote the monomials $M^{(k)}_j$ as defined above.
    For $\alpha\in \K^t$, we have $\varphi_{\alpha}(F_i)\in A[z]$ and the coefficient of $z^k$ in $\varphi_{\alpha}(F_i)$ is given by $M_1(\alpha)F_{i1}+\cdots+M_{r_k}(\alpha)F_{ir_k}$. For a general $\alpha\in \K^t$, we know that $\dim\Kspan{\varphi(\cF)}\leq D$. Therefore the dimension of the span of the coefficients of $z^k$ in all the $\varphi(F_i)$ is at most $D$, i.e. we have $\dim\Kspan{M_1(\alpha)F_{i1}+\cdots+M_{r_k}(\alpha)F_{ir_k}|i\in[t]}\leq D$. By \cref{lemma: general vandermonde invertible}, we may choose $r_k$ number of general  points $\alpha^{(1)},\cdots,\alpha^{(r_k)}\in \K^{r_k}$, such that the matrix $\cM$ defined by $\cM_{ij}= M_i(\alpha^{(j)})$ is invertible. Then we have $\dim\Kspan{F_{ij}|i\in[t],j\in[r_k]}\leq r_kD$. Furthermore, since $F_i^{(0)}\in A$, we know that $\varphi_\alpha(F_i^{(0)})=F_i^{(0)}$ and $\dim(V_d)\leq \dim(\Kspan{\varphi_\alpha(F_i^{(0)})|i\in [t]})\leq D$.

Let $F_i\in \cF$ and $k\in [d-1]$. 
By the above argument, we know that the coefficients $\{F^{(k)}_{ij}\}$ of the degree $k$ monomials $M^{(k)}_1,\cdots,M^{(k)}_{r_k}$ in $F_i$'s span a vector space of dimension at most $r_{k}D$. 
Hence, the span of $\{M_\ell F_{ij}|i\in[m],j,\ell\in[r_k]\}$ is of dimension at most $r_{k}^2D$. For $k=d$, the coefficients of the degree $d$ monomials $M^{(d)}_1,\cdots,M^{(d)}_{r_d}$ in $F_i$ are constants. For $k=0$, we know that span of the parts that are independent of $x_1,\cdots,x_n$ has dimension given by $\dim(V_d)\leq D$. Therefore, $\dim\Kspan{\cF}\leq D+\sum_{k=1}^{d-1}r_k^2D+r_d\leq d(1+d)^{2n+2}D$, since $r_k\leq (1+d)^n$ for all $k\in [d]$.
\end{proof}

\begin{proposition}\label{proposition: lifting general quotient}
Let $d,e\in \N$ such that $1\leq d\leq e$. Let $U\subset S_{\leq e}$ be a graded vector space generated by forms $H_1,\cdots,H_t$. Let $R=S/(U)$. Let $V \subset R_{\leq e}$ be a $B$-lifted strong vector space with basis $F_1,\cdots,F_n\in R$. Let $\varphi_\alpha :R[z]\rightarrow R[z]/I_\alpha$ be a graded quotient as defined in \cref{definition: quotients}. Let $\cF\subset R_{\leq d}$ be a finite set of homogeneous elements.
Suppose that there exists $D\in \N$ such that $\dim \Kspan{\varphi_\alpha(\cF)}\leq D$ for a general $\alpha\in \K^n$. 
Then  
$$\dim\Kspan{\cF}\leq d^2(1+d)^{2n+2}D\cdot\Pi_{i=1}^t\deg(H_i)\cdot\Pi_{j=1}^n\deg(F_j).$$
\end{proposition}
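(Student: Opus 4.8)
The plan is to reduce to the polynomial-ring statement \cref{lemma: lifting from quotient basic} by exploiting the Cohen--Macaulay free-module structure of $R$ over a polynomial subalgebra, exactly as in the proof of \cref{proposition: general quotient general}. Since $V$ is $B$-lifted strong with respect to $U$, \cref{remark: lifted strong implies UFD} shows that a basis $H_1,\dots,H_t$ of $U$ is an $\cR_\eta$-sequence in $S$ (in particular a regular sequence) and that $R$ is a Cohen--Macaulay UFD, while \cref{proposition: 2B implies B lifted strong} shows $F_1,\dots,F_n$ is a (prime, hence regular) sequence in $R$. Using \cref{proposition: CM hsop and degree} I would extend $F_1,\dots,F_n$ to a homogeneous system of parameters $F_1,\dots,F_M$ of $R$ with $\deg(F_i)=1$ for $i>n$, set $A:=\K[F_1,\dots,F_M]$, which is a polynomial ring, and record via part (3) of \cref{proposition: CM hsop and degree} that $R$ is a free $A$-module of rank $r:=\prod_{i=1}^t\deg(H_i)\cdot\prod_{j=1}^n\deg(F_j)$. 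By \cref{proposition: hsop injection} I can fix a graded $A$-module isomorphism $R\simeq A^{\oplus r}$ under which $A\hookrightarrow R$ is the inclusion of the first summand; tensoring with $\K[z]$ over $\K$ gives a graded $A[z]$-module isomorphism $R[z]\simeq A[z]^{\oplus r}$, where $A[z]=\K[F_1,\dots,F_M,z]$ is again a polynomial ring.

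Next I would descend the quotient. With $J_\alpha:=(F_1-\alpha_1z^{\deg F_1},\dots,F_n-\alpha_nz^{\deg F_n})\subset A[z]$, the proof of \cref{proposition: general quotient general} gives $I_\alpha=J_\alpha R[z]$, and tensoring $R[z]\simeq A[z]^{\oplus r}$ over $A[z]$ with $A[z]/J_\alpha$ produces a graded isomorphism $R[z]/I_\alpha\simeq (A[z]/J_\alpha)^{\oplus r}$ compatible with the quotient maps: if $F\in R\subset R[z]$ has $A$-components $(f_1^{(F)},\dots,f_r^{(F)})\in A^{\oplus r}$, then $\varphi_\alpha(F)$ corresponds to $(\vartheta_\alpha(f_1^{(F)}),\dots,\vartheta_\alpha(f_r^{(F)}))$, where $\vartheta_\alpha:A[z]\to A[z]/J_\alpha$ is the quotient map. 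For each $j\in[r]$ set $\cF_j:=\{\,f_j^{(F)} : F\in\cF,\ f_j^{(F)}\neq 0\,\}\subset A_{\leq d}$. Since $R\simeq A^{\oplus r}$ is $\K$-linear and injective, $\dim\Kspan{\cF}\leq\sum_{j=1}^r\dim\Kspan{\cF_j}$, and projecting $\Kspan{\varphi_\alpha(\cF)}$ onto the $j$-th summand gives $\dim\Kspan{\vartheta_\alpha(\cF_j)}\leq \dim\Kspan{\varphi_\alpha(\cF)}\leq D$ for general $\alpha$.

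Finally I would bound each $\dim\Kspan{\cF_j}$ by applying \cref{lemma: lifting from quotient basic} to the polynomial ring $A$, the $n$ coordinates $F_1,\dots,F_n$, and the set $\cF_j$. One minor point requires care: $\cF_j$ may contain nonzero constants, namely when some $F\in\cF$ has degree equal to the degree of the $j$-th module generator. Splitting $\cF_j=\cF_j^0\sqcup\cF_j^+$ by degree, the graded map $\vartheta_\alpha$ keeps $\Kspan{\vartheta_\alpha(\cF_j^0)}\subset\K$ and $\Kspan{\vartheta_\alpha(\cF_j^+)}\subset (A[z]/J_\alpha)_{\geq 1}$ in complementary degrees, so for general $\alpha$ their dimensions add up to at most $\dim\Kspan{\vartheta_\alpha(\cF_j)}\leq D$; moreover $\vartheta_\alpha$ is the identity on constants, so $\dim\Kspan{\cF_j^0}=\dim\Kspan{\vartheta_\alpha(\cF_j^0)}$, while \cref{lemma: lifting from quotient basic} applied to $\cF_j^+$ yields $\dim\Kspan{\cF_j^+}\leq d^2(1+d)^{2n+2}\dim\Kspan{\vartheta_\alpha(\cF_j^+)}$. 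Adding and using $d^2(1+d)^{2n+2}\geq 1$ gives $\dim\Kspan{\cF_j}\leq d^2(1+d)^{2n+2}D$, and summing over $j\in[r]$ gives $\dim\Kspan{\cF}\leq r\cdot d^2(1+d)^{2n+2}D$, which is the asserted bound. The argument is essentially an assembly of tools already developed; the part I expect to need the most careful checking is the compatibility of the two free-module decompositions with the quotient homomorphisms and with the grading, which is exactly what the commutative square in the proof of \cref{proposition: general quotient general} supplies.
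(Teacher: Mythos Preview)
Your proposal is correct and follows essentially the same route as the paper: extend $F_1,\dots,F_n$ to a homogeneous system of parameters, use the free $A$-module decomposition of rank $r=\prod_i\deg(H_i)\prod_j\deg(F_j)$, pass to components via the commutative square relating $\varphi_\alpha$ and $\vartheta_\alpha^{\oplus r}$, and apply \cref{lemma: lifting from quotient basic} componentwise before summing. Your treatment of the possible degree-zero components $\cF_j^0$ is in fact slightly more careful than the paper's, which applies \cref{lemma: lifting from quotient basic} directly without singling out this edge case.
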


\begin{proof}
Note that $H_1,\cdots, H_t$ is an $\cR_\eta$-sequence in $S$ and hence $R$ is a Cohen-Macaulay UFD, by \cref{remark: lifted strong implies UFD}. 
Also, $R[z]$ and $R[z]/I_\alpha$ are quotients of $S[z]$ by $\cR_\eta$-sequences and hence by Cohen-Macaulay UFDs by \cref{proposition: graded quotient is UFD}. 
By \cref{proposition: 2B implies B lifted strong}, we know that  $F_1,\cdots,F_n$ is an $\cR_\eta$-sequence in $R$. 
Thus, $F_1,\cdots,F_n,z$ is an $\cR_\eta$-sequence in $R[z]$. 
We may extend $F_1,\cdots,F_n$ to a homogeneous system of parameters $F_1,\cdots,F_{N-t}$ of $R$ such that $\deg(F_i)=1$ for $n+1\leq i \leq N-t$.
Let $A=\K[F_1,\cdots,F_{N-t}]$, $G_i=F_i-\alpha_iz^{\deg(F_i)}\in A[z]$ for $i\in [n]$ and $J_\alpha=(G_1,\cdots,G_n)\subset A[z]$. 
Let $\vartheta_\alpha: A[z]\rightarrow A[z]/J_\alpha$ be the quotient map. 
We have that $A\subset R$ is a module-finite free extension by \cref{theorem: CM hsop}. 
Furthermore, the rank of $R$ as a free $A$-module is $r:=\mathrm{rank}(R)=\Pi_{i=1}^t\deg(H_i)\cdot\Pi_{j=1}^n\deg(F_j)$  by \cref{proposition: CM hsop and degree}. By tensoring, we obtain that $A[z]\subset R[z]$ and $A[z]/J_\alpha\rightarrow R[z]/I_\alpha$ are module-finite free extensions of rank $r$. Therefore, we have 
$R[z]/I_\alpha\xrightarrow[]{\sim}(A[z]/J_\alpha)^{\oplus r}$. Let $F_i\in \cF \subset R[z]$ have components $(f_{i1},\cdots,f_{ir})\in (A[z])^{\oplus r}$ under this isomorphism. Note that we must have $f_{ij}$ are homogeneous and  $\deg(f_{ij})\leq \deg(F_i)$ in $A[z]$, by \cref{proposition: homogeneous compoments}. We have that $\varphi_\alpha: R[z]\rightarrow R[z]/I_\alpha$ commutes with the quotient map $\vartheta_\alpha^{\oplus r}:(A[z])^{\oplus r}\rightarrow (A[z]/J_\alpha)^{\oplus r}$ via the isomorphisms above, i.e. the following diagram commutes.

\[\begin{tikzcd}
 R[z] \arrow[r, "\varphi_\alpha"] \arrow[d, "\simeq"] & R[z]/I_\alpha \arrow[d,"\simeq"]\\
 (A[z])^{\oplus r} \arrow[r, "(\vartheta_\alpha)^{\oplus r}"] & (A[z]/J_\alpha)^{\oplus r} \\
\end{tikzcd}\]

Now we know that $\dim \Kspan{\varphi_\alpha(\cF)}\leq D$ in $R[z]/I_\alpha$. Therefore, for any $j\in [r]$,  we have $\dim \Kspan{\vartheta_\alpha(f_{ij})|i\in [m]}\leq D$ in the $j$-th component of $(A[z]/J_\alpha)^{\oplus r}$. Note that $A=\K[F_1,\cdots,F_n]\subset R$ is a graded polynomial ring. 
By applying \cref{lemma: lifting from quotient basic}, we conclude that $\dim\Kspan{f_{ij}|i\in [m]}\leq d^2(1+d)^{2n+2}D$. Therefore, we have \[\dim\Kspan{\cF}\leq d^2(1+d)^{2n+2}Dr=d^2(1+d)^{2n+2}D\cdot\Pi_{i=1}^t\deg(H_i)\cdot\Pi_{j=1}^n\deg(F_j).\]
\end{proof}

\subsection{General Quotients and Sylvester-Gallai Configurations}\label{subsecttion: SG and quotients}

We show that radical SG-configurations are preserved under general quotients.
Moreover, if a $\rsg{d}$ configuration is contained in an ideal generated by a strong vector space, then we can reduce the degree of the configuration.

\begin{definition}[Reduction of a form]
Let $R$ be a finitely generated $\K$-algebra which is a UFD. 
For any $F\in R$ we define a reduction of $F$ to be a generator of the principal ideal $\rad(F)$ in $R$. 
In other words if $F=f_1^{e_1}\cdots f_r^{e_r}$ is an irreducible factorization of $F$, then $f_1\cdots f_r$ is a reduction of $F$. 
Let $\cF\subset R$ be any subset consisting of elements of positive degree. 
We define $\cF_{\mathrm{red}}$ to be a set of pairwise non-associate elements in $R$ such that for any $F\in \cF$ we have that $\cF_{\mathrm{red}}$ contains a reduction of $F$.  
\end{definition}

\begin{remark}
(a) By abuse of notation we will denote any choice of a reduction of $F$ as $F_\red$. 
Note that $F_\red$ is well-defined up to multiplication by elements of $\K$.

(b) Let $\cF\subset R$ be a finite set of homogeneous elements. 
Suppose there is a homogeneous element $z\in R_1$ such that for all $F,G\in \cF$ we have $\gcd(F,G)=z^k$ for some $k\geq 1$. Then $|\cF_\red |=|\cF|-|\cF\cap \K[z]|+1$.

\end{remark}

For the rest of  the section, we fix positive integers $d,e,\eta\in \N$ with $1\leq d\leq e$ and $\eta\geq 3$. 
Let $B:\N^e\rightarrow \N^e$ denote an ascending function such that $B_i(\delta)\geq A(\eta,i)+3(\sum_i\delta_i-1)$ for all $i\in [e]$, where $A(\eta,i)$ is the function provided by \cref{theorem: Ananyan-Hochster}. 
For any $a\in \N$, let $t_a:\N^e\rightarrow \N^e$ be the translation function defined as $t_a(\delta)=(\delta_1+a,\cdots,\delta_e+a)$. 
Let $C_B:\N^e\rightarrow \N^e$ and $h_B:\N^e\rightarrow \N^e$ be the functions provided by \cref{proposition: constructing AH algebras}. 

Throughout this section we will use the following notations. Let $S=\K[x_1,\cdots,x_N]$ and $y$ a new variable.
Let $U\subset S_{\leq e}$ be a graded vector space with dimension sequence $\delta\in \N^e$, generated by forms $H_1,\cdots,H_t$ and let $R=S/(U)$. Let $z\in R_1$. Let $V\subset R_{\leq e}$ be a graded vector space of dimension $n+1$ containing $z$. We choose a basis $z,G_1,\cdots, G_n\in \R_{\leq e}$ and for $\alpha\in \K^{n+1}$, we let $I_\alpha=(V_\alpha)=(z-\alpha_1y,G_n-\alpha_1y^{\deg(G_1)},\cdots,G_n-\alpha_ny^{\deg(G_n)})$.

\begin{proposition}\label{proposition: general quotient preserves SG}
With the notations as above, suppose that $V \subset R_{\leq e}$ is $(h_B\circ t_2)$-lifted strong vector space. 
Let $\cF \subset R$ be a $\rsgufd{d}{z}{R}$ configuration. 
Let $\varphi_\alpha :R[y]\rightarrow R[y]/I_\alpha$ be the graded quotient. Then we have

 \begin{enumerate}
     \item the set $\varphi_{ \alpha}(\cF)_{\mathrm{red}} \subset R[y]/I_{ \alpha}$ is a $\rsgufd{d}{y}{R[y]/I_{\alpha}}$ configuration for general $\varphi_\alpha$.
     \item Suppose that for all $zP\in \cF\cap R_{d+1}$ we have $P\in (V)$. Then we have that $\varphi_{ \alpha}(\cF)_{\mathrm{red}}$ is a $\rsgufd{d-1}{y}{R[y]/I_{ \alpha}}$ configuration for general $\varphi_\alpha$.
     \item  Suppose there exists $D\in \N$, such that we have a global bound $\dim \Kspan{\varphi_{ \alpha}(\cF)_{\mathrm{red}}} \leq D$, for general $\varphi_\alpha$. Then 
$$\dim\Kspan{\cF}\leq (d+1)^3(d+2)^{2n+2}(D+1)\cdot\Pi_{i=1}^t\deg(H_i)\cdot\Pi_{j=1}^n\deg(G_j).$$
In particular, we have 
$$\dim\Kspan{\cF}\leq (d+3)^{3n+6}e^tD=(d+3)^{3\dim(V)+6}e^{\dim(U)}D.$$
\end{enumerate}
\end{proposition}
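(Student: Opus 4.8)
The three parts build on one another, so I would organize the proof around the behavior of the reduction operation and the gcd statements from \cref{proposition: gcd after projection}. For part (1), the key observations are: (i) every element of $\cF$ is divisible by $z$, so every element of $\varphi_\alpha(\cF)$ is divisible by $\varphi_\alpha(z) = \alpha_1 y$, hence divisible by $y$; (ii) by \cref{proposition: gcd after projection}(2), for any two forms $zF_i, zF_j \in \cF$ with $\gcd(F_i,F_j)=1$ we have $\gcd(\varphi_\alpha(zF_i),\varphi_\alpha(zF_j)) = y^{k+1}$ for some $k$, so after passing to reductions the only common factor of distinct elements of $\varphi_\alpha(\cF)_\red$ is $y$; and (iii) by \cref{proposition: gcd after projection}(3), $\varphi_\alpha(zF_i)$ is square-free away from powers of $y$, so its reduction is square-free. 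This gives the gcd condition and square-freeness required by \cref{definition: radical SG general}. For the radical-SG condition, I would use \cref{lemma: radical regular sequence}(1): if $zF_k \in \rad(zF_i, zF_j)$ in $R$, then applying $\varphi_\alpha$ (a quotient homomorphism, hence preserving radical membership) gives $\varphi_\alpha(zF_k) \in \rad(\varphi_\alpha(zF_i), \varphi_\alpha(zF_j))$, and then dividing through by the appropriate power of $y$ and passing to reductions — using that $y, \varphi_\alpha(F_i)_\red, \varphi_\alpha(F_j)_\red$ is a regular sequence in the quotient UFD (via \cref{proposition: graded quotient is UFD} and \cref{lemma: radical regular sequence}(2)) — yields that the reduction of $\varphi_\alpha(zF_k)$ lies in the radical of the pair. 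One must also check that the three elements of $\rad(zF_i,zF_j)\cap\cF$ remain distinct after reduction; this follows because pairwise distinct forms in $\cF$ have reductions differing by more than a power of $y$, using the linear independence clause of \cref{proposition: gcd after projection}(2) for the non-$\K[z]$ forms and the convention that $z \in \cF$ maps to $y$.

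For part (2), the extra hypothesis is that every top-degree residual form $P$ (with $zP \in \cF \cap R_{d+1}$) lies in $(V)$. Since the quotient map kills $V_\alpha$ and sends $V$ into $\K[y]$, we get $\varphi_\alpha(P) \in (y)$, so $\varphi_\alpha(zP) = \varphi_\alpha(z)\varphi_\alpha(P) \in (y^2)$; more precisely $\varphi_\alpha(zP)$ is divisible by $y^2$, and after extracting the full power of $y$ the residual form has degree at most $d-1$. For residual forms of degree $\leq d$ already, $\varphi_\alpha$ does not raise degree, so every element of $\varphi_\alpha(\cF)_\red$ has degree at most $d$, i.e. residual part of degree at most $d-1$. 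Combined with part (1), this shows $\varphi_\alpha(\cF)_\red$ is a $\rsgufd{d-1}{y}{R[y]/I_\alpha}$ configuration. The only subtlety is ensuring the SG condition still produces three distinct elements in each radical — this is inherited from part (1) since reduction and the degree drop do not collapse distinct elements (again by the gcd/linear-independence statements).

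Part (3) is the quantitative lifting step and is where I expect the real bookkeeping. Given $\cF$ with $zF_i \in R_{\leq d+1}$, so $\cF \subset R_{\leq d+1}$, and a bound $\dim\Kspan{\varphi_\alpha(\cF)_\red} \leq D$, I want a bound on $\dim\Kspan{\cF}$. The plan is: first relate $\Kspan{\varphi_\alpha(\cF)}$ to $\Kspan{\varphi_\alpha(\cF)_\red}$ — each $\varphi_\alpha(zF_i)$ equals $y^{k_i+1}$ times its reduction, and $y \in \K[z]_\alpha$-image, so $\dim\Kspan{\varphi_\alpha(\cF)} \leq (\text{something like } (d+2)) \cdot (D+1)$, the $+1$ and the degree factor accounting for the powers of $y$ multiplying reductions of bounded degree (this is the content of the remark following the reduction definition, $|\cF_\red| = |\cF| - |\cF\cap\K[z]| + 1$, upgraded to a span bound). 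Then apply \cref{proposition: lifting general quotient} with the vector space $V$ of dimension $n+1$ (basis $z, G_1,\dots,G_n$) and the $t$ generators $H_1,\dots,H_t$ of $U$: this gives $\dim\Kspan{\cF} \leq d'^2(1+d')^{2(n+1)+2} \cdot D' \cdot \Pi\deg(H_i) \cdot \deg(z)\Pi\deg(G_j)$ where $d' = d+1$ is the degree bound on $\cF$ and $D' = (d+2)(D+1)$ is the intermediate span bound; since $\deg(z)=1$ this matches the claimed $(d+1)^3(d+2)^{2n+2}(D+1)\Pi\deg(H_i)\Pi\deg(G_j)$ up to rearranging the polynomial-in-$d$ prefactor. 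The final "in particular" inequality follows by crudely bounding $\Pi_{i=1}^t \deg(H_i) \leq e^t = e^{\dim(U)}$ (each $H_i$ has degree $\leq e$, and $t \leq \dim(U)$), $\Pi_{j=1}^n \deg(G_j) \leq e^n$, and absorbing $e^n$ together with the polynomial prefactor $(d+1)^3(d+2)^{2n+2}$ into $(d+3)^{3n+6}$ — here one checks $(d+1)^3(d+2)^{2n+2}(d+2) \cdot e^n \leq (d+3)^{3n+6}$ when $d \leq e$, since $e \leq d+3$ is false in general, so actually one should keep $e^{\dim(U)}$ explicit and only absorb the $G_j$-degree product: $(d+1)^3(d+2)^{2n+3} e^n \leq (d+3)^{3n+6}$ using $e \leq$ ... — the cleanest route is to note $n = \dim(V) - 1$ and $\deg(G_j) \leq e \leq \max(d,e)$, then bound everything by replacing $e$-powers coming from the $G_j$'s by $(d+3)$-powers is only valid if $e \leq d+3$, which need not hold; hence I would instead write the bound as $(d+3)^{3\dim(V)+6} e^{\dim(U)} D$ by bounding the $H_i$ product by $e^{\dim(U)}$ and bounding $(d+1)^3(d+2)^{2n+2}(d+2)\prod\deg(G_j) \le (d+3)^{3n+6} \cdot e^{n} / \text{(already counted)}$ — I will need to be careful here, and this arithmetic absorption is the one genuinely fiddly point. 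The main obstacle is thus not conceptual but getting the constants in part (3) to line up with the stated clean form; everything in parts (1) and (2) is a direct application of the gcd lemmas and \cref{lemma: radical regular sequence}.
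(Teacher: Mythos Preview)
Your proposal is correct and follows essentially the same approach as the paper. A few remarks on where the paper streamlines your argument:

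For part (1), you propose using \cref{lemma: radical regular sequence}(1) with a regular-sequence verification to pass from $\varphi_\alpha(zH)\in\rad(\varphi_\alpha(zF),\varphi_\alpha(zG))$ to the reductions. The paper bypasses this: since $\rad(\varphi_\alpha(zF),\varphi_\alpha(zG))=\rad(F',G')$ (the generators differ only by powers of $y$, which does not change the radical) and since some power of $(\varphi_\alpha(zH))_\red$ is a multiple of $\varphi_\alpha(zH)$, one gets $H'\in\rad(F',G')$ directly. For distinctness the paper does exactly what you outline, using \cref{proposition: general quotient general}(3) for the case $H\notin\K[V]$ and the observation $H'=y$ when $H\in\K[V]$.

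For part (3), your plan is precisely the paper's: first bound $\dim\Kspan{\varphi_\alpha(\cF)}$ by splitting into the $\K[y]$-part (spanned by $y,\dots,y^{d+1}$, dimension $\le d+1$) and the rest (each element is $y^rP$ with $r\in[d]$ and $yP\in\varphi_\alpha(\cF)_\red$, so dimension $\le dD$), giving $\dim\Kspan{\varphi_\alpha(\cF)}\le d+1+dD<(d+1)(D+1)$; then apply \cref{proposition: lifting general quotient} with degree bound $d+1$ and $\dim V=n+1$. Your instinct that the final ``in particular'' absorption is the fiddly step is accurate: the paper simply bounds $\prod\deg(H_i)\le e^t$ and absorbs the remaining polynomial-in-$d$ factors together with $\prod\deg(G_j)$ into $(d+3)^{3\dim(V)+6}$, without spelling out the arithmetic; you should not expect this step to be tight, and the exponents in the displayed bounds are not meant to match exactly under careful bookkeeping.
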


\begin{proof}
   1.  Note that $\varphi_\alpha
    (\cF)_\red$ is a finite set of square-free homogeneous elements of degree at most $d+1$ in $R[y]/I_\alpha$. Since $z\in V$,  we have that $\varphi_\alpha(z)=\beta y$ for some non-zero $\beta\in \K$. We know that $V$ is also $B$-lifted strong, since $h_B\circ t_2\geq B$. Since $\cF$ is a finite set, we may assume that for a general $\alpha\in \K^n$, we have $\varphi_\alpha(F)\neq 0$ for any $F\in \cF$, by \cref{proposition: general quotient general}. We may assume that $y\in (\varphi_\alpha(\cF))_\red$, by changing the representative of $\varphi_\alpha(z)$ in the reduction. Now we prove that the defining conditions for a $\rsgufd{d}{y}{R[y]/I_\alpha}$ configuration hold.
    
    (a) 
    For any two elements $F,G\in \cF$ we know that $\gcd(F,G)=z$ and $\varphi_\alpha(z)=\beta y$. Therefore, by \cref{proposition: gcd after projection} we have $\gcd(\varphi_\alpha(F), \varphi_\alpha(G)) = y^k $ for some $k \geq 1$, for a general $\alpha\in \K^n$. In particular, for any $F',G'\in \varphi_\alpha(\cF)_\red$ we have that $\gcd(F',G') = y$.

    (b) For any two distinct $F',G'\in \varphi_\alpha(\cF)_\red$, there exists distinct $F,G\in \cF$ such that $F'=(\varphi_\alpha(F))_\red$ and $G'=(\varphi_\alpha(G))_\red$. Now, there exists $H\in \rad(F,G)\cap \cF$ such that $H\neq F,G$, as $\cF$ is a $\rsgufd{d}{z}{R}$ configuration. Then, $\varphi_\alpha(H)\in \rad(\varphi_\alpha(F),\varphi_\alpha(G))$, since $\varphi_\alpha$ is a ring homomorphism. Now we have $H'=(\varphi_\alpha(H))_\red\in \rad(F',G')\cap \varphi_\alpha(\cF)_\red$. If $F',G'\neq y$, then $F,G\not \in \K[V]$. If $H\in \K[V]$, then we have $H'=y\neq F',G'$.  By \cref{proposition: gcd after projection}, we know that the only common factor of  $\varphi_\alpha(F) ,\varphi_\alpha(H)$ is $y$. If $H\not \in \K[V]$, then we have $H'\neq y$, by \cref{proposition: general quotient general}. Therefore we conclude that  $H'=(\varphi_\alpha(H))_\red\neq (\varphi_\alpha(F))_\red=F'$. Similarly, $H'\neq G'$. Therefore, we have that $\varphi_\alpha(\cF)_\red$ is a $\rsgufd{d}{y}{R[y]/I_\alpha}$ configuration.

    2. Suppose that for all $zP\in \cF\cap R_{d+1}$, we have $P\in (V)=(z,G_2,\cdots,G_n)$. 
    Note that $\varphi_\alpha(G_i)=\alpha_iy^{\deg(G_i)}$ in $R[y]/I_\alpha$. 
    Since $\cF\subset (z)$ by definition, we see that $\varphi_\alpha(\cF)\subset (y^2)$ as $\varphi_\alpha(z)=\beta y$ and $\varphi_\alpha(G_i)\in (y)$ for all $i$. Therefore, we have $\deg(\varphi_\alpha(F)_\red)\leq d$ for any $F\in \cF$. Since $\varphi_\alpha(\cF)_\red$ is a $\rsgufd{d}{y}{R[y]/I_\alpha}$ configuration, by part (1). We conclude that since $\varphi_\alpha(\cF)_\red$ is a $\rsgufd{d-1}{y}{R[y]/I_\alpha}$ configuration.

    3. Recall that $\cF\subset R_{\leq d+1}$ and we have $\cF\subset (z)$. By definition of a $\rsgufd{d}{z}{R}$ configuration, we know that $\cF$ consists of square-free homogeneous elements. For a general $\alpha\in \K^n$, we know that for all $F\in \cF$, the only multiple factor of $\varphi_\alpha(F)$ is of the form $y^r$, by \cref{proposition: gcd after projection}. Note that $\varphi_\alpha(\cF)\cap \K[y]\subset \Kspan{y,y^2,\cdots,y^{d+1}}$. On the other hand, for any $F\in \cF$ such that $\varphi_\alpha(F)\not \in \K[y]$, we have $\varphi_\alpha(F)=y^rP$ where $P$ is square free and $y\not\mid P$. Thus \[\Kspan{\varphi_\alpha(F)\setminus \K[y]}\subset \Kspan{y^rP\mid r\in[d], y\not\mid P, yP\in (\varphi_\alpha(F))_\red}.\]
    Therefore, for a general $\alpha\in \K^n$, we have $\dim(\Kspan{\varphi_\alpha(\cF)})\leq d+1+dD< (d+1)(D+1)$, as $\dim(\Kspan{\varphi_\alpha(\cF)_\red})\leq D$. Hence, by applying \cref{proposition: lifting general quotient} to $\cF\subset R_{\leq d+1}$, we have 
    $$\dim\Kspan{\cF}\leq (d+1)^3(d+2)^{2n+2}(D+1)\cdot\Pi_{i=1}^t\deg(H_i)\cdot\Pi_{j=1}^n\deg(G_j).$$
 \end{proof}

In the proof of \cref{theorem: radical SG theorem over UFD}, we will apply general quotients successively to eventually reduce the degree of the SG-configuration. In \cref{corollary: lifting from composition of quotients}, we generalize \cref{proposition: general quotient preserves SG} to compositions of general quotients and show that we can indeed lift bounds on dimensions of SG-configurations.\\

\emph{Composition of general quotients.} Recall that $1\leq d\leq e\in \N$ and $R=S/(U)$, where $U\subset R_{\leq e}$ is a graded vector space.
Let $z_1,\cdots,z_\ell$ be new variables and $n_0,\cdots, n_{\ell-1}$ be positive integers. Suppose we have a sequence of graded quotients and Sylvester-Gallai configurations defined inductively as follows. 

\begin{enumerate}
    \item Let $R^{(0)}=R$. Let $\cF^{(0)}$ be a $\rsgufd{d}{z_0}{R}$ configuration for some $z_0\in R$.
    \item For some $0\leq k \leq \ell-1$, suppose that we have defined $R^{(k)}$ and a $\rsgufd{d}{z_k}{R^{(k)}}$ configuration $\cF^{(k)}$. We define $\cF^{(k+1)}$ iteratively as follows:
    \begin{enumerate}
        \item Suppose that $W^{(k)}\subset R^{(k)}_{\leq e}$ is a $h_B\circ t_2$-lifted strong graded vector space such that $z_k\in W^{(k)}$ and $\dim(W^{(k)})\leq n_k$.
        \item We define $R^{(k+1)}=R^{(k)}[z_{k+1}]/(W^{(k)}_{\alpha^{(k)}})$ and $\varphi_{\alpha^{(k)}}:R^{(k)}[z_{k+1}]\rightarrow R^{(k+1)}$ as the graded quotient defined by a general $\alpha^{(k)}\in \K^{\dim(W^{(k)})}$, for which the conclusions of \cref{proposition: general quotient preserves SG} hold.
        \item We define $\cF^{(k+1)}=(\varphi_{\alpha^{(k)}})_\red \subset R^{(k+1)}$, which is a $\rsgufd{d}{z_{k+1}}{R^{(k+1)}}$ configuration by \cref{proposition: general quotient preserves SG}.
    \end{enumerate}
\end{enumerate} 

Note that we assume the existence of a vector space $W^{(k)}\subset R^{(k)}$ which is $h_B\circ t_2$-lifted strong. In particular $U\subset S$ must be sufficiently strong to begin with. In the following result, we show that if we have a bound on the dimension of $\cF^{(\ell)}$ for compositions of general quotients as above, then we can lift it to a bound on the dimension of $\cF^{(0)}$.

\begin{corollary}\label{corollary: lifting from composition of quotients}

Let $1\leq \ell\in \N$. With the notations as above, for all $0\leq k \leq \ell-1$, we may write $$R^{(k+1)}=S[z_1,\cdots, z_{k+1}]/(U^{(k)})
,$$ where $U^{(k)}$ is a vector space such that $\dim(U^{(k)}) = \dim(U)+\dim(W^{(0)})+\cdots +\dim(U^{(k)})$. 
Furthermore, suppose there exists $D\in \N$ such that $\dim \Kspan{\varphi_{ \alpha}(\cF^{(\ell)})_{\mathrm{red}}} \leq D$ for all choices of general $\alpha^{(0)},\cdots, \alpha^{(\ell-1)}$ as above. 
Then,
    $$\dim\Kspan{\cF^{(0)}}\leq (d+3)^{6\ell+3(\sum_{k= 0}^{\ell-1}n_k)}\cdot e^{(\ell\dim(U)+\sum_{k=0}^{\ell-2}(\ell-k-1)n_k)}\cdot D.$$
\end{corollary}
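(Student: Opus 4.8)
The plan is to prove \cref{corollary: lifting from composition of quotients} by iterating \cref{proposition: general quotient preserves SG}, in particular its item (3), through the tower of general quotients $R^{(0)} \to R^{(1)} \to \cdots \to R^{(\ell)}$, and by carefully tracking how the ambient vector spaces $U^{(k)}$ grow. First I would establish the presentation $R^{(k+1)} = S[z_1,\cdots,z_{k+1}]/(U^{(k)})$ with $\dim(U^{(k)}) = \dim(U) + \sum_{j=0}^{k}\dim(W^{(j)})$ (note the formula in the statement seems to contain a typo: $\dim(U^{(k)})$ should equal $\dim(U)$ plus the sum of the $\dim(W^{(j)})$ for $j \le k$, since each step adjoins one new variable $z_{k+1}$ and imposes $\dim(W^{(k)})$ new linear-type relations on the generators). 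This is a straightforward induction: $R^{(k+1)} = R^{(k)}[z_{k+1}]/(W^{(k)}_{\alpha^{(k)}})$, and lifting the basis of $W^{(k)}$ to $S[z_1,\cdots,z_k]$ and using \cref{proposition: strong with z} together with the robustness of strong vector spaces (\cref{corollary: robust strong algebra}) shows the relations can be realized as a strong sequence inside $S[z_1,\cdots,z_{k+1}]$, so the new ambient space $U^{(k)}$ has the claimed dimension and is still strong enough for \cref{proposition: general quotient preserves SG} to apply at the next stage.

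Next I would set up the inductive lifting. Let $D_k := \dim\Kspan{\cF^{(k)}}$, so $D_\ell \le D$ by hypothesis and we want to bound $D_0$. Applying \cref{proposition: general quotient preserves SG}(3) to the configuration $\cF^{(k)}$ in $R^{(k)} = S[z_1,\cdots,z_k]/(U^{(k-1)})$ with the strong vector space $W^{(k)}$ of dimension $\le n_k$, and using that $R^{(k)}$ is presented by $\dim(U^{(k-1)}) = \dim(U) + \sum_{j=0}^{k-1}\dim(W^{(j)})$ forms each of degree $\le e$, the second (cruder) bound in that proposition gives
$$D_k \le (d+3)^{3\dim(W^{(k)}) + 6}\, e^{\dim(U^{(k-1)})}\, D_{k+1} \le (d+3)^{3n_k + 6}\, e^{\dim(U) + \sum_{j=0}^{k-1} n_j}\, D_{k+1}.$$
Here I use $\dim(W^{(j)}) \le n_j$ and monotonicity. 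Then I would multiply these inequalities together for $k = 0, 1, \ldots, \ell-1$: the powers of $(d+3)$ accumulate to $(d+3)^{6\ell + 3\sum_{k=0}^{\ell-1} n_k}$, and the powers of $e$ accumulate to $e^{\sum_{k=0}^{\ell-1}(\dim(U) + \sum_{j=0}^{k-1} n_j)} = e^{\ell\dim(U) + \sum_{j=0}^{\ell-2}(\ell - 1 - j)n_j}$, since $n_j$ appears in the exponent sum for each $k$ with $k-1 \ge j$, i.e.\ for $k = j+1, \ldots, \ell-1$, which is $\ell - 1 - j$ times. Combining with $D_\ell \le D$ yields exactly the claimed bound
$$\dim\Kspan{\cF^{(0)}} \le (d+3)^{6\ell + 3(\sum_{k=0}^{\ell-1} n_k)} \cdot e^{\ell\dim(U) + \sum_{k=0}^{\ell-2}(\ell - k - 1)n_k} \cdot D.$$

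The main obstacle, and the place requiring genuine care rather than routine bookkeeping, is verifying that at each stage $k$ the hypotheses of \cref{proposition: general quotient preserves SG} are actually met for $R^{(k)}$, in particular that $W^{(k)}$ is $(h_B \circ t_2)$-lifted strong \emph{with respect to the accumulated ambient space} $U^{(k-1)}$, not just with respect to $U$. This is where the strength functions must have been chosen large enough from the start: one needs that adjoining the relations $W^{(0)}, \ldots, W^{(k-1)}$ (realized inside the polynomial ring via \cref{proposition: strong with z}) preserves enough strength that the lifted-strength condition on $W^{(k)}$ still holds. The statement of the corollary implicitly assumes the existence of such $W^{(k)}$ — it says ``we assume the existence of a vector space $W^{(k)} \subset R^{(k)}$ which is $h_B \circ t_2$-lifted strong'' — so in the proof I would simply invoke this hypothesis together with \cref{proposition: graded quotient is UFD} to guarantee each $R^{(k)}$ is a Cohen-Macaulay UFD and each $\cF^{(k)}$ is a bona fide $\rsgufd{d}{z_k}{R^{(k)}}$ configuration (by \cref{proposition: general quotient preserves SG}(1)), making the chain of applications legitimate. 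The remaining work — the exponent arithmetic above — is then purely mechanical, and the only subtlety is getting the indices of the double sum right, which the computation of ``$n_j$ contributes $\ell - 1 - j$ times'' handles.
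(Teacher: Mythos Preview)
Your proposal is correct and follows essentially the same approach as the paper: both arguments iterate \cref{proposition: general quotient preserves SG}(3) through the tower of quotients, with the paper phrasing it as an induction on $\ell$ (peeling off the last quotient and invoking the inductive hypothesis) while you chain the per-step inequalities $D_k \le (d+3)^{3n_k+6}\,e^{\dim(U)+\sum_{j<k}n_j}\,D_{k+1}$ and multiply, arriving at the identical exponent arithmetic. Your observation about the typo in the dimension formula for $U^{(k)}$ is also correct.
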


\begin{proof}
   Given $\ba=(\alpha^{(0)},\cdots,\alpha^{(\ell-1)})$ and $W^{(0)}_{\alpha^{(0)}},\cdots, W^{(\ell-1)}_{\alpha^{(\ell -1)}}$ as above, we let $\wt{W}^{(k)}_{\alpha^{(k)}}\subset S[z_1,\cdots, z_{\ell-1}]$ be the vector space spanned by homogeneous lifts of a homogeneous basis of $W^{(k)}_{\alpha^{(k)}}\subset R^{(k)}$, for all $0\leq k \leq \ell-1$. We define  $U^{(k)}_{\ba}=U+\wt{W}^{(0)}_{\alpha^{(0)}}+\cdots + \wt{W}^{(k)}_{\alpha^{(k)}}$ in $S[z_1,\cdots, z_{k+1}]$. Note that we have $R^{(k+1)}=S[z_1,\cdots, z_{k+1}]/(U^{(k)}_{\ba})$ for all $0\leq k \leq \ell -1$. By \cref{proposition: strong with z}, we know that $\dim(W^{(k)}=\dim(W^{(k)}_{\alpha^{(k)}})$ for all $k$. Now we have $\dim(W^{(k)}_{\alpha^{(k)}})=\dim(\wt{W}^{(k)}_{\alpha^{(k)}})$ for all $k$. Hence we have \[\dim(U^{(k)}_{\ba})= \dim(U)+\dim(W^{(0)})+\cdots +\dim(U^{(k)})\leq \dim(U)+n_0+\cdots+n_k\] for all $k$. Therefore we may take $U^{(k)}=U^{(k)}_{\ba}$. 
   
   Now we prove the dimension bound by induction on $\ell$. If $\ell=1$, then by applying \cref{proposition: general quotient preserves SG} to the general quotient $\varphi_{\alpha^{(1)}}$, we obtain that 
   $$\dim\Kspan{\cF^{(0)}}\leq (d+3)^{6+3n_0}\cdot e^{\dim(U)}\cdot D.$$
   Therefore we may assume that $\ell \geq 2$ and that the statement holds for all $k\leq \ell-1$. By applying \cref{proposition: general quotient preserves SG} to $R^{(\ell-1)}=S[z_1,\cdots,z_{\ell-1}]/(U^{(\ell-2)}_{\ba})$ and the general quotient $\varphi_{\alpha^{(\ell-1)}}$ we obtain that $$\dim\Kspan{\cF^{(\ell-1)}}\leq D':= (d+3)^{6+3n_{\ell-1}}\cdot e^{\dim(U)+\sum_{k=0}^{\ell-2}n_k}\cdot D.$$
   Since the statement holds for $k=\ell-1$, we obtain by induction that 
    $$\dim\Kspan{\cF^{(0)}}\leq (d+3)^{6(\ell-1)+3(\sum_{k= 0}^{\ell-2}n_k)}\cdot e^{((\ell-1)\dim(U)+\sum_{k=0}^{\ell-3}(\ell-k-2)n_k)}\cdot D'.$$
    Therefore we have
     $$\dim\Kspan{\cF^{(0)}}\leq (d+3)^{6\ell+3(\sum_{k= 0}^{\ell-1}n_k)}\cdot e^{(\ell\dim(U)+\sum_{k=0}^{\ell-2}(\ell-k-1)n_k)}\cdot D.$$
\end{proof}

\section{Main Result}\label{sec:main}


In this section, we prove the radical Sylvester-Gallai theorem for forms of degree at most $d$. 
For the rest of this section, we will denote our $\rsgufd{d}{z}{R}$ configuration $\cF=\{z,zF_1,\cdots,zF_m\}$, where $m := |\cF|-1$ and use the convention from \cref{remark: grading of SG configs} to denote for any $0 \leq j \leq d$:
$$\cF_j := \{F \in R_j \mid zF \in \cF\} \text{ and } m_j := |\cF_j|.$$
Also, for any $j > 1$ we denote by $\cF_{< j} = \bigcup_{i=1}^{j-1} \cF_i$. Moreover, for an element $F \in \cF_d$, we define the following sets:
\begin{align*}
    \Fspan(F) &:= \{ G \in \cF_d \ \mid \ |\Kspan{F, G} \cap \cF_d | \geq 3 \} \\
    \Grad(F) &:= \{ P \in \cF_{< d} \ \mid \ z, P, F \text{ regular sequence in $R$, and } (F, P) = \rad(F, P)  \} \\
\end{align*}
Note that for $F,G\in \cF_d$, we have $|\Kspan{F, G} \cap \cF_d | \geq 3 $ iff $| \Kspan{zF, zG} \cap \cF | \geq 3 $.

\begin{proposition}\label{proposition: auxiliary sets} With the definitions as above, the following holds.

\begin{enumerate}
    \item Any two elements $P,Q\in \bigcup_{i=1}^d\cF_i$ do not have any common factors in $R$.
    \item Suppose $F\in \cF_d$ and $P\in \Grad(F)$. Then there exists $G\in \cF_d$ such that $G\in (F,P)$.
\end{enumerate}

\end{proposition}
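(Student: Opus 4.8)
The plan is to prove the two items of \cref{proposition: auxiliary sets} directly from the definitions, with item (1) being a quick consequence of the gcd condition in a $\rsgufd{d}{z}{R}$ configuration, and item (2) following by a radical/regular-sequence manipulation using \cref{lemma: radical regular sequence}.

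For item (1), I would argue as follows. Recall that $\cF = \{z, zF_1, \ldots, zF_m\}$ and $\cF_i = \{F \in R_i \mid zF \in \cF\}$. Take $P, Q \in \bigcup_{i=1}^d \cF_i$ with $P \neq Q$; then $zP, zQ \in \cF$, and since $\cF$ is a $\rsgufd{d}{z}{R}$ configuration, the defining condition gives $\gcd(zP, zQ) = 1$ whenever $zP \neq zQ$ — wait, more precisely the definition states $\gcd(F_i, F_j) = 1$ for the residual forms; here if $zP = zF_i$ and $zQ = zF_j$ with $i \neq j$ then $\gcd(F_i, F_j) = \gcd(P, Q) = 1$. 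Since $1$ has no non-unit factors, $P$ and $Q$ share no common factor in $R$. The only subtlety is the degenerate case $i = j$, which cannot occur since $P \neq Q$, or the case where $P$ or $Q$ equals $1$ (i.e. the $\cF_0$ element), but item (1) only ranges over $i \in [1,d]$, so all elements have positive degree and this is fine.

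For item (2), suppose $F \in \cF_d$ and $P \in \Grad(F)$. By definition of $\Grad(F)$, we have that $(F,P) = \rad(F,P)$ and $P \in \cF_{<d}$, so $zP \in \cF$. Also $zF \in \cF$. Since $\cF$ is a $\rsgufd{d}{z}{R}$ configuration, $|\rad(zF, zP) \cap \cF| \geq 3$, so there is $zH \in \cF$ with $zH \in \rad(zF, zP)$ and $zH \neq zF, zP$, i.e. $H \in \cF_{\leq d}$, $H \neq F, P$. Now I want to conclude $H \in \rad(F,P) = (F,P)$. By \cref{lemma: radical regular sequence}(1), since $z, F, P$ is a regular sequence (this is part of the definition of $\Grad(F)$, so $z, P, F$ regular, hence also $z, F, P$ regular as they are pairwise coprime forms and $R$ is a UFD — or one reorders using \cref{lemma: radical regular sequence}(2)), we get $zH \in \rad(zF, zP)$ iff $H \in \rad(F, P) = (F, P)$. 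Since $\deg H \leq d$ and $H \in (F,P)$ with $\deg F = d$ and $\deg P < d$, writing $H = AF + BP$ with $A, B$ homogeneous forces $A \in \K$ (possibly $A = 0$), so $H = AF + BP$; if $A = 0$ then $H$ is a multiple of $P$, contradicting item (1) applied to $H, P \in \cF_{\leq d}$ unless $H = \lambda P$, which is excluded since $H \neq P$ and elements of $\cF$ are pairwise non-associate — hence $A \neq 0$ and $\deg H = d$, so $H \in \cF_d$, and $H \in (F,P)$ is the desired $G$.

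The main obstacle, such as it is, is bookkeeping around the ordering of the regular sequence $z, F, P$ versus $z, P, F$ and confirming that $\rad(zF,zP) = \rad(zF) \cap \rad(zF, zP)$-type reductions go through; but \cref{lemma: radical regular sequence} is stated in exactly the form needed (it handles $FG \in \rad(FP_1, FP_2) \iff G \in \rad(P_1,P_2)$ for a regular sequence $F, P_1, P_2$), so this is routine. The one genuinely careful point is ruling out $G = F$: one must use that the third element $zH$ guaranteed by the SG condition is distinct from both $zF$ and $zP$, and that pairwise non-associateness together with item (1) forces the coefficient of $F$ in the expression $H = AF + BP$ to be a nonzero scalar, placing $H$ in $\cF_d$ and distinct from $F$. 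I would present item (1) first since item (2) invokes it.
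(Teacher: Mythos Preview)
Your proof is correct and follows essentially the same approach as the paper's: item (1) is immediate from the gcd condition in the definition of a $\rsgufd{d}{z}{R}$ configuration, and item (2) uses the SG condition to produce a third element $zH \in \rad(zF,zP)$, then \cref{lemma: radical regular sequence} together with the radicality of $(F,P)$ to get $H \in (F,P)$, and finally item (1) to force $\deg H = d$.

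Two minor remarks. First, your concern about reordering the regular sequence is unnecessary: since $\rad(zF,zP)=\rad(zP,zF)$, \cref{lemma: radical regular sequence} applies directly with the sequence $z,P,F$ as given in the definition of $\Grad(F)$. Second, the proposition as literally stated does not require $G\neq F$ (indeed $G=F$ trivially satisfies it), so your final paragraph about ruling out $G=F$ proves more than is asked; the paper's proof also produces such a $G\neq F$, and this stronger conclusion is what is actually used downstream in \cref{lemma: Grad set small}.
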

\begin{proof}
    1. Recall that $\cF$ is a set of non-associate square-free homogeneous elements and $\gcd(F_i,F_j)=z$ for any $i\neq j$. Therefore, if $P,Q\in \bigcup_{i=1}^d\cF_i$, i.e. $zP,zQ\in \cF$, then $P,Q$ have no common factors.
    
    2. We have $zF,zP\in \cF$. By definition of $\rsgufd{d}{z}{R}$ configurations, there exists $zG\in \cF$ such that $zG\in \rad(zF,zP)$ and $zG\neq zF, zP$. As $P\in \Grad(F)$, we have that $z,P,F$ is a regular sequence. By \cref{lemma: radical regular sequence} we have that $G\in \rad(F,P)$. As $P\in \Grad(F)$, we have $G\in (F,P)=\rad(F,P)$. Since $\deg(P)<d=\deg(F)$ by assumption, we see that $\deg(G)=d$, as $G,P$ do not have any common factors by part (1).
\end{proof}

We define the set $\Frad(F)$ as follows.

\begin{align*}
\Frad(F) &:= \{ G \in \cF_d \ \mid \ \exists \ P \in \Grad(F) \st G \in (F, P) \}
\end{align*}

By \cref{proposition: auxiliary sets}, we know that if $\Grad(F)$ is non-empty, then $\Frad(F)$ is also non-empty.

\subsection{Preparatory lemmata}\label{subsection: combinatorial preparations}

We fix positive integers $d,e, \eta\in \N$ with $1\leq d\leq e$ and $\eta\geq 3$. 
We let $B:\N^e\rightarrow \N^e$ denote an ascending function such that $B_i(\delta)\geq A(\eta,i)+3(\sum_i\delta_i-1)$ for all $i\in [e]$. Here $A(\eta,i)$ is the function given by \cref{theorem: Ananyan-Hochster}.
For any $n\in \N$, let $t_n:\N^e\rightarrow \N^e$ be the translation by $n$ function defined as $t_n(\delta)=(\delta_1+n,\cdots,\delta_d+n)$. 
Let $C_B:\N^e\rightarrow \N^e$ and $h_B:\N^e\rightarrow \N^e$ be the functions defined in \cref{proposition: constructing AH algebras}.
We first prove the linear case of our main theorem below.

\begin{proposition}\label{proposition: linear case main theorem} 
Let $U \subset S_{\leq e}$ be a graded vector space. 
Suppose $U$ is $B\circ t_2$-strong.
Consider the graded unique factorization domain $R := S/(U)$. 
Let $z \in R_1$ and $\mathcal{F}=\{z, za_1,\cdots,za_m\} \subset R_{\leq 2}$ be a  $\rsgufd{1}{z}{R}$ configuration. 
Then $\dim(\Kspan{\cF})\leq 26$.
\end{proposition}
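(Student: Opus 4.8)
The statement is the degree-$1$ base case of the main induction, so the goal is to reduce a $\rsgufd{1}{z}{R}$ configuration $\cF = \{z, za_1, \ldots, za_m\}$ in the strong quotient $R = S/(U)$ to an honest \emph{linear} Sylvester-Gallai situation in a \emph{polynomial ring}, where \cref{theorem: fractional linear SG} and \cref{proposition: one delta amir} apply. The first step is to pass to a strong polynomial subalgebra containing all the relevant linear forms. Since $U$ is $B\circ t_2$-strong, the zero subspace of $R$ is $(h_{2B}\circ t_k)$-lifted strong for the appropriate $k$, so by \cref{corollary: practical robustness } there is a $B$-lifted strong vector space $W \subset R_1$ with $z, a_1, \ldots, a_m \in \K[W]$; in fact, since all the $a_i$ and $z$ are linear (infinitely strong), I expect $W$ can simply be taken to be $\Kspan{z, a_1, \ldots, a_m}$ itself once one checks this is lifted strong — and the relevant point is that $A := \K[W] \subset R$ is isomorphic to a polynomial ring and the extension $A \subset R$ has all the good properties of \cref{proposition: radicals and prime sequences} (primes extend to primes, radicals transfer, etc.).

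\textbf{Key step: transferring the SG condition.} Having placed everything in the polynomial subalgebra $A$, I need to show that $\{z, a_1, \ldots, a_m\}$, viewed in $A$, forms a $\dlsg{c}{\delta}$ configuration (a robust linear SG configuration over a small vector space) for suitable constants, after which \cref{prop:lincdeltasg} gives the bound. The SG condition on $\cF$ says that for every $i \neq j$ there is $k \neq i, j$ with $za_k \in \rad(za_i, za_j)$. Using \cref{lemma: radical regular sequence}(1) (since $z, a_i, a_j$ is a regular sequence in $R$ by \cref{proposition: properties of quotient strength}(5), as $U$ is strong enough), this gives $a_k \in \rad(a_i, a_j)$ in $R$, hence in $A$ by \cref{proposition: radicals and prime sequences}(3). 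Now $a_i, a_j$ are linearly independent linear forms in the polynomial ring $A$, so $(a_i, a_j)$ is prime and $\rad(a_i, a_j) = (a_i, a_j)$; since $a_k$ is also linear and lies in $(a_i,a_j)$, we get $a_k \in \Kspan{a_i, a_j}$. Thus in $A$ the set $\{a_1, \ldots, a_m\}$ satisfies the genuine linear SG condition, and together with $z$ it is a linear SG configuration in a polynomial ring; applying \cref{theorem: fractional linear SG} (with $\delta = 1$) bounds $\dim_\K \Kspan{z, a_1, \ldots, a_m} \leq 3$. This is much better than $26$, so the constant $26$ is presumably slack left for safety or for a slightly lossier argument (e.g. one that does not bother to check regular-sequence-ness for every triple and instead throws away a bounded number of exceptional $a_i$ and invokes \cref{prop:lincdeltasg} or \cref{linear sg simple} with some small $c$ and $\delta$ close to $1$).

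\textbf{The main obstacle.} The delicate point is verifying that $z, a_i, a_j$ is actually a regular sequence in $R$ for (almost) every pair — equivalently, that $z, a_i, a_j$ are pairwise linearly independent \emph{as elements of $R$} and that no degenerate collapse occurs in the quotient. Pairwise linear independence of the $a_i$ and of each $a_i$ with $z$ follows from the $\gcd$ condition in \cref{definition: radical SG general} (if $a_i, a_j$ were proportional in $R$ they would have a common factor), so by \cref{proposition: properties of quotient strength}(5) any three linear forms in $R$ form a prime (hence regular) sequence provided $U$ is $(B\circ t_3)$-strong — which is why the hypothesis is $B \circ t_2$ and one has a little room, or one simply strengthens the hypothesis constant inside $B$. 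Once regularity is in hand, \cref{lemma: radical regular sequence}(1) and the transfer principle do the rest essentially mechanically. A secondary, purely bookkeeping obstacle is making sure the lifted-strength hypotheses needed to invoke \cref{corollary: practical robustness } and the transfer lemmas are implied by ``$U$ is $B\circ t_2$-strong''; this is routine given the monotonicity of the functions $h_B, C_B$ but must be stated carefully. I do not expect any genuinely hard mathematics here — the content is entirely in correctly chaining the strength bookkeeping with the already-established transfer principle and the classical robust linear SG theorem.
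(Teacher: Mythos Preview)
Your proposal correctly identifies the main obstacle but resolves it incorrectly. You assert that pairwise linear independence of $z, a_i, a_j$ together with \cref{proposition: properties of quotient strength}(5) makes $z, a_i, a_j$ a prime (hence regular) sequence, and that any failure is just a strength-bookkeeping matter fixable by strengthening $U$. This is false: no amount of strength on $U$ can rule out the purely linear relation $z \in \Kspan{a_i, a_j}$, and when this holds the triple is not a regular sequence. \cref{proposition: properties of quotient strength}(5) gives a prime sequence only for \emph{linearly independent} linear forms; pairwise independence of a triple does not imply this. Relatedly, your claimed bound of $3$ via the full linear SG theorem cannot work, because the SG condition in \cref{definition: radical SG general} is imposed only on pairs $(za_i, za_j)$ with $i,j \in [m]$ and says nothing about pairs $(z, za_i)$; so $\cG = \{z, a_1, \ldots, a_m\}$ is not a priori a $\vdlsg{1}$ configuration.

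The paper's proof dispenses with any subalgebra and works directly in $R$. The hypothesis that $U$ is $B\circ t_2$-strong is used, via \cref{proposition: properties of quotient strength}(5) with $n=2$, to conclude that $(a_i, a_j)$ is \emph{prime in $R$} for every pair. Then one splits into two cases: either $z$ is a non-zerodivisor modulo $(a_i, a_j)$, so $z, a_i, a_j$ is regular and \cref{lemma: radical regular sequence} plus primality give $a_k \in \Kspan{a_i, a_j}$; or $z \in (a_i, a_j)$, which by degree forces $z \in \Kspan{a_i, a_j}$, and then $z$ itself is the third element of $\cG$ in that span. Either way $|\Kspan{a_i, a_j} \cap \cG| \geq 3$ for all pairs $a_i, a_j$, so by \cref{linear sg simple} (taking $z$ as the single excluded form) $\cG$ is a $(1,\tfrac{1}{3})$-linear SG configuration, and \cref{proposition: one delta amir} gives $\dim\Kspan{\cG} \leq 25$, hence $\dim\Kspan{\cF} \leq 26$. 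Note also that your subalgebra detour is not only unnecessary but infeasible under the stated hypothesis: putting all of $z, a_1, \ldots, a_m$ into a $B$-lifted-strong $W$ would require $U$ to be roughly $B \circ t_{m+1}$-strong, and $m$ is unbounded.
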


\begin{proof} We may assume that $m\geq 4$. Consider the $\K$-vector space $R_1$ and the set $\cG=\{z,a_1,\cdots,a_m\}$. We will show that $\cG $ is a $(1,1/3)$-linear Sylvester-Gallai configuration in $R_1$. Indeed, let $a_i,a_j\in \cG$ where $i\neq j$. Since $U$ is $B\circ t_2$-strong in $S$, we know that $(a_i,a_j)$ is a prime ideal in $R$ by \cref{proposition: properties of quotient strength}. If $z,a_i,a_j$ is not a regular sequence in $R$, then $z$ is a zero-divisor in $R/(a_i,a_j)$ and hence we have $z\in (a_i,a_j)$. As $R$ is a graded $\K$-algebra, we conclude that $z\in \Kspan{a_i,a_j}$. Now suppose that $z,a_i,a_j$ is a regular sequence in $R$. Then, there exists $za_k\in \rad(za_i,za_j)$ for some $k\neq i,j$. Hence, we have $a_k\in \rad(a_i,a_j)$ by \cref{lemma: radical regular sequence}. Since $(a_i,a_j)$ is prime in $R$ and $a_i,a_j,a_k\in R_1$, we must have that $a_k\in \Kspan{a_i,a_j}$. Note that $|\cG|=m+1$ and $m-1\geq \frac{2}{3}m$ as $m\geq 4$. Hence, for all $a_i$ there exist at least  $\frac{2}{3}m$ number of $a_j\in\cG$ such that $|\Kspan{a_i,a_j}\cap \cG|\geq 3$. Therefore, $\cG$ is a $(1,\frac{1}{3})$-linear Sylvester-Gallai configuration in $R_1$ by \cref{linear sg simple}. Therefore $\dim(\Kspan{\cG})\leq 25$ by \cref{prop:lincdeltasg}. Therefore, $\dim(\Kspan{za_1,\cdots,za_m})\leq 25$ and $\dim(\Kspan{\cF})\leq 26$.
\end{proof}

\begin{lemma}\label{lemma: Grad set small}
    Let $U\subset S_{\leq e}$ be a $B$-strong graded vector space in $S$. 
    Consider the graded UFD $R=S/(U)$. 
    Let $\cF$ be a $\rsgufd{d}{z}{R}$ configuration.
    For any $F \in \cF_d$, we have \[|\Grad(F)| \leq 2^{d^2} \cdot |\Frad(F)| \leq 2^{d^2} \cdot m_d.\]
\end{lemma}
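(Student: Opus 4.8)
The goal is to bound $|\Grad(F)|$ in terms of $|\Frad(F)|$. Recall $\Grad(F)$ consists of lower-degree forms $P$ such that $z,P,F$ is a regular sequence and $(F,P)$ is radical, while $\Frad(F)$ collects the degree-$d$ forms $G$ that lie in some such $(F,P)$. By \cref{proposition: auxiliary sets}, for each $P\in\Grad(F)$ there is a corresponding $G=G(P)\in\Frad(F)$ with $G\in(F,P)$; in fact one should pick $G(P)$ so that $G(P)\in\rad(F,P)=(F,P)$ and $G(P)\neq F$. This gives a map $\Phi:\Grad(F)\to\Frad(F)$. The plan is to show that each fiber of $\Phi$ has size at most $2^{d^2}$, which immediately yields $|\Grad(F)|\le 2^{d^2}|\Frad(F)|$, and then $|\Frad(F)|\le m_d$ is clear since $\Frad(F)\subseteq\cF_d$.

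First I would fix $F\in\cF_d$ and $G\in\Frad(F)$, and consider the fiber $\Phi^{-1}(G)=\{P_1,\dots,P_k\}\subseteq\Grad(F)$ — all the lower-degree forms $P_i$ with $G\in(F,P_i)=\rad(F,P_i)$. The key claim to establish is that $k\le 2^{d^2}$. The idea is to apply \cref{lemma: d^2 minimal primes} with the roles: the ``$P$'' of that lemma is our $F$, the ``$Q$'' is our $G$, and the ``$P_i$'' are the forms $P_1,\dots,P_k$. To invoke that lemma I need: (i) $U$ is sufficiently strong — specifically $U$ should be $h_{2B}\circ t_2$-strong; here I'd note that \cref{lemma: Grad set small} is being applied in a context where $B$ was already chosen large enough (the statement says $U$ is $B$-strong, and the functions feeding into the main theorem will be set so that this $B$ dominates $h_{2B'}\circ t_2$ for the relevant $B'$ — so this is really about how the ambient inductive constants are chosen); (ii) $\gcd(F,G)=\gcd(F,P_i)=\gcd(G,P_i)=1$, which follows from \cref{proposition: auxiliary sets}(1) since $F,G\in\cF_d$ and $P_i\in\cF_{<d}$, all coming from $\cF$; (iii) $G\in\rad(F,P_i)$ for all $i$, which is exactly the defining property of the fiber. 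The lemma then concludes that if $k\ge 2^{d^2}$ there exist distinct $i,j$ with $\rad(F,P_i)=\rad(F,P_j)$.

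The remaining step is to derive a contradiction from $\rad(F,P_i)=\rad(F,P_j)$ for distinct $P_i,P_j$ — i.e. to show that the forms in a single fiber actually have \emph{distinct} radicals $\rad(F,P_i)$, so that the pigeonhole bound of $2^{d^2}$ on the number of possible radical ideals directly bounds $k$. Here I would argue: if $\rad(F,P_i)=\rad(F,P_j)=:\fa$, then since $z,P_i,F$ is a regular sequence (and similarly for $P_j$), $(F,P_i)$ and $(F,P_j)$ are both unmixed of height $2$, hence $(F,P_i)=\fa=(F,P_j)$ (using that $(F,P_i)$ is radical by $P_i\in\Grad(F)$). Then $P_j\in(F,P_i)$, so $P_j=AF+BP_i$ for forms $A,B$; comparing degrees and using that $P_i,P_j$ have degree $<d=\deg F$ forces $A=0$ (the term $AF$ would have degree $\ge d$), so $P_j=BP_i$ with $\deg B\ge 0$; but $P_i,P_j\in\cF_{<d}$ are non-associate and square-free with no common factor by \cref{proposition: auxiliary sets}(1) unless $\deg B=0$, in which case $P_j$ is a scalar multiple of $P_i$, contradicting that they are distinct elements of $\cF$. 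Hence distinct $P_i$ in a fiber give distinct radical ideals.

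The main obstacle I anticipate is bookkeeping the strength hypotheses: \cref{lemma: d^2 minimal primes} requires $U$ to be $h_{2B}\circ t_2$-strong for a specific $B$ satisfying the Ananyan–Hochster bound, whereas \cref{lemma: Grad set small} only assumes $U$ is ``$B$-strong''. The cleanest resolution is to state the lemma with the understanding (consistent with the surrounding section's conventions, where $B$ is fixed to satisfy $B_i(\delta)\ge A(\eta,i)+3(\sum_i\delta_i-1)$) that the ambient $B$ used throughout \cref{subsection: combinatorial preparations} is taken large enough to dominate the composite strength functions invoked by \cref{lemma: d^2 minimal primes}; then the application is immediate. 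A second, minor, point of care is ensuring the map $\Phi$ is well-defined — i.e. that one can \emph{choose} $G(P)\neq F$; this is guaranteed by \cref{proposition: auxiliary sets}(2) together with the SG condition, which produces $zG\in\rad(zF,zP)$ with $zG\neq zF,zP$, and the degree count shows $\deg G=d$. Once these two points are pinned down, the fiber argument and the pigeonhole bound finish the proof.
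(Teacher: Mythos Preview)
Your proposal is correct and follows essentially the same approach as the paper's proof: define the map $\Phi:\Grad(F)\to\Frad(F)$ via \cref{proposition: auxiliary sets}, bound each fiber by $2^{d^2}$ using \cref{lemma: d^2 minimal primes}, and derive the contradiction from $(F,P_i)=(F,P_j)$ by the degree comparison $\deg P_i<d=\deg F$ together with $\gcd(P_i,P_j)=1$. Your flagging of the strength-hypothesis bookkeeping (that \cref{lemma: d^2 minimal primes} needs $U$ to be $h_{2B}\circ t_2$-strong rather than merely $B$-strong) is apt; the paper handles this the same way you suggest, namely by relying on the ambient conventions of the section so that when the lemma is actually invoked (inside \cref{lemma: fraction of Fd}) $U$ is strong enough.
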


\begin{proof}
    Since $\Frad(F)\subset\cF_d$, it is enough to prove the first inequality. We may assume that $\Grad(F)$ is non-empty, otherwise the inequality is trivially satisfied. Let $\Grad(F) = \{P_1, \ldots, P_k\}$. 
    Note that $\deg(P_i) < d = \deg(F)$ by definition. 
    If $(F,P_i)=(F,P_j)$ for some $i,j\in[k]$, then $P_i\in (F,P_j)$. 
    As $\deg(P_i)<d$, we must have $P_i\in (P_j)$. 
    Hence we must have $i=j$, since $P_i,P_j$ do not have any common factors for $i\neq j$, by \cref{proposition: auxiliary sets}. 
    Therefore, we have that $(F,P_i)\neq (F,P_j)$ for all $i\neq j$.
    
    By \cref{proposition: auxiliary sets}, we know that for every $P_i\in \Grad(F)$, there exists at least one $G\in \Frad(F)$ such that $G\in (F,P_i)$. Let us fix such a choice of $G_i\in \Frad(F)$ for each $P_i\in \Grad(F)$. If $|\Grad(F)| > 2^{d^2} \cdot |\Frad(F)| $, then by the pigeonhole principle, there exist at least $2^{d^2}$ number of $P_i$'s in $\Grad(F)$ such that the corresponding $G_i$ is the same element, say $G\in \Frad(F)$. Therefore, there exist at least $2^{d^2}$ number of $P_i$'s in $\Grad(F)$ such that $G\in (F,P_i)= \rad(F,P_i)$. By \cref{lemma: d^2 minimal primes}, there exist at least two distinct $P_i,P_j\in \Grad(F)$ such that $\rad(F,P_i)=\rad(F,P_j)$. Hence there exist $P_i,P_j\in \Grad(F)$ such that $(F,P_i)=(F,P_j)$ for some $i\neq j$. This is a contradiction as $(F,P_i)\neq (F,P_j)$ for all $i\neq j$.
\end{proof}

\begin{lemma}\label{lemma: algebra contains many then ideal contains Fd}
    Let $U\subset S_{\leq e}$ be a graded vector space and let  $R=S/(U)$. 
    Let $z\in R_1$ and $W\subset 
    R_{\leq e}$ be a graded vector space. 
    Suppose that $W$ is $h_{2B}\circ t_1$-lifted strong with respect to $U$ and we have $z\in W$. Let $\cF$ be a $\rsgufd{d}{z}{R}$ configuration.
    
    \begin{enumerate}
        \item Suppose $|\cF_d|=m_d>6d^3$. If $|\cF_d\cap \K[W]|\geq \frac{2m_d}{3}$, then $\cF_d\subset(W)$.
        \item If $|\cF\cap \K[W]|> (d+1)m_d+d^2(2d-1)$, then $\cF_d\subset (W)$. 
    \end{enumerate}
    
\end{lemma}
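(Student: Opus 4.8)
\textbf{Proof plan for \cref{lemma: algebra contains many then ideal contains Fd}.}

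The plan is to prove both parts by contradiction: assume there is a ``bad'' form $F \in \cF_d \setminus (W)$ and show that too many forms of $\cF$ are forced into $\K[W]$ (or into $(W)$), contradicting the hypotheses. The key engine is the interplay between the SG condition, the reducedness criterion via discriminants, and the counting bounds from \cref{sec:algebraic-geometry}. First I would set up the dichotomy for pairs: given $F \in \cF_d \setminus (W)$ and $G \in \cF_d \cap \K[W]$, consider the pair $zF, zG \in \cF$. By \cref{proposition: auxiliary sets}(1), $F, G$ have no common factor, and since $W$ is $h_{2B}\circ t_1$-lifted strong with $z \in W$ (so in particular $\K[W]$ behaves like a polynomial ring by \cref{proposition: 2B implies B lifted strong}), \cref{proposition: three forms regular sequence} gives that $z, G, F$ is a regular sequence in $R$ (here using $G \in \K[W]$, $F \notin (W)$). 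Then either $(F,G)$ is radical — in which case the SG condition produces $zH \in \cF$ with $H \in \rad(F,G) = (F,G)$, $H \neq F,G$, and since $\deg H \le d$ and $H$ has no common factor with $G$, one gets $H \in \cF_d \cap \Kspan{F,G}$ — or $(F,G)$ is not radical.

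For part (2): apply \cref{corollary: best corollary discriminant} (after replacing $W$ by the slightly larger $B$-lifted strong $W'$ containing $F$ furnished by \cref{corollary: practical robustness }, and noting that $F \notin (W)$ translates to $P = F$ not lying in the ideal of the ``$f$-variables'') to conclude that there are at most $d^2(2d-1)$ forms $G \in \cF_d \cap \K[W]$ for which $(F,G)$ fails to be radical. So all but $\le d^2(2d-1)$ of the forms $G \in \cF_d \cap \K[W]$ produce a third form in $\cF_d \cap \Kspan{F,G}$; but each such $G$ contributes a distinct element of $\cF$ (the argument that $\Kspan{F,G_i} \cap \Kspan{F,G_j} = \Kspan{F}$ when $F \notin \Kspan{G_i,G_j} \supseteq$ nothing problematic, as in the proof of \cref{linear sg simple}), forcing $|\cF| \ge |\cF_d \cap \K[W]| + (\text{number of good } G) - d^2(2d-1)$, roughly. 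Combined with $|\cF| \le (d+1)m_d + \dots$ (counting that each degree-$e'$ stratum of $\cF$ has $\le m_{e'}$ elements and there are $\le d$ strata) one derives $|\cF \cap \K[W]| \le (d+1)m_d + d^2(2d-1)$, the contrapositive of the claim. I would need to be careful bookkeeping exactly which forms land in $\K[W]$ versus merely being counted, and that $H \in \cF_d \cap \Kspan{F,G}$ with $G \in \K[W]$ and $F \notin (W)$ still need not put $H$ in $\K[W]$ — so the counting must be organized so that the ``produced'' third forms are genuinely new elements of $\cF_d$, not elements of $\K[W]$.

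For part (1): here $m_d > 6d^3$ and $|\cF_d \cap \K[W]| \ge \tfrac{2m_d}{3}$, and we want the stronger conclusion $\cF_d \subset (W)$. Again suppose $F \in \cF_d \setminus (W)$. For each $G \in \cF_d \cap \K[W]$, by the dichotomy above, $(F,G)$ is radical for all but $\le d^2(2d-1)$ choices of $G$; for each such ``good'' $G$ we get $P = $ the third form lying in $\Grad(F)$? — more precisely, the SG condition gives $H \in (F,G) \cap \cF_d$, and reinterpreting: $G$ itself (of degree $d < $ nothing, since $\deg G = d$) — wait, here I want $P \in \cF_{<d}$, so instead I should look at the pair $F$ and the \emph{lower-degree} forms. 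Let me reorganize: the right move is that for good $G$, $(F,G)$ radical and $z,F,G$ regular means... if $\deg G = d$ we get $H \in \cF_d \cap \Kspan{F,G}$, so $\cF_d \cap \K[W]$ minus a bounded set is a robust linear SG configuration over $W$ inside $\cF_d$, hence by \cref{prop:lincdeltasg} spans a bounded-dimensional space; but that is not yet $\cF_d \subset (W)$. So the actual mechanism must be: show $F$ has $|\Grad(F)| \ge \tfrac{2m_d}{3} - d^2(2d-1) - (\text{bad } G)$ large, then combine with \cref{lemma: Grad set small} which bounds $|\Grad(F)| \le 2^{d^2} m_d$ — no, that's the wrong direction too. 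The main obstacle, and the step I expect to be genuinely delicate, is exactly this: converting ``many good pairs'' into the ideal-membership conclusion $\cF_d \subset (W)$ rather than merely a dimension bound. I anticipate the correct route uses \cref{lemma: finiteness of radicals} or \cref{lemma: d^2 minimal primes} to force $\rad(F,P_i) = \rad(F,P_j)$ among the many $P_i$ producing third forms, then a contradiction with the structure of $\cF$; I would work that coincidence-of-radicals argument out carefully, as it is where the constant $6d^3$ (versus the weaker bound in part (2)) must come from, presumably via $6d^3 = 3 \cdot 2 \cdot d^3 \gtrsim 3(d^2(2d-1) + \text{small})$ so that a strict majority survives.
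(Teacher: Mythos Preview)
Your setup is correct: assume $F \in \cF_d \setminus (W)$, enlarge $W$ to a $B$-lifted strong $V \ni F$ via \cref{corollary: practical robustness }, use \cref{proposition: three forms regular sequence} to get $z, P_i, F$ regular for each $P_i$ with $zP_i \in \cF \cap \K[W]$, and apply \cref{corollary: best corollary discriminant} to bound the non-radical pairs by $d^2(2d-1)$. From there the SG condition gives, for each ``good'' $P_i$, a third form $G_i \in (F,P_i) \cap \cF_d$. That much matches the paper. But your execution of both parts is inverted relative to what actually works.

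\textbf{Part 1.} Here the $P_i$'s lie in $\cF_d \cap \K[W]$, so they all have degree exactly $d$. Hence $G_i \in (F,P_i)_d = \Kspan{F,P_i}$, and since $F \notin (W)$ while $P_i \in \K[W]$, one has $G_i \in \cF_d \setminus (W)$. Now the distinctness argument you sketched (for part 2!) applies here: if $G_i = G_j$ for distinct good $i,j$, then $\Kspan{F,P_i} = \Kspan{F,P_j}$, forcing a linear relation among $F, P_i, P_j$ that contradicts $F \notin \K[W]$ and $\gcd(P_i,P_j)=1$. So the $G_i$ are distinct elements of $\cF_d \setminus (W)$, of which there are at most $m_d/3$. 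But the number of good $P_i$'s is at least $\tfrac{2m_d}{3} - d^2(2d-1) > \tfrac{m_d}{3}$ once $m_d > 6d^3$. That is the whole argument --- no $\Grad$, no coincidence of radicals, no \cref{lemma: d^2 minimal primes}.

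\textbf{Part 2.} Here $\cF \cap \K[W]$ contains forms of all degrees, so the $P_i$'s may have degree $< d$ and the linear-span/distinctness argument breaks down: $G_i$ lies in $(F,P_i)_d = \Kspan{F} + P_i R_{d-\deg P_i}$, which is large. The paper's argument runs in the \emph{opposite} direction: since there are more than $(d+1)m_d$ good $P_i$'s each producing some $G_i \in \cF_d$, pigeonhole gives a single $G \in \cF_d$ with $G \in (F,P_i)$ for at least $d+1$ indices $i$. Writing $G = \alpha_i F + P_i A_i$, if two $\alpha_i$'s differ then $F \in (P_i,P_j) \subset (W)$, contradiction; so all $\alpha_i$ agree, whence $G - \alpha_1 F$ is divisible by each of $P_1,\dots,P_{d+1}$ (pairwise coprime), impossible in degree $d$. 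Your proposed counting ``$|\cF| \le (d+1)m_d + \dots$'' does not lead anywhere, because $|\cF|$ is not bounded in terms of $m_d$ and the produced $G_i$'s are not new elements of $\cF$.
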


\begin{proof} Let $\cF \cap \K[W] =: \{z, zP_1, \ldots, zP_k \}$. By \cref{remark: lifted strong implies UFD}, we know that any homogeneous basis of $U$ is an $\cR_\eta$-sequence in $S$ and that $R$ is a graded UFD. Furthermore, by \cref{proposition: 2B implies B lifted strong}, we know that any homogeneous basis of $W$ is an $\cR_\eta$-sequence (and also a prime sequence) in $R$. Note that, by \cref{lem:factor-ufd-algebra}, we must have $P_i\in \K[W]$ for all $i\in [k]$. Suppose there is $F \in \cF_d \setminus (W)$. 
Let $V\subset R$ be the $B$-lifted strong graded vector space constructed by applying \cref{corollary: practical robustness } to $W$ and $F$. In particular, $W\subset V$ and $F\in \K[V]$. By \cref{proposition: 2B implies B lifted strong}, we know that any homogeneous basis of $V$  is an $\cR_\eta$-sequence (and also a prime sequence) in $R$. In particular, $W$ and $V$ have homogeneous bases containing $z$, that form a prime sequences in $R$. Note that $z,P_i,F$ is a regular sequence in $R$, by \cref{proposition: three forms regular sequence}. By applying \cref{corollary: best corollary discriminant} to $F$ and the algebras $\K[W]\subset \K[V]$, we have that $(F, P_i)$ is not radical for at most $d^2(2d-1)$ elements in $\{P_1,\cdots, P_k\}$. Now we prove (1) and (2) below.

    1. Suppose $\cF_d\cap \K[W]\geq \frac{2m_d}{3}$. Without loss of generality, we may assume that $P_1,\cdots,P_r\in \cF_d\cap \K[W]$ for some $r\geq \frac{2m_d}{3}$. Furthermore, we can say that $(F, P_i)$ is radical for all $ i \leq r-d^2(2d-1)$.
    Now, the SG condition implies that there is $zG_i \in \cF \setminus \{zP_i, zF\}$ such that $zG_i \in \rad(zP_i, zF)$. Let us fix such a choice of $G_i$ for each $P_i$ where $i\in [r]$. Since $z,P_i,F$ is a regular sequence in $R$, we have that $G_i\in \rad(F,P_i)$ by \cref{lemma: radical regular sequence}.
    As $(F, P_i)$ is radical for $1\leq i \leq r-d^2(2d-1)$, we have $G_i\in (F,P_i)$ for all such $i$.  Since $\deg(F) = d$, we must have $\deg(G_i)=d$, as $G_i,P_i$ do not have common factors by \cref{proposition: auxiliary sets}. In particular, $G_i\in \Kspan{F,P_i}\cap \cF_d$. Note that since $F\not \in (W)$ and $P_i\in \K[W]$, we must have that $G_i\in\cF_d\setminus (W)$. If $G_i=G_j$ for some $i\neq j\in [r]$, then we have $G_i=G_j\in (F,P_i)\cap (F,P_j)$. Since all of $G_i,G_j,F,P_i,P_j\in R_d$, we must have that $\Kspan{F,P_i}=\Kspan{F,P_j}$. This is contradiction since $P_i,P_j$ do not have common factors and $F\not \in \K[W]$, whereas $P_i,P_j\in \K[W]$. 
    Therefore, $G_i\neq G_j$ for $1\leq i\neq j\leq r-d^2(2d-1)$. Note that $G_i\in \cF_d\setminus (W)$ for all $i\in [r]$ and $|\cF_d\setminus (W)|\leq |\cF_d\setminus \K[W]|\leq \frac{m_d}{3}$. Now we have \[r-d^2(2d-1)>\frac{2m_d}{3}-2d^3>\frac{m_d}{3},\] as $m_d>6d^3$. Hence, by the pigeonhole principle, there exists $1\leq i\neq j\leq r-d^2(2d-1)$ such that $G_i=G_j$. This is a contradiction.  Therefore, there does not exist $F\in \cF_d\setminus (W)$ and we have $\cF_d\subset (W)$.

 2. Recall that $(\bigcup_{i=1}^d\cF_i)\cap \K[W]=\{P_1,\cdots,P_k\}$ and $k> (d+1)m_d+d^2(2d-1)$. By the argument in the first paragraph, without loss of generality, we can say that $(F, P_i)$ is radical for all $ i \leq k-d^2(2d-1)$.
   Now, the SG condition implies that there is $zG_i \in \cF \setminus \{zP_i, zF\}$ such that $zG_i \in \rad(zP_i, zF)$. Since $z,P_i,F$ is a regular sequence in $R$, we have that $G_i\in \rad(F,P_i)$ by \cref{lemma: radical regular sequence}.
    As $(F, P_i)$ is radical for $1\leq i \leq k-d^2(2d-1)$, we have $G_i\in (F,P_i)$ for all such $i$.

    Since $\deg(F) = d$, we must have $\deg(G_i)=d$, as $G_i,P_i$ do not have common factors by \cref{proposition: auxiliary sets}. In other words, $G_i \in \cF_d \setminus \{F,P_i\}$ for $1\leq i \leq k-d^2(2d-1)$. Note that $k-d^2(2d-1)>(d+1)m_d$ and $|\cF_d|=m_d$ by assumption. Therefore, by the pigeonhole principle, there exists $G\in \cF_d$ such that $G\in \rad(F,P_i)$ for at least $(d+1)$ number of $P_i$'s. Without loss of generality, suppose that $G\in \rad(F,P_i)$ for $1\leq i \leq (d+1)$. Since $G, F\in \cF_d$ and $\rad(F,P_i)=(F,P_i)$, we may write $G=\alpha_i F+ P_i A_i$ for some $\alpha_i\in \K$ and $A_i\in R_{d-\deg(P_i)}$. If $\alpha_i\neq \alpha_j$ for some $i\neq j\leq (d+1)$, then we have that $(\alpha_i-\alpha_j)F=P_jA_j-P_iA_i\in (P_i,P_j)\subset (W)$. This is a contradiction, since $F\not \in (W)$. Therefore, we  must have that $\alpha_1=\alpha_i$ for all $1\leq i \leq (d+1)$. Therefore, we have $G-\alpha_1F=P_iA_i$ for all $1\leq i \leq (d+1)$. Now $\deg(G-\alpha_1F)=d$ and $P_i,P_j$ do not have any common factors for $i\neq j$, by \cref{proposition: auxiliary sets}. Therefore, we have that $\Pi_{i=1}^{d+1}P_i$ divides $G-\alpha_1F$, which is a contradiction. Hence we must have that $F\in (W)$ and hence $\cF_d\subset (W)$.
\end{proof}

\begin{lemma}\label{lemma: fraction of Fd}
   Let $\mu \in (0, 1)$ and $k\geq \lceil \frac{37d^3}{\mu}\rceil$. 
   Let $U\subset S_{\leq e}$ be a graded vector space with dimension sequence $\delta_U\in \N^e$. 
   Let $\wt{B}:\N^e\rightarrow \N^e$ be such that $\wt{B}\geq h_{2B}\circ t_1 $. Suppose $U$ is $h_{2\wt{B}}\circ t_{k}$-strong in $S$. Consider the graded unique factorization domain  $R=S/(U)$. Let $\cF$ be a $\rsgufd{d}{z}{R}$ configuration. Suppose that $2d^3<m_d$.  Then there exists a $\wt{B}$-lifted strong vector space $W\subset R_{\leq d}$ such that the following holds:
   
   \begin{enumerate}  
        \item $\dim(W_i)\leq C_{2\wt{B},i}(t_{k}(\delta_U))$
        \item $z\in W$
        \item $|\cF_d \cap \K[W]| > 2d^3$.
        \item $|\cF_d \cap (W)| \geq (1-\mu) \cdot m_d$.
        \item Furthermore, if $2d^3<m_d<\varepsilon m$ for some $\varepsilon\in (0,1)$ such that $\varepsilon <\frac{1}{d^32^{d^2+3}}$, then 
        $$|\cF_{<d} \cap (W)| \geq (1-\varepsilon d^32^{d^2+2}) \cdot m\geq \frac{m}{2}.$$ 
   \end{enumerate}
\end{lemma}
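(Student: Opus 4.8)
The plan is to pick a small random-free combinatorial core of $\cF_d$, build a strong algebra around it, and then use the earlier radical-ideal counting to force most of $\cF_d$ into the ideal generated by that algebra. First I would select $r = O(d^3)$ elements $F_1,\dots,F_r \in \cF_d$ together with the linear form $z$; concretely $r$ should be taken around $6d^3$ so that the bounds $2d^3 < m_d$ and the constant in item (5) work out. Applying \cref{corollary: practical robustness } (with the hypothesis $U$ being $h_{2\wt B}\circ t_k$-strong, which is exactly what is needed to absorb $k \geq r$ new forms) I obtain a $\wt B$-lifted strong graded vector space $W \subset R_{\leq d}$ with $z, F_1,\dots,F_r \in \K[W]$, $\dim(W_i) \leq C_{2\wt B, i}(t_k(\delta_U))$, which gives items (1), (2) immediately, and item (3) since $r > 2d^3$. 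The nontrivial content is items (4) and (5): showing $\K[W]$ already captures a $(1-\mu)$-fraction of $\cF_d$ in its \emph{ideal}, and a large fraction of the lower-degree forms as well.

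For item (4), I would argue by contradiction: suppose more than $\mu m_d$ elements of $\cF_d$ lie outside $(W)$. For each such bad form $F \in \cF_d \setminus (W)$ and each $F_j$ in our core ($j \in [r]$), the SG condition gives $zG \in \rad(zF, zF_j) \cap \cF$ with $G \neq F, F_j$; since $z, F_j, F$ is a regular sequence (via \cref{proposition: three forms regular sequence}) we get $G \in \rad(F, F_j)$. By \cref{corollary: best corollary discriminant} applied to $F$ and the chain $\K[W] \subset \K[V]$ (where $V$ is a $B$-lifted strong space containing $F$ and $W$, built again via \cref{corollary: practical robustness }), the pair $(F, F_j)$ fails to be radical for at most $d^2(2d-1)$ of the $F_j$'s; for the remaining ones $G \in (F, F_j)$, and since $\deg F = d$ and $F_j, G$ share no factor (\cref{proposition: auxiliary sets}) we get $\deg G = d$, hence $G \in \Kspan{F, F_j} \cap \cF_d$ and $G \notin (W)$ because $F \notin (W)$ but $F_j \in \K[W]$. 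Now a double-counting argument: each bad $F$ produces $\geq r - d^2(2d-1) = \Omega(d^3)$ distinct degree-$d$ forms $G$ (distinctness because $\Kspan{F, F_j} \cap \Kspan{F, F_{j'}} = \Kspan{F}$ for $j \neq j'$, the same argument as in \cref{lemma: algebra contains many then ideal contains Fd} item (1) and \cref{proposition: one delta amir}), each such $G$ is itself a bad form, and each $G$ can arise from $F$ only if $G \in \Kspan{F, F_j}$ for some $j$ — so the bipartite incidence between bad $F$'s and bad $G$'s has high min-degree on one side, forcing $|\cF_d \setminus (W)|$ to be both small (it contains all these $G$) and large simultaneously, a contradiction once $k/\mu$ is chosen large enough (this is where the $\lceil 37 d^3/\mu \rceil$ threshold comes from). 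I would phrase this cleanly by showing the bad forms of $\cF_d$ form a $\vdlsg{\delta}$-type configuration with $\delta$ bounded below, but a direct pigeonhole on the $G_i$'s as in \cref{lemma: algebra contains many then ideal contains Fd} is cleaner.

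For item (5), assuming additionally $m_d < \varepsilon m$ with $\varepsilon$ small, I would bound the forms in $\cF_{<d}$ lying outside $(W)$. Each such $P \in \cF_{<d} \setminus (W)$ with $zP \in \cF$: pairing $P$ with a good form $F \in \cF_d \cap (W)$ (of which there are $\geq (1-\mu)m_d \geq m_d/2$ by item (4)) and using the SG condition, $z, P, F$ regular and $(F,P)$ radical for all but $d^2(2d-1)$ choices of $F$ (\cref{corollary: best corollary discriminant}), we get $G \in (F, P) \cap \cF_d$; so $P \in \Grad(F)$ for such $F$, and then $|\Grad(F)| \leq 2^{d^2} m_d$ by \cref{lemma: Grad set small}. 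Summing $\sum_F |\Grad(F)| \leq m_d \cdot 2^{d^2} m_d < \varepsilon 2^{d^2} m \cdot m_d$, while each bad $P$ is counted by $\geq m_d/2 - d^2(2d-1) \geq m_d/4$ of the sets $\Grad(F)$ (using $m_d > 2d^3$), gives $|\cF_{<d} \setminus (W)| \leq \varepsilon 2^{d^2+2} m$, hence $|\cF_{<d} \cap (W)| \geq (1 - \varepsilon d^3 2^{d^2+2}) m$, and the condition $\varepsilon < (d^3 2^{d^2+3})^{-1}$ makes this $\geq m/2$.

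\textbf{Main obstacle.} The delicate point is item (4): making the double-counting tight enough that a core of size only $O(d^3)$ — independent of $m$, of $\dim(U)$, and of everything else — suffices to bound the \emph{fraction} of bad forms by an arbitrary $\mu$, with the $k$-vs-$\mu$ tradeoff absorbed into the choice $k \geq \lceil 37d^3/\mu\rceil$. This forces careful bookkeeping of how many degree-$d$ forms each bad $F$ spawns versus how many bad forms can exist, and the use of \cref{corollary: best corollary discriminant}'s $d^2(2d-1)$ bound is exactly what keeps the spawning count $\Omega(d^3)$; the second subtlety is ensuring the auxiliary strong spaces $V$ (one per bad form $F$) are all realizable inside $R$, which is why $U$ must be pre-strengthened by $h_{2\wt B}\circ t_k$ rather than merely $h_{2\wt B}\circ t_1$.
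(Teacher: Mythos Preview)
Your approach to items (1)–(3) is fine, and your plan for item (5) is close in spirit to the paper's, though you should pair the bad $P$ with forms in $\cF_d \cap \K[W]$ (supplied by item (3)) rather than $\cF_d \cap (W)$: the discriminant bound \cref{corollary: best corollary discriminant} requires the varying forms to lie in the subalgebra $\K[W]$, not merely in the ideal $(W)$, and likewise regularity via \cref{proposition: three forms regular sequence} needs the pair $z,F$ inside a strong algebra with $P$ outside its ideal.

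The real gap is item (4). Your pigeonhole does not close: for a single bad $F \notin (W)$ you correctly produce $\geq r - d^2(2d-1)$ distinct bad forms $G_j \in \Kspan{F,F_j}$, but this only gives a \emph{lower} bound $|\cF_d \setminus (W)| \geq r - d^2(2d-1)$. Double-counting pairs $(F,G)$ is symmetric and yields nothing sharper, since each bad $G$ can arise from up to $r$ different bad $F$'s (one per core element $F_j$, via $F \in \Kspan{G,F_j}$). Nothing in this count involves $\mu$, so an \emph{arbitrary} core of size $O(d^3)$ cannot produce the $(1-\mu)m_d$ bound.

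The paper's missing idea is a case split on whether $\cF_d$ is already a robust linear SG configuration. Set $r = 2d^3$ and $\nu = \mu/(2r)$. If $\cF_d$ is an $(r,\nu)$-linear SG configuration, then \cref{prop:lincdeltasg} gives $\dim\Kspan{\cF_d} < 36d^3/\mu < k$, and one absorbs \emph{all} of $\cF_d$ into $\K[W]$ via \cref{corollary: practical robustness } (this is the only place the threshold $k \geq \lceil 37d^3/\mu\rceil$ is used). Otherwise, by \cref{linear sg simple} there exist $r+1$ elements $F_1,\dots,F_{r+1}\in\cF_d$ with $|\Fspan(F_i)| < 2\nu m_d$, and \emph{these specific} elements form the core. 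Setting $\cB = \cF_d \setminus \bigcup_i \Fspan(F_i)$, one has $|\cB| \geq (1-\mu)m_d$, and for any $G \in \cB$ the fact that $G \notin \Fspan(F_i)$ forces, for every $i$, either $z,F_i,G$ non-regular or $(F_i,G)$ non-radical. If $G \notin (W)$, the first is ruled out by \cref{proposition: three forms regular sequence}, and the second can hold for at most $d^2(2d-1) < r+1$ indices by \cref{corollary: best corollary discriminant} --- a contradiction. Hence $\cB \subset (W)$. The crucial point is that choosing core elements with \emph{small} $\Fspan$ is what ties the fraction to $\mu$; an arbitrary core cannot.
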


\begin{proof} 
    First we will show that we can construct a $\wt{B}$-lifted strong vector space $W\subset R$ satisfying properties (1)-(4) above. 
    Note that the vector space $(0)\subset R$ is $h_{2\wt{B}}\circ t_{k}$-lifted strong. 
    Let $r := 2d^3$ and $\nu := \dfrac{\mu}{2r}$. 
    Suppose $\cF_d$ is an $\dlsg{r}{\nu}$ configuration. 
    By \cref{prop:lincdeltasg},  
    $$\dim(\Kspan{\cF_d}) \leq 2d^3+1+ \frac{8}{\nu}=2d^3+1+\frac{32d^3}{\mu}< \frac{36d^3}{\mu}< k$$ 
    Therefore we may apply \cref{corollary: practical robustness } to the vector space $(0)\subset R$ and a basis of $\Kspan{\cF_d,z}$ to obtain a $\wt{B}$-lifted strong vector space $W\subset R_{\leq d}$ such that $z\in W$, $\cF_d\subset \K[W]$ and $ \dim(W_i)\leq C_{2\wt{B},i}(t_{k}(\delta_U))$. We also have $|\cF_d\cap \K[W]|=|\cF_d|>2d^3$.
    This proves (1)-(4) in this case.
    
    Hence we may assume that $\cF_d$ is not a $\dlsg{r}{\nu}$ configuration. Consider the set $$\cH=\{F\in \cF_d\mid |\Fspan(F)|< 2\nu m_d\}.$$
    If $|\cH|\leq r$, then $\cF$ is an  $\dlsg{r}{\nu}$ configuration, by \cref{linear sg simple}. 
    Therefore we must have that $|\cH|> r$. In particular, there exist $r+1$ elements in $\cF_d$, say $F_1, \ldots, F_{r+1}$, such that $|\Fspan(F_i)| < 2\nu m_d$ for all $i\in [r+1]$.
    Let $\cB := \cF_d \setminus \bigcup_{i=1}^{r+1} \Fspan(F_i)$. Suppose for some $G\in \cB$ and $i\in [r+1]$, we have that $z,F_i,G$ is a regular sequence and $(F_i,G)$ is a radical ideal. Now there exists $zH\in \cF$ such that $zH\in \rad(zF_i,zG)$. Since $z,F_i,G$ is a regular sequence by assumption, we have that $H\in \rad(F_i,G)=(F_i,G)$. Hence we have $H\in \cF_d$ as $F_i,G\in R_d$ and thus $H\in \Kspan{F_i,G}$. 
    This is a contradiction since $G\not \in \Fspan(F_i)$. 
    Therefore, for any $G \in \cB$ and $i \in [r+1]$, we must have that $z, F_i, G$ is not regular or $(F_i, G)$ not radical.

    Since $r+2=2d^3+2< k$, we may apply \cref{corollary: practical robustness } to the vector space $(0)\subset R$ and the set $z,F_1,\cdots, F_{r+1}$ to obtain a $\wt{B}$-lifted strong vector space $W\subset R_{\leq d}$ such that $z,F_1,\cdots,F_{r+1}\in \K[W]$ and $ \dim(W_i)\leq C_{2\wt{B},i}(t_{k}(\delta_U))$. 
    Note that $|\cF_d\cap \K[W]| \geq r+1 > 2d^3$. We will show that $\cB\subset (W)$. 
    
    Fix $G\in \cB$. Since $\wt{B}\geq h_{2B}\circ t_1$, we may apply \cref{corollary: practical robustness } to $W$ and $G$ to obtain a $B$-lifted strong vector space $V\subset R$ such that $G\in \K[V]$ and $W\subset V$.  Therefore we may apply \cref{corollary: best corollary discriminant} to $G$ and $W\subset V$ in $R$. Thus, if $G\not \in (W)$, then  $(G,F_i)$ is not radical for at most $d^2(2d-1)$ number of elements in $\K[W]$ by \cref{corollary: best corollary discriminant}. Since $r>d^2(2d-1)$, we must have that $z,G,F_i$ is not a regular sequence for some $i\in [r+1]$. This is a contradiction since $z,F_i,G$ is a regular sequence in $R$, by \cref{proposition: three forms regular sequence}. Therefore we must have that $G\in (W)$ and hence $\cB\subset (W)$. Now we have \[|\cB| \geq m_d - \sum_{i=1}^r |\Fspan(F_i)| \geq m_d - r \cdot 2\nu m_d = (1-\mu) \cdot m_d.\]
    
    Now we prove property (5). 
    Let $W\subset R$ be a $\wt{B}$-lifted strong vector space satisfying (1)-(4) above. 
    In particular, $|\cF_d\cap \K[W]| > 2d^3 = r$. 
    We may assume that $F_1,\cdots,F_r\in \cF_d\cap \K[W]$.  
    By \cref{lemma: Grad set small}, we know that $|\Grad(F)|\leq 2^{d^2}m_d$ for all $F\in \cF_d$. 
    Let $\cG=\cF_{<d}\setminus \bigcup_{j=1}^r\Grad(F_i)$. 
    Note that 
    \[|\cG|\geq m-m_d-r2^{d^2}m_d\geq m-(1+2d^32^{d^2})\varepsilon m\geq (1-\varepsilon d^32^{d^2+2}) \cdot m\geq \frac{m}{2},\]
    since $|\Grad(F)|\leq 2^{d^2}m_d$ and $m_d<\varepsilon m$ and $\varepsilon d^32^{d^2+2}< \frac{1}{2}$. 
    We will show that $\cG\subset (W)$. 
    Let $G\in \cB$. 
    Since $\wt{B}\geq h_{2B}\circ t_1$, we may apply \cref{corollary: practical robustness } to $W$ and $G$ to obtain a $B$-lifted strong vector space $V\subset R$ such that $G\in \K[V]$ and $W\subset V$.  
    Therefore we may apply \cref{corollary: best corollary discriminant} to $G$ and $W\subset V$ in $R$. 
    Thus, if $G\not \in (W)$, then  $(G,F_i)$ is not radical for at most $d^2(2d-1)$ number of elements in $\K[W]$ by \cref{corollary: best corollary discriminant}. 
    Since $r>d^2(2d-1)$, we must have that $z,G,F_i$ is not a regular sequence for some $i\in [r]$. 
    This is a contradiction since $z,F_i,G$ is a regular sequence in $R$, by \cref{proposition: three forms regular sequence}. 
    Therefore we must have that $G\in (W)$ and hence $\cG\subset (W)$. 
\end{proof}

\subsection{Proof of the main theorem}

We are now ready to prove our main technical theorem on radical Sylvester-Gallai configurations.

\subsubsection{Definitions of \texorpdfstring{$\Lambda_{d,e}$}{Lambdade},  \texorpdfstring{$\lambda_{d,e}$}{lambdade} and auxiliary functions}\label{subsection: definition of functions} 

We now introduce some auxiliary functions and notation that we will use in \cref{theorem: radical SG theorem over UFD}.

We fix positive integers $e\geq 2$ and $\eta\geq 3$. We let $d\in \N$ such that $1\leq d \leq e$. 
For all positive integers $d\leq e$, we will inductively define ascending functions $ \Lambda_d : \N^e \to \N^e$ and  $\lambda_d:\N^e\rightarrow \N$, which shall appear in the statement of \cref{theorem: radical SG theorem over UFD}. The functions $\Lambda_d,\lambda_e$ will depend on both $d$ and $e$. We will omit $e$ from the notaion for simplicity.
For any $n\in \N$, let $t_a:\N^e\rightarrow \N^e$ be the translation by $n$ function defined as $t_a(\delta)=(\delta_1+a,\cdots,\delta_e+a)$. 
We fix an ascending function $B: \N^e\rightarrow \N^e$ such that $B_i(\delta)\geq A(\eta,i)+3(\sum_i\delta_i-1)$ for all $\delta\in \N^e$ and $i\in [e]$, where $A(\eta,i)$ are the functions in \cref{theorem: Ananyan-Hochster}. 
Let $C_B:\N^e\rightarrow \N^e$ and $h_B:\N^e\rightarrow \N^e$ be the functions defined in \cref{proposition: constructing AH algebras}. \\

\emph{Potential function $\phi$.}
Let $U\subset S_{\leq e}$ be a graded vector space in $S$ such that $R=S/(U)$ is an UFD. 
Let $z\in R_1$ and $\cF$ be a $\rsgufd{d}{z}{R}$ configuration. We define the following potential function:

\[\phi(\cF)=\sum_{j=1}^dj\cdot|\cF_j|.\]

Note that for any $\rsgufd{d}{z}{R}$ configuration $\cF$ we have $0<\phi(\cF)\leq d|\cF|$. 
The potential function is a weighted measure of the size of the configuration $\cF$. 
In the proof of \cref{theorem: radical SG theorem over UFD}, we will show that the potential of a Sylvester-Gallai configuration drops under suitably chosen general quotients (defined in \cref{sec:general-quotients}). 
We will use this to reduce a $\rsgufd{d}{z}{R}$ configuration to a $\rsgufd{d-1}{y}{R'}$ configuration where $R'$ is a general quotient of $R$.\\

\emph{The parameters $\varepsilon_d$ and $k_d$.} 
Let $d\geq 2$. 
We choose and fix a rational number $\varepsilon_d\in (0,1)$ such that $\varepsilon_d< \frac{1}{d^32^{d^2+3}}$. 
Let $k_d=\left\lceil \frac{(5d)^3}{\varepsilon_d} \right\rceil $. 
Note that $\varepsilon_d$ and $k_d$ satisfy the numerical hypotheses of \cref{lemma: fraction of Fd} with $\mu= \frac{\varepsilon_d}{3}$ as $k_d\geq \lceil \frac{37d^3}{\mu}\rceil$. 
We have $k_d>6d^3$ by definition, and we note that $(1-8d(d+2)\varepsilon_d)\geq \frac{1}{4}$. 
We will use these facts in the proof of \cref{theorem: radical SG theorem over UFD}.\\

\emph{The functions $ \Lambda_d$ and $\Lambda^{(\ell)}_d$.} 
For all positive integers $ 2 \leq d \leq e$, we will inductively define ascending functions $ \Lambda^{(\ell)}_d: \N^e \to \N^e$ for $0\leq \ell\leq 8d+1$, and use the functions $\Lambda^{(\ell)}_d$ to define the ascending function $\Lambda_d : \N^e \to \N^e$.  
Recall that $B:\N^e\rightarrow \N^e$ is an ascending function such that $B_i(\delta)\geq A(\eta,i)+3(\sum_i\delta_i-1)$ for all $i\in [e]$, as defined above.
 
For $d=1$, we let $\Lambda_1=2B\circ t_2$. For $d\geq 2$, suppose that $\Lambda_{d-1}$ has been defined. 
We define $\Lambda_d$ and $\Lambda^{(\ell)}_d:\N^e\rightarrow \N^e$ for $0\leq \ell\leq 8d+1$ as follows. 
 
Let $\Lambda^{(8d+1)}_d=h_{2\Lambda_{d-1}}\circ t_{k_d}$. 
Suppose $\Lambda^{(\ell+1)}_d$ has been defined for some  $\ell \in [0, 8d]$. 
We inductively define $\Lambda^{(\ell)}_d=h_{2\Lambda^{(\ell+1)}_d}\circ t_{k_d}$. 
Given the definition of $\Lambda^{(\ell)}_d$ for all $\ell \in [0, 8d]$, we define $\Lambda_d=h_{2\Lambda^{(0)}_d}\circ t_{k_d}$. 
 
Recall that $h_G\geq G$ and $h_G\circ t_k\geq h_G$  for any function $G:\N^e\rightarrow\N^e$. 
Since $\Lambda_1=h_{2B}\circ t_2$, we have $\Lambda^{(\ell)}_d\geq h_{2B}\circ t_2$ for all $d\leq e$ and $\ell\in [0,8d+1]$ by induction. 
In the proof of \cref{theorem: radical SG theorem over UFD}, we will use the $\Lambda^{(\ell)}_d$ functions to lower bound the strengths of iteratively constructed vector spaces.\\

\emph{The functions $\Gamma^{(\ell)}_d$ and $\Gamma_{d-1}$.} 
For all $0\leq \ell \leq 8d+1$, we will define the functions $\Gamma^{(\ell)}_d:\N\rightarrow \N$ and $\Gamma_{d-1}:\N\rightarrow \N$ inductively as follows:

\[\Gamma^{(\ell)}_d(n) = 
\max\left\{\sum_{i=1}^e C_{2\Lambda^{(\ell)}_{d},i}(t_{k_d}(\delta)) \ \ | \ \  \delta\in \N^e \text{ with } \sum_{i=1}^e\delta_i\leq n\right\}\]
\[\Gamma_{d-1}(n)=
\max\left\{\sum_{i=1}^e C_{2\Lambda_{d-1},i}(t_{k_d}(\delta)) \ \ | \ \ \delta\in \N^e \text{ with } \sum_{i=1}^e\delta_i\leq n \right\}.\]

Note that $\Gamma^{(\ell)}_d$ and $\Gamma_{d-1}$ are non-decreasing functions. 
In the proof of \cref{theorem: radical SG theorem over UFD}, we will use the functions $\Gamma^{(\ell)}_d$ and $\Gamma_{d-1}$ to bound the dimensions of iteratively constructed vector spaces. \\

\emph{The functions $\lambda_d$.} 
Given $n\in \N$, let $n_0=n+\Gamma^{(0)}_d(n)$ and inductively define $$n_k=\Gamma^{(k)}_d \left(n+\sum_{j=0}^{k-1}n_k \right)$$ 
for all $k\leq 8d+1$ and finally let $n'=n+\Gamma_{d-1}(n+\sum_{k=0}^{8d+1}n_k)$. 
For all $d\geq 1$ , we define a function $\lambda_d:\N\rightarrow \N$ as follows.

For $d=1$, let $\lambda_1=26$.
For $d\geq 2$, let $D(n)=\lambda_{d-1}(n+n_0+\cdots+n_{\ell-1}+n')$ and
\[\lambda_d(n)= (d+3)^{6(8d+2)+3(\sum_{k= 0}^{8d+1}n_k+n')}\cdot e^{((8d+2)n+\sum_{k=0}^{8d}(8d+1-k)n_k+n')}\cdot D(n).\]  

In \cref{theorem: radical SG theorem over UFD}, we show that the functions $\lambda_d$ provide upper bounds on the dimension of the span of any radical Sylvester-Gallai configuration of degree $d$.

\begin{remark}
    Note that the functions $\Lambda_d, \Lambda_d^{(\ell)}, \Gamma_d^{(\ell)},\Gamma_{d-1}, \lambda_d,\lambda_d^{(\ell)}$ defined above depend on the integer $e$ that we fixed in the beginning of this section. We omit $e$ from the notation for simplicity. In the proof of \cref{theorem: radical SG theorem over UFD} below, we will fix the integer $e$ and consider these functions for $d\in[1,e]$. We also note that all these functions defined above are ascending functions.
\end{remark}
\subsubsection{Main theorems}\label{subsection: final proof}

We begin with our main technical theorem, which proves bounds on radical Sylvester-Gallai configurations over quotients of the polynomial ring by sufficiently strong ideals.
We then show how the radical Sylvester-Gallai theorem over polynomial rings follows from our technical theorem.

\generalRadicalSG*

\begin{proof}
Fix an integer $e\geq 2$. 
For all positive integers $d\leq e$, let $\Lambda_d$, $\Lambda^{(\ell)}_d$, $\lambda_d$, $\varepsilon_d$ and $k_d$ be defined as in \cref{subsection: definition of functions}. We let $\Lambda_{d,e}:=\Lambda_d$ and $\lambda_{d,e}:=\lambda_d$. We will show that these functions satisfy the conclusion.

Suppose $d=1$. Since $\Lambda_1=h_{2B}\circ t_2$, we have that $\dim \Kspan{\cF} \leq \lambda_1(\dim(U))=26$ by \cref{proposition: linear case main theorem}. If $d\geq 2$, we will construct a sequence of general quotients to eventually reduce $\cF$ to a $(d-1)$-radical Sylvester-Gallai configuration and then apply \cref{corollary: lifting from composition of quotients} together with the inductive bound given by $\lambda_{d-1}$. 
    
From now on we assume that $d\geq 2$.
For $0\leq \ell \leq 8d+1$, we define a graded $\K$-algebra $R^{(\ell)}$, a $\Lambda^{(\ell)}_d$-lifted strong vector space $W^{(\ell)}\subset R^{(\ell)}$ and a $\rsgufd{d}{z_\ell}{R^{(\ell)}}$ configuration $\cF^{(\ell)}$ inductively as follows.

\begin{enumerate}
    \item Set $R^{(0)}=R$, $U^{(0)}=U\subset S$, $z_0=z$ and $\cF^{(0)}=\cF$. 
    \item Suppose that for some $\ell\in [0,8d]$, we have defined a quotient ring $R^{(\ell)}$, a $\rsgufd{d}{z_\ell}{R^{(\ell)}}$ configuration $\cF^{(\ell)}$ with $z_\ell \in R^{(\ell)}_1$ and $U^{(\ell-1)}\subset S[z_1,\cdots,z_\ell]$. 
    
    If $m^{(\ell)}_d:=|\cF^{(\ell)}_d|>6d^3$ :
    \begin{itemize}
        \item [2.1.] Apply \cref{lemma: fraction of Fd}  with $\mu=\frac{\varepsilon_d}{3}$ and the constant $k=k_d$, to obtain a $\Lambda^{(\ell)}_d$-lifted strong vector space $W^{(\ell)}\subset R^{(\ell)}$ such that $z_\ell\in W^{(\ell)}$and  $|\cF^{(\ell)}_d \cap (W^{(\ell)})| \geq (1-\frac{\varepsilon_{\ell+1}}{3}) \cdot m^{(\ell)}_d$.
        \item [2.2.] Let $z_{\ell+1}$ be a new variable. We let  $R^{(\ell+1)}:=\frac{R^{(\ell)}[z_{\ell+1}]}{(W^{(\ell)}_\alpha)}$ and $\varphi_\alpha:R^{(\ell)}[z_{\ell+1}]\rightarrow R^{(\ell+1)}$ be the quotient defined by a general $\alpha^{(\ell)}\in \K^{\dim(W^{(\ell)})}$for which the conclusions of \cref{proposition: general quotient preserves SG} hold.
        \item [2.3.] Let $\cF^{(\ell+1)}=(\varphi_\alpha(\cF^{(\ell)}))_\red$ as defined in \cref{subsecttion: SG and quotients}.
        \item [2.4.] 
        Let $U^{(\ell+1)}\subset S[z_1,\cdots,z_{\ell+1}]$ be the vector space given by \cref{corollary: lifting from composition of quotients}, such that we have $R^{(\ell+1)}=S[z_1,\cdots,z_k]/(U^{(\ell)})$.
    \end{itemize}
\end{enumerate}

Note that $\Lambda^{(\ell-1)}_d\geq h_{2\Lambda^{(\ell)}_{d}}\circ t_{k_d}$ and $\Lambda^{(\ell)}_{d}\geq h_{2B}\circ t_1$. 
Therefore, \cref{lemma: fraction of Fd} applies in Step 2.1. and we get that $W^{(\ell)}$ is $\Lambda^{(\ell)}_d$-lifted strong with respect to $U^{(\ell)}$. 
By \cref{proposition: strong with z}, we know that the vector space $W^{(\ell)}_\alpha$ is also $\Lambda^{(\ell)}_{d}$-lifted strong in $R[z_{\ell+1}]$ for all $\alpha\in\K^{\dim(W^{(\ell)})}$. 
Note that $\Lambda^{(\ell)}_{d}>h_B\circ t_2$. 
Therefore, by \cref{proposition: general quotient preserves SG}, we conclude that $\cF^{(\ell+1)}=(\varphi_\alpha(\cF^{(\ell)}))_\red$ is a $\rsgufd{d}{z_{\ell+1}}{R^{(\ell+1)}}$ configuration for a general $\alpha\in \K^{\dim(W^{(\ell)})}$. 
Note that the definitions of $R^{(\ell)}$ and $\cF^{(\ell)}$ depend on the choice of the general $\alpha^{(\ell)}\in \K^{\dim(W^{(\ell)})}$. 
We omit this from the notation for simplicity. 

Now we will show that the size of $\cF^{(\ell)}_d$ significantly reduces every time we increase $\ell$. 
In particular we have the following.

\textbf{Claim:} $m^{(\ell)}_d\leq 6d^3$ for some $\ell\leq 8d+1$.

\begin{proof}[Proof of Claim]
Note that $m^{(\ell)}_d\geq m^{(\ell+1)}_d$ for all $\ell$. 
Suppose that $m^{(\ell)}_d> 6d^3$ for all $0\leq \ell \leq 8d+1$. 
If $|\cF^{(\ell)}_d \cap \K[W^{(\ell)}]|\geq \frac{2m^{(\ell)}_d}{3}$ for some $\ell\leq 8d$, then by \cref{lemma: algebra contains many then ideal contains Fd}, we have that $\cF^{(\ell)}_d\subset (W^{(\ell)})$. 
Therefore, by \cref{proposition: general quotient preserves SG}, we see that $\cF^{(\ell+1)}$ is a $\rsgufd{d-1}{z_{\ell+1}}{R^{(\ell+1)}}$ configuration. 
In particular, $\cF^{(\ell+1)}_d=\emptyset$ and $m^{(\ell+1)}_d=0$. 
Therefore we may assume that $|\cF^{(\ell)}_d \cap \K[W^{(\ell)}]|< \frac{2m^{(\ell)}_d}{3}$ for all $0 \leq \ell \leq 8d$.
    
Recall that $|\cF^{(\ell)}_d \cap (W^{(\ell)})| \geq (1-\frac{\varepsilon_\ell}{3}) \cdot m^{(\ell)}_d$ by the construction in step 2.1. 
Since all elements of $\cF^{(\ell)}_d \cap (W^{(\ell)})$ will be of lower degree after a general quotient and reduction, we have that $m^{(\ell+1)}_d\leq  \frac{\varepsilon_{\ell+1}}{3} m^{(\ell)}_d$. 
Thus:
\[m^{(\ell+1)}:=|\cF^{(\ell+1)}|=|\varphi_\alpha(\cF^{(\ell)})_\red|\geq |\cF^{(\ell)}_d|-|\cF^{(\ell)}_d\cap \K[W^{(\ell)}]|\geq \frac{m^{(\ell)}_d}{3}\] 
as $|\cF^{(\ell)}_d \cap \K[W^{(\ell)}]|< \frac{2m^{(\ell)}_d}{3}$. 
Therefore, using the inequalities above, we obtain
\[m^{(\ell+1)}_d\leq \frac{\varepsilon_{\ell+1}}{3} m^{(\ell)}_d \leq \varepsilon_{\ell+1} m^{(\ell+1)}\]
for all $0\leq \ell\leq 8d$. Therefore, by \cref{lemma: fraction of Fd}, we know that 
\[|\cF^{(\ell+1)}_{<d} \cap (W^{(\ell+1)})| \geq  \frac{m^{(\ell+1)}}{2}\]
for all $0\leq \ell \leq 8d$, and hence
\[|\cF^{(\ell)}_{<d} \cap (W^{(\ell)})| \geq  \frac{m^{(\ell)}}{2}\] for all $1\leq \ell \leq 8d$. 
Therefore we have 
\[\phi(\cF^{(\ell)})-\phi(\cF^{(\ell+1)})\geq \frac{m^{(\ell)}}{2}\]
for all $1\leq \ell\leq 8d$. 
Note that $\phi(\cF^{(1)})\leq \phi(\cF^{(0)})\leq dm^{(0)}=dm$. 
If $m^{(\ell)}\geq \frac{m}{4}$ for all $1\leq \ell \leq 8d$, then we have that $\phi(\cF^{(8d+1)})=0$, which is a contradiction. 
Hence, it is enough to prove that $m^{(\ell)}\geq \frac{m}{4}$ for all $1\leq \ell \leq 8d$.

For all $1\leq \ell \leq 8d$, we have
\[(d+2)\varepsilon_d m^{(\ell)}\geq (d+2)m^{(\ell)}_d> (d+1)m^{(\ell)}_d+d^2(2d-1)\]
since $ m^{(\ell)}_d\leq \varepsilon_d m^{(\ell)} $ and $m^{(\ell)}_d> 2d^3$. 
If for some $1\leq \ell \leq 8d$, we have $|\cF^{(\ell)}\cap \K[W]|>(d+2)\varepsilon_d m^{(\ell)}$, then $\cF^{(\ell)}_d\subset (W^{(\ell)})$ by \cref{lemma: algebra contains many then ideal contains Fd}. 
Thus we have $m^{(\ell+1)}_d=0$ and $\ell+1\leq 8d+1$, which is a contradiction. 
Therefore we must have that $|\cF^{(\ell)}\cap \K[W^{(\ell)}]|\leq (d+2)\varepsilon_d  m^{(\ell)}$ for all $1\leq \ell \leq 8d$.  
Letting $\gamma=(d+2)\varepsilon_d $, we have 
\[m^{(\ell+1)}\geq (1-\gamma)m^{(\ell)}\geq (1-\gamma)^\ell m^{(0)}\geq (1-\gamma)^{8d} m \geq (1-8d\gamma) m\geq \frac{m}{4}\]
for all $1\leq \ell \leq 8d$, where the last inequality holds since $(1-8d(d+2)\varepsilon_d)\geq \frac{1}{4}$. 
This completes the proof of the claim.
\end{proof}

Thus we may henceforth assume $m^{(\ell)}_d\leq 6d^3$ for some $ \ell\leq 8d+1$. 
Let $\ell$ be the smallest such number. 
Note that $W^{(\ell-1)}\subset R^{(\ell-1)}$ is $\Lambda^{(\ell-1)}_d$-lifted strong by construction and $\Lambda^{(\ell-1)}_d\geq h_{2\Lambda_{d-1}}\circ t_{k_d}$. 
Therefore the vector space $(0)$ is $h_{2\Lambda_{d-1}}\circ t_{k_d}$-lifted strong in $R^{(\ell)}$. 
As $6d^3<k_\ell$, we may apply \cref{corollary: practical robustness } to obtain a $\Lambda_{d-1}$-lifted strong vector space $W\subset R^{(\ell)}_{\leq e}$ such that $z_\ell,\cF^{(\ell)}_d\subset \K[W]$ and $\dim(W_i)\leq C_{2\Lambda_{d-1},i}(t_{k_\ell}(\delta_U+\delta_{W^{(\ell)}}))$ for all $i\in [e]$. 

Let $y$ be a new variable. By \cref{proposition: strong with z}, we know that the vector space $W_\alpha$ is $\Lambda_{d-1}$-lifted strong in $R^{(\ell)}[y]$ for all $\alpha\in\K^{\dim(W^{(\ell)})}$. 
Note that $\Lambda_{d-1}>h_B\circ t_2$. 
Let $R'=R^{(\ell)}[y]/(W_\alpha)$. 
By \cref{proposition: general quotient preserves SG}, we conclude that $\varphi_\alpha(\cF^{(\ell)})_\red$ is a $\rsgufd{d-1}{y}{R'}$ configuration for a general $\alpha \in \K^{\dim(W)}$.

By construction, we have $R^{(k)}=S[z_1,\cdots,z_k]/(U^{(k-1)})$ for all $1 \leq k \leq \ell$, where $U^{(k-1)}$ is given by \cref{corollary: lifting from composition of quotients}. 
By \cref{lemma: fraction of Fd} we have, $\dim(W^{(k)}_i)\leq C_{2\Lambda^{(\ell)}_{d},i}(t_{k_d}(\delta_{U^{(k-1)}}))$.  
Therefore 
$$\dim(W^{(k)})=\sum_{i=1}^e\dim(W^{(k)}_i)\leq \sum_{i=1}^e C_{2\Lambda^{(k)}_{d},i}(t_{k_d}(\delta_{U^{(k-1)}}))\leq \Gamma^{(k)}_d(\dim(U^{(k-1)})).$$
By \cref{corollary: lifting from composition of quotients}, we also have $\dim(U^{(k-1)})=\dim(U)+\sum_{j=0}^{k-1}\dim(W^{(j)})$. 
Let $\dim(U)=n$, $n_0=n+\Gamma^{(0)}_d(n)$ and define $n_k=n+\Gamma^{(k)}_d(n+\sum_{j=0}^{k-1}n_j)$. 
By induction and using the inequality above, we have $\dim(W^{(k)})\leq n_k$ for all $0\leq k \leq \ell$. 
Similarly, we may write $R'=S[z_1,\cdots,z_\ell,y]/(U')$ for some $U'$. 
Then we have 
$$\dim(W)\leq n':= n+\Gamma_{d-1}(n+\sum_{j=0}^\ell n_j) \ \text{ and } \ \dim(U')=\dim(U)+\sum_{j=0}^{\ell-1}\dim(W^{(j)})+\dim(W).$$

We have shown above that for all $\alpha^{(0)},\cdots, \alpha^{(0)}, \alpha$ such that the conclusions of \cref{proposition: general quotient preserves SG} hold, we get that $\varphi_\alpha(\cF^{(\ell)})_\red$ is a $\rsgufd{d-1}{y}{R'}$ configuration. 
Since $W_\alpha\subset R^{(\ell)}$ is $\Lambda_{d-1}$-lifted strong, we know that $U'\subset S[z_1,\cdots,z_\ell,y]$ is $\Lambda_{d-1}$-strong in $S[z_1,\cdots,z_\ell,y]$. 
Therefore by our induction hypothesis, we may apply the theorem to $R'=S[z_1,\cdots,z_\ell,y]/U'$ and the configuration $\varphi_\alpha(\cF^{(\ell)})_\red$ and obtain 
\[\dim(\Kspan{\varphi_\alpha(\cF^{(\ell)})_\red})\leq \lambda_{d-1}(\dim(U'))\leq \lambda_{d-1}(n+n_0+\cdots+n_{\ell-1}+n').\]
Setting $D := \lambda_{d-1}(n+n_0+\cdots+n_{\ell-1}+n')$, \cref{corollary: lifting from composition of quotients}, implies that 
\[\dim(\Kspan{\cF}) \leq (d+3)^{6(\ell+1)+3(\sum_{k= 0}^{\ell-1}n_k+n')}\cdot e^{((\ell+1)n+\sum_{k=0}^{\ell-2}(\ell-k)n_k+n')}\cdot D.\]
Hence, by definition of $\lambda_d$, we conclude that
$\dim(\Kspan{\cF})\leq \lambda_d(n)=\lambda_d(\dim(U)).$
\end{proof}

As a corollary of our technical theorem above, we obtain the radical Sylvester-Gallai theorem below, proving \cite[Conjecture 2]{G14}.

\radSGmain*

\begin{proof}
    We define $\lambda(d)=\lambda_d(0)$, where $\lambda_d:\N\rightarrow \N$ is the function given by \cref{theorem: radical SG theorem over UFD}. If $\cF := \{F_1, \ldots, F_m\}$, where $F_i \in S$, and $z$ is a new variable (free over $S$), \cref{proposition: vanilla radical SG reduces to general radical SG} implies that the set $\cG := \{zF_1, \ldots, zF_m\}$ is a $\rsgufd{d}{z}{S[z]}$ configuration, with $U = 0$.
    Since the zero vector space is arbitrarily strong, we have that $\cG$ satisfies the conditions of \cref{theorem: radical SG theorem over UFD}, with the vector space $U=(0)\subset S_{\leq d}$. Hence we have that $\dim \Kspan{\cG} \leq \lambda(d)$.
    As $\dim \Kspan{\cF} = \dim \Kspan{\cG}$, we are done.
\end{proof}

Using the Stillman uniformity principle, we obtain the following direct consequence of \cref{theorem: sylvester-gallai}.

\stillmanSG*

\begin{proof}
Note that by \cref{theorem: sylvester-gallai}, we may assume that $I$ is generated by at most $\lambda(d)$ forms of degree at most $d$. Therefore, the Stillman uniformity type results \cite[Theorem 1.1, Proposition 4.6]{ESS21} imply that each of the ideal invariants of $I$ are uniformly bounded by functions of $d$ only, independent of $\K$ and $N$.
\end{proof}

\bibliographystyle{alpha}
\bibliography{main}

\appendix

\section{Auxiliary algebraic results}\label{appendix: vector SG}

We note a few elementary results below which will be used in the previous section of the paper.


We note the relationship between transcendence degree and vector space dimension.

\begin{proposition}
    Let $\cF\subset S=\K[x_1,\cdots,x_N]$ be a finite set of forms. Then $\mathrm{trdeg}_{\K}(\cF)\leq \dim(\Kspan{\cF})$.
\end{proposition}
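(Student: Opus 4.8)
The plan is to show that a $\K$-linearly independent set remains large under any transcendence basis extraction, or more directly, that transcendence degree never exceeds $\K$-vector-space dimension. First I would let $r = \dim(\Kspan{\cF})$ and pick a $\K$-basis $F_{i_1}, \ldots, F_{i_r}$ of $\Kspan{\cF}$ consisting of elements of $\cF$. Every element of $\cF$ is a $\K$-linear combination of these $r$ forms, hence lies in $\K[F_{i_1}, \ldots, F_{i_r}]$, so in fact $\K(\cF) \subseteq \K(F_{i_1}, \ldots, F_{i_r})$ as subfields of the fraction field of $S$.

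The key step is then the standard fact that the transcendence degree of a finitely generated field extension is at most the number of generators: since $\K(F_{i_1}, \ldots, F_{i_r})$ is generated over $\K$ by $r$ elements, its transcendence degree over $\K$ is at most $r$. This follows because from any generating set of a field extension one can extract a transcendence basis (a maximal algebraically independent subset), and a subset of an $r$-element set has size at most $r$. Consequently $\mathrm{trdeg}_{\K}(\cF) = \mathrm{trdeg}_{\K} \K(\cF) \leq \mathrm{trdeg}_{\K} \K(F_{i_1}, \ldots, F_{i_r}) \leq r = \dim(\Kspan{\cF})$, using that transcendence degree is monotone under field inclusion.

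This argument is entirely routine and there is no real obstacle; the only point requiring a moment of care is the observation that a $\K$-linear combination of the basis forms lies in the \emph{polynomial} subalgebra (not merely the $\K$-span), which is immediate since $\Kspan{\cF} \subseteq \K[F_{i_1}, \ldots, F_{i_r}]$. One could alternatively phrase it via the exchange property of algebraic dependence: an algebraically independent subset of $\cF$ is in particular $\K$-linearly independent when the elements have positive degree and no constant terms — but this phrasing is more delicate, so I would stick with the field-theoretic argument above, citing a standard reference such as \cite{Eis95} for the fact that transcendence degree is bounded by the size of a generating set.
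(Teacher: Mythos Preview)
Your argument is correct and is the standard one: a $\K$-basis of $\Kspan{\cF}$ drawn from $\cF$ generates $\K(\cF)$ as a field extension of $\K$, whence the transcendence degree is bounded by the number of generators. The paper itself does not supply a proof of this proposition---it is stated in the appendix as an elementary auxiliary fact and left unproved---so there is nothing to compare against; your write-up fills that gap cleanly.
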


\begin{proposition}\label{proposition: homogeneous factors}
Let $R$ be a graded integral domain. Let $F\in R$ be a homogeneous element. If $g\in R$ and $g$ divides $F$, then $g$ is homogeneous. In particular, if $R$ is a UFD, then all irreducible factors of $F$ are homogeneous.

\end{proposition}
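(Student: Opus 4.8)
The statement to prove is \cref{proposition: homogeneous factors}: in a graded integral domain $R$, if $F\in R$ is homogeneous and $g\mid F$, then $g$ is homogeneous; consequently, in a graded UFD all irreducible factors of a homogeneous element are homogeneous. Here is the plan.

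\textbf{Setup.} Write $F = gh$ for some $h\in R$. I want to show $g$ (and by symmetry $h$) is homogeneous. Decompose $g$ and $h$ into their homogeneous components: let $g = g_a + g_{a+1} + \cdots + g_b$ with $g_a, g_b \neq 0$ the lowest and highest degree parts (so $a \le b$), and similarly $h = h_c + \cdots + h_d$ with $h_c, h_d \neq 0$. The product $gh$ then has lowest-degree homogeneous component $g_a h_c$ in degree $a+c$ and highest-degree homogeneous component $g_b h_d$ in degree $b+d$. Since $R$ is an integral domain, $g_a h_c \neq 0$ and $g_b h_d \neq 0$, so these are genuinely the bottom and top components of the product and they are distinct in degree unless $a+c = b+d$.

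\textbf{Main step.} Since $F = gh$ is homogeneous, all its homogeneous components vanish except one; in particular the lowest and highest nonzero components coincide, forcing $a + c = b + d$. Combined with $a \le b$ and $c \le d$, this yields $a = b$ and $c = d$, i.e. $g = g_a$ and $h = h_c$ are each homogeneous. This is the crux, and it is genuinely easy — the only thing being used is that $R$ has no zero divisors (so that the extreme-degree terms of a product do not cancel) together with the $\N$-grading being bounded below (which guarantees $g$ and $h$ have well-defined lowest components; if one prefers a $\Z$-grading the argument is identical using that a nonzero element has finitely many homogeneous components). There is no real obstacle here; the "hard part" is merely being careful that the grading is such that the component decompositions are finite, which holds in our setting since $R$ is a finitely generated $\N$-graded $\K$-algebra (or more generally any $\N$-graded ring).

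\textbf{Consequence for UFDs.} If moreover $R$ is a UFD and $F = f_1^{e_1}\cdots f_r^{e_r}$ is an irreducible factorization of the homogeneous element $F$, then each $f_i$ divides $F$, so by the first part each $f_i$ is homogeneous. This gives the "in particular" clause. I would write this up in a few lines, as it is a direct corollary.

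Overall this is a short, self-contained argument; I would present it as: (i) reduce to analyzing $F = gh$; (ii) take lowest/highest homogeneous components and use the domain hypothesis to see they survive in the product; (iii) homogeneity of $F$ forces top degree $=$ bottom degree for each factor; (iv) deduce the UFD statement. No earlier results from the paper are even needed beyond the definitions.
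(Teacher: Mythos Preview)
Your proof is correct and follows essentially the same approach as the paper: decompose $g$ and $h$ into homogeneous components, use the integral domain hypothesis to ensure the extreme-degree terms of the product survive, and conclude from the homogeneity of $F$ that the top and bottom degrees of each factor coincide. The only cosmetic difference is that the paper phrases the argument by contradiction (assuming $g$ non-homogeneous forces a vanishing product of nonzero elements), whereas you argue directly via $a+c=b+d$.
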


\begin{proof}
Suppose $F=gh$ for some $h\in R$. Let $g=g_d+g_{d-1}+\cdots+g_r$ and $h=h_e+h_{e-1}+\cdots+h_s$ be the decomposition into homogeneous parts of $g$ and $h$, where $g_d,g_r,h_e,h_s\neq 0$, and $g_r,h_s$ are the least degree non-zero homogeneous parts of $g$ and $h$ respectively. Suppose that $g$ is not homogeneous, i.e. $d>r$. Since $F$ is homogeneous, we must have $g_dh_e=0$ or $g_rh_s=0$. This is a contradiction as $R$ is a domain.
\end{proof}

\begin{proposition}\label{proposition: homogeneous compoments}
Let $R$ be a finitely generated $\N$-graded $\K$-algebra with $R_0=\K$. Let $A\subset R$ be a finitely generated graded $\K$-subalgebra. Suppose $A\subset R$ is a module-finite free extension of rank $r$ and $R$ is generated by homogeneous elements $u_1,\cdots,u_r\in R$ as a free $A$-module. Consider the $A$-module isomorphism $\phi:R\xrightarrow[]{\sim}A\cdot e_1\oplus \cdots \oplus A\cdot e_r$ given by $\phi(u_i)=e_i$. Let $F\in R_d$ be a homogeneous element of degree $d$. Let $\phi(F)=(f_1,\cdots,f_r)\in A^{\oplus r}$. Then $f_i$ are homogeneous and $\deg(f_i)\leq d$ in $A$ and $R$.
\end{proposition}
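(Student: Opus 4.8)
The statement is that homogeneous elements of $R$ decompose into homogeneous components $f_i$ under the free-module isomorphism $\phi: R \xrightarrow{\sim} \bigoplus_{i=1}^r A\cdot e_i$ coming from a homogeneous free basis $u_1,\ldots,u_r$, and moreover $\deg(f_i) \leq d$. The key point is that $\phi$ is not just an $A$-module isomorphism but respects the gradings once we grade $\bigoplus_i A\cdot e_i$ by setting $\deg(e_i) = \deg(u_i)$. So the plan is first to set up this graded structure carefully, and then to argue that $\phi$ is a morphism of graded $A$-modules, which forces each $f_i u_i$ to be the degree-$d$ homogeneous component of $F = \sum_i f_i u_i$, hence $f_i$ is homogeneous of degree $d - \deg(u_i) \leq d$ (with the convention that $f_i = 0$ contributes nothing).

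First I would make $M := \bigoplus_{i=1}^r A\cdot e_i$ into an $\N$-graded $A$-module by declaring $e_i$ to live in degree $\deg(u_i) =: \delta_i$, so that $M_n = \bigoplus_i A_{n - \delta_i}\cdot e_i$ (where $A_{<0} = 0$). Since $R$ is generated by $R_1$ and the $u_i$ are homogeneous generating a free basis over $A$, the map $\psi: M \to R$ sending $e_i \mapsto u_i$ is an $A$-module isomorphism sending $M_n$ into $R_n$: indeed any homogeneous $a \in A_{n-\delta_i}$ gives $a\cdot u_i \in R_n$, so $\psi(M_n) \subseteq R_n$. Because $\psi$ is an isomorphism of $A$-modules and both $R = \bigoplus_n R_n$ and $M = \bigoplus_n M_n$ are direct sums of these graded pieces, a dimension/linear-algebra count over each graded degree (or: $\psi$ surjective and degree-preserving on each $R_n$, since every homogeneous element of $R$ of degree $n$ is an $A$-combination of the $u_i$ whose coefficients may be taken homogeneous — extract homogeneous parts) shows $\psi(M_n) = R_n$ for every $n$. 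Hence $\psi$ is an isomorphism of graded $A$-modules, and so is $\phi = \psi^{-1}$.

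Given this, the proof is immediate: for $F \in R_d$, we have $\phi(F) \in M_d = \bigoplus_i A_{d - \delta_i}\cdot e_i$, so writing $\phi(F) = (f_1,\ldots,f_r)$ we get $f_i \in A_{d-\delta_i}$, i.e. $f_i$ is homogeneous of degree $d - \delta_i \leq d$ whenever $f_i \neq 0$ (and $f_i = 0$ otherwise, which we may regard as homogeneous of any degree). This degree is computed in $A$, and since $A \subseteq R$ is a graded subalgebra the degree is the same in $R$; so $\deg(f_i) \leq d$ in both $A$ and $R$, as claimed. The one technical point to be careful about — and the only place any real argument is needed — is the claim that $\psi$ is degree-preserving on each $R_n$, i.e. that a homogeneous element of $R$ has homogeneous coefficients in the free basis; this follows because if $F = \sum_i a_i u_i$ with $a_i \in A$ arbitrary, then collecting the degree-$d$ homogeneous parts of both sides gives $F = \sum_i (a_i)_{d - \delta_i} u_i$ by uniqueness of the free-module representation, so we may replace $a_i$ by its homogeneous part $(a_i)_{d-\delta_i}$. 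This is the main (and only mild) obstacle; everything else is bookkeeping with graded modules.
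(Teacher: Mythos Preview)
Your proof is correct and takes essentially the same approach as the paper. Both arguments come down to the same key step: write $F = \sum_i f_i u_i$, take the degree-$d$ homogeneous part to get $F = \sum_i (f_i)_{d-\delta_i} u_i$, and then use freeness of the basis to conclude $f_i = (f_i)_{d-\delta_i}$; you package this as ``$\phi$ is a graded $A$-module isomorphism,'' while the paper carries it out directly, but the content is identical.
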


\begin{proof}
Suppose $\deg(u_j)=s_j$ and $d_j=d-s_j$. Let $f_{id_j}$ be the degree $d_j$ homogeneous part of $f_i$ in $A$ and $f_{i,\neq d_j}=f_i-f_{id_j}$. Now we have $F=f_1u_1+\cdots+f_ru_r$ in $R$. Since $F$ is homogeneous of degree $d$, we must have $F=f_{1d_1}u_1+\cdots+f_{rd_r}u_r$ and $f_{1,\neq d_1}u_1+\cdots+f_{r,\neq d_r}u_r=0$. Since $u_1,\cdots,u_r$ generate $R$ as a free $A$-module, we must have that $f_{i,\neq d_j}=0$ for all $j\in [r]$. In particular, $f_i=f_{i,d_j}$ is homogeneous of degree $d_j\leq d$.
\end{proof}

\begin{lemma}\label{lemma: general vandermonde invertible}
Let $\cM := \{M_i|i\in [r]\}$ be a set of monomials in $S$ of degree at most $d$. 
Let $x^{(j)}=(x^{(j)}_1,\cdots,x^{(j)}_n)$ be new variables for $j\in [r]$. 
If $M$ is the $r\times r $ matrix with $(i,j)$-th entry given by $M_{ij}=M_i(x^{(j)})$, then $\det(M)\neq 0$.
\end{lemma}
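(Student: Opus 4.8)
The idea is to collapse $\det(M)$, via an explicit monomial substitution of the blocks of variables $x^{(j)}$, into a single-variable generalized Vandermonde determinant which is visibly nonzero; since a ring homomorphism sends $0$ to $0$, this forces $\det(M)$ itself to be a nonzero polynomial, and hence nonvanishing for a general choice of the $x^{(j)}$.

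First I would record the only structural input: the $M_i$ form a \emph{set}, so they are pairwise distinct. Writing $M_i = \prod_{\ell=1}^n x_\ell^{c_{i\ell}}$ with $c_i = (c_{i1},\dots,c_{in})\in\N^n$ and $\sum_\ell c_{i\ell}\le d$, the exponent vectors $c_1,\dots,c_r$ are pairwise distinct. Set $a_\ell := (d+1)^{\ell-1}$ for $\ell\in[n]$ and $b_i := \sum_{\ell=1}^n a_\ell c_{i\ell}$; since $c\mapsto\sum_\ell a_\ell c_\ell$ is just the base-$(d+1)$ expansion, it is injective on vectors with entries in $\{0,\dots,d\}$, so $b_1,\dots,b_r$ are pairwise distinct nonnegative integers. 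Now let $t_1,\dots,t_r$ be new indeterminates and apply the $\K$-algebra homomorphism determined by $x^{(j)}_\ell \mapsto t_j^{a_\ell}$. Under it, $M_i(x^{(j)})\mapsto \prod_\ell t_j^{a_\ell c_{i\ell}} = t_j^{b_i}$, so the image of $\det(M)$ in $\K[t_1,\dots,t_r]$ is the generalized Vandermonde determinant $\det\!\big(t_j^{b_i}\big)_{i,j\in[r]}$.

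It then remains to show $\det\!\big(t_j^{b_i}\big)\neq 0$ in $\K[t_1,\dots,t_r]$, which I would do by induction on $r$ after reordering the rows so that $b_1<b_2<\dots<b_r$. Expanding along the last column gives $\det\!\big(t_j^{b_i}\big)_{i,j\in[r]} = \sum_{i=1}^r (-1)^{i+r}\, t_r^{b_i}\, D_i$, where $D_i = \det\!\big(t_j^{b_k}\big)_{k\ne i,\ j\in[r-1]}$ involves only $t_1,\dots,t_{r-1}$. The term $i=r$ contributes $t_r^{b_r}D_r$ with $D_r = \det\!\big(t_j^{b_k}\big)_{k,j\in[r-1]}\neq 0$ by induction (base case $r=1$: $t_1^{b_1}\neq 0$); pick any monomial $\mu$ occurring in $D_r$ with nonzero coefficient. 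Because $b_r$ strictly exceeds every other $b_i$, the monomial $t_r^{b_r}\mu$ cannot appear in any of the terms with $i<r$ (those are divisible only by $t_r^{b_i}$, $b_i<b_r$), so its coefficient in the full determinant is nonzero. Hence $\det\!\big(t_j^{b_i}\big)\neq 0$, and therefore $\det(M)\neq 0$, as claimed.

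There is no serious obstacle here; the single point that needs care is choosing the exponents $a_\ell$ so that the induced exponents $b_i$ remain pairwise distinct (the role of the degree bound $\deg M_i\le d$), together with the trivial but essential observation that one only needs the weak implication ``image nonzero $\Rightarrow$ original nonzero'' rather than injectivity of the substitution map.
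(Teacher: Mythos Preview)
Your proof is correct and follows essentially the same strategy as the paper's: both apply a monomial substitution $x_\ell^{(j)} \mapsto t_j^{a_\ell}$, with $a_\ell$ a power of a base exceeding $d$, to collapse $\det(M)$ to a generalized Vandermonde determinant $\det\big(t_j^{b_i}\big)$ with pairwise distinct exponents $b_i$. The only difference is in how nonvanishing of this determinant is verified: the paper invokes the bialternant identity $\det\big(t_j^{b_i}\big) = s_\lambda(t_1,\dots,t_r)\prod_{i<j}(t_i-t_j)$ for a suitable Schur polynomial $s_\lambda$, whereas you argue directly by induction on $r$ via column expansion and degree-in-$t_r$ considerations. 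Your route is slightly more elementary and self-contained (no Schur polynomial machinery needed); the paper's route is shorter if one is willing to cite that identity.
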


\begin{proof}
    Let $M_i = \bx^{\bu_i}$, where $\bu_i \in \N^n$ is the exponent vector of $M_i$.
    Since $\deg(M_i) \leq d$, we have $\sum_{k=1}^n u_{ik} \leq d$.
    Let $D = 2d + 1$ and consider the vector $\be := (1, D, D^2, \ldots, D^{n-1})$.
    Then the inner products $\mu_i := \langle \be, \bu_i \rangle$ are such that 
    $$\mu_i - \mu_j = \sum_{k=1}^n (u_{ik} - u_{jk}) D^{k-1} \neq 0 $$
    whenever $i \neq j$.
    Hence, after relabeling, we can assume that $\mu_1 > \mu_2 > \cdots > \mu_r$.
    In particular, we can write $\mu_i = \lambda_i + (r-i)$ for some decreasing sequence of integers $\lambda_1 \geq \lambda_2 \geq \cdots \geq \lambda_r$.

    Thus, if we consider the natural ring homomorphism given by $\varphi(x^{(j)}_k) = y_j^{D^{k-1}}$, where $y_1, \ldots, y_r$ are new variables, we obtain that $\varphi(M)$ is a generalized Vandermonde matrix corresponding to the partition $\lambda:= (\lambda_1, \ldots, \lambda_r)$, and therefore we have that 
    $$\varphi(\det(M)) = \det(\varphi(M)) = s_{\lambda}(y_1, \ldots, y_r) \cdot \prod_{i < j} (y_i - y_j)$$
    which implies $\det(M) \neq 0$.
\end{proof}

\end{document}